\documentclass[12pt]{article}
\usepackage{amsthm,amsfonts,amsmath,amscd,amssymb,latexsym}    
\usepackage{mathrsfs}
\usepackage{epsfig,graphicx,color}     
\usepackage[all]{xy}  
\usepackage{makeidx}
\usepackage{hyperref}
\usepackage{accents}
\usepackage{multirow}
\usepackage{array}
\usepackage{scalerel}[2014/03/10]
\usepackage{stackengine,wasysym}

\title{
Complex structures, moment maps, \\
and the Ricci form (Extended Version)
}

\author{
Oscar Garc\'{\i}a-Prada\\
ICMAT Madrid 
\and
Dietmar~A.~Salamon\\
ETH Z\"urich
\and
Samuel Trautwein\\
ETH Z\"urich
}

\date{8 March 2021}

%
%
\newtheorem{PARA}{}[section] 
\newtheorem{theorem}[PARA]{Theorem} 
\newtheorem{corollary}[PARA]{Corollary} 
\newtheorem{lemma}[PARA]{Lemma} 
 
\newtheorem{definition}[PARA]{Definition}    
\newtheorem{remark}[PARA]{Remark} 
\newtheorem{example}[PARA]{Example}

\numberwithin{equation}{section}
\newcommand{\MAT}[1]{\left[\begin{array}{#1}}
\newcommand{\RIX}{\end{array}\right]}
\newcommand{\p}{\partial}
\newcommand{\one}{{{\mathchoice {\rm 1\mskip-4mu l} {\rm 1\mskip-4mu l}
{\rm 1\mskip-4.5mu l} {\rm 1\mskip-5mu l}}}}

\newcommand{\C}{{\mathbb{C}}}

\newcommand{\R}{{\mathbb{R}}}
\newcommand{\T}{{\mathbb{T}}}
\newcommand{\Z}{{\mathbb{Z}}}

\newcommand{\cK}{{\mathcal K}}
\newcommand{\cL}{{\mathcal L}}

\newcommand{\cT}{{\mathcal T}}

\newcommand{\sA}{\mathscr{A}}    
\newcommand{\sB}{\mathscr{B}}

\newcommand{\sE}{\mathscr{E}}    
    
\newcommand{\sG}{\mathscr{G}}    
\newcommand{\sH}{\mathscr{H}}    
    
\newcommand{\sJ}{\mathscr{J}}    
\newcommand{\sK}{\mathscr{K}}    
    
\newcommand{\sM}{\mathscr{M}}

\newcommand{\sS}{\mathscr{S}}    
\newcommand{\sT}{\mathscr{T}}

\newcommand{\sW}{\mathscr{W}}

\newcommand{\sn}{{\mathsf{n}}}
\newcommand{\Om}{{\Omega}}
\newcommand{\om}{{\omega}}

\newcommand{\xhat}{{\widehat{x}}}

\newcommand{\Jhat}{{\widehat{J}}}

\newcommand{\Xhat}{{\widehat{X}}}
\newcommand{\Yhat}{{\widehat{Y}}}
\newcommand{\Zhat}{{\widehat{Z}}}

\newcommand{\betahat}{{\widehat{\beta}}}
\newcommand{\lambdahat}{{\widehat{\lambda}}}

\newcommand{\thetahat}{{\widehat{\theta}}}

\newcommand{\omhat}{{\widehat{\omega}}}
\newcommand{\tauhat}{{\widehat{\tau}}}

\newcommand{\Richat}{{\widehat{\Ric}}}
\newcommand{\oa}{{\bar{a}}}
\newcommand{\oc}{{\bar{c}}}
\newcommand{\og}{{\bar{g}}}
\newcommand{\oh}{{\bar{h}}}
\newcommand{\oz}{{\bar{z}}}
\newcommand{\obeta}{{\overline{\beta}}}
\newcommand{\oeta}{{\overline{\eta}}}
\newcommand{\otheta}{{\overline{\theta}}}

\newcommand{\im}{{\mathrm{im}}}

\newcommand{\DIV}{{\mathrm{div}}}   
\newcommand{\trace}{{\mathrm{trace}}} 
 
\newcommand{\id}{{\mathrm{id}}}

\newcommand{\INT}{{\mathrm{int}}}

\newcommand{\RE}{{\mathrm{Re}}}  
\renewcommand{\Re}{{\mathrm{Re}}}  
\renewcommand{\Im}{{\mathrm{Im}}}  
\newcommand{\Lie}{{\mathrm{Lie}}}

\newcommand{\Diff}{{\mathrm{Diff}}} 
\newcommand{\Vect}{{\mathrm{Vect}}}  
\newcommand{\Symp}{{\mathrm{Symp}}}   
\newcommand{\Ham}{{\mathrm{Ham}}}

\newcommand{\End}{{\mathrm{End}}}

\newcommand{\bi}{{\mathbf{i}}}

\newcommand{\ex}{{\rm ex}}

\newcommand{\norm}{{\rm norm}}

\newcommand{\dvol}{{\rm dvol}}

\newcommand{\rG}{{\mathrm{G}}} 
\newcommand{\rH}{{\mathrm{H}}}

\newcommand{\SU}{{\mathrm{SU}}} 
 
\newcommand{\SL}{{\mathrm{SL}}}

\newcommand{\Sp}{{\mathrm{Sp}}}

\newcommand{\fsl}{{\mathfrak{sl}}} 
\newcommand{\fsp}{{\mathfrak{sp}}} 
\newcommand{\fg}{{\mathfrak{g}}} 
\newcommand{\fh}{{\mathfrak{h}}} 

\newcommand{\CP}{{\mathbb{C}\mathrm{P}}}

\newcommand{\BC}{{\mathrm{BC}}}

\newcommand{\Ric}{{\mathrm{Ric}}}

\newcommand{\dR}{{\mathrm{dR}}}

\newcommand{\inner}[2]{\langle #1, #2\rangle}
\newcommand{\INNER}[2]{\left\langle #1, #2\right\rangle}

\newcommand{\winner}[2]{\langle #1{\wedge}#2\rangle}


\newlength{\dtildeheight}

\newlength{\dhatheight}

\newcommand{\tabla}{{\widetilde{\nabla}}}
\newcommand{\habla}{{\widehat{\nabla}}}
\newcommand{\dhabla}{{\widehat{\vphantom{\rule{1pt}{9.6pt}}\smash{\widehat{\nabla}}}}}
\newcommand{\sdhabla}{{\widehat{\vphantom{\rule{1pt}{6.8pt}}\smash{\widehat{\nabla}}}}}

\def\NABLA#1{{\mathop{\nabla\kern-.5ex\lower1ex\hbox{$#1$}}}}
\def\Nabla#1{{\nabla\kern-.5ex{}_{#1}}}
\def\Tabla#1{{\widetilde\nabla\kern-.5ex{}_{#1}}}
\def\DTabla#1{{{\widetilde{\vphantom{\rule{1pt}{9.6pt}}\smash{\widetilde{\nabla}}}}\kern-.5ex{}_{#1}}}
\def\SDTabla#1{{{\widetilde{\vphantom{\rule{1pt}{6.8pt}}\smash{\widetilde{\nabla}}}}\kern-.5ex{}_{#1}}}
\def\Habla#1{{\widehat\nabla\kern-.5ex{}_{#1}}}
\def\DHabla#1{{{\widehat{\vphantom{\rule{1pt}{9.6pt}}\smash{\widehat{\nabla}}}}\kern-.5ex{}_{#1}}}
\def\SDHabla#1{{{\widehat{\vphantom{\rule{1pt}{6.8pt}}\smash{\widehat{\nabla}}}}\kern-.5ex{}_{#1}}}
\def\abs#1{\mathopen|#1\mathclose|}

\def\norm#1{\mathopen\|#1\mathclose\|}
\def\Norm#1{\left\|#1\right\|}
\renewcommand{\p}{{\partial}}

%


\begin{document}
\maketitle

\begin{abstract}
The Ricci form is a moment map for the action 
of the group of exact volume preserving diffeomorphisms 
on the space of almost complex structures. 
This observation yields a new approach to the 
Weil--Petersson symplectic form on the Teichm\"uller 
space of isotopy classes of complex structures with 
real first Chern class zero and nonempty K\"ahler cone.
This extended version of the paper includes a proof of the
Bochner--Kodaira--Nakano identity (Appendix~\ref{app:BKN}),
a brief exposition of Bott--Chern cohomology 
(Appendix~\ref{app:BOTTCHERN}), and a discussion
of the relation between complex structures and differential 
forms of middle degree (Appendix~\ref{app:NFORM}).
\end{abstract}

\vspace{-20pt}

{\tiny\tableofcontents}


\section{Introduction}\label{sec:INTRO}

This paper is based on a remark by Simon Donaldson.
The remark is that the space of linear complex structures
on~$\R^{2\sn}$ can be viewed as a co-adjoint $\SL(2\sn,\R)$-orbit
and hence is equipped with a canonical symplectic form 
and a Hamiltonian $\SL(2\sn,\R)$-action. 
Thus, for any volume form~$\rho$
on a closed oriented $2\sn$-manifold~$M$,
the space~$\sJ(M)$ of almost complex 
structures carries a natural symplectic structure.
Following~\cite{DON3}, one can then deduce
that the action of the group of exact volume preserving 
diffeomorphisms on~$\sJ(M)$ is a Hamiltonian group action
with the Ricci form as a moment map. 
In the integrable case this yields
a new approach to the Weil--Petersson symplectic form 
on the Teichm\"uller space of isotopy classes of 
complex structures with real first Chern class zero 
and nonempty K\"ahler cone. Here are the details.

Fix a closed connected oriented $2\sn$-manifold~$M$
and a positive volume form~$\rho$ and denote 
by~$\sJ(M)$ the space of almost complex structures
compatible with the orientation. This space is equipped 
with a symplectic form
\begin{equation}\label{eq:OMRHOJ}
\Om_{\rho,J}(\Jhat_1,\Jhat_2) 
:= \tfrac{1}{2}\int_M\trace\Bigl(\Jhat_1J\Jhat_2\Bigr)\rho
\qquad\mbox{for }\Jhat_1,\Jhat_2\in\Om^{0,1}_J(M,TM).
\end{equation}
The {\bf Ricci form}~${\Ric_{\rho,J}\in\Om^2(M)}$ 
associated to~$\rho$ and~$J$ is defined by
\begin{equation*}
\begin{split}
\Ric_{\rho,J}(u,v) 
:= \tfrac{1}{4}\trace\bigl((\Nabla{u}J)J(\Nabla{v}J)\bigr) 
+ \tfrac{1}{2}\trace\bigl(JR^\nabla(u,v)\bigr) 
+ \tfrac{1}{2}d\lambda^\nabla_J(u,v)
\end{split}
\end{equation*}
for~${u,v\in\Vect(M)}$, where~$\nabla$ is a torsion-free
$\rho$-connection and the $1$-form~${\lambda^\nabla_J}$ on~$M$ 
is defined by~${\lambda^\nabla_J(u):=\trace\bigl((\nabla J)u\bigr)}$
for~${u\in\Vect(M)}$.  In the integrable case~$\bi\Ric_{\rho,J}$ 
is the curvature of the Chern connection on the canonical bundle 
associated to the Hermitian structure determined by~$\rho$.

\medskip\noindent{\bf Theorem~A (The Ricci Form).}
{\it The Ricci form is independent of the choice of the 
torsion-free $\rho$-connection $\nabla$ used to define it. 
It is closed, represents the cohomology class~$2\pi c_1(TM,J)$, 
satisfies~${\phi^*\Ric_{\rho,J}=\Ric_{\phi^*\rho,\phi^*J}}$
for every diffeomorphism~$\phi$, and 
$\Ric_{e^f\rho,J}=\Ric_{\rho,J}+\tfrac{1}{2}d(df\circ J)$
for all~${f\in\Om^0(M)}$.
Moreover, the map~${J\mapsto2\Ric_{\rho,J}}$ is a moment 
map for the action of the group~$\Diff^\ex(M,\rho)$
of exact volume preserving diffeomorphisms on~$\sJ(M)$,
i.e.\ if~${t\mapsto J_t}$ is a smooth path of 
almost complex structures on~$M$, then
\begin{equation}\label{eq:RICMOMENT}
\frac{d}{dt}\int_M2\Ric_{\rho,J_t}\wedge\alpha
= \tfrac{1}{2}\int_M\trace\Bigl((\p_tJ_t)J_t(\cL_{v_\alpha}J_t)\Bigr)\rho
\end{equation}
for~${t\in\R}$ and~${\alpha\in\Om^{2\sn-2}(M)}$,
where~${v_\alpha\in\Vect(M)}$ is defined 
by~${\iota(v_\alpha)\rho=d\alpha}$.}

\begin{proof}
See Theorem~\ref{thm:RICCI}. 
\end{proof}

\bigbreak

The proof of Theorem~A is based on the aforementioned 
observation that the space of linear complex structures 
is a co-adjoint~$\SL(2\sn,\R)$-orbit.  Theorem~A can then be 
derived from a general result of Donaldson~\cite{DON3} 
about the action of the group~$\Diff^\ex(M,\rho)$ 
on a suitable space of sections of a fibration over~$M$. 
In Section~\ref{sec:RICCI} we give a direct proof which does 
not rely on~\cite{DON3}.  That the Ricci form is closed 
and represents $2\pi$ times the first Chern class is a consequence of the 
formula
$
\Ric_{\rho,J} = \tfrac{1}{2}\trace(JR^\tabla)+d\lambda^\nabla_J,
$
where~${\om\in\Om^2(M)}$  is a nondegenerate $2$-form compatible 
with~$J$, $\nabla$~is the Levi--Civita connection of the 
metric~${\om(\cdot,J\cdot)}$, and~${\tabla := \nabla - \tfrac{1}{2}J\nabla J}$.  
Moreover, ${\lambda^\nabla_J=0}$ whenever~$\om$ is closed,
so one obtains the standard Ricci form in the symplectic case.
We emphasize that the dual space of the space of exact divergence-free
vector fields is the space of exact $2$-forms on~$M$, so one obtains 
a genuine moment map only for almost complex structures 
with real first Chern class zero.

Equation~\eqref{eq:RICMOMENT} extends to an 
identity that holds for all vector fields~$v$.
This identity takes the form
\begin{equation}\label{eq:LAMBDA1}
\int_M\Lambda_\rho(J,\Jhat)\wedge\iota(v)\rho
= \tfrac{1}{2}\int_M\trace\Bigl(\Jhat J\cL_vJ\Bigr)\rho
\end{equation}
for all~${\Jhat\in\Om^{0,1}_J(M,TM)}$ and all~${v\in\Vect(M)}$,
where~${\Lambda_\rho(J,\Jhat)\in\Om^1(M)}$
is defined by~${(\Lambda_\rho(J,\Jhat))(u)
:= \trace((\nabla\Jhat)u+\tfrac12\Jhat J\Nabla{u}J)}$
for~${u\in\Vect(M)}$.   Thus~$\Lambda_\rho$ is a $1$-form
on~$\sJ(M)$ with values in~$\Om^1(M)$.  The next theorem shows
that the differential of this $1$-form is a $2$-form on~$\sJ(M)$
with values in~$d\Om^0(M)$.

\medskip\noindent{\bf Theorem~B (The one-form $\Lambda_\rho$).}
{\it 
Let~${v\in\Vect(M)}$ and define~${f_v\in\Om^0(M)}$
by~${f_v\rho:=d\iota(v)\rho}$.  Then, for all~${J\in\sJ(M)}$,
\begin{equation}\label{eq:LAMBDA2}
\Lambda_\rho(J,\cL_vJ)
= 2\iota(v)\Ric_{\rho,J} - df_v\circ J + df_{Jv}.
\end{equation}
Moreover, if~${\R^2\to\sJ(M):(s,t)\mapsto J(s,t)}$ is a smooth map, then}
\begin{equation}\label{eq:LAMBDA3}
\p_s\Lambda_\rho(J,\p_tJ) - \p_t\Lambda_\rho(J,\p_sJ)
+ \tfrac{1}{2}d\trace((\p_sJ)J(\p_tJ)) = 0.
\end{equation}

\begin{proof}
See Theorem~\ref{thm:LAMBDA}.
\end{proof}

\noindent{\bf Theorem~C (The Integrable Case).}
{\it Let~${\rho\in\Om^{2\sn}(M)}$ be a positive volume form 
and let~${J\in\sJ(M)}$ be an integrable almost complex structure.
Then~${\tfrac{1}{2\pi}\Ric_{\rho,J}}$ is a~${(1,1)}$-form 
and represents the first Bott--Chern class of~$J$.
Moreover, the first Bott--Chern class of~$J$ vanishes
if and only if there exists a diffeomorphism~$\phi\in\Diff_0(M)$
such that~${\Ric_{\rho,\phi^*J}=0}$.
If~${\Ric_{\rho,J}=\Ric_{\rho,\phi^*J}=0}$
for some orientation preserving diffeomorphism~$\phi$,
then~${\phi^*\rho=\rho}$.}

\begin{proof}
See Theorem~\ref{thm:RICBC}.
\end{proof}

\bigbreak

Let~${\sJ_{\INT,0}(M)\subset\sJ(M)}$ be the space of 
integrable almost complex structures with real first Chern 
class zero and nonempty K\"ahler cone.
Then Theorem~C shows that the Teichm\"uller 
space~${\sT_0(M):=\sJ_{\INT,0}(M)/\Diff_0(M)}$ can be 
identified with the quotient space~${\sT_0(M,\rho):=\sJ_{\INT,0}(M,\rho)/\Diff_0(M,\rho)}$,
where~${\sJ_{\INT,0}(M,\rho):=\{J\in\sJ_{\INT,0}(M)\,|\,\Ric_{\rho,J}=0\}}$.
We emphasize that the quotient 
group~${\Diff_0(M,\rho)/\Diff^\ex(M,\rho)}$ acts trivially.  
The space~$\sJ(M)$ carries a complex 
structure~${\Jhat\mapsto-J\Jhat}$ and the 
symplectic form~$\Om_\rho$ in~\eqref{eq:OMRHOJ} is of type~${(1,1)}$.
However, it is not K\"ahler because the symmetric  
pairing~${\inner{\Jhat_1}{\Jhat_2}=\tfrac{1}{2}\int_M\trace(\Jhat_1\Jhat_2)\rho}$
is indefinite in general. Thus complex submanifolds of~$\sJ(M)$
need not be symplectic.  The space~${\sJ_{\INT,0}(M)}$
is an example. Its tangent space at~$J$ is the kernel 
of~${\bar\p_J:\Om^{0,1}_J(M,TM)\to\Om^{0,2}_J(M,TM)}$.
If~${\Ric_{\rho,J}=0}$ and~${\bar\p_J\Jhat=0}$,
then Theorem~B implies that there exist unique smooth 
functions~${f=f_{\rho,\Jhat}}$ and~${g=f_{\rho,J\Jhat}}$ 
such that
\begin{equation}\label{eq:LAMBDA4}
\Lambda_\rho(J,\Jhat) = -df\circ J + dg,\qquad 
\int_Mf\rho=\int_Mg\rho = 0.
\end{equation}
This implies that the restriction 
of the $2$-form~$\Om_{\rho,J}$ to~$\ker\bar\p_J$ 
vanishes on the subspace~${\{\cL_vJ\,|\,f_v=f_{Jv}=0\}}$. 
It turns out that~$\Om_\rho$ descends to a symplectic form 
on the Teich\-m\"uller space~${\sT_0(M,\rho)\cong\sT_0(M)}$ 
that is independent of~$\rho$.   For~${J\in\sJ_{\INT,0}(M)}$ let~$\rho_J$ 
be the volume form with~${\Ric_{\rho_J,J}=0}$ and~${\int_M\rho_J=V}$. 

\medskip\noindent{\bf Theorem~D (Teichm\"uller Space).}
{\it The formula
\begin{equation}\label{eq:TEICH1}
\Om_J(\Jhat_1,\Jhat_2) 
:= \int_M\Bigl(
\tfrac{1}{2}\trace\bigl(\Jhat_1J\Jhat_2\bigr)
- f_1g_2 + f_2g_1
\Bigr)\rho_J,
\end{equation}
for~${J\in\sJ_{\INT,0}(M)}$ and~${\Jhat_i\in\Om^{0,1}_J(M,TM)}$
with~${\bar\p_J\Jhat_i=0}$ and~$f_i,g_i$ as in~\eqref{eq:LAMBDA4},
defines a symplectic form on the Teichm\"uller space~$\sT_0(M)$.
It satisfies the naturality 
condition~${\Om_{\phi^*J}(\phi^*\Jhat_1,\phi^*\Jhat_2)
= \phi^*\Om_J(\Jhat_1,\Jhat_2)}$
for every~$\phi\in\Diff^+(M)$ and thus the mapping class 
group acts on~$\sT_0(M)$ by symplectomorphisms.}

\begin{proof}
See Theorem~\ref{thm:TEICH}.
\end{proof}

Theorem~D gives an alternative construction of 
the Weil--Petersson symplectic form on Calabi--Yau
Teichm\"uller spaces (see~\cite{HUY3,KOISO,N,S2,S4,SIU1}
for the polarized case and~\cite[Ch~16]{FHW}
for the symplectic form on~$\sT_0(M)$
for the K3 surface). The proof relies on 
Yau's theorem and the observations, 
for Ricci-flat K\"ahler manifolds~$(M,\om,J)$,
that a vector field~$v$ is holomorphic 
if and only if~${\iota(v)\om}$ is harmonic 
(Lemma~\ref{le:HOLV}), and that the space of 
$\bar\p_J$-harmonic $1$-forms~${\Jhat\in\Om^{0,1}_J(M,TM)}$
is invariant under the map~${\Jhat\mapsto\Jhat^*}$
(Lemma~\ref{le:JHATSTAR}). 

\bigbreak

Associated to the symplectic form~\eqref{eq:TEICH1}
on~$\sT_0(M)$ and the complex structure~${\Jhat\mapsto-J\Jhat}$
is the symmetric bilinear form
\begin{equation}\label{eq:TEICH3}
\inner{\Jhat_1}{\Jhat_2}
= \int_M\Bigl(
\tfrac{1}{2}\trace\bigl(\Jhat_1\Jhat_2\bigr)
- f_1f_2 - g_1g_2
\Bigr)\rho_J.
\end{equation}
This is indefinite in general, so~$\sT_0(M)$ need not be K\"ahler.
If~$\om$ is a K\"ahler form with~${\om^\sn/\sn!=\rho_J}$, 
then the subspace of self-adjoint harmonic 
endomorphisms~${\Jhat=\Jhat^*\in\Om^{0,1}_J(M,TM)}$
is positive for~\eqref{eq:TEICH3} 
(and tangent to the Teich\-m\"uller space 
of~$\om$-compatible complex structures). 
Its symplectic complement is the negative
subspace of skew-adjoint harmonic endomorphisms.
The $2$-form~\eqref{eq:TEICH1} defines a 
symplectic connection on the space~$\sE_0(M)$ 
of isotopy classes of Ricci-flat K\"ahler 
structures, fibered over the space~$\sB_0(M)$ 
of isotopy classes of K\"ahlerable symplectic
forms with real first Chern class zero,
whose fiber over~${[\om]}$ is the space~$\sT_0(M,\om)$
of~$\om$-compatible (integrable)
complex structures~$J$ with~${\Ric_{\om,J}=0}$ 
modulo~${\Symp(M,\om)\cap\Diff_0(M)}$.

\medskip\noindent{\bf Theorem~E (A Connection).}
{\it The projection~${\sE_0(M)\to\sB_0(M)}$ 
is a submersion and the $2$-form~\eqref{eq:TEICH1} defines a symplectic 
connection on~$\sE_0(M)$.  The connection $1$-form~$\sA$
assigns to each Ricci-flat K\"ahler structure~${(\om,J)}$
and each closed~$2$-form~$\omhat$ the 
unique element~${\Jhat=\sA_{\om,J}(\omhat)\in\Om^{0,1}_J(M,TM)}$ 
that satisfies~${\bar\p_J\Jhat = 0}$
and~${\Lambda_\rho(J,\Jhat)=-d\inner{\omhat}{\om}\circ J}$
and~${\omhat-J^*\omhat = \inner{(\Jhat-\Jhat^*)\cdot}{\cdot}}$
and~${\Om_J(\Jhat,\Jhat')=0}$ 
for all~${\Jhat'\in\Om^{0,1}_J(M,TM)}$ 
with~${\bar\p_J\Jhat'=0}$ and~${\Jhat'=(\Jhat')^*}$.
The connection is $\Diff^+(M)$-equivariant and is given by
\begin{equation}\label{eq:CONNECTION}
\begin{split}
\sA_{\om,J}(\omhat) = \cL_vJ+\Jhat_0,\qquad
\inner{\Jhat_0\cdot}{\cdot} 
= \tfrac{1}{2}\bigl((\omhat-d\lambdahat)-J^*(\omhat-d\lambdahat)\bigr),
\end{split}
\end{equation}
where ${v\in\Vect(M)}$ and~${\lambdahat=\iota(v)\om\in\Om^1(M)}$
satisfy~${d^*(\omhat-d\lambdahat)=0}$, ${d^*\lambdahat=0}$.}

\begin{proof}
See Lemma~\ref{le:A} and Theorem~\ref{thm:CONNECTION}.
\end{proof}

The Weil--Petersson metric on the fiber~$\sT_0(M,\om)$ in
Theorem~E is K\"ahler and has been studied by many authors
(see e.g.~\cite{BCS,F,FS,KOISO,LS}, \cite{N}-\cite{TROMBA}, 
\cite{VIEHWEG,WANG} and the references therein).  
An important special case arises when~${H^{2,0}_J(M)=0}$
for all~${J\in\sJ_{\INT,0}(M)}$. In this case~${\sT_0(M)}$ is K\"ahler,
each polarized fiber~$\sT_0(M,\om)$ is an open subset of~$\sT_0(M)$,
the symplectic forms on the fibers agree on the overlaps
(as noted by Todorov~\cite[p~328]{TODOROV}),
and the connection is trivial.

\medskip\noindent{\bf Acknowledgement.}
Thanks to Simon Donaldson for suggesting this problem.
Thanks to Paul Biran, Ron Donagi, Andrew Kresch, Rahul Pandharipande,
Yanir Rubinstein, and Claire Voisin for helpful discussions.


\section{The Ricci form}\label{sec:RICCI}


\subsection*{Linear complex structures}

The standard orientation of~$\R^{2\sn}$ with the 
coordinates~$x_1,\dots,x_\sn,y_1,\dots,y_\sn$ is determined by the volume 
form~$dx_1\wedge dy_1\wedge\cdots\wedge dx_\sn\wedge dy_\sn$.
The space of linear complex structures on~$\R^{2\sn}$
compatible with the orientation is given by 
\begin{equation}\label{eq:Jn}
\begin{split}
\sJ_\sn
=
\left\{gJ_0g^{-1}\,\Big|\,g\in\SL(2\sn,\R)\right\},\qquad
J_0 := \left(\begin{array}{rr}
0 & -\one \\ \one & 0
\end{array}\right).
\end{split}
\end{equation}
This is a co-adjoint orbit equipped with a Hamiltonian~$\SL(2\sn,\R)$-action.
Abbreviate~${\rG:=\SL(2\sn,\R)}$ and~${\fg:=\Lie(\rG)=\fsl(2\sn,\R)}$
and note that~${\sJ_\sn\subset\fg}$.

\begin{lemma}\label{le:LCS}
The set~$\sJ_\sn\subset\R^{2\sn\times2\sn}$
is a connected $2\sn^2$-dimensional submani\-fold 
and its tangent space at~${J\in\sJ_\sn}$ is given by
\begin{equation}\label{eq:TJ}
\begin{split}
T_J\sJ_\sn 
= 
\bigl\{\Jhat\in\R^{2\sn\times2\sn}\,\big|\,
\Jhat J+J\Jhat = 0\bigr\} 
= 
\bigl\{[\xi,J]\,\big|\,\xi\in\fg\bigr\}.
\end{split}
\end{equation}
The formula~$\Jhat\mapsto-J\Jhat$ defines a complex structure on~$\sJ_\sn$
and the formula
\begin{equation}\label{eq:omJ}
\tau_J\bigl(\Jhat_1,\Jhat_2\bigr)
:= \tfrac12\trace\bigl(\Jhat_1J\Jhat_2\bigr)
= -\trace\bigl([\xi_1,\xi_2]J\bigr)
\end{equation}
for~${\xi_i\in\fg}$ and~${\Jhat_i:=[\xi_i,J]}$
defines a symplectic form~${\tau\in\Om^2(\sJ_\sn)}$.
The $\rG$-action ${\rG\times\sJ_\sn\to\sJ_\sn:(g,J)\mapsto gJg^{-1}}$
is Hamil\-tonian and is generated by the 
$\rG$-equi\-variant moment map~${\mu:\sJ_n\to\fg}$ 
given by~${\mu(J) = -J}$ for~${J\in\sJ_\sn}$.
\end{lemma}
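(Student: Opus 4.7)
My plan is to realize $\sJ_\sn$ as a single $\rG$-orbit under conjugation and to derive all its structures from the coadjoint orbit picture, identifying $\fg\cong\fg^*$ via the nondegenerate $\Ad$-invariant trace pairing $\inner{\xi}{\eta}:=\trace(\xi\eta)$; under this identification, conjugation $g\cdot J=gJg^{-1}$ on $\fg$ corresponds to the coadjoint action of $\rG$.

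For the submanifold structure, tangent space, and dimension, I note that $\sJ_\sn=\{gJ_0g^{-1}:g\in\rG\}$ is by definition an orbit and hence a submanifold, with tangent space at $J$ equal to the image of $\xi\mapsto[\xi,J]$. A short calculation gives $[\xi,J]J+J[\xi,J]=0$, so $\{[\xi,J]:\xi\in\fg\}\subseteq\{\Jhat:\Jhat J+J\Jhat=0\}$. For the reverse inclusion, if $\Jhat J+J\Jhat=0$ I set $\xi:=-\tfrac12\Jhat J$; the identity $\trace(\Jhat J)=\trace(J\Jhat)=-\trace(\Jhat J)$ forces $\trace\xi=0$, so $\xi\in\fg$, and $[\xi,J]=\Jhat$ then follows from $J\Jhat J=\Jhat$. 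The stabilizer of $J_0$ in $\rG$ is $\SL(2\sn,\R)\cap\GL(\sn,\C)$, of real dimension $2\sn^2-1$, giving $\dim\sJ_\sn=(4\sn^2-1)-(2\sn^2-1)=2\sn^2$, and connectedness follows from that of $\rG$.

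The complex structure $I_J\Jhat:=-J\Jhat$ preserves $T_J\sJ_\sn$ because $J\Jhat J=\Jhat$, and $I_J^2=J^2=-\Id$ is immediate. For the symplectic form, the crucial preliminary is to reconcile the two expressions in \eqref{eq:omJ}: setting $\Jhat_i=[\xi_i,J]$ and expanding, repeated use of $J^2=-\Id$ and cyclicity of the trace yields $\tfrac12\trace(\Jhat_1J\Jhat_2)=-\trace([\xi_1,\xi_2]J)$. Well-definedness of the right-hand side on $T_J\sJ_\sn\times T_J\sJ_\sn$ then follows from the identity $\trace([\xi_1,\xi_2]J)=\trace(\xi_1[\xi_2,J])$, and antisymmetry is manifest from the first expression. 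Closedness I would prove by the standard Kirillov--Kostant--Souriau argument: extending tangent vectors to the fundamental vector fields $X_\xi(J):=[\xi,J]$, which satisfy $[X_{\xi_1},X_{\xi_2}]=-X_{[\xi_1,\xi_2]}$, the Cartan formula for $d\tau(X_{\xi_1},X_{\xi_2},X_{\xi_3})$ collapses to the Jacobi identity in $\fg$.

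For nondegeneracy, I would observe that $\R^{2\sn\times 2\sn}$ splits as the direct sum of $J$-commuting and $J$-anticommuting matrices (each of dimension $2\sn^2$), and these two subspaces are orthogonal under the trace pairing because $JABJ^{-1}=-AB$ whenever $A$ commutes and $B$ anticommutes with $J$. Hence the pairing restricts nondegenerately to $T_J\sJ_\sn$; since $\Jhat\mapsto\Jhat J$ maps $T_J\sJ_\sn$ to itself, vanishing of $\tfrac12\trace((\Jhat_1J)\Jhat_2)$ for all $\Jhat_2\in T_J\sJ_\sn$ forces $\Jhat_1 J=0$, hence $\Jhat_1=0$. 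Finally, for the moment map I set $H_\xi(J):=-\trace(J\xi)$, so that $\inner{\mu(J)}{\xi}=H_\xi(J)$ recovers $\mu(J)=-J$; the defining identity $dH_\xi=\iota(X_\xi)\tau$ reduces to the direct computation $\tau_J([\xi,J],\Jhat)=-\trace(\xi\Jhat)$ using $J\Jhat=-\Jhat J$ and cyclicity, and equivariance of $\mu$ is immediate from $\Ad$-invariance of the trace pairing. The main technical step is the algebraic reconciliation of the two formulas for $\tau$; once that identity is in hand, everything else follows formally from the coadjoint-orbit framework.
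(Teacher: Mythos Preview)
Your coadjoint-orbit approach is sound and genuinely different from the paper's, but two points need attention. First, ``orbit hence submanifold'' skips the immersed-versus-embedded distinction: orbits of smooth Lie group actions are only immersed in general, and you need properness of the orbit map $\rG/\rH\to\R^{2\sn\times 2\sn}$ (equivalently, closedness of the orbit) to obtain an embedded submanifold. This is easy to supply here, since $\sJ_\sn$ is a connected component of the closed set $\{J:J^2=-\one\}$, but the paper actually proves properness directly by an explicit sequence argument. Second, and more substantively, you check $I_J^2=-\Id$ but never address \emph{integrability} of the almost complex structure $I$; this is part of the claim (``complex structure on $\sJ_\sn$'') and is not supplied by the KKS package. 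The paper handles closedness of $\tau$ and integrability of $I$ in one stroke by exhibiting the torsion-free connection
\[
\Nabla{t}\Jhat:=\tfrac{d}{dt}\Jhat+\tfrac12\Jhat J\dot J+\tfrac12\dot JJ\Jhat,
\]
checking that it preserves both $I$ and $\tau$, and invoking Lemma~\ref{le:TORSION}.

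Where your arguments are complete they differ usefully from the paper's. For the tangent space you produce the explicit preimage $\xi=-\tfrac12\Jhat J$, whereas the paper argues by a dimension count on the chain of inclusions. For nondegeneracy your orthogonal splitting of $\R^{2\sn\times 2\sn}$ into $J$-commuting and $J$-anticommuting parts is clean and conceptual; the paper instead tests against the specific vector $\Jhat'=[\Jhat^T,J]$ to obtain a strictly positive value of $\tau_J$. Your KKS proof of $d\tau=0$ via the Jacobi identity is correct, but unlike the paper's torsion-free-connection argument it does not deliver integrability of $I$ for free, which is why the gap above matters.
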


\begin{proof}
The set~${\rH := \{h\in\SL(2n,\R)\,|\,hJ_0=J_0h\}}$
is a Lie subgroup of~$\rG$ and is isomorphic to 
the group of complex~${\sn\times\sn}$-matrices with determinant
in the unit circle.  So~${\dim\rH=2\sn^2-1}$ and~${\dim\rG=4\sn^2-1}$
and thus the homogeneous space~$\rG/\rH$ is a manifold of dimension~$2\sn^2$.
Since~$\rG$ is connected, so is~$\rG/\rH$.  Next we claim 
that the map~${\rG\to\R^{2\sn\times2\sn}:g\mapsto gJ_0g^{-1}}$ descends to 
a proper injective immersion~${\iota:\rG/\rH\to\R^{2\sn\times2\sn}}$. 
It is injective by definition.  To see that~$\iota$ is an immersion, 
observe that~${T_{[g]}\rG/\rH\cong g\fg/g\fh}$ 
and~${d\iota([g])[g\xi] = g[\xi,J_0]g^{-1}}$ for~${g\in\rG}$ and~${\xi\in\fg}$.
To prove that~$\iota$ is proper, choose~${g_k\in\rG}$ 
such that the sequence~${J_k:=g_kJ_0g_k^{-1}}$ 
converges to~$J_0$, and 
define~$h_k:=g_k^{-1}[e_1\cdots e_\sn\,J_ke_1\cdots J_ke_\sn]$,
where the vectors~$e_1,\dots,e_\sn\in\R^{2\sn}$ form the standard
basis of~${\R^\sn\times\{0\}}$.  Then~$h_k\in\rH$ for~$k$ sufficiently large
and~${\lim_{k\to\infty}g_kh_k=\one}$.  This shows that the 
map~${\iota:\rG/\rH\to\R^{2\sn\times2\sn}}$ is a proper injective 
immersion.  Hence its image~${\sJ_\sn=\iota(\rG/\rH)}$ is 
a connected $2\sn^2$-dimensional submanifold of~$\R^{2\sn\times2\sn}$. 

\bigbreak

Now let~$J\in\sJ_\sn$.  Then~$gJg^{-1}\in\sJ_\sn$ for all~$g\in\rG$
and so~${[\xi,J]\in T_J\sJ_\sn}$ for all~${\xi\in\fg}$.
Thus
$
\{[\xi,J]\,|\,\xi\in\fg\}
\subset
T_J\sJ_\sn 
\subset
\{\Jhat\in\R^{2\sn\times2\sn}\,|\,
\Jhat J+J\Jhat = 0\}.
$
Since all three spaces have dimension~$2\sn^2$,
equality holds and this proves~\eqref{eq:TJ}. 
The formula~\eqref{eq:omJ} follows by direct calculation. 
To show that the $2$-form~$\tau$ in~\eqref{eq:omJ}
is nondegenerate, let~${\Jhat=[\xi,J]\in T_J\sJ_\sn\setminus\{0\}}$
and define~${\eta:=[\xi,J]^T}$ and~${\Jhat':=[\eta,J]}$.
Then~$\tau_J(\Jhat,\Jhat') = \trace(\eta[\xi,J]) = \trace([\xi,J]^T [\xi,J]) > 0$.  
The $2$-form~$\tau$ is closed and the 
complex structure~$\Jhat\mapsto-J\Jhat$ is integrable
by Lemma~\ref{le:TORSION}, as both structures are preserved 
by the torsion-free connection
$$
\Nabla{t}\Jhat
:= \tfrac{d}{dt}\Jhat
+ \tfrac{1}{2}\Jhat J\dot J
+ \tfrac{1}{2}\dot{J}J\Jhat.
$$
The map~${\sJ_\sn\to\fg:J\mapsto\mu(J):=-J}$
is a moment map for the $\rG$-action because
$\tau_J([\xi,J],\Jhat)= - \trace(\xi\Jhat)=\trace((d\mu(J)\Jhat)\xi)$
for~${J\in\sJ_\sn}$, ${\Jhat\in T_J\sJ_\sn}$,
and~${\xi\in\fg}$. This proves Lemma~\ref{le:LCS}.
\end{proof}

\begin{remark}\label{rmk:complexJ}\rm
The symplectic form~$\tau$
in~\eqref{eq:omJ} is a $(1,1)$-form with respect 
to the complex structure~$\Jhat\mapsto-J\Jhat$. 
For~${\sn>1}$ it is not a K\"ahler form,
because the bilinear form
$
{\inner{\Jhat_1}{\Jhat_2}
=\tfrac12\trace(\Jhat_1\Jhat_2)}
$
is indefinite on each tangent space.
\end{remark}

\begin{remark}\label{rmk:compatibleJ}\rm
Let~${\om_0:=\sum_{i=1}^\sn dx_i\wedge dy_i}$
denote the standard symplectic form on~$\R^{2\sn}$
and consider the space of $\om_0$-compatible 
linear complex structures
\begin{equation}\label{eq:compatibleJ}
\sJ_{\sn,0} 
:= \left\{J\in\sJ_\sn\,\bigg|\,
\begin{array}{l}
J^*\om_0=\om_0\mbox{ and }
\om_0(\zeta,J\zeta)>0\\
\mbox{for all }
\zeta\in\R^{2n}\setminus\{0\}
\end{array}\right\}.
\end{equation}
This is a complex submanifold of~$\sJ_\sn$
of real dimension~$\sn^2+\sn$ and the symplectic 
form~\eqref{eq:omJ} restricts to a K\"ahler form 
on~$\sJ_{\sn,0}$.  The symplectic linear group~$\Sp(2\sn)$ 
acts on~$\sJ_{\sn,0}$ by K\"ahler isometries
and a moment map~${\mu:\sJ_{\sn,0}\to\fsp(2\sn)}$
for this action is again given by~${\mu(J) = - J}$.
\end{remark}

\begin{remark}\label{rmk:siegelJ}\rm
The group~$\Sp(2\sn)$ acts on Siegel upper half space
${\sS_\sn\subset\C^{\sn\times\sn}}$ of symmetric matrices
with positive definite imaginary part via
$$
g_*Z := (AZ +B)(CZ+D)^{-1},\qquad
g=: \left(\begin{array}{cc} A & B \\ C & D\end{array}\right)
$$
for~${g\in\Sp(2\sn)}$ and~${Z\in\sS_\sn}$.
There is a unique $\Sp(2\sn)$-equivariant diffeomorphism
from~$\sS_\sn$ to $\sJ_{\sn,0}$ that sends~${\bi\one\in\sS_\sn}$
to~${J_0\in\sJ_{\sn,0}}$.  It is given by 
$$
J(Z) = \left(\begin{array}{cc}
XY^{-1} & -Y-XY^{-1}X \\ Y^{-1} & -Y^{-1}X
\end{array}\right)\in\sJ_{\sn,0},\qquad
Z=X+\bi Y\in\sS_\sn.
$$
This diffeomorphism is a K\"ahler isometry with 
respect to the K\"ahler metric on~$\sS_\sn$ given 
by~${\abs{\Zhat}^2=\trace((Y^{-1}\Xhat)^2+(Y^{-1}\Yhat)^2)}$
for~${\Zhat=\Xhat+\bi\Yhat\in T_Z\sS_\sn}$.
\end{remark}


\subsection*{Definition of the Ricci form}

By Lemma~\ref{le:LCS} the space~$\sJ_\sn$ fits as a fiber 
into the general framework developed by Donaldson~\cite{DON3}.
Starting from this observation we will show that 
the action of the group of exact volume preserving 
diffeomorphisms on the space of almost complex structures 
is a Hamiltonian group action with twice the Ricci form 
as a moment map. Let~$M$ be a closed connected 
oriented $2\sn$-manifold.  Assume~$M$ admits an almost 
complex structure compatible with the orientation and denote 
the space of such almost complex structures by
\begin{equation}\label{eq:J}
\sJ(M) := \left\{J\in\Om^0(M,\End(TM))\,\Bigg|\,
\begin{array}{l}
J^2=-\one\mbox{ and}\\
J\mbox{ is compatible with}\\
\mbox{the orientation of }M
\end{array}\right\}.
\end{equation}
Thus~$\sJ(M)$ is the space of sections of a bundle
each of whose fibers is equipped with a natural symplectic form
by Lemma~\ref{le:LCS}.  It can be viewed formally as
an infinite-dimensional manifold whose tangent space
at~$J$ is the space $T_J\sJ(M) = \{\Jhat\in\Om^0(M,\End(TM))\,|\,
\Jhat J + J\Jhat = 0\} = \Om^{0,1}_J(M,TM)$ of complex
anti-linear $1$-forms on~$M$ with values in~$TM$.
Every positive volume form~${\rho\in\Om^{2\sn}(M)}$ 
determines a symplectic form~$\Om_\rho$ on~$\sJ(M)$ defined by 
\begin{equation}\label{eq:OMRHO}
\Om_{\rho,J}(\Jhat_1,\Jhat_2) 
:= \tfrac12\int_M\trace\left(\Jhat_1J\Jhat_2\right)\rho
\end{equation}
for~${J\in\sJ(M)}$ and~${\Jhat_1,\Jhat_2\in T_J\sJ(M)}$. The  
group~${\sG=\Diff(M,\rho)}$ of volume preserving diffeomorphisms 
acts on~$\sJ(M)$ contravariantly by~${J\mapsto\phi^*J}$
for~${\phi\in\sG}$ and~${J\in\sJ(M)}$.  
This action preserves the symplectic form~$\Om_\rho$.  

\begin{definition}[{\bf Ricci Form}]\label{def:RICCI}
Fix a positive volume form~${\rho\in\Om^{2\sn}(M)}$,
an almost complex structure~${J\in\sJ(M)}$,
and a torsion-free $\rho$-connection $\nabla$ on~$TM$.
The {\bf Ricci form} of the pair~$(\rho,J)$
is the $2$-form
\begin{equation}\label{eq:RICCI}
\Ric_{\rho,J} := \tfrac{1}{2}\left(\tau^\nabla_J+d\lambda^\nabla_J\right),
\end{equation}
where~${\tau_J^\nabla\in\Om^2(M)}$ and~${\lambda_J^\nabla\in\Om^1(M)}$ 
are defined by
\begin{equation}\label{eq:RICCI2}
\begin{split}
\tau^\nabla_J(u,v)
&:= 
\tfrac{1}{2}\trace\bigl((\Nabla{u}J)J(\Nabla{v}J)\bigr)
+ \trace\bigl(JR^\nabla(u,v)\bigr), \\
\lambda^\nabla_J(u) &:= \trace\bigl((\nabla J)u\bigr)
\end{split}
\end{equation}
for~${u,v\in\Vect(M)}$.
For~${\Jhat\in\Om^{0,1}_J(M,TM)}$
define~${\Lambda_\rho(J,\Jhat)\in\Om^1(M)}$ by
\begin{equation}\label{eq:LAMBDA}
\bigl(\Lambda_\rho(J,\Jhat)\bigr)(u)
:= \trace\bigl((\nabla\Jhat)u+\tfrac12\Jhat J\Nabla{u}J\bigr)
\qquad\mbox{for }u\in\Vect(M). 
\end{equation}
\end{definition}


\subsection*{The Ricci form as a moment map}

The next theorem is the main result of this section.
It asserts that the action of the subgroup
\begin{equation}\label{eq:GEX}
\sG^\ex 
:= 
\left\{
\phi\in\Diff(M)\,
\left|\,
\begin{array}{l}
\mbox{there exists a smooth isotopy}\\
{[0,1]\times\Diff(M):t\mapsto\phi_t}\\
\mbox{and a smooth family of vector fields}\\
{[0,1]\to\Vect(M):t\mapsto v_t}\\
\mbox{such that }\iota(v_t)\rho\mbox{ is exact for all }t\\
\mbox{and }\p_t\phi_t=v_t\circ\phi_t\mbox{ for all }t\\
\mbox{and }\phi_0=\id\mbox{ and }\phi_1=\phi
\end{array}
\right.
\right\}
\end{equation}
of exact volume preserving diffeomorphisms on~$\sJ(M)$ 
is a Hamiltonian group action and is generated by 
the~$\sG$-equivariant moment map which assigns 
to each~${J\in\sJ(M)}$ twice the Ricci form~$\Ric_{\rho,J}$. 
The moment map must take values in the {\it dual space} 
of the {\it Lie algebra}
$$
\Lie(\sG^\ex) 
= \Vect^\ex(M,\rho) 
= \left\{v\in\Vect(M)\,|\,\iota(v)\rho\mbox{ is exact}\right\}.
$$
Every~$(2\sn-2)$-form~${\alpha\in\Om^{2\sn-2}(M)}$ determines an 
exact divergence-free vector field~${v_\alpha\in\Vect^\ex(M,\rho)}$
via
$$
\iota(v_\alpha)\rho=d\alpha.
$$  
Thus~$\Vect^\ex(M,\rho)$ can be identified 
with the quotient of the space~$\Om^{2\sn-2}(M)$
by the space of closed~$(2\sn-2)$-forms on~$M$.
Its dual space can be viewed formally as the space 
of exact $2$-forms on~$M$, in that every exact 
$2$-form~$\tau$ on~$M$ determines a linear functional
$$
\Vect^\ex(M,\rho)\to\R:
v_\alpha\mapsto\int_M \tau\wedge\alpha.
$$
With this understood, equation~\eqref{eq:RICHATRHO} 
in the following theorem is the assertion that
the map~${J\mapsto2\Ric_{\rho,J}}$ is a moment map
for the action of~$\sG^\ex$ on~$\sJ(M)$.  
In general, however, the Ricci form is only closed and not exact;
only its differential in the direction of an infinitesimal 
almost complex structure is always exact.
Thus the map~${J\mapsto2\Ric_{\rho,J}}$ is only a moment 
in the strict sense of the word when restricted to the space
of almost complex structures with real first Chern class zero.
One could attempt to rectify this situation by subtracting 
a closed $2$-form in the appropriate cohomology class from 
the Ricci form, however such a modification would destroy 
the~$\sG^\ex$-equivariance of the moment map unless~$M$
has real dimension two.

\begin{theorem}\label{thm:RICCI}
Let~${\rho\in\Om^{2\sn}(M)}$ be a positive volume form, 
let~${J\in\sJ(M)}$, and let~${\Jhat\in\Om^{0,1}_J(M,TM)}$.  
Then the following holds. 

\smallskip\noindent{\bf (i)}  
The Ricci form~$\Ric_{\rho,J}$ and the $1$-form~$\Lambda_\rho(J,\Jhat)$
are independent of the choice of the torsion-free $\rho$-connection~$\nabla$ 
used to define them.  Moreover,
\begin{equation}\label{eq:RICRHOF}
\Ric_{e^f\rho,J}=\Ric_{\rho,J} + \tfrac{1}{2}d(df\circ J),\quad
\Lambda_{e^f\rho}(J,\Jhat)=\Lambda_\rho(J,\Jhat)+df\circ\Jhat
\end{equation}
for all~${f\in\Om^0(M)}$ and the Ricci form and~$\Lambda_\rho$
satisfy the naturality condition
\begin{equation}\label{eq:NATURALITY}
\phi^*\Ric_{\rho,J} = \Ric_{\phi^*\rho,\phi^*J},\qquad
\phi^*\Lambda_\rho(J,\Jhat) = \Lambda_{\phi^*\rho}(\phi^*J,\phi^*\Jhat)
\end{equation}
for all~${\phi\in\Diff(M)}$.

\smallskip\noindent{\bf (ii)}
Every vector field~${v\in\Vect(M)}$ satisfies 
\begin{equation}\label{eq:LAMBDARHO}
\int_M\Lambda_\rho(J,\Jhat)\wedge\iota(v)\rho
= \tfrac{1}{2}\int_M\trace\bigl(\Jhat J\cL_vJ\bigr)\rho.
\end{equation}
Moreover, every smooth path~${\R\to\sJ(M):t\mapsto J_t}$ 
of almost complex structures with~${J_0=J}$ 
and~${\left.\tfrac{d}{dt}\right|_{t=0}J_t=\Jhat}$
satisfies the equations
\begin{equation}\label{eq:RICHAT}
\Richat_\rho(J,\Jhat) 
:= \left.\frac{d}{dt}\right|_{t=0}\Ric_{\rho,J_t} 
= \tfrac{1}{2}d\bigl(\Lambda_\rho(J,\Jhat)\bigr)
\end{equation}
and
\begin{equation}\label{eq:RICHATRHO}
\int_M2\Richat_\rho(J,\Jhat) \wedge\alpha
= \tfrac12\int_M\trace\bigl(\Jhat J\cL_{v_\alpha}J\bigr)\rho
\end{equation}
for~${\alpha\in\Om^{2\sn-2}(M)}$, where~${v_\alpha\in\Vect(M)}$
is defined by~${\iota(v_\alpha)\rho = d\alpha}$.

\smallskip\noindent{\bf (iii)}
Let~$\om\in\Om^2(M)$ be a nondegenerate $2$-form 
compatible with~$J$ such that~$\om^\sn/\sn!=\rho$, 
let~$\nabla$ be the Levi-Civita connection of the 
Riemannian metric~${\inner{\cdot}{\cdot}=\om(\cdot,J\cdot)}$,
and define
\begin{equation}\label{eq:TABLA}
\tabla:=\nabla-\tfrac{1}{2}J\nabla J.
\end{equation}
Then~$\tabla$ is a Hermitian connection and
\begin{equation}\label{eq:RICCItilde}
\Ric_{\rho,J}=\tfrac12\bigl(\trace(JR^\tabla) + d\lambda^\nabla_J\bigr).
\end{equation}
Thus~$\Ric_{\rho,J}$ is closed and represents
the class~${2\pi c_1(TM,J)}$.  Moreover,
\begin{equation}\label{eq:RICSYMP}
d\om=0\qquad\implies\qquad
\lambda^\nabla_J=0,\qquad
\Ric_{\rho,J}=\tfrac12\trace(JR^\tabla).
\end{equation}
\end{theorem}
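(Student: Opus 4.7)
The plan is to prove the three parts in order. For \textbf{part (i)}, to see that $\Ric_{\rho,J}$ and $\Lambda_\rho(J,\Jhat)$ do not depend on the choice of torsion-free $\rho$-connection $\nabla$, I would write any other such connection as $\nabla'=\nabla+A$ with $A\in\Om^0(M,S^2T^*M\otimes TM)$ (symmetric, giving torsion-freeness) and satisfying the tracelessness condition $\trace(u\mapsto A(u,w))=0$ for all $w$ (the $\rho$-preservation condition). Then I would directly compute how each of $\tau^\nabla_J$, $\lambda^\nabla_J$, and $\Lambda_\rho(J,\Jhat)$ changes under this modification and check that the combinations appearing in \eqref{eq:RICCI} and \eqref{eq:LAMBDA} are unaffected; the crucial cancellations come from antisymmetrization combined with tracelessness. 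Naturality under diffeomorphisms is a tautology once one observes that $\nabla J$, $R^\nabla$, and the various traces are natural under pullback. For the conformal change $\rho\mapsto e^f\rho$, I would choose an explicit torsion-free $e^f\rho$-connection of the form $\nabla'=\nabla+B$ with $B$ determined by $df$ (symmetrized and made traceless), and then track how $\tau^\nabla_J$ and $\lambda^\nabla_J$ change: the extra term $\tfrac12 d(df\circ J)$ arises naturally from the change in $\lambda^\nabla_J$.

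For \textbf{part (ii)}, the pairing identity \eqref{eq:LAMBDARHO} will follow from the formula $\alpha\wedge\iota(v)\rho=\alpha(v)\rho$ for any $1$-form $\alpha$, together with the torsion-free expansion $\cL_v J=\Nabla{v}J+[J,\nabla v]$ (where $(\nabla v)(u)=\Nabla{u}v$) and the decomposition $(\Nabla{w}\Jhat)v=\Nabla{w}(\Jhat v)-\Jhat\Nabla{w} v$. Taking traces and using the anticommutation $\Jhat J=-J\Jhat$ together with cyclicity of trace reduces the difference of the two sides of \eqref{eq:LAMBDARHO} to $\DIV_\rho(\Jhat v)$, which integrates to zero. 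The derivative formula \eqref{eq:RICHAT} is obtained by differentiating $\tau^\nabla_J+d\lambda^\nabla_J$ along the path $J_t$: one observes that $\frac{d}{dt}|_0\lambda^\nabla_{J_t}$ agrees with $\Lambda_\rho(J,\Jhat)$ up to the $1$-form $u\mapsto\tfrac12\trace(\Jhat J\Nabla{u}J)$, and then a direct computation (using $(\Nabla{u}J)J=-J(\Nabla{u}J)$ and the action of $R^\nabla$ on endomorphisms via commutators) shows that $\tfrac12\dot\tau^\nabla_J$ is the exterior derivative of that correction $1$-form. Finally \eqref{eq:RICHATRHO} is Stokes' theorem applied to $d(\Lambda_\rho(J,\Jhat)\wedge\alpha)$, combined with \eqref{eq:LAMBDARHO} and the definition $\iota(v_\alpha)\rho=d\alpha$.

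For \textbf{part (iii)}, I would first verify by direct calculation that $\tabla=\nabla-\tfrac12 J\nabla J$ satisfies $\tabla J=0$ and $\tabla g=0$: the former uses $(\Nabla{u}J)J=-J\Nabla{u}J$, and the latter uses that $\Nabla{u}J$ is $g$-skew, a consequence of $\nabla g=0$ together with $J$ being $g$-skew. The central identity \eqref{eq:RICCItilde} then reduces to $\tau^\nabla_J=\trace(JR^\tabla)$, which I would establish by expanding $R^\tabla=R^\nabla+d^\nabla A+A\wedge A$ with $A(u)=-\tfrac12 J\Nabla{u}J$, pairing against $J$, and invoking $\trace(JA(u))=0$ along with cyclic trace manipulations. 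In that expansion the $d^\nabla A$ contribution yields $\trace((\Nabla{u}J)J(\Nabla{v}J))$ and the quadratic piece yields $-\tfrac12\trace((\Nabla{u}J)J(\Nabla{v}J))$, which together give the required $\tfrac12\trace((\Nabla{u}J)J(\Nabla{v}J))$. Closedness of $\Ric_{\rho,J}$ and the Chern class identification $[\Ric_{\rho,J}]=2\pi c_1(TM,J)$ then follow from Chern--Weil theory applied to the induced Hermitian connection on the anticanonical line bundle $\det_\C(TM,J)$. For \eqref{eq:RICSYMP}, with $\nabla$ the Levi-Civita connection of $\om(\cdot,J\cdot)$ and $d\om=0$, I would evaluate $\lambda^\nabla_J$ in a $J$-adapted orthonormal frame $e_1,Je_1,\dots,e_\sn,Je_\sn$, use the cyclic identity $d\om(u,v,w)=g((\Nabla{u}J)v,w)+g((\Nabla{v}J)w,u)+g((\Nabla{w}J)u,v)$ and the anticommutation of $\Nabla{\cdot}J$ with $J$, and observe that $\sum_i(\Nabla{e_i}J)e_i-J\sum_i(\Nabla{e_i}J)Je_i=0$ as an identity, forcing $\lambda^\nabla_J=0$.

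The main obstacle will be the explicit trace computation in part (iii) establishing $\tau^\nabla_J=\trace(JR^\tabla)$: while each individual step is elementary, the result depends on meticulous accounting of signs arising from cyclic permutations combined with the repeated use of $(\Nabla{u}J)J=-J(\Nabla{u}J)$ and $\trace(JA)=0$. A subsidiary subtlety appears already in part (i): a naive correction $B$ implementing $\rho\mapsto e^f\rho$ need not be simultaneously symmetric and traceless, so the proof of the conformal change formula depends crucially on the previously established connection-invariance.
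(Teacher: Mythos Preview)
Your plan is essentially the paper's proof in all three parts. The only organizational difference worth noting is in part~(i): rather than treating connection-independence and the conformal change formula separately, the paper handles both at once by choosing a one-parameter family $\rho_t=e^{f_t}\rho$ with torsion-free $\rho_t$-connections $\nabla^t$, computing $\tfrac{d}{dt}(\tau^{\nabla^t}_J+d\lambda^{\nabla^t}_J)=d(d(\p_tf_t)\circ J)$, and integrating. This is the infinitesimal version of your direct computation and saves some bookkeeping, but the content is the same.

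One genuine slip in part~(iii): the identity $\sum_i(\Nabla{e_i}J)e_i-J\sum_i(\Nabla{e_i}J)Je_i=0$ you propose at the end is a tautology, since $(\Nabla{w}J)J=-J(\Nabla{w}J)$ already gives $J(\Nabla{e_i}J)Je_i=(\Nabla{e_i}J)e_i$ for any $\om$; it therefore cannot force $\lambda^\nabla_J=0$. The identity you actually need is $\Nabla{Jw}J=-J(\Nabla{w}J)$, which holds precisely when $d\om=0$ and follows from the cyclic identity you cite. With this in hand, the endomorphism $w\mapsto(\Nabla{w}J)u$ anticommutes with $J$ and hence has vanishing trace, giving $\lambda^\nabla_J(u)=0$ directly; this is exactly the paper's argument.
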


\begin{proof}
We prove part~(i).   Choose a smooth function
$$
[0,1]\times M\to\R:(t,p)\mapsto f_t(p)
$$ 
with~${f_0=0}$ and~${f_1=f}$,
define~${\rho_t:=e^{f_t}\rho}$ for~${0\le t\le 1}$, and choose 
a smooth path of torsion-free connections~$\Nabla{t}$ 
on~$TM$ such that~${\Nabla{t}\rho_t=0}$ for all~$t$.  
For~${0\le t\le 1}$ define the $1$-forms~${A_t\in\Om^1(M,\End(TM))}$
and~${\alpha_t\in\Om^1(M)}$ by
\begin{equation}\label{eq:altau}
A_t := \tfrac{d}{dt}\Nabla{\,t},\qquad
\alpha_t(u) := \trace\bigl(JA_t(u)\bigr)
\end{equation}
for~${u\in\Vect(M)}$. Then, for all~$t$ 
and all~${u,v\in\Vect(M)}$, we have
\begin{equation}\label{eq:A}
A_t(u)v=A_t(v)u,\qquad
\trace(A_t(u)) = d(\p_tf_t)(u),
\end{equation}
\begin{equation}\label{eq:RAt}
\tfrac{d}{dt}\Nabla{t,u}J=[A_t(u),J],\qquad
\tfrac{d}{dt}R^{\Nabla{\,t}}=d^{\Nabla{\,t}}\!\!A_t.
\end{equation}
It follows from~\eqref{eq:RICCI2}, \eqref{eq:altau}, 
\eqref{eq:A}, and~\eqref{eq:RAt} that
\begin{equation*}
\begin{split}
\tfrac{d}{dt}\tau^{\Nabla{\,t}}_J(u,v) 
&= 
\trace\bigl((\Nabla{t,u}J)A_t(v)-(\Nabla{t,v}J)A_t(u)\bigr)
+ \trace\bigl(Jd^{\Nabla{\,t}}\!\!A_t(u,v)\bigr) \\
&=
\trace\bigl(d^{\Nabla{\,t}}(JA_t)(u,v)\bigr) \\
&=
d\alpha_t(u,v)
\end{split}
\end{equation*}
and
\begin{equation*}
\begin{split}
\left.\tfrac{d}{dt}\right|_{t=0}\lambda^{\Nabla{\,t}}_J(u)
&= 
\trace\bigl([A_t,J]u\bigr)  \\
&= 
\trace\bigl(A_t(Ju)\bigr) - \trace(JA_t(u)) \\
&= 
d(\p_tf_t)(Ju) - \alpha_t(u)
\end{split}
\end{equation*}
for all~$t$ and all~${u,v\in\Vect(M)}$.  Hence
$$
\left.\tfrac{d}{dt}\right|_{t=0}(\tau^{\Nabla{\,t}}_J + d\lambda^{\Nabla{\,t}}_J)
= d\bigl(d(\p_tf_t)\circ J\bigr).
$$
Integrate this formula to obtain the first equation in~\eqref{eq:RICRHOF}
and consider the case where~$\rho_t=\rho$ is independent of~$t$
to deduce that the $2$-form~$\Ric_{\rho,J}$ is independent 
of the choice of the torsion-free $\rho$-connection~$\nabla$ used to define it.
Moreover, it follows from~\eqref{eq:altau}, \eqref{eq:A}, 
and~\eqref{eq:RAt} that 
\begin{equation*}
\begin{split}
\tfrac{d}{dt}\trace\bigl((\Nabla{t}\Jhat)u+\tfrac12\Jhat J\Nabla{t,u}J\bigr)
&= 
\trace\bigl([A_t,\Jhat]u+\tfrac{1}{2}\Jhat J[A_t(u),J]\bigr)  \\
&= 
\trace\bigl(A_t(\Jhat u)\bigr) \\
&= 
d(\p_tf_t)(\Jhat u).
\end{split}
\end{equation*}
for all~$t$ and all~${u\in\Vect(M)}$. 
Integrate this formula to obtain the second equation in~\eqref{eq:RICRHOF}
and consider the case where~$\rho_t=\rho$ is independent of~$t$
to deduce that the $1$-form~$\Lambda_\rho(J,\Jhat)$ is independent 
of the choice of the torsion-free $\rho$-connection~$\nabla$ used to define it.
The naturality condition~\eqref{eq:NATURALITY} follows directly ftom 
the definitions and this proves~(i).

\bigbreak

To prove part~(ii) we use the formulas
\begin{equation}\label{eq:nablaurho}
\trace(\nabla u)\rho=d\iota(u)\rho,
\end{equation}
\begin{equation}\label{eq:LvJac}
(\cL_vJ)u = J\Nabla{u}v-\Nabla{Ju}v+(\Nabla{v}J)u
\end{equation}
for~${u,v\in\Vect(M)}$. By~\eqref{eq:LvJac}, we have
\begin{equation*}
\begin{split}
\trace\bigl(\Jhat J\cL_vJ\bigr)
&=
\trace\bigl(-\Jhat\nabla v-\Jhat J\Nabla{J\cdot}v+\Jhat J\Nabla{v}J\bigr) \\
&=
\trace\bigl(-2\Jhat\nabla v+\Jhat J\Nabla{v}J\bigr)
\end{split}
\end{equation*}
for all~${u,v\in\Vect(M)}$. Here the second equality holds 
because two endomorphisms~$\Phi$ and~$-J\Phi J$ are conjugate 
and so have the same trace. Thus
\begin{equation*}
\begin{split}
\Lambda_\rho(J,\Jhat)(v)
&= 
\trace\bigl((\nabla \Jhat)v + \tfrac{1}{2}\Jhat J\Nabla{v}J\bigr) \\
&= 
\trace\bigl(\nabla(\Jhat v) - \Jhat\nabla v + \tfrac{1}{2}\Jhat J\Nabla{v}J\bigr) \\
&=
\trace\bigl(\nabla(\Jhat v)\bigr) 
+ \tfrac{1}{2}\trace\bigl(\Jhat J\cL_vJ\bigr)
\end{split}
\end{equation*}
for all~${v\in\Vect(M)}$.
Hence it follows from~\eqref{eq:nablaurho} with~${u=\Jhat v}$ that
$$
\int_M\Lambda_\rho(J,\Jhat)\wedge\iota(v)\rho 
= \int_M\Lambda_\rho(J,\Jhat)(v)\rho 
= \tfrac{1}{2}\int_M\trace\bigl(\Jhat J\cL_vJ\bigr)\rho
$$
for all~${v\in\Vect(M)}$.  This proves~\eqref{eq:LAMBDARHO}.

Now fix a torsion-free $\rho$-connection~$\nabla$ and abbreviate
$$
\lambdahat(u) 
:= \trace((\nabla\Jhat)u)
= \left.\tfrac{d}{dt}\right|_{t=0}\lambda^\nabla_{J_t}(u),\qquad
\betahat(u) := \tfrac{1}{2}\trace\bigl(\Jhat J\Nabla{u}J\bigr)
$$
for~${u\in\Vect(M)}$.   
Then~${\Lambda_\rho(J,\Jhat)=\lambdahat+\betahat}$ and
\begin{equation*}
\begin{split}
&d\betahat(u,v) \\
&=
\tfrac{1}{2}\cL_u\trace\bigl(\Jhat J\Nabla{v}J\bigr)
- \tfrac{1}{2}\cL_v\trace\bigl(\Jhat J\Nabla{u}J\bigr)
+ \tfrac{1}{2}\trace\bigl(\Jhat J\Nabla{[u,v]}J\bigr) \\
&=
\tfrac{1}{2}\trace\bigl((\Nabla{u}(\Jhat J))\Nabla{v}J\bigr)
- \tfrac{1}{2}\trace\bigl((\Nabla{v}(\Jhat J))\Nabla{u}J\bigr) \\
&\quad
+ \tfrac{1}{2}\trace\bigl(\Jhat J\bigl(\Nabla{u}\Nabla{v}J
-\Nabla{v}\Nabla{u}J + \Nabla{[u,v]}J\bigr)\bigr) \\
&=
\tfrac{1}{2}\trace\bigl((\Nabla{u}\Jhat)J(\Nabla{v}J)\bigr)
- \tfrac{1}{2}\trace\bigl((\Nabla{v}\Jhat)J(\Nabla{u}J)\bigr) 
+ \tfrac{1}{2}\trace\bigl(\Jhat J[R^{\nabla}(u,v),J]\bigr) \\
&=
\tfrac{1}{2}\trace\bigl((\Nabla{u}\Jhat)J(\Nabla{v}J)\bigr)
+ \tfrac{1}{2}\trace\bigl((\Nabla{u}J)J(\Nabla{v}\Jhat)\bigr)
+ \trace\bigl(\Jhat R^{\nabla}(u,v)\bigr) \\
&=
\left.\tfrac{d}{dt}\right|_{t=0}\tau^\nabla_{J_t}(u,v)
\end{split}
\end{equation*}
for all~${u,v\in\Vect(M)}$.  
Since~${\Ric_{\rho,J_t}=\tfrac{1}{2}(\tau^\nabla_{J_t}+d\lambda^\nabla_{J_t})}$
this proves~\eqref{eq:RICHAT}.  Equation~\eqref{eq:RICHATRHO} follows directly
from~\eqref{eq:LAMBDARHO}, \eqref{eq:RICHAT}, and Stokes' theorem
and this proves part~(ii).

\bigbreak

We prove part~(iii).
The connection~$\tabla$ in~\eqref{eq:TABLA} will in general 
no longer be torsion-free.  However, since the 
endomorphism~${J\Nabla{u}J}$ is skew-adjoint
for all~${u\in\Vect(M)}$, it preserves the Riemannian metric 
on~$M$ and the volume form~$\rho$. In addition it preserves 
the almost complex structure~$J$ because
$$
\Tabla{u}J 
= \Nabla{u}J-\tfrac12[J\Nabla{u}J,J] 
=  \Nabla{u}J -\tfrac12J(\Nabla{u}J)J + \tfrac12JJ\Nabla{u}J
= 0
$$
for all~${u\in\Vect(M)}$.  Next we compute the curvature tensor 
of~$\tabla$.  Fix three vector fields~${u,v,w\in\Vect(M)}$.
Then~${\Tabla{v}w = \Nabla{v}w - \tfrac12J(\Nabla{v}J)w}$
and so
\begin{equation*}
\begin{split}
\Tabla{u}\Tabla{v}w 
&= 
\Tabla{u}\bigl(\Nabla{v}w - \tfrac12J(\Nabla{v}J)w\bigr) 
= 
\Tabla{u}\Nabla{v}w - \tfrac12J\Tabla{u}\bigl((\Nabla{v}J)w\bigr) \\
&= 
\Nabla{u}\Nabla{v}w 
- \tfrac12J(\Nabla{u}J)\Nabla{v}w 
- \tfrac12J\Nabla{u}\bigl((\Nabla{v}J)w\bigr) 
- \tfrac14(\Nabla{u}J)(\Nabla{v}J)w \\
&= 
\Nabla{u}\Nabla{v}w 
- \tfrac12J\bigl(\Nabla{u}\Nabla{v}J\bigr)w 
- \tfrac14(\Nabla{u}J)(\Nabla{v}J)w \\
&\quad
- \tfrac12J(\Nabla{u}J)\Nabla{v}w 
- \tfrac12J(\Nabla{v}J)\Nabla{u}w.
\end{split}
\end{equation*}
Hence
\begin{equation*}
\begin{split}
R^\tabla(u,v)w
&=
\Tabla{u}\Tabla{v}w - \Tabla{u}\Tabla{v}w + \Tabla{[u,v]}w \\
&=
\Nabla{u}\Nabla{v}w 
- \tfrac12J\bigl(\Nabla{u}\Nabla{v}J\bigr)w 
- \tfrac14(\Nabla{u}J)(\Nabla{v}J)w \\
&\quad
- \Nabla{v}\Nabla{u}w 
+ \tfrac12J\bigl(\Nabla{v}\Nabla{u}J\bigr)w 
+ \tfrac14(\Nabla{v}J)(\Nabla{u}J)w \\
&\quad
+ \Nabla{[u,v]}w - \tfrac12J(\Nabla{[u,v]}J)w  \\
&=
R^\nabla(u,v)w - \tfrac{1}{2}J[R^\nabla(u,v),J]w 
- \tfrac{1}{4}[\Nabla{u}J,\Nabla{v}J]w \\
&=
\tfrac{1}{2}R^\nabla(u,v)w - \tfrac{1}{2}JR^\nabla(u,v)Jw 
- \tfrac{1}{4}[\Nabla{u}J,\Nabla{v}J]w.
\end{split}
\end{equation*}
This implies
\begin{equation}\label{eq:Rtabla}
\begin{split}
JR^\tabla(u,v)
&=
\tfrac{1}{2}JR^\nabla(u,v) 
+ \tfrac{1}{2}R^\nabla(u,v)J 
- \tfrac{1}{4}J[\Nabla{u}J,\Nabla{v}J]
\end{split}
\end{equation}
and hence
\begin{equation}\label{eq:Rtablatrace}
\begin{split}
\trace\bigl(JR^\tabla(u,v)\bigr)
=
\trace\bigl(JR^\nabla(u,v)\bigr) 
+ \tfrac12\trace\bigl((\Nabla{u}J)J(\Nabla{v}J)\bigr).
\end{split}
\end{equation}
Thus~$\trace(JR^\tabla)=\tau^\nabla_J$
and this proves~\eqref{eq:RICCItilde}.  
Since~$\tabla$ is a Hermitian connection,
the $2$-form~${\trace(\tfrac{1}{4\pi}JR^\tabla)
=\trace^c(\tfrac{1}{2\pi}JR^\tabla)\in\Om^2(M)}$
is closed and represents the first Chern class 
of~$(TM,J)$. 

If~$\om$ is closed, then~${\Nabla{Jv}J = -J(\Nabla{v}J)}$ 
for every vector field~${v\in\Vect(M)}$ 
by~\cite[Lemma~4.1.14]{MS},
so the endomorphism~${v\mapsto(\Nabla{v}J)u}$ 
anti-commutes with~$J$ and therefore has trace zero. 
Hence~$\lambda^\nabla_J=0$.
This proves part~(iii) and Theorem~\ref{thm:RICCI}. 
\end{proof}


For~${u\in\Vect(M)}$ define~${f_u:=f_{\rho,u}:=\DIV_\rho(u)\in\Om^0(M)}$, so that
\begin{equation}\label{eq:fv}
f_u\rho = d\iota(u)\rho.
\end{equation}

\begin{theorem}\label{thm:LAMBDA}
Let~${\rho\in\Om^{2\sn}(M)}$ be a positive volume form, let~${J\in\sJ(M)}$,
and let~${u\in\Vect(M)}$.  Then
\begin{equation}\label{eq:LAMBDAu}
\Lambda_\rho(J,\cL_uJ) = 2\iota(u)\Ric_{\rho,J} - df_u\circ J + df_{Ju}.
\end{equation}
Moreover, every smooth map~${\R^2\to\sJ(M):(s,t)\mapsto J(s,t})$ satisfies
\begin{equation}\label{eq:LAMBDAst}  
\p_s\Lambda_\rho(J,\p_tJ) - \p_t\Lambda_\rho(J,\p_sJ) 
+ \tfrac{1}{2}d\trace\bigl((\p_sJ)J(\p_tJ)\bigr) = 0.
\end{equation}
\end{theorem}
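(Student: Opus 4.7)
The plan is to treat the two identities by different strategies.

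For \eqref{eq:LAMBDAst}, I would work directly from the pointwise formula $\Lambda_\rho(J,\Jhat)(w)=\trace((\nabla\Jhat)w+\tfrac12\Jhat J\Nabla{w}J)$ with a torsion-free $\rho$-connection $\nabla$ (independent of the choice by Theorem~\ref{thm:RICCI}(i)). Differentiating with respect to $s$ and $t$ and forming the relevant difference, the symmetric mixed-partial terms cancel, leaving a commutator contribution $\tfrac12\trace([\partial_tJ,\partial_sJ]\Nabla{w}J)$ together with two antisymmetric first-derivative terms. The commutator term vanishes by the trace identity that any matrix anti-commuting with $J$ has zero trace: here $(\partial_sJ)(\partial_tJ)$ commutes with $J$ while $\Nabla{w}J$ anti-commutes, so their product anti-commutes. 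Expanding the target $-\tfrac12 d\trace((\partial_sJ)J(\partial_tJ))(w)$ via the Leibniz rule for $\nabla$ and using the same identity to discard $\trace((\partial_sJ)(\Nabla{w}J)(\partial_tJ))$ (a product of three $J$-anti-commuting factors), a short manipulation with the cyclic property of trace matches the remaining terms.

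For \eqref{eq:LAMBDAu} my plan has two stages. First, I use naturality to pin down the differential. Let $\phi_t$ be the flow of $u$ and write $\phi_t^*\rho=e^{h_t}\rho$ with $h_0=0$ and $\partial_th_t|_{t=0}=f_u$. Combining \eqref{eq:NATURALITY} with the conformal rescaling formula \eqref{eq:RICRHOF} gives $\phi_t^*\Ric_{\rho,J}=\Ric_{\rho,\phi_t^*J}+\tfrac12 d(dh_t\circ\phi_t^*J)$. Differentiating at $t=0$, and using that $\Ric_{\rho,J}$ is closed so $\cL_u\Ric_{\rho,J}=d\iota(u)\Ric_{\rho,J}$, yields $\Richat_\rho(J,\cL_uJ)=d\iota(u)\Ric_{\rho,J}-\tfrac12 d(df_u\circ J)$. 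Combined with \eqref{eq:RICHAT} this proves that the $1$-form $\Lambda_\rho(J,\cL_uJ)-2\iota(u)\Ric_{\rho,J}+df_u\circ J$ is closed.

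To pin down the primitive exactly as $df_{Ju}$, I would then use the sharper pointwise identity $\Lambda_\rho(J,\Jhat)(v)=f_{\Jhat v}+\tfrac12\trace(\Jhat J\cL_vJ)$ extracted from the proof of \eqref{eq:LAMBDARHO} in Theorem~\ref{thm:RICCI}(ii). Substituting $\Jhat=\cL_uJ$, expanding $(\cL_uJ)v=J\Nabla{v}u-\Nabla{Jv}u+(\Nabla{u}J)v$ via \eqref{eq:LvJac}, and comparing the resulting divergence and trace expressions with the curvature form \eqref{eq:RICCI2} of $\Ric_{\rho,J}(u,v)$ should produce the terms $-df_u(Jv)+df_{Ju}(v)$ exactly. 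The hardest piece here is the systematic identification of the divergence $f_{(\cL_uJ)v}$ with the piece $2\Ric_{\rho,J}(u,v)+df_{Ju}(v)$ modulo the trace contribution, which forces the second covariant derivative of $u$ to be paired against the curvature term in $\tau^\nabla_J$.

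The main obstacle is this second stage of \eqref{eq:LAMBDAu}: the naturality argument alone determines the $1$-form only up to a closed form, so the exact matching of the primitive $df_{Ju}$ requires a pointwise calculation. Organising the various trace cancellations (many of which hinge on whether the number of $J$-anti-commuting factors is even or odd) together with the expansion of $\cL_uJ$ via \eqref{eq:LvJac} and of $\lambda^\nabla_J$ and $\tau^\nabla_J$ via \eqref{eq:RICCI2} is the most delicate step.
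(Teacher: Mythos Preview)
Your treatment of~\eqref{eq:LAMBDAst} is correct and is a genuinely different route from the paper's. The paper never computes pointwise: it pairs both sides against $\iota(v)\rho$ for an arbitrary vector field~$v$, invokes~\eqref{eq:LAMBDARHO} to convert each $\Lambda_\rho$ into $\tfrac12\int_M\trace(\cdot\, J\cL_vJ)\rho$, differentiates under the integral, and collects the three Leibniz terms into $\cL_v$ of the trace. Your direct calculation via the trace-vanishing trick for products with an odd number of $J$-anti-commuting factors is a clean alternative that avoids the integral pairing altogether.

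For~\eqref{eq:LAMBDAu} your first stage coincides exactly with the paper's Step~2. But your second stage diverges sharply from the paper, and this is where your proposal is incomplete rather than merely different. The paper \emph{never} attempts the pointwise curvature computation you describe. Instead it proceeds as follows: (Step~3) for $u$ with $\iota(u)\rho=d\alpha$ exact, the identity is obtained by an integration-by-parts duality argument using Step~2 applied to the \emph{test} vector field~$v$ and the moment-map formula~\eqref{eq:RICHATRHO}; (Steps~4--5) one checks that if~$u$ satisfies~\eqref{eq:LAMBDAu} then so does $\lambda u$ for any $\lambda\in\Om^0(M)$, via the explicit formulas $\cL_{\lambda u}J=\lambda\cL_uJ+\Jhat_u$ and $\Lambda_\rho(J,\lambda\Jhat)=\lambda\Lambda_\rho(J,\Jhat)+d\lambda\circ\Jhat$; (Step~6) every vector field is a finite $\Om^0(M)$-combination of exact divergence-free ones. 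This bootstrap completely sidesteps the second-derivative/curvature matching you flag as the main obstacle. Your direct approach is in principle feasible, but you have not carried it out, and organising the identification of $f_{(\cL_uJ)v}$ with the curvature piece of $\tau^\nabla_J$ is substantially more laborious than the paper's $C^\infty$-module argument.
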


\begin{proof}
The proof has six steps.

\medskip\noindent{\bf Step~1.}
{\it We prove~\eqref{eq:LAMBDAst}.} 

\medskip\noindent
Let~${v\in\Vect(M)}$.  Then it follows from 
equation~\eqref{eq:LAMBDARHO} that
\begin{equation*}
\begin{split}
&\int_M\bigl( 
\p_s\Lambda_\rho(J,\p_tJ) - \p_t\Lambda_\rho(J,\p_sJ) 
\bigr)\wedge\iota(v)\rho \\
&=
\tfrac{1}{2}\p_s\int_M\trace\bigl((\p_tJ)J(\cL_vJ)\bigr)\rho
- \tfrac{1}{2}\p_t\int_M\trace\bigl((\p_sJ)J(\cL_vJ)\bigr)\rho \\
&=
\tfrac{1}{2}\int_M\trace\bigl(
(\cL_v\p_tJ)J(\p_sJ) 
+ (\p_tJ)(\cL_vJ)(\p_sJ)
+ (\p_tJ)J(\cL_v\p_sJ)
\bigr)\rho  \\
&=
\tfrac{1}{2}\int_M \bigl(\cL_v\trace\bigl((\p_tJ)J(\p_sJ)\bigr)\bigr)\rho  
=
\tfrac{1}{2}\int_M d\trace\bigl((\p_tJ)J(\p_sJ)\bigr)\wedge\iota(v)\rho.
\end{split}
\end{equation*}
This proves Step~1.

\medskip\noindent{\bf Step~2.}
{\it ${d\Lambda_\rho(J,\cL_uJ) = 2d\iota(u)\Ric_{\rho,J} - d(df_u\circ J)}$.}

\medskip\noindent
Let~$\phi_t$ be the flow of~$u$. 
Then~${\phi_t^*\Ric_{\rho,J}=\Ric_{\phi_t^*\rho,\phi_t^*J}}$
by part~(i) of Theorem~\ref{thm:RICCI}.
Differentiate this equation and use 
parts~(i) and~(ii) of Theorem~\ref{thm:RICCI} 
to get~${d\iota(u)\Ric_{\rho,J} 
= \Richat_\rho(J,\cL_uJ) + \tfrac{1}{2}d(df_u\circ J) 
= \tfrac{1}{2}\bigl(d\Lambda_\rho(J,\cL_uJ) + d(df_u\circ J)\bigr)}$.

\medskip\noindent{\bf Step~3.}
{\it Suppose~${\iota(u)\rho}$ is exact. 
Then~$u$ satisfies~\eqref{eq:LAMBDAu}.}

\medskip\noindent
Choose~${\alpha\in\Om^{2\sn-2}(M)}$
such that~${\iota(u)\rho=d\alpha}$.
Then, for all~${v\in\Vect(M)}$,
\begin{equation*}
\begin{split}
&\int_M 2\iota(u)\Ric_{\rho,J} \wedge \iota(v)\rho
=
\int_M 2\Ric_{\rho,J}\wedge\iota(v)d\alpha 
= 
- \int_M 2d\iota(v)\Ric_{\rho,J}\wedge\alpha \\
&=
- \int_M d\bigl(\Lambda_\rho(J,\cL_vJ)+df_v\circ J\bigr)\wedge\alpha 
=
- \int_M \bigl(\Lambda_\rho(J,\cL_vJ)+df_v\circ J\bigr)\wedge \iota(u)\rho \\
&= 
\int_M \bigl(\Lambda_\rho(J,\cL_uJ)-df_{Ju}\bigr)\wedge\iota(v)\rho.
\end{split}
\end{equation*}
Here the third equality follows from Step~2. This proves Step~3.

\bigbreak

\medskip\noindent{\bf Step~4.}
{\it Let~${\lambda\in\Om^0(M)}$ and~${\Jhat\in\Om^{0,1}_J(M,TM)}$. Then}
\begin{equation}\label{eq:flau}
f_{\lambda u} = \lambda f_u +d\lambda(u),
\end{equation}
\begin{equation}\label{eq:Lala}
\Lambda_\rho(J,\lambda\Jhat) 
= \lambda\Lambda_\rho(J,\Jhat) + d\lambda\circ\Jhat,
\end{equation}
\begin{equation}\label{eq:LlauJ}
\cL_{\lambda u}J = \lambda\cL_u J + \Jhat_u,\qquad
\Jhat_u := Ju\otimes d\lambda - u\otimes d\lambda\circ J.
\end{equation}
\noindent
\eqref{eq:flau} and~\eqref{eq:Lala} follow from 
the definitions and~\eqref{eq:LlauJ} follows from~\eqref{eq:LvJac}.

\medskip\noindent{\bf Step~5.}
{\it  If~$u$ satisfies~\eqref{eq:LAMBDAu}
and~${\lambda\in\Om^0(M)}$, then~$\lambda u$
satisfies~\eqref{eq:LAMBDAu}.}

\medskip\noindent
Let~${\Jhat_u\in\Om^{0,1}_J(M,TM)}$ be as in~\eqref{eq:LlauJ}.
We prove the identity
\begin{equation}\label{eq:LAu}
\begin{split}
\Lambda_\rho(J,\Jhat_u) + d\lambda\circ\cL_uJ
&=
f_{Ju}d\lambda - f_ud\lambda\circ J
+ d\cL_{Ju}\lambda - d\cL_u\lambda\circ J.
\end{split}
\end{equation}
To see this, let~${v,w\in\Vect(M)}$.  Then
\begin{equation*}
\begin{split}
(\Nabla{w}\Jhat_u)v
&=
\Nabla{w}\bigl(d\lambda(v)Ju - d\lambda(Jv)u\bigr)
- d\lambda(\Nabla{w}v)Ju + d\lambda(J\Nabla{w}v) u \\
&=
d\lambda(v)\Nabla{w}(Ju) + (\cL_w\cL_v\lambda)Ju - (\cL_{\Nabla{w}v}\lambda)Ju \\
&\quad
- d\lambda(Jv)\Nabla{w}u - (\cL_w\cL_{Jv}\lambda)u + (\cL_{J\Nabla{w}v}\lambda)u.
\end{split}
\end{equation*}
Hence it follows from~\eqref{eq:nablaurho} and~\eqref{eq:LvJac} that
\begin{equation*}
\begin{split}
\trace\bigl((\nabla\Jhat_u)v\bigr)
&= 
d\lambda(v)\trace\bigl(\nabla(Ju)\bigr)
+ \cL_{Ju}\cL_v\lambda - \cL_{\Nabla{Ju}v}\lambda \\
&\quad
- d\lambda(Jv)\trace\bigl(\nabla u\bigr)  
- \cL_u\cL_{Jv}\lambda + \cL_{J\Nabla{u}v}\lambda \\
&=
d\lambda(v)f_{Ju} - d\lambda(Jv)f_u 
+ \cL_v\cL_{Ju}\lambda - \cL_{Jv}\cL_u\lambda \\
&\quad
- d\lambda((\Nabla{v}J)u+(\cL_uJ)v).
\end{split}
\end{equation*}
Since~${(\Lambda_\rho(J,\Jhat_u))(v)=\trace((\nabla\Jhat_u)v)+d\lambda((\Nabla{v}J)u)}$,
this proves~\eqref{eq:LAu}. Now suppose~$u$ satisfies~\eqref{eq:LAMBDAu}
and let~${v\in\Vect(M)}$.  Then, by Step~4, we have
\begin{equation*}
\begin{split}
\bigl(\Lambda_\rho(J,\cL_{\lambda u}J)\bigr)(v)
&= 
\bigl(\Lambda_\rho(J,\lambda\cL_uJ+\Jhat_u)\bigr)(v) \\
&= 
\lambda\bigl(\Lambda_\rho(J,\cL_uJ)\bigr)(v)
+ \bigl(\Lambda_\rho(J,\Jhat_u)\bigr)(v) 
+ d\lambda((\cL_uJ)v) \\
&= 
\lambda\bigl(2\Ric_{\rho,J}(u,v) - df_u(Jv) + df_{Ju}(v)\bigr)  \\
&\quad
+ d\lambda(v)f_{Ju} - d\lambda(Jv)f_u 
+ \cL_v\cL_{Ju}\lambda - \cL_{Jv}\cL_u\lambda \\
&=
2\Ric_{\rho,J}(\lambda u,v) - df_{\lambda u}(Jv) + df_{\lambda Ju}(v).
\end{split}
\end{equation*}
Here the third equality uses~\eqref{eq:LAu}.  This proves Step~5.

\medskip\noindent{\bf Step~6.}
{\it We prove~\eqref{eq:LAMBDAu}.} 

\medskip\noindent
There exist finitely many exact divergence-free vector fields~$u_i$ and smooth 
functions~$\lambda_i$ such that~${u=\sum_i\lambda_iu_i}$.  
For each~$i$ the vector field~$\lambda_iu_i$ satisfies~\eqref{eq:LAMBDAu} 
by Steps~3 and~5.  Hence so does~$u$ and this proves Theorem~\ref{thm:LAMBDA}.
\end{proof}

\bigbreak

Equation~\eqref{eq:LAMBDAu} is equivalent to the formula
\begin{equation}\label{eq:RICuv}
\Om_{\rho,J}(\cL_uJ,\cL_vJ) 
= \int_M\bigl(
2\Ric_{\rho,J}(u,v) + f_uf_{Jv}-f_{Ju}f_v
\bigr)\rho
\end{equation}
for~${u,v\in\Vect(M)}$.
For exact divergence-free vector fields~${u,v}$ this is 
the analogue of the identity~${\om(L_x\xi,L_x\eta)=\inner{\mu(x)}{[\xi,\eta]}}$
for Hamiltonian group actions on finite-dimensional symplectic manifolds.
The analogue in the scalar curvature setting is discussed 
in Remark~\ref{rmk:REDUCTIVE} below.


\subsection*{Scalar curvature}

Let~$(M,\om)$ be a $2\sn$-dimensional closed 
connected symplectic manifold and denote by 
$$
\sJ(M,\om) := \left\{J\in\Om^0(M,\End(TM))\,\Bigg|\,
\begin{array}{l}
J^2=-\one\mbox{ and }J^*\om=\om\\
\mbox{and }\om(\xhat,J\xhat)>0\\
\mbox{for all }\xhat\in T_xM\setminus\{0\}
\end{array}\right\}
$$
the space of all almost complex structures that are compatible 
with~$\om$.  This is an infinite-dimensional 
K\"ahler submanifold of~$\sJ(M)$ with the tangent 
spaces~${T_J\sJ(M,\om)=\{\Jhat\in\Om^{0,1}_J(M,TM)\,|\,
\om(\Jhat\cdot,\cdot)+\om(\cdot,\Jhat\cdot)=0\}}$,
the symplectic form~$\Om_\rho$ in~\eqref{eq:OMRHO},
and the complex structure~$\Jhat\mapsto-J\Jhat$.

\begin{definition}[{\bf Scalar Curvature}]\label{def:SCALAR}
Let~$\om$ be a symplectic form on~$M$, 
let~$J$ be an $\om$-compatible almost complex structure on~$M$,
let~$\nabla$ be the Levi-Civita connection 
of the metric~${\inner{\cdot}{\cdot}=\om(\cdot,J\cdot)}$, 
and let~${\tabla:=\nabla-\tfrac12J(\nabla J)}$.  
Define the {\bf Ricci form} of~$(\om,J)$ 
by~${\Ric_{\om,J} := \Ric_{\om^\sn/\sn!,J} 
= \tfrac12\trace(JR^\tabla)}$
and define the {\bf scalar curvature} by 
\begin{equation}\label{eq:SCALAR}
S_{\om,J} 
:= 2\inner{\Ric_{\om,J}}{\om}
:= \frac{2\Ric_{\om,J} \wedge\om^{\sn-1}/(\sn-1)!}{\om^\sn/\sn!}
\in\Om^0(M).
\end{equation}
\end{definition}

By Theorem~\ref{thm:RICCI} the scalar 
curvature~$S_{\om,J}$ in~\eqref{eq:SCALAR} satisfies 
\begin{equation}\label{eq:INTS}
\int_MS_{\om,J}\frac{\om^\sn}{\sn!}
=4\pi\INNER{c_1(TM,J)\smile\frac{[\om]^{\sn-1}}{(\sn-1)!}}{[M]}
\end{equation}
and~${\phi^*S_{\om,J} = S_{\phi^*\om,\phi^*J}}$
for every diffeomorphism~${\phi:M\to M}$.  
The following result was proved by Donaldson~\cite{DON2}, 
and independently by Fujiki~\cite{F} (in the integrable case) 
and Quillen (for Riemann surfaces).

\begin{corollary}[{\bf Fujiki--Quillen--Donaldson}]\label{thm:SCALAR}
The map~${J\mapsto S_{\om,J}}$ is an equivariant moment map 
for the action of~$\Ham(M,\om)$ on~$\sJ(M,\om)$, i.e.\ 
if~${H\in\Om^0(M)}$ and~${v_H\in\Vect(M)}$ is the Hamiltonian 
vector field defined by~${\iota(v_H)\om = dH}$, then
every smooth path~${\R\to\sJ(M,\om):t\mapsto J_t}$ satisfies 
\begin{equation}\label{eq:momentJHAM}
\frac{d}{dt}\int_MS_{\om,J_t}H\frac{\om^\sn}{\sn!}
= \tfrac12\int_M\trace\Bigl((\p_tJ_t)J_t(\cL_{v_H}J_t)\Bigr)\frac{\om^\sn}{\sn!}.
\end{equation}
\end{corollary}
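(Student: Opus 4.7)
The plan is to deduce~\eqref{eq:momentJHAM} as an essentially immediate corollary of the moment map identity~\eqref{eq:RICHATRHO} in Theorem~\ref{thm:RICCI}, by specializing the volume form to $\rho := \om^\sn/\sn!$ and recognizing the Hamiltonian vector field $v_H$ as $v_\alpha$ for a suitable $(2\sn-2)$-form $\alpha$ built out of $H$ and~$\om$.

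Concretely, I would first set $\alpha := H\,\om^{\sn-1}/(\sn-1)!$ and verify that $v_\alpha = v_H$. This reduces to the pointwise identity $\iota(v)(\om^\sn/\sn!) = \iota(v)\om\wedge\om^{\sn-1}/(\sn-1)!$ applied with $v=v_H$, combined with $\iota(v_H)\om = dH$ and $d\om=0$, which together give $d\alpha = dH\wedge\om^{\sn-1}/(\sn-1)! = \iota(v_H)\rho$. Next, from the definition~\eqref{eq:SCALAR} of the scalar curvature I rewrite
\[
\int_M S_{\om,J_t}\,H\,\frac{\om^\sn}{\sn!}
= \int_M 2\Ric_{\rho,J_t}\wedge\alpha.
\]
Differentiating in $t$, using~\eqref{eq:RICHAT} and the fact that $\p_tJ_t\in\Om^{0,1}_{J_t}(M,TM)$ (immediate from $J_t^2=-\one$), the left hand side becomes $\int_M 2\Richat_\rho(J_t,\p_tJ_t)\wedge\alpha$.

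Finally I would apply~\eqref{eq:RICHATRHO} from Theorem~\ref{thm:RICCI}(ii), with $\Jhat=\p_tJ_t$ and $v_\alpha=v_H$, to convert this integral into $\tfrac{1}{2}\int_M\trace((\p_tJ_t)J_t(\cL_{v_H}J_t))\rho$, which is exactly~\eqref{eq:momentJHAM}. Equivariance of $J\mapsto S_{\om,J}$ under $\Ham(M,\om)$ is inherited from the naturality identity $\phi^*\Ric_{\rho,J}=\Ric_{\phi^*\rho,\phi^*J}$ in Theorem~\ref{thm:RICCI}(i), since any Hamiltonian diffeomorphism preserves both $\om$ and~$\rho$. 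There is really no obstacle here beyond these two bookkeeping steps—the analytic content has already been carried by Theorem~\ref{thm:RICCI}—so the proof is short and essentially computational rather than conceptual.
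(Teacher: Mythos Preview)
Your proposal is correct and takes essentially the same approach as the paper: both set $\rho=\om^\sn/\sn!$, rewrite $\int_M S_{\om,J_t}H\rho$ as $\int_M 2\Ric_{\rho,J_t}\wedge H\om^{\sn-1}/(\sn-1)!$, and then invoke Theorem~\ref{thm:RICCI}(ii). The only cosmetic difference is that you cite the packaged identity~\eqref{eq:RICHATRHO} directly (with $\alpha=H\om^{\sn-1}/(\sn-1)!$ and $v_\alpha=v_H$), whereas the paper unwinds it via~\eqref{eq:RICHAT} and an integration by parts down to~\eqref{eq:LAMBDARHO}.
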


\begin{proof}
Define~${J:=J_0}$,~${\Jhat:=\left.\tfrac{d}{dt}\right|_{t=0}J_t}$, and~${\rho:=\om^\sn/\sn!}$.  
Then
\begin{equation*}
\begin{split}
\left.\frac{d}{dt}\right|_{t=0}
\int_MS_{\om,J_t}H\frac{\om^\sn}{\sn!}
&=
\int_M 2H\Richat_\rho(J,\Jhat)\wedge\frac{\om^{\sn-1}}{(\sn-1)!} \\
&= 
\int_M Hd\Lambda_\rho(J,\Jhat)\wedge\frac{\om^{\sn-1}}{(\sn-1)!}  \\
&=
\int_M \Lambda_\rho(J,\Jhat)\wedge \iota(v_H)\rho.
\end{split}
\end{equation*}
Hence the assertion follows from Theorem~\ref{thm:RICCI}.
\end{proof}

\begin{remark}\label{rmk:REDUCTIVE}\rm
For a closed connected symplectic $2\sn$-manifold~$(M,\om)$ with volume 
form~${\rho:=\om^\sn/\sn!}$, an almost complex structure~${J\in\sJ(M,\om)}$, 
and two Hamiltonian functions~${F,G:M\to\R}$
equation~\eqref{eq:RICuv} takes the form
\begin{equation}\label{eq:SFG}
\begin{split}
\Om_{\rho,J}(\cL_{v_F}J,\cL_{v_G}J)
&= 
\int_M 2\Ric_{\om,J}(v_F,v_G)\rho \\
&=
\int_M 2\Ric_{\om,J}\wedge\iota(v_G)\iota(v_F)\rho \\
&=
\int_M 2\Ric_{\om,J}\wedge\iota(v_G)\left(dF\wedge\frac{\om^{\sn-1}}{(\sn-1)!}\right) \\
&=
\int_M 2\Ric_{\om,J}\wedge\{F,G\}\frac{\om^{\sn-1}}{(\sn-1)!} \\
&=
\int_MS_{\om,J}\{F,G\}\rho.
\end{split}
\end{equation}
Here~${\{F,G\}:=\om(v_F,v_G)}$ denotes the Poisson bracket.
If the scalar curvature is constant, equation~\eqref{eq:SFG} 
implies that~$\cL_{v_F}J$ and~$J\cL_{v_G}J$ are $L^2$ orthogonal 
and hence~${\Norm{\cL_{v_F}J+J\cL_{v_G}J}^2
=\Norm{\cL_{v_F}}^2+\Norm{\cL_{v_G}J}^2}$
for all~${F,G\in\Om^0(M)}$.  
If~$J$ is integrable, the scalar curvature is constant, 
and~${H^1(M;\R)=0}$, this in turn implies that
the Lie algebra of holomorphic vector fields
is the complexification of the Lie algebra of Killing vector fields
and is therefore reductive (Matsushima’s Theorem).
\end{remark}


\subsection*{Symplectic complements}

The next theorem examines symplectic complements 
in~${T_J\sJ(M)}$.  It shows that the regular part of 
the Marsden--Weinstein quotient
\begin{equation}\label{eq:MRHO}
\sW_0(M,\rho) := \{J\in\sJ(M)\,|\,\Ric_{\rho,J}=0\}/\Diff^\ex(M,\rho)
\end{equation}
is an infinite-dimensional symplectic manifold.  

\begin{theorem}[{\bf Complements}]\label{thm:COMP}
Let~${\rho\in\Om^{2\sn}(M)}$ be a positive volume form
and~${J\in\sJ(M)}$,~${\Jhat\in\Om^{0,1}_J(M,TM)}$,
${\lambdahat\in\Om^1(M)}$. Then the following holds.

\smallskip\noindent{\bf (i)}
There exists a~${\Jhat'\in\Om^{0,1}_J(M,TM)}$
such that~${\Lambda_\rho(J,\Jhat')=\lambdahat}$ 
if and only if~${\int_M\lambdahat\wedge\iota(v)\rho=0}$
for all~${v\in\Vect(M)}$ with~${\cL_vJ=0}$.

\smallskip\noindent{\bf (ii)}
There exists a~${v\in\Vect(M)}$ with~${\cL_vJ=\Jhat}$
if and only if~${\Om_{\rho,J}(\Jhat,\Jhat')=0}$ for 
all~${\Jhat'\in\Om^{0,1}_J(M,TM)}$ with~${\Lambda_\rho(J,\Jhat')=0}$.

\smallskip\noindent{\bf (iii)}
There exists a~${\Jhat'\in\Om^{0,1}_J(M,TM)}$
such that~${\Richat_\rho(J,\Jhat')=d\lambdahat}$ 
if and only if~${\int_Md\lambdahat\wedge\alpha=0}$
for all~${\alpha\in\Om^{2\sn-2}(M)}$ with~${\cL_{v_\alpha}J=0}$.

\smallskip\noindent{\bf (iv)}
There exists a~${v\in\Vect(M)}$ 
such that~${\cL_vJ=\Jhat}$ and~$\iota(v)\rho$ is exact
if and only if~${\Om_{\rho,J}(\Jhat,\Jhat')=0}$
for all~${\Jhat'\in\Om^{0,1}_J(M,TM)}$ 
such that~${\Richat_\rho(J,\Jhat')=0}$.
\end{theorem}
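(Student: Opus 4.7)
Write $L\colon\Vect(M)\to\Om^{0,1}_J(M,TM)$, $L(v):=\cL_vJ$, and $\Lambda\colon\Om^{0,1}_J(M,TM)\to\Om^1(M)$, $\Lambda(\Jhat):=\Lambda_\rho(J,\Jhat)$. Formula~\eqref{eq:LAMBDARHO} reads
\[
\Om_{\rho,J}\bigl(\Jhat,L(v)\bigr)=\int_M\bigl(\Lambda(\Jhat)\bigr)(v)\,\rho,
\]
so $L$ and $\Lambda$ are formal adjoints for the nondegenerate symplectic pairing $\Om_{\rho,J}$ on $\Om^{0,1}_J(M,TM)$ and the pairing $(\lambdahat,v)\mapsto\int_M\lambdahat\wedge\iota(v)\rho$ between $\Om^1(M)$ and $\Vect(M)$. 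Moreover $\Richat_\rho(J,\cdot)=\tfrac12 d\Lambda$ by~\eqref{eq:RICHAT}, so parts~(iii) and~(iv) are the $d$-twisted analogues of~(i) and~(ii).

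\emph{Parts~(i) and~(ii).} Necessity is immediate from the adjoint identity above: a preimage makes the relevant pairing vanish on the opposite kernel. For sufficiency the plan is to show $L$ has closed range. Using~\eqref{eq:LvJac} I would compute the principal symbol at $\xi\neq 0$ as $\sigma(L)(\xi)(v_0)(u)=\xi(u)Jv_0-\xi(Ju)v_0$; choosing $u$ with $\xi(u)=1$ and $\xi(Ju)=0$ forces $v_0=0$, so $L$ is overdetermined elliptic. Thus $\ker L$ is finite-dimensional and $L$ has closed range in any Sobolev completion, and dually $\Lambda$ has closed range. The closed-range theorem combined with elliptic regularity then yields $\im(\Lambda)=(\ker L)^\perp$ (part~(i)) and $\im(L)=(\ker\Lambda)^{\perp_{\Om}}$ (part~(ii)).

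\emph{Part~(iii).} Necessity follows from $2\Richat_\rho(J,\Jhat')=d\Lambda(\Jhat')$ together with~\eqref{eq:LAMBDARHO} and Stokes. For sufficiency, set $V_{\mathrm{ex}}:=\ker L\cap\Vect^\ex(M,\rho)$; Stokes rewrites the hypothesis as $\int_M\lambdahat(v)\rho=0$ for all $v\in V_{\mathrm{ex}}$. I would seek a closed 1-form $\nu$ such that $2\lambdahat-\nu$ annihilates all of $\ker L$; part~(i) then supplies $\Jhat'$ with $\Lambda(\Jhat')=2\lambdahat-\nu$, whence $2\Richat_\rho(J,\Jhat')=2d\lambdahat$. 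The obstruction lives in the finite-dimensional quotient $\ker L/V_{\mathrm{ex}}$, and the central task is surjectivity of the map $Z^1(M)\to(\ker L/V_{\mathrm{ex}})^*$, $\nu\mapsto(v\mapsto\int_M\nu(v)\rho)$. I would prove this by splitting along $0\to Y_0/V_{\mathrm{ex}}\to\ker L/V_{\mathrm{ex}}\to\ker L/Y_0\to 0$ with $Y_0:=\ker L\cap\Vect(M,\rho)$: exact representatives $d\phi$ realize any functional on $\ker L/Y_0$ by $L^2$-duality against the finite-dimensional image of the divergence map, while harmonic representatives realize any functional on $Y_0/V_{\mathrm{ex}}\hookrightarrow H^{2\sn-1}(M)$ (the injection being $v\mapsto[\iota(v)\rho]$) by Poincar\'e duality.

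\emph{Part~(iv).} This is the symplectic dual of~(iii) and is proved by the dual argument. Necessity is direct: if $\Jhat=\cL_vJ$ with $\iota(v)\rho=d\alpha$ and $\Lambda(\Jhat')$ is closed, then $\Om_{\rho,J}(\Jhat,\Jhat')=-\int_M\Lambda(\Jhat')\wedge d\alpha=0$. For sufficiency, part~(ii) produces $v_0$ with $\cL_{v_0}J=\Jhat$, and two modifications within $\ker L$ complete the argument. First, testing the hypothesis against $\Jhat'$ with $\Lambda(\Jhat')$ exact yields, via~(i) and $L^2$-duality, that $f_{v_0}\in\SPAN\{f_w:w\in\ker L\}$, so a correction by some $w\in\ker L$ makes $v_0$ divergence-free. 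Second, testing against $\Jhat'$ with $\Lambda(\Jhat')$ closed forces $[\iota(v_0)\rho]\in H^{2\sn-1}(M)$ to annihilate a subspace of $H^1(M)$; the same finite-dimensional Poincar\'e-duality argument as in~(iii) places $[\iota(v_0)\rho]$ in the image of $Y_0\to H^{2\sn-1}(M)$, and a final correction within $Y_0$ makes $\iota(v_0)\rho$ exact. The main obstacle throughout~(iii) and~(iv) is this finite-dimensional surjectivity/Poincar\'e-duality step; everything else reduces to the ellipticity of $L$ from~(i)--(ii) together with~\eqref{eq:LAMBDARHO} and Stokes.
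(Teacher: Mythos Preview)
Your approach is correct, and the overall architecture matches the paper's: both proofs derive~(i) and~(ii) from the adjoint identity~\eqref{eq:LAMBDARHO} together with closed range for~$L$, and both reduce~(iii) and~(iv) to a finite-dimensional Poincar\'e-duality argument on the space~$\ker L$ of $J$-holomorphic vector fields.

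The one genuine difference is how closed range is obtained. You compute the principal symbol of~$L$ directly from~\eqref{eq:LvJac} and show it is injective, so~$L$ is overdetermined elliptic; the closed-range theorem and elliptic regularity then give $\im\Lambda=(\ker L)^\perp$ and $\im L=(\ker\Lambda)^{\perp_\Om}$. The paper instead fixes an auxiliary nondegenerate $2$-form~$\om$ compatible with~$J$ with~$\om^\sn/\sn!=\rho$, proves the explicit formula $\Lambda_\rho(J,\Jhat)=\iota(J\bar\p_J^*\Jhat^*)\om$ (Lemma~\ref{le:DJ}), and thereby reduces~(i) and~(ii) to standard Hodge theory for the Cauchy--Riemann operator~$\bar\p_J$ on~$TM$. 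Your route is more intrinsic (no auxiliary metric) and shorter; the paper's route is more explicit and constructive, and the lemma it isolates is reused later in the paper.

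For~(iii) and~(iv) the two arguments are essentially the same, though packaged differently. Where you split $\ker L/V_{\mathrm{ex}}$ along the intermediate subgroup~$Y_0=\ker L\cap\Vect(M,\rho)$ and treat the two pieces with exact and harmonic $1$-forms separately, the paper works directly with the forms~$\iota(u_j)\rho$ modulo~$\im d$: since the pairing $Z^1(M)\times\bigl(\Om^{2\sn-1}(M)/\im d\bigr)\to\R$ separates points (exact $1$-forms detect nonvanishing~$d\iota(u)\rho$, harmonic $1$-forms detect the cohomology class), one can choose closed $1$-forms $\lambda_1,\dots,\lambda_k$ dual to a basis of~$\ker L/V_{\mathrm{ex}}$ in one stroke. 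This also lets the paper handle both steps of your part~(iv) (making~$v_0$ divergence-free, then making~$\iota(v_0)\rho$ exact) simultaneously: showing $\int_M\lambdahat\wedge\iota(v_0)\rho=0$ for \emph{all} closed~$\lambdahat$ forces~$f_{v_0}=0$ (take~$\lambdahat$ exact) and then~$[\iota(v_0)\rho]=0$ (Poincar\'e duality). Your two-step version is perfectly valid but slightly longer.
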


\begin{proof}
See page~\pageref{proof:COMP}.
\end{proof}

To prove Theorem~\ref{thm:COMP} it is convenient to choose 
a nondegenerate $2$-form~${\om\in\Om^2(M)}$ that is 
compatible with~$J$ and satisfies~${\om^\sn/\sn!=\rho}$.
Let~$\nabla$ be the Levi-Civita connection of the Riemannian 
metric~${\inner{\cdot}{\cdot}=\om(\cdot,J\cdot)}$ and define the linear 
operator~${\bar\p_J:\Om^0(M,TM)\to\Om^{0,1}_J(M,TM)}$ by
\begin{equation}\label{eq:DJ}
(\bar\p_Jv)u
:= -\tfrac{1}{2}J(\cL_vJ)u
 = \tfrac{1}{2}\bigl(\Nabla{u}v+J\Nabla{Ju}v - J(\Nabla{v}J)u\bigr) 
\end{equation}
for~${u,v\in\Vect(M)}$. 
Let~$\bar\p_J^*$ be the formal adjoint operator of~$\bar\p_J$ with
respect to the standard $L^2$-inner products.  Then both~$\bar\p_J$
and~$\bar\p_J^*$ are bounded linear operators with closed images
between appropriate Sobolev completions.  

\begin{lemma}\label{le:DJ}
Let~${\Jhat\in\Om^{0,1}_J(M,TM)}$. 
Then~${\Lambda_\rho(J,\Jhat) = \iota(J\bar\p_J^*\Jhat^*)\om}$.
\end{lemma}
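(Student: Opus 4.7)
The plan is to establish the claimed pointwise identity of $1$-forms by showing that both sides induce the same linear functional on vector fields under the pairing $\alpha\mapsto\int_M\alpha(v)\rho$. Since $\alpha\wedge\iota(v)\rho=\alpha(v)\rho$ for any $1$-form $\alpha$ (because $\alpha\wedge\rho=0$ for dimensional reasons), this pairing separates points in $\Om^1(M)$, so it suffices to verify
$$
\int_M\Lambda_\rho(J,\Jhat)\wedge\iota(v)\rho
= \int_M\iota(J\bar\p_J^*\Jhat^*)\om\wedge\iota(v)\rho
$$
for every $v\in\Vect(M)$.

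For the left-hand side, I would apply equation~\eqref{eq:LAMBDARHO} of Theorem~\ref{thm:RICCI} to get $\tfrac{1}{2}\int_M\trace(\Jhat J\cL_vJ)\rho$. The defining formula~\eqref{eq:DJ} reads $\bar\p_Jv=-\tfrac{1}{2}J\cL_vJ$, so $\cL_vJ=2J\bar\p_Jv$ and therefore
$$
\tfrac{1}{2}\trace\bigl(\Jhat J\cL_vJ\bigr) = \trace\bigl(\Jhat J\cdot J\bar\p_Jv\bigr) = -\trace\bigl(\Jhat\bar\p_Jv\bigr).
$$
The next step is to reinterpret this pointwise trace as the natural inner product of endomorphisms induced by the Riemannian metric. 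Since $\inner{A}{B}:=\trace(A^*B)$ defines the standard fibrewise inner product on $\End(TM)$ (with $A^*$ the metric adjoint), one has $\trace(\Jhat\bar\p_Jv)=\inner{\Jhat^*}{\bar\p_Jv}$. Integrating and applying the defining property of the formal adjoint,
$$
\int_M\Lambda_\rho(J,\Jhat)\wedge\iota(v)\rho
= -\int_M\inner{\Jhat^*}{\bar\p_Jv}\rho
= -\int_M\inner{\bar\p_J^*\Jhat^*}{v}\rho.
$$

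For the right-hand side, using $\iota(X)\om\wedge\iota(v)\rho=\om(X,v)\rho$ and the identity $\om(X,Y)=\inner{JX}{Y}$ (which follows from $\inner{X}{Y}=\om(X,JY)$ together with the skew-adjointness $J^*=-J$), I compute
$$
\om\bigl(J\bar\p_J^*\Jhat^*,v\bigr)
= \inner{J\cdot J\bar\p_J^*\Jhat^*}{v}
= -\inner{\bar\p_J^*\Jhat^*}{v},
$$
so that $\int_M\iota(J\bar\p_J^*\Jhat^*)\om\wedge\iota(v)\rho$ produces the same expression. Equality of both sides for every $v$ yields the desired pointwise identity.

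The main technical obstacle is the bookkeeping in the middle step, namely identifying the pointwise trace $\trace(\Jhat\bar\p_Jv)$ with the Frobenius-type inner product $\inner{\Jhat^*}{\bar\p_Jv}$ in precisely the sense that is compatible with the $L^2$-adjoint $\bar\p_J^*$ used in the statement. Once this bridge between the trace formulation of $\Lambda_\rho$ and the operator-theoretic adjoint is set up, and one keeps track of the signs coming from $J^2=-1$ and the two conventions relating $\om$ and the metric, the remaining computations are routine.
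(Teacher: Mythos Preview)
Your proof is correct and follows essentially the same route as the paper: both test the identity against arbitrary vector fields via~\eqref{eq:LAMBDARHO}, rewrite $\tfrac{1}{2}\trace(\Jhat J\cL_vJ)$ as $-\inner{\Jhat^*}{\bar\p_Jv}$, pass to the $L^2$-adjoint, and then convert back using $\om(X,Y)=\inner{JX}{Y}$. You have simply written out the bookkeeping (the trace-to-Frobenius-inner-product step and the $\om$-versus-metric conversion) more explicitly than the paper does.
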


\begin{proof}
Let~${v\in\Vect(M)}$. Then part~(ii) of Theorem~\ref{thm:RICCI} yields
\begin{equation}\label{eq:DJLAMBDA}
\begin{split}
&\int_M\Lambda_\rho(J,\Jhat)\wedge\iota(v)\rho
=
\tfrac{1}{2}\int_M\trace\bigl(\Jhat J\cL_vJ\bigr)\rho 
= 
- \inner{\Jhat^*}{\bar\p_Jv}_{L^2}  \\
&= 
- \inner{\bar\p_J^*\Jhat^*}{v}_{L^2} 
= 
\int_M\om(J\bar\p_J^*\Jhat^*,v)\rho
= 
\int_M\iota(J\bar\p_J^*\Jhat^*)\om\wedge\iota(v)\rho.
\end{split}
\end{equation}
This proves Lemma~\ref{le:DJ}.
\end{proof}

\begin{proof}[Proof of Theorem~\ref{thm:COMP}]\label{proof:COMP}
Choose~$\om$ as in Lemma~\ref{le:DJ}. We prove part~(i).
The condition is necessary by~\eqref{eq:LAMBDARHO}. 
Conversely, assume~${\int_M\lambdahat\wedge\iota(v)\rho=0}$
for all~${v\in\Vect(M)}$ with~${\cL_vJ=0}$.
Define the vector field~$u$
by~${\iota(Ju)\om := \lambdahat}$.
Then~${\inner{u}{v}_{L^2} = \int_M\om(u,Jv)\rho
= - \int_M\lambdahat\wedge\iota(v)\rho = 0}$
for all~${v\in\ker\bar\p_J}$.
Hence there exists a~${\Jhat'\in\Om^{0,1}_J(M,TM)}$
such that~${\bar\p_J^*(\Jhat')^*=u}$ and so
by Lemma~\ref{le:DJ} we have~${\lambdahat=\iota(Ju)\om
=\iota(J\bar\p_J^*(\Jhat')^*)\om=\Lambda_\rho(J,\Jhat')}$.
This proves~(i).

We prove part~(ii).  The condition is necessary by~\eqref{eq:LAMBDARHO}. 
Conversely, assume~${\Om_{\rho,J}(\Jhat,\Jhat')=0}$ for 
all~${\Jhat'\in\Om^{0,1}_J(M,TM)}$ that satisfy~${\Lambda_\rho(J,\Jhat')=0}$.
Let~${v\in\Vect(M)}$ with~${\bar\p_J^*\bigl(\bar\p_Jv+\tfrac{1}{2}J\Jhat\bigr)=0}$
and define~${\Jhat':=(\bar\p_Jv+\tfrac{1}{2}J\Jhat)^*}$.
Then~${\bar\p_J^*(\Jhat')^*=0}$, hence~${\Lambda_\rho(J,\Jhat')=0}$
by~\eqref{eq:DJLAMBDA}, and hence~${\Om_{\rho,J}(\Jhat,\Jhat')=0}$. 
This implies
$\int_M\abs{\Jhat'}^2\rho
= 
\int_M\trace\bigl(\Jhat'(\bar\p_Jv+\tfrac{1}{2}J\Jhat)\bigr)\rho  
=
\inner{(\Jhat')^*}{\bar\p_Jv}_{L^2} 
=
0$.
Thus~$\Jhat'=0$ and so
$
\Jhat=2J\bar\p_Jv=\cL_vJ
$
by~\eqref{eq:DJ}. This proves~(ii).

We prove part~(iii).  
The condition is necessary by~\eqref{eq:RICHATRHO}.  
Conversely, assume~${\int_Md\lambdahat\wedge\alpha=0}$
for all~${\alpha\in\Om^{2\sn-2}(M)}$ with~${\cL_{v_\alpha}J=0}$.
Choose a basis~${u_1,\dots,u_\ell}$ 
of~${V:=\left\{u\in\Vect(M)\,|\,\cL_uJ=0\right\}}$
such that~${u_{k+1},\dots,u_\ell}$ form a basis 
of~${\left\{u\in V\,|\,\iota(u)\rho\in\im d\right\}}$.
Then~$\iota(u_1)\rho,\dots,\iota(u_k)\rho$ 
are linearly independent in the quotient~$\Om^{2\sn-1}(M)/\im d$.
Hence, by Poincar\'e duality, there exist closed
$1$-forms~${\lambda_1,\dots,\lambda_k\in\Om^1(M)}$ 
such that~${\int_M\lambda_i\wedge\iota(u_j)\rho = \delta_{ij}}$
for~${i,j\le k}$.
Define~${\lambdahat':=
\lambdahat-\sum_{i=1}^k(\int_M\lambdahat\wedge\iota(u_i)\rho)
\lambda_i}$.
Then we have~${\int_M\lambdahat'\wedge\iota(u_j)\rho=0}$
for~${j=1,\dots,\ell}$. Hence by~(i) there 
exists a $1$-form~${\Jhat'\in\Om^{0,1}_J(M,TM)}$
such that~${\Lambda_\rho(J,\Jhat')=2\lambdahat'}$.
Thus~${\Richat_\rho(J,\Jhat')=d\lambdahat'=d\lambdahat}$
and this proves~(iii).

We prove part~(iv).  The condition is necessary by~\eqref{eq:RICHATRHO}.  
Conversely, assume~${\Om_{\rho,J}(\Jhat,\Jhat')=0}$
for all~${\Jhat'\in\Om^{0,1}_J(M,TM)}$ 
such that~${\Richat_\rho(J,\Jhat')=0}$.
Then by~(ii) there is a~${v\in\Vect(M)}$ with~${\cL_vJ=\Jhat}$.  
Choose~${u_i,\lambda_i}$ as in the proof of part~(iii)
and define~${v_0:=v-\sum_{i=1}^kx_iu_i}$,
${x_i:=\int_M\lambda_i\wedge\iota(v)\rho}$.
Then~${\cL_{v_0}J=\Jhat}$.
We prove that~${\iota(v_0)\rho}$ is exact.
To see this, let~$\lambdahat\in\Om^1(M)$ be any closed $1$-form
and define~$\lambdahat':=\lambdahat - \sum_{i=1}^ky_i\lambda_i$,
$y_i := \int_M\lambdahat\wedge\iota(u_i)\rho$.
Then~${\int_M\lambdahat'\wedge\iota(u_j)\rho=0}$
for~${j=1,\dots,\ell}$.  Hence by~(i) there exists
a $1$-form~$\Jhat'\in\Om^{0,1}_J(M,TM)$ 
such that~$\Lambda_\rho(J,\Jhat')=\lambdahat'$.
Thus~$\Richat_\rho(J,\Jhat')=0$, 
hence~${\Om_{\rho,J}(\Jhat,\Jhat')=0}$,
and therefore
\begin{equation*}
\begin{split}
\int_M\lambdahat\wedge\iota(v_0)\rho
= 
\int_M\lambdahat\wedge\iota(v)\rho 
- \sum_{i=1}^kx_iy_i 
= 
\int_M\lambdahat'\wedge\iota(v)\rho 
= 
\Om_{\rho,J}\bigl(\Jhat',\Jhat\bigr)
= 0.
\end{split}
\end{equation*}
This shows that~$\iota(v_0)\rho$ is exact
and completes the proof of Theorem~\ref{thm:COMP}.
\end{proof}


\section{The integrable case}\label{sec:INTEGRABLE}

Let~$M$ be a closed connected oriented $2\sn$-manifold.  
In this section we restrict attention to (integrable) complex 
structures that are compatible with the orientation.  
Denote the space of such complex structures by~${\sJ_\INT(M)}$.


\subsection*{The Ricci form in the integrable case}

Let~${J\in\sJ_\INT(M)}$.  Then~$(TM,J)$ is a holomorphic vector 
bundle with the Cauchy--Riemann 
operator~${\bar\p_J:\Om^{0,q}_J(M,TM)\to\Om^{0,q+1}_J(M,TM)}$.
It satisfies
\begin{equation}\label{eq:LvJ}
2J\bar\p_{J,u}v = J\Nabla{u}v - \Nabla{Ju}v = (\cL_vJ)u
\end{equation}
and
\begin{equation}\label{eq:dNJ}
\begin{split}
2J\bar\p_J\Jhat(u,v)
&=
J(\Nabla{u}\Jhat)v 
- J(\Nabla{v}\Jhat)u 
- J(\Nabla{Ju}\Jhat)Jv
+ J(\Nabla{Jv}\Jhat)Ju \\
&=
-\left.\tfrac{d}{dt}\right|_{t=0}N_{J_t}(u,v)
\end{split}
\end{equation}
for all~${u,v\in\Vect(M)}$, all~${\Jhat\in\Om^{0,1}_J(M,TM)}$, and 
every smooth path of almost complex structures~${\R\to\sJ(M):t\mapsto J_t}$ 
with~${J_0=J}$ and~${\frac{d}{dt}|_{t=0}J_t=\Jhat}$.
Here~$\nabla$ is a torsion-free connection on~$TM$ with~${\nabla J=0}$,
equation~\eqref{eq:LvJ} follows from~\eqref{eq:LvJac},
and~\eqref{eq:dNJ} follows by differentiating~\eqref{eq:NJnabla}.

Next observe that
\begin{equation}\label{eq:dfJN}
d(df\circ J)(u,v) - d(df\circ J)(Ju,Jv) = df(JN_J(u,v)).
\end{equation}
for all~${f\in\Om^0(M)}$ and all~${u,v\in\Vect(M)}$. 
Hence an almost complex structure~$J$ is integrable if and only if the
$2$-form~${d(df\circ J)}$ is of type $(1,1)$ for all~${f\in\Om^0(M)}$. 
Theorem~\ref{thm:RICBC} below uses the Bott--Chern cohomology 
group~${H^{1,1}_\BC(M,J):=(\ker d\cap\Om^{1,1}_J(M))/\{d(df\circ J)\,|\,f\in\Om^0(M)\}}$
\cite{AT1,AT2,BGS,BC}. 
It shows that~$\Ric_{\rho,J}$ is the standard Ricci form in the integrable case. 

\begin{theorem}\label{thm:RICBC}
Let~${\rho\in\Om^{2\sn}(M)}$ be a positive volume form,
let~${J\in\sJ_\INT(M)}$, and let~$\nabla$ be a torsion-free 
$\rho$-connection with~${\nabla J=0}$.  
The following holds.

\smallskip\noindent{\bf (i)}
${\Ric_{\rho,J} = \tfrac12\trace(JR^\nabla)}$ is a closed~$(1,1)$-form
and~$\tfrac{1}{2\pi}\Ric_{\rho,J}$ represents the 
first Bott--Chern class of the holomorphic tangent bundle~$(TM,J)$.

\smallskip\noindent{\bf (ii)}
There exists a diffeomorphism~${\phi\in\Diff_0(M)}$ 
such that~${\Ric_{\rho,\phi^*J}=0}$ if and only if the first 
Bott--Chern class of~$(TM,J)$ vanishes.

\smallskip\noindent{\bf (iii)}
Let~${\phi:M\to M}$ be an  orientation preserving diffeomorphism
and suppose that~${\Ric_{\rho,J}=\Ric_{\rho,\phi^*J}=0}$.
Then~${\phi^*\rho=\rho}$.   If in addition~$\phi$ is isotopic
to the identity, then~${\phi\in\Diff_0(M,\rho)}$.
\end{theorem}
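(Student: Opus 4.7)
My plan is to handle the three parts in sequence, leaning on Theorem~\ref{thm:RICCI} (closedness, naturality, and the rescaling rule~\eqref{eq:RICRHOF}), the Bott--Chern refinement of $c_1$, and Moser's theorem. The common thread is that for an integrable~$J$ a torsion-free $\rho$-connection~$\nabla$ with $\nabla J=0$ trivializes both $\lambda_J^\nabla$ and the first term of $\tau_J^\nabla$, so the Ricci form collapses to $\tfrac12\trace(JR^\nabla)$.

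For part~(i) the remaining tasks are (a) closedness, (b) type $(1,1)$, and (c) identification with $2\pi c_1^{\BC}(TM,J)$. Closedness is a direct second-Bianchi calculation using $\nabla J=0$. For (b) I will work in local holomorphic coordinates and invoke Newlander--Nirenberg: integrability together with torsion-freeness and $\nabla J=0$ forces the mixed Christoffel symbols $\Gamma^k_{\bar\imath j}$ and $\Gamma^{\bar k}_{ij}$ to vanish, so $\nabla_{\partial_{\bar\imath}}$ is the Dolbeault operator $\bar\partial$ on the holomorphic tangent bundle and $R^\nabla$ has no $(2,0)$ or $(0,2)$ component. For (c), the rule~\eqref{eq:RICRHOF} shows that the Bott--Chern class of $\Ric_{\rho,J}$ is independent of~$\rho$; to pin it down I will view $\rho$ as a Hermitian metric on the anti-canonical bundle $K_M^{-1}$ and verify that its Chern curvature agrees with $\tfrac12\trace(JR^\nabla)$.

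For part~(ii), if $c_1^{\BC}(TM,J)=0$ then part~(i) produces $f\in\Om^0(M)$ with $\Ric_{\rho,J}=\tfrac12 d(df\circ J)$. After adjusting the additive constant so that $\int_M e^{-f}\rho=\int_M\rho$, Moser's theorem yields $\phi\in\Diff_0(M)$ with $\phi^*\rho=e^{-f}\rho$. Naturality and~\eqref{eq:RICRHOF} then give
\[
\Ric_{\rho,\phi^*J}=\phi^*\Ric_{e^{-f}\rho,J}=\phi^*\bigl(\Ric_{\rho,J}-\tfrac12 d(df\circ J)\bigr)=0.
\]
The converse is the observation that $\phi:(M,\phi^*J)\to(M,J)$ is biholomorphic, so it induces an isomorphism of Bott--Chern groups carrying $c_1^{\BC}(TM,\phi^*J)$ to $c_1^{\BC}(TM,J)$.

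For part~(iii), naturality gives $\Ric_{\phi^*\rho,\phi^*J}=\phi^*\Ric_{\rho,J}=0$, so both $\rho$ and $\phi^*\rho$ are Ricci-flat for $\phi^*J$. Writing $\phi^*\rho=e^f\rho$, the rule~\eqref{eq:RICRHOF} forces $d(df\circ\phi^*J)=0$, i.e.\ $f$ is pluriharmonic on the compact connected complex manifold $(M,\phi^*J)$. Locally $f=\Re g$ for a holomorphic~$g$, and the maximum principle applied to $|e^g|$ shows $f$ is constant; since $\int_M\phi^*\rho=\int_M\rho$, this constant is zero and $\phi^*\rho=\rho$. For the final clause, Moser's theorem exhibits $\Diff(M,\rho)\hookrightarrow\Diff(M)$ as a weak homotopy equivalence (the base of volume forms of fixed total volume is convex, hence contractible), so any isotopy of $\phi$ to the identity in $\Diff(M)$ deforms to one inside $\Diff(M,\rho)$. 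I expect (c) in part~(i) to be the subtlest step, since it asks for the Bott--Chern refinement rather than just the de~Rham class $2\pi c_1$ that Theorem~\ref{thm:RICCI} already delivers.
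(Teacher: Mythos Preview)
Your proposal is essentially correct and for parts~(ii) and~(iii) follows the same line as the paper. One small slip: in part~(ii) your naturality step requires $\phi^*(e^{-f}\rho)=\rho$, not $\phi^*\rho=e^{-f}\rho$; with the condition you wrote, the identity $\Ric_{\rho,\phi^*J}=\phi^*\Ric_{e^{-f}\rho,J}$ does not follow from~\eqref{eq:NATURALITY}. Either reverse the roles or replace $\phi$ by $\phi^{-1}$.

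For part~(i) your route genuinely differs from the paper's. The paper does not use the given torsion-free $\rho$-connection $\nabla$ for the Bott--Chern identification; instead it picks an auxiliary compatible $2$-form $\om$, takes the Levi--Civita connection, builds the Chern connection $\dhabla$ on $TM$ by an explicit correction~\eqref{eq:habla1}--\eqref{eq:habla2}, and computes $\trace(JR^{\sdhabla})=2\Ric_{\rho,J}$. Your idea---observe that torsion-freeness plus $\nabla J=0$ forces $\nabla^{0,1}=\bar\p_J$ on $TM$, so the induced connection on $K_M^{-1}$ is Hermitian for the metric determined by $\rho$ and has $(0,1)$-part $\bar\p$, hence \emph{is} the Chern connection---is more economical and stays with the connection already in the hypothesis. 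For the $(1,1)$-property the paper uses a short first-Bianchi argument (Lemma~\ref{le:RICCI11}); your Christoffel-symbol computation gives $R^\nabla(\bar\p_i,\bar\p_j)|_{T^{1,0}}=0$ directly, but to kill the $(2,0)$-part of $\trace(JR^\nabla)$ you will also need $\nabla\rho=0$ (which forces the real trace of $R^\nabla$ to vanish) or an appeal to reality of the form; your sketch should make this explicit. The payoff of your approach is that it avoids the somewhat lengthy $\dhabla$ computation; the paper's approach has the advantage of producing the Chern connection on $TM$ itself rather than only on the determinant line.
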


\begin{proof}
The formula~${\Ric_{\rho,J} = \tfrac12\trace(JR^\nabla)}$
follows from Definition~\ref{def:RICCI}.
Moreover,~${\Ric_{\rho,J}}$ is independent of 
the choice of~$\nabla$ by part~(i) of Theorem~\ref{thm:RICCI},
is closed and represents the cohomology
class~${2\pi c_1(TM,J)\in H^2(M;\R)}$ 
by part~(iii) of Theorem~\ref{thm:RICCI},
and is a $(1,1)$-form by Lemma~\ref{le:RICCI11}.

Now choose a
nondegenerate $2$-form~${\om\in\Om^2(M)}$,
compatible with~$J$, such that~$\rho$ is the volume form 
of the metric~${\inner{\cdot}{\cdot}=\om(\cdot,J\cdot)}$. 
Let~$\nabla$ be the Levi--Civita connection of this metric and define
\begin{equation}\label{eq:habla1}
\tabla:=\nabla-\tfrac12J\nabla J,\qquad
\dhabla := \tabla - \tfrac14(A-A^*),
\end{equation}
where~$A\in\Om^1(M,\End(TM))$ is the endomorphism 
valued $1$-form defined by
\begin{equation}\label{eq:habla2}
A(u)v := J(\Nabla{v}J)u + (\Nabla{Jv}J)u
\end{equation}
for~${u,v\in\Vect(M)}$.  Then, for all~${u\in\Vect(M)}$,
\begin{equation}\label{eq:habla3}
A(u)J = JA(u) = -A(Ju),\qquad
A(u)^*J=JA(u)^* = A(Ju)^*.
\end{equation}
This shows that~$\dhabla$ is a Hermitian connection on~$TM$
and induces the same Cauchy--Riemann operator on~$TM$ 
as the connection~${\tabla-\tfrac14A}$. The latter preserves~$J$
by~\eqref{eq:habla3} and is torsion-free by~\eqref{eq:NJnabla}
(but it need not preserve~$\rho$). Hence, for all~${u,v\in\Vect(M)}$, 
we have 
$$
\bar\p^\sdhabla_{J,u}v
= \bar\p^{\tabla-\frac14A}_{J,u}v
= \bar\p_{J,u}v 
= \tfrac12\Bigl(\Nabla{u}v+J\Nabla{Ju}v - J(\Nabla{v}J)u\Bigr).
$$
Here the last equality holds because $\nabla$ is torsion-free and~$J$ is integrable. 
Thus~$\dhabla$ is the unique Hermitian connection on~$TM$ 
with~${\bar\p^\sdhabla_J=\bar\p_J}$. 

The curvature tensor of~$\dhabla$ is given by 
\begin{equation*}
R^\sdhabla = R^\tabla + \tfrac{1}{4}d^\tabla(A^*-A) + \tfrac{1}{32}[(A^*-A)\wedge(A^*-A)].
\end{equation*}
Since $J$ commutes with~${A^*-A}$ by~\eqref{eq:habla3}, we obtain
\begin{equation*}
\begin{split}
\trace(JR^\sdhabla)
&=
\trace(JR^\tabla) + \tfrac14\trace\bigl(Jd^\tabla(A^*-A)\bigr) \\
&=
\trace(JR^\tabla) + \tfrac14\trace\bigl(d^\tabla(JA^*-JA)\bigr) \\
&=
\trace(JR^\tabla) + \tfrac12d\bigl(\trace(A)\circ J\bigr) \\
&=
\trace(JR^\tabla) + d\lambda^\nabla_J 
=
2\Ric_{\rho,J}.
\end{split}
\end{equation*}
Here the third equality follows from~\eqref{eq:habla3}
and the fact that the endomorphisms~$A(Ju)$ 
and~$A(Ju)^*$ have the same trace, 
the fourth equality uses the fact that the two summands 
in~${v\mapsto A(Ju)v=(\Nabla{v}J)u+(\Nabla{Jv}J)Ju}$
have the same trace, both equal to~$\lambda_J^\nabla(u)$
(see equation~\eqref{eq:RICCI2}), and the last equality 
follows from part~(iii) of Theorem~\ref{thm:RICCI}. 
This proves~(i).

\bigbreak

We prove part~(ii). Let~${\phi\in\Diff_0(M)}$
such that~${\Ric_{\rho,\phi^*J}=0}$.  Then we 
have~${\Ric_{\phi_*\rho,J} = \phi_*\Ric_{\rho,\phi^*J} = 0}$
by part~(i) of Theorem~\ref{thm:RICCI}.
Define the function~${f\in\Om^0(M)}$ by~${e^{-f}\rho := \phi_*\rho}$.
Then
\begin{equation*}
\begin{split}
\Ric_{\rho,J}
=
\Ric_{\rho,J}-\Ric_{\phi_*\rho,J} 
=
\Ric_{\rho,J}-\Ric_{e^{-f}\rho,J} 
=
\tfrac{1}{2}d(df\circ J).
\end{split}
\end{equation*}
Here the last equality uses~\eqref{eq:RICRHOF}.
Since~$\Ric_{\rho,J}$ represents~$2\pi$ 
times the first Bott--Chern class of~$(TM,J)$
by~(i), this shows that~${c_{1,\BC}(TM,J)=0}$.

Conversely, assume~${c_{1,\BC}(TM,J)=0}$. 
Then, by part~(i), there exists a smooth function~${f:M\to\R}$ 
such that~${\Ric_{\rho,J} = \tfrac{1}{2}d(df\circ J)}$.
Choose~${c\in\R}$ such that~${e^c\int_M\rho=\int_Me^{-f}\rho}$ 
and replace~$f$ by~${f+c}$ to obtain~${\int_Me^{-f}\rho = \int_M\rho}$.
Then by Moser isotopy there exists a smooth 
isotopy~$\{\phi_t\}_{0\le t\le1}$ of~$M$ such that~${\phi_0=\id}$ 
and~${\phi_t^*\bigl((1-t)\rho+te^{-f}\rho\bigr) = \rho}$
for~${0\le t\le 1}$. Thus the diffeomorphism~${\phi:=\phi_1}$
is isotopic to the identity and satisfies~${\phi^*(e^{-f}\rho)=\rho}$.
Hence 
\begin{equation*}
\begin{split}
\Ric_{\rho,\phi^*J}
=
\Ric_{\phi^*(e^{-f}\rho),\phi^*J} 
= 
\phi^*\Ric_{e^{-f}\rho,J} 
= 
\phi^*\Bigl(\Ric_{\rho,J}-\tfrac{1}{2}d(df\circ J)\Bigr) 
=
0.
\end{split}
\end{equation*}
This proves~(ii).

We prove part~(iii). Let~${\phi\in\Diff(M)}$ be orientation preserving,
assume that~${\Ric_{\rho,\phi^*J} = \Ric_{\rho,J} = 0}$, and 
define~${f\in\Om^0(M)}$ by~${e^{-f}\rho:=\phi_*\rho}$. Then 
\begin{equation*}
\begin{split}
\tfrac{1}{2}d(df\circ J)
= 
\Ric_{\rho,J} - \Ric_{e^{-f}\rho,J} 
=
- \Ric_{\phi_*\rho,J} 
=
- \phi_*\Ric_{\rho,\phi^*J} 
= 
0.
\end{split}
\end{equation*}
Thus~$f$ is constant.
Since~${\int_Me^{-f}\rho=\int_M\phi_*\rho=\int_M\rho}$, 
it follows that~${f=0}$ and so~${\phi_*\rho = \rho}$.  
Moreover,~${\Diff_0(M,\rho)=\Diff(M,\rho)\cap\Diff_0(M)}$
by Moser isotopy. This proves part~(iii) and Theorem~\ref{thm:RICBC}.
\end{proof}

\begin{example}\label{ex:RIEMANN}\rm
Assume~${\sn=1}$, suppose~$M$ has genus~${g\ge1}$,
define~${V:=\int_M\rho}$ and~${c:=2\pi(2-2g)V^{-1}\le0}$, 
and let~${K_{\rho,J}:=\Ric_{\rho,J}/\rho}$ be the 
Gau{\ss}ian curvature. Then the moment map
$$
\sJ(M)\to\Om^2(M):J\mapsto2(\Ric_{\rho,J}-c\rho)
=2(K_{\rho,J}-c)\rho
$$
is $\sG$-equivariant and takes values in the space of 
exact $2$-forms. The uniformization theorem for Riemann 
surfaces asserts that for every~${J\in\sJ(M)}$ 
there exists a diffeomorphism~${\phi\in\Diff_0(M)}$ 
such that~${K_{\phi_*\rho,J}=c}$ and therefore~${\Ric_{\rho,\phi^*J}=c\rho}$.
Moreover, if~${\Ric_{\rho,J}=\Ric_{\rho,\phi^*J}=c\rho}$
for some orientation preserving diffeomorphism~$\phi$
and~$\phi_*\rho=:e^f\rho$, 
then~${\tfrac{1}{2}d(df\circ J)=c(e^f-1)\rho}$.
Hence~${d^*df=2c(e^f-1)}$ and this 
implies~${\int_M\abs{df}^2\rho=2c\int_Mf(e^f-1)\rho\le0}$. 
Thus~$f$ is constant 
and~${\int_Me^f\rho=\int_M\phi_*\rho=\int_M\rho}$,
so~${f\equiv0}$ and~${\phi^*\rho=\rho}$.
\end{example}

Let $(M,\om,J)$ be a closed connected K\"ahler manifold.
For a K\"ahler potential~$h:M\to\R$ (with mean value zero) 
let~${\om_h:=\om+\bi\bar\p\p h=\om+\tfrac{1}{2}d(dh\circ J)}$ 
be the associated symplectic form and let~${\rho_h:=\om_h^\sn/\sn!}$. 
The Calabi conjecture asserts that the map~${h\mapsto\Ric_{\rho_h,J}}$ 
is a bijection onto the space 
of closed $(1,1)$-forms representing the cohomology class~$2\pi c_1(TM,J)$. 
Injectivity was proved by Calabi~\cite{CALABI1,CALABI2}
and surjectivity by Yau~\cite{YAU0,YAU1}.

\begin{corollary}[{\bf Calabi--Yau}]\label{cor:CY}
Let $(M,\om,J)$ be a closed connected K\"ahler manifold
and let~$\rho\in\Om^{2\sn}(M)$ be a positive volume form
with~${\int_M\rho=\int_M\om^\sn/\sn!}$.
Then the following holds.

\smallskip\noindent{\bf (i)}
There exists a unique K\"ahler potential~${h:M\to\R}$ 
such that~${\rho_h=\rho}$.

\smallskip\noindent{\bf (ii)}
Assume~$\om^\sn/\sn!=\rho$ and~${c_1(TM,J)=0\in H^2(M;\R)}$.
Then there exists a diffeomorphism~${\phi\in\Diff_0(M)}$ such that
\begin{equation}\label{eq:CY}
\Ric_{\rho,\phi^*J}=0
\mbox{ and }\phi^*J\mbox{ is compatible with }\om.
\end{equation}

\smallskip\noindent{\bf (iii)}
Assume~${\om^\sn/\sn!=\rho}$ and~${\Ric_{\rho,J}=0}$.
Suppose~${\phi\in\Diff(M)}$ satisfies~\eqref{eq:CY} 
and the $2$-form~${\phi^*\om-\om}$ is exact.  
Then~${\phi^*\om=\om}$.
\end{corollary}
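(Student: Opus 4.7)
The plan is to deduce all three parts from the classical Calabi--Yau theorem (which is part~(i)), together with the naturality of the Ricci form from Theorem~\ref{thm:RICCI} and the invariance properties collected in Theorem~\ref{thm:RICBC}. Part~(i) is invoked as a black box: uniqueness by Calabi's maximum-principle argument and existence by Yau's solution of the complex Monge--Amp\`ere equation.

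For part~(ii), I would apply Yau's theorem to the K\"ahler manifold $(M,\om,J)$ with $c_1(TM,J)=0$ to produce a K\"ahler form~$\om'$ in the de Rham class $[\om]$ with $\Ric(\om',J)=0$; equivalently, $\Ric_{\rho',J}=0$ where $\rho':=(\om')^\sn/\sn!$. Since $\om$ and $\om'$ are both positive $(1,1)$-forms in the same cohomology class, the path $\om_t:=(1-t)\om+t\om'$ consists of K\"ahler forms in that class, and Moser's isotopy argument yields a $\phi\in\Diff_0(M)$ with $\phi^*\om'=\om$. Taking $\sn$-th wedge powers gives $\phi^*\rho'=(\phi^*\om')^\sn/\sn!=\om^\sn/\sn!=\rho$, and then the naturality identity of Theorem~\ref{thm:RICCI}(i) yields $\Ric_{\rho,\phi^*J}=\Ric_{\phi^*\rho',\phi^*J}=\phi^*\Ric_{\rho',J}=0$. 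Compatibility of $\phi^*J$ with $\om=\phi^*\om'$ is immediate from compatibility of $J$ with $\om'$ by pullback.

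For part~(iii), the plan is to recognize it as the uniqueness half of Calabi--Yau. First, $\phi$ must be orientation preserving (forced by $\phi^*J\in\sJ(M,\om)$), so Theorem~\ref{thm:RICBC}(iii) applied to $\Ric_{\rho,J}=\Ric_{\rho,\phi^*J}=0$ yields $\phi^*\rho=\rho$; hence $(\phi^*\om)^\sn/\sn!=\phi^*\rho=\rho$. Next, both $\om$ and $\phi^*\om$ are K\"ahler forms for $\phi^*J$ (the former by hypothesis~\eqref{eq:CY}, the latter by pullback), so their difference is an exact real $(1,1)$-form on the compact K\"ahler manifold $(M,\phi^*J)$. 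The $\p\bar\p$-lemma then produces $h\in\Om^0(M)$ with $\phi^*\om-\om=\tfrac{1}{2}d(dh\circ\phi^*J)$, i.e.\ $\phi^*\om=\om_h$ relative to the K\"ahler structure $(\om,\phi^*J)$. Since both $\om_h=\phi^*\om$ and $\om_0=\om$ have $\sn$-th power equal to $\rho$, the uniqueness in part~(i) applied to $(M,\om,\phi^*J)$ forces $h$ to be constant and hence $\phi^*\om=\om$.

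The main conceptual obstacle is noticing that (iii) is not a separate rigidity statement but a direct consequence of Calabi's uniqueness, once Theorem~\ref{thm:RICBC}(iii) has been used to upgrade joint Ricci-flatness of $J$ and $\phi^*J$ to $\phi^*\rho=\rho$, and the $\p\bar\p$-lemma has reduced $\phi^*\om=\om_h$ to a question about K\"ahler potentials on $(\om,\phi^*J)$. The heaviest analytic ingredient, Yau's existence theorem, is imported rather than reproved.
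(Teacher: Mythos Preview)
Your proof is correct and follows essentially the same approach as the paper: invoke Calabi--Yau as a black box for~(i), run Moser isotopy on the Ricci-flat K\"ahler form for~(ii), and reduce~(iii) to Calabi uniqueness. The only minor difference is that for~(iii) the paper works with~$(\phi^{-1})^*\om$ on~$(M,J)$ rather than~$\phi^*\om$ on~$(M,\phi^*J)$, which lets it bypass the appeal to Theorem~\ref{thm:RICBC}(iii) and go straight to $\Ric_{\rho_h,J}=0=\Ric_{\rho,J}$; your route through $\phi^*\rho=\rho$ is a harmless detour.
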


\begin{proof}
We prove part~(i).  By part~(i) of Theorem~\ref{thm:RICBC}, $\Ric_{\rho,J}$ 
is a closed $(1,1)$-form representing the cohomology class~$2\pi c_1(TM,J)$.
Hence, by Yau's existence theorem~\cite{YAU0,YAU1}
and Calabi's uniqueness theorem~\cite{CALABI1,CALABI2}, 
there exists a unique K\"ahler potential~$h$ such that~${\Ric_{\rho_h,J}=\Ric_{\rho,J}}$.  
Since~${\int_M\rho_h=\int_M\rho}$ by assumption, this implies~${\rho_h=\rho}$ 
by equation~\eqref{eq:RICRHOF} in part~(i) of Theorem~\ref{thm:RICCI}.

We prove part~(ii).  By assumption and part~(i) 
of Theorem~\ref{thm:RICBC}~$\Ric_{\rho,J}$ is an exact~$(1,1)$-form.
Since~$J$ admits a compatible K\"ahler form, this implies that
there exists a function~${f\in\Om^0(M)}$ such 
that
$$
\Ric_{\rho,J}=\tfrac{1}{2}d(df\circ J),\qquad
\int_Me^{-f}\rho=\int_M\rho.
$$
Hence~${\Ric_{e^{-f}\rho,J}=0}$ by part~(i) of Theorem~\ref{thm:RICCI}.
Now it follows from~(i) that there exists a K\"ahler potential~$h$ 
such that~${\rho_h=e^{-f}\rho}$.  Since~$\om_h$ and~$\om$ are
compatible with~$J$, Moser isotopy yields a 
diffeomorphism~${\phi\in\Diff_0(M)}$ with~${\phi^*\om_h=\om}$. 
Thus~$\phi^*J$ is compatible with~$\om$ and~$\phi^*\rho_h=\rho$.
This implies~${\Ric_{\rho,\phi^*J}=\phi^*\Ric_{\rho_h,J}=0}$
by part~(i) of Theorem~\ref{thm:RICCI}.

To prove~(iii), note that~$(\phi^{-1})^*\om$ is compatible with~$J$ 
and represents the cohomology class of~$\om$.  
Thus there is a K\"ahler potential~$h$ with~${\om_h=(\phi^{-1})^*\om}$.  
Hence~${\phi^*\rho_h=\rho}$ and~${\phi^*\Ric_{\rho_h,J}=\Ric_{\rho,\phi^*J}=0}$
by part~(i) of Theorem~\ref{thm:RICCI}.
Thus~${h=0}$ by Ca\-labi uniqueness, so~${\phi^*\om=\om}$. 
This proves Corollary~\ref{cor:CY}.
\end{proof}


\subsection*{Ricci-flat K\"ahler manifolds}

Let~${\rho\in\Om^{2\sn}(M)}$ be a positive volume form.
Then the symplectic form~$\Om_\rho$ on~$\sJ(M)$ 
is a~$(1,1)$-form for the complex structure~${\Jhat\mapsto-J\Jhat}$.
However, the resulting symmetric bilinear 
form~${\inner{\Jhat_1}{\Jhat_2}_{\rho,J} 
= \tfrac{1}{2}\int_M\trace(\Jhat_1\Jhat_2)\rho}$
is indefinite, so~$\sJ(M)$ is not K\"ahler 
and complex submanifolds need not be symplectic.
An example is the space of (integrable) complex structures 
with real first Chern class zero and nonempty K\"ahler cone.  
It is denoted by
\begin{equation*}
\begin{split}
\sJ_{\INT,0}(M)
:=
\biggl\{J\in\sJ_\INT(M)\,\bigg|\,
\begin{array}{l}c_1(TM,J)=0\in H^2(M;\R) \\
\mbox{and }J\mbox{ admits a K\"ahler form}
\end{array}\biggr\}.
\end{split}
\end{equation*}
Its tangent space at~$J$ is the kernel 
of~${\bar\p_J:\Om^{0,1}_J(M,TM)\to\Om^{0,2}_J(M,TM)}$.

\begin{theorem}\label{thm:JINTZERO}
Let~${J\in\sJ_{\INT,0}(M)}$ with~${\Ric_{\rho,J}=0}$
and let~${\Jhat\in\Om^{0,1}_J(M,TM)}$ such that~${\bar\p_J\Jhat=0}$. 
Then the following holds.

\smallskip\noindent{\bf (i)}
${\Om_{\rho,J}(\Jhat,\Jhat')=0}$ for all~$\Jhat'$ with~${\bar\p_J\Jhat'=0}$
if and only if there exists a vector field~$v$ such that~${f_v=f_{Jv}=0}$
and~${\cL_vJ=\Jhat}$. 

\smallskip\noindent{\bf (ii)}
Assume~${\Richat_\rho(J,\Jhat)=0}$.
Then~${\Om_{\rho,J}(\Jhat,\Jhat')=0}$ for all~$\Jhat'$
with~${\bar\p_J\Jhat'=0}$ and~${\Richat_\rho(J,\Jhat')=0}$
if and only if there exists a vector field~$v$ such that~${f_v=0}$
and~${\cL_vJ=\Jhat}$ or, equivalently, there exists 
an~${\alpha\in\Om^{2\sn-2}(M)}$ with~${\cL_{v_\alpha}J=\Jhat}$. 
\end{theorem}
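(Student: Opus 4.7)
The plan is to reduce everything to Hodge theory on a Ricci-flat K\"ahler background. Since $J\in\sJ_{\INT,0}(M)$ and $\Ric_{\rho,J}=0$, Corollary~\ref{cor:CY} produces a Ricci-flat K\"ahler form~$\om$ compatible with~$J$ satisfying $\om^\sn/\sn!=\rho$; this yields the Hodge decomposition $\Om^{0,1}_J(M,TM)=\cH\oplus\im\bar\p_J\oplus\im\bar\p_J^*$ and hence $\ker\bar\p_J=\cH\oplus\im\bar\p_J$. By Lemma~\ref{le:JHATSTAR} the involution $\Jhat\mapsto\Jhat^*$ preserves~$\cH$, and writing $\cH=\cH_\sym\oplus\cH_{\mathrm{anti}}$ the indefinite symmetric pairing $\inner{\Jhat_1}{\Jhat_2}=\tfrac12\int_M\trace(\Jhat_1\Jhat_2)\rho$ is positive-definite on $\cH_\sym$, negative-definite on $\cH_{\mathrm{anti}}$, and orthogonal between the two pieces, so it restricts to a non-degenerate pairing on~$\cH$. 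This non-degeneracy is the main mechanism for killing harmonic parts.

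Both backward directions follow by direct computation. If $\cL_vJ=\Jhat$ and $\Jhat'\in\ker\bar\p_J$, then combining~\eqref{eq:LAMBDARHO} with the formula $\Lambda_\rho(J,\Jhat')=-df'\circ J+dg'$ from~\eqref{eq:LAMBDA4} and integrating by parts expresses $\Om_{\rho,J}(\Jhat,\Jhat')$ as a linear combination of $\int_M f'f_{Jv}\rho$ and $\int_M g'f_v\rho$, which vanishes under $f_v=f_{Jv}=0$; this settles~(i). For~(ii) the additional hypothesis $\Richat_\rho(J,\Jhat')=0$ translates into $d(df'\circ J)=2\bi\bar\p\p f'=0$, and since no non-constant pluriharmonic function exists on a compact K\"ahler manifold, $f'=0$; then only the $g'$-term survives, and it vanishes under $f_v=0$.

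For both forward directions I will decompose $\Jhat=\Jhat_H+\bar\p_Jw$ and test the orthogonality against $J\Jhat_H'$ for $\Jhat_H'\in\cH$. Since $\cH$ is $J$-invariant and $\Lambda_\rho(J,\Jhat_H')=0$ for harmonic $\Jhat_H'$ (by Lemmas~\ref{le:DJ} and~\ref{le:JHATSTAR}), the form $J\Jhat_H'$ lies in $\ker\bar\p_J\cap\ker\Richat_\rho$, making this test admissible in both parts. The identity $\Om_{\rho,J}(\Jhat,J\Jhat_H')=-\inner{\Jhat}{\Jhat_H'}$ together with the computation $\inner{\bar\p_Jw}{\Jhat_H'}=\tfrac12\inner{\bar\p_J^*(\Jhat_H')^*}{w}_{L^2}=0$ (again via Lemma~\ref{le:JHATSTAR}) forces $\inner{\Jhat_H}{\Jhat_H'}=0$ for every $\Jhat_H'\in\cH$, hence $\Jhat_H=0$ by the non-degeneracy above. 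Then $\Jhat=\bar\p_Jw$, and since $\bar\p_J$ commutes with left multiplication by $J$ on vector fields in the K\"ahler integrable case, the choice $v_0:=-\tfrac12Jw$ satisfies $\cL_{v_0}J=\Jhat$.

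Finally I arrange the divergence conditions. In~(i), testing the orthogonality against $\cL_{v'}J\in\ker\bar\p_J$ for arbitrary $v'\in\Vect(M)$ and applying~\eqref{eq:RICuv} with $\Ric_{\rho,J}=0$ yields $\int_M(f_{v_0}f_{Jv'}-f_{Jv_0}f_{v'})\rho=0$ for every~$v'$. The central analytic point — and the step I expect to require the most care — is to show that the map $v'\mapsto(f_{v'},f_{Jv'})$ surjects onto $\Om^0(M)_0\times\Om^0(M)_0$; I will establish this by decomposing $v'=\grad u_1+X_{u_2}$ into gradient and Hamiltonian pieces, exploiting that Hamiltonian vector fields are $\rho$-divergence-free while $f_{\grad u}$ is (a multiple of) $\Delta u$, and invoking Hodge theory for the scalar Laplacian. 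This surjectivity forces $f_{v_0}=f_{Jv_0}=0$, completing~(i). In~(ii) this freedom is unavailable, but the hypothesis $\Richat_\rho(J,\Jhat)=0$ combined with~\eqref{eq:LAMBDA2} gives $d(df_{v_0}\circ J)=0$, and the pluriharmonic argument again forces $f_{v_0}=0$. The concluding equivalence between existence of~$v$ with $f_v=0$ and existence of~$\alpha$ with $\cL_{v_\alpha}J=\Jhat$ will follow by Hodge-decomposing $\iota(v)\rho=d\alpha+\beta$ into exact plus harmonic parts and observing, via the Lefschetz isomorphism applied to Lemma~\ref{le:HOLV}, that the harmonic piece $\beta$ corresponds to a holomorphic vector field which contributes nothing to $\cL_\cdot J$ and can therefore be absorbed.
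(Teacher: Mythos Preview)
Your argument is correct and rests on the same ingredients as the paper's proof: the Ricci-flat K\"ahler background from Corollary~\ref{cor:CY}, Lemma~\ref{le:JHATSTAR} (so that $\cH$ is $*$-invariant and hence the indefinite pairing is nondegenerate on it), Lemma~\ref{le:LAMBDAFG} for the structure of~$\Lambda_\rho$, and the identity~\eqref{eq:RICuv}. The organization, however, differs from the paper in a way worth noting.

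The paper does not first kill the harmonic part and then arrange the divergences; it does the reverse. In part~(i) it chooses~$v$ so that $\Jhat-\cL_vJ$ is harmonic, and then a \emph{single} test against~$\cL_{Jv}J$ combined with~\eqref{eq:RICuv} gives
\[
0=\Om_{\rho,J}(\Jhat-\cL_vJ,\cL_{Jv}J)=-\Om_{\rho,J}(\cL_vJ,\cL_{Jv}J)=\int_M(f_v^2+f_{Jv}^2)\rho,
\]
so $f_v=f_{Jv}=0$ falls out immediately without the surjectivity argument for $v'\mapsto(f_{v'},f_{Jv'})$. Only then does the paper test against~$J\Jhat'$ to show $\Jhat-\cL_vJ=0$. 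For part~(ii) the paper reduces to part~(i) by subtracting $\cL_{v_H}J$ and $\cL_{\nabla F}J$ corrections, whereas you argue directly via pluriharmonicity; your route here is arguably cleaner. The concluding equivalence with~$v_\alpha$ is precisely the content of the last sentence of Lemma~\ref{le:HOLV}(iii), which you could cite in place of your Lefschetz argument.
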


\begin{proof}
See page~\pageref{proof:JINTZERO}.
\end{proof}

Define~${\sJ_{\INT,0}(M,\rho):=\{J\in\sJ_{\INT,0}(M)\,|\,\Ric_{\rho,J}=0\}}$.
Part~(ii) of Theorem~\ref{thm:JINTZERO}
(compare with part~(iv) of Theorem~\ref{thm:COMP})
implies that the Teich\-m\"uller space
$
\sT_0(M,\rho):= \sJ_{\INT,0}(M,\rho)/\Diff_0(M,\rho)
$
is a symplectic submanifold of the 
infinite-dimensional symplectic quotient~$\sW_0(M,\rho)$
in~\eqref{eq:MRHO}. The Teichm\"uller space will be
discussed further in Section~\ref{sec:TEICH}.

The proof of Theorem~\ref{thm:JINTZERO}
relies on three lemmas about Ricci-flat K\"ahler manifolds, 
which examine~$\Lambda_\rho$
(Lemma~\ref{le:LAMBDAFG}), show that holomorphic
vector fields correspond to harmonic $1$-forms
(Lemma~\ref{le:HOLV}), and show that the space
of harmonic infinitesimal complex structures
is invariant under~${\Jhat\mapsto\Jhat^*}$
(Lemma~\ref{le:JHATSTAR}).
These in turn require three preparatory lemmas about Hamiltonian
and gradient vector fields (Lemma~\ref{le:DIVH}),
about infinitesimal compati\-bility (Lemma~\ref{le:ONEONE}),
and about vector fields~$v$ such that~$\cL_vJ$ is self-adjoint
(Lemma~\ref{le:SELFADJOINT}).  While some of this material is well-known,
we include full proofs for completeness of the exposition.
For a symplectic manifold~$(M,\om)$ and~${J\in\sJ(M,\om)}$
the Hamiltonian and gradient vector fields of~${H\in\Om^0(M)}$ 
are given by~${\iota(v_H)\om=dH}$ and~${\nabla H=Jv_H}$.

\begin{lemma}
[{\bf Hamiltonian and Gradient Vector Fields}]
\label{le:DIVH}
Let~$(M,\om)$ be a symplectic $2\sn$-manifold,
let~${J\in\sJ(M,\om)}$, let~${H\in\Om^0(M)}$,
and define~${\rho:=\om^\sn/\sn!}$. 
Then~${f_{v_H}=0}$ and~${f_{\nabla H}=-d^*dH}$. 
Moreover, if~${\Ric_{\rho,J}=0}$, 
then 
$
\Lambda_\rho(J,\cL_{\nabla H}J) = dd^*dH\circ J
$
and
$
\Lambda_\rho(J,\cL_{v_H}J) = -dd^*dH.
$
\end{lemma}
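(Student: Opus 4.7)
The plan is to dispatch the four identities in sequence: the two divergence formulas first, then the two $\Lambda_\rho$ formulas, which follow almost immediately from Theorem~\ref{thm:LAMBDA}.

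For $f_{v_H}=0$ I would expand $\iota(v_H)\rho$ using the contraction identity $\iota(X)(\om^\sn/\sn!)=\iota(X)\om\wedge\om^{\sn-1}/(\sn-1)!$ together with $\iota(v_H)\om=dH$ to get $\iota(v_H)\rho=dH\wedge\om^{\sn-1}/(\sn-1)!$. Applying $d$ and using $d\om=0$ and $ddH=0$ yields $d\iota(v_H)\rho=0$, so $f_{v_H}=0$ by definition~\eqref{eq:fv}.

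For $f_{\nabla H}=-d^*dH$ I would either cite the standard Riemannian identity $\DIV_g(\nabla H)=-d^*dH$ (in the adjoint convention used in Example~\ref{ex:RIEMANN}) and observe that the $\rho$-divergence coincides with the Riemannian divergence because $\rho=\om^\sn/\sn!$ is the metric volume form, or verify it directly as follows: from $\nabla H=Jv_H$ and $J^*\om=\om$ one gets $\iota(\nabla H)\om=-dH\circ J$ and hence $\iota(\nabla H)\rho=-(dH\circ J)\wedge\om^{\sn-1}/(\sn-1)!$; integration by parts against an arbitrary $G\in\Om^0(M)$, combined with the pointwise K\"ahler identity $dG\wedge(-dH\circ J)\wedge\om^{\sn-1}/(\sn-1)!=\inner{dG}{dH}\rho$, then gives $\int_M G f_{\nabla H}\rho=-\int_M\inner{dG}{dH}\rho=-\int_M G\,d^*dH\,\rho$.

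With both divergences in hand, the $\Lambda_\rho$ formulas drop out of equation~\eqref{eq:LAMBDAu}: under $\Ric_{\rho,J}=0$ it reduces to $\Lambda_\rho(J,\cL_uJ)=-df_u\circ J+df_{Ju}$. Choosing $u=v_H$ and using $Jv_H=\nabla H$ together with $f_{v_H}=0$ yields $\Lambda_\rho(J,\cL_{v_H}J)=df_{\nabla H}=-dd^*dH$; choosing $u=\nabla H$ and using $J\nabla H=J^2v_H=-v_H$ gives $\Lambda_\rho(J,\cL_{\nabla H}J)=-df_{\nabla H}\circ J=dd^*dH\circ J$. The only real obstacle is keeping track of signs in the K\"ahler identity used for $f_{\nabla H}$; everything else is formal manipulation, so the Riemannian shortcut is probably cleanest in the write-up.
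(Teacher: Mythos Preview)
Your proof is correct and follows essentially the same route as the paper: compute the two divergences, then read off the $\Lambda_\rho$ formulas from~\eqref{eq:LAMBDAu} with $\Ric_{\rho,J}=0$. The only difference is in the $f_{\nabla H}$ step: the paper first records the Hodge-star identity ${*\iota(v)\om}=\iota(Jv)\rho$ (equivalently ${*dH}=\iota(\nabla H)\rho$) and then gets $f_{\nabla H}={*d\iota(\nabla H)\rho}={*d{*dH}}=-d^*dH$ in one line, whereas you either cite the Riemannian divergence formula or pass through an integration-by-parts against a test function~$G$; your ``K\"ahler identity'' $dG\wedge(-dH\circ J)\wedge\om^{\sn-1}/(\sn-1)!=\inner{dG}{dH}\rho$ is exactly $dG\wedge{*dH}=\inner{dG}{dH}\rho$, so the content is the same and the direct $*d*$ computation is the cleaner write-up.
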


\begin{proof}
Since~${{*\lambda}=-(\lambda\circ J)\wedge\om^{\sn-1}/(\sn-1)!}$
for all~${\lambda\in\Om^1(M)}$, we have
\begin{equation}\label{eq:STARV}
\begin{split}
*\iota(v)\om
= -(\iota(v)\om\circ J)\wedge\tfrac{\om^{\sn-1}}{(\sn-1)!} 
= \iota(Jv)\om\wedge\tfrac{\om^{\sn-1}}{(\sn-1)!}
= \iota(Jv)\rho
\end{split}
\end{equation}
for all~${v\in\Vect(M)}$.
Hence~${f_{\nabla H} = *d\iota(Jv_H)\rho = *{d{*dH}} = - d^*dH}$
and the remaining assertions follow from~\eqref{eq:LAMBDAu}. 
This proves Lemma~\ref{le:DIVH}.
\end{proof}

\begin{lemma}[{\bf Infinitesimal Compatibility}]\label{le:ONEONE}
Let $M$ be an oriented $2\sn$-manifold
and let~${J\in\sJ(M)}$.  If~${\tau\in\Om^{1,1}_J(M)}$
and~${v\in\Vect(M)}$, then the Lie derivatives~${\tauhat:=\cL_v\tau}$ 
and~${\Jhat:=\cL_vJ}$ satisfy the equation
\begin{equation}\label{eq:tauhat}
\tauhat(u,u')-\tauhat(Ju,Ju') = \tau(Ju,\Jhat u') + \tau(\Jhat u,Ju')
\end{equation}
for all~${u,u'\in\Vect(M)}$.  If~$J$ is integrable and ${\Jhat\in\Om^{0,1}_J(M,TM)}$ 
satisfies the equation~${\bar\p_J\Jhat=0}$, then~${\tau:=\Ric_{\rho,J}}$ 
and~${\tauhat:=\Richat_\rho(J,\Jhat)}$ satisfy equation~\eqref{eq:tauhat}
for every positive volume form~${\rho\in\Om^{2\sn}(M)}$.
\end{lemma}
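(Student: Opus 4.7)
The plan is to handle the two parts separately: the first via a direct Lie-derivative computation, and the second by reducing to the Lie-derivative case through a local $\bar\p_J$-Poincar\'e argument.

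For the first part, I would expand $\cL_v$ applied to the scalar identity $\tau(Ju,Ju')=\tau(u,u')$ and compare with the tensorial Leibniz formula $(\cL_v\tau)(a,b)=v\cdot\tau(a,b)-\tau([v,a],b)-\tau(a,[v,b])$. Substituting $[v,Ju]=\Jhat u+J[v,u]$ and using $\tau(J[v,u],Ju')=\tau([v,u],u')$ (a consequence of $\tau$ being of type $(1,1)$), the bracket terms involving $[v,u]$ and $[v,u']$ cancel between the two sides, leaving exactly~\eqref{eq:tauhat}.

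For the second part, I would first verify the identity in the special case $\Jhat=\cL_vJ$. By Theorem~\ref{thm:RICBC}(i) the form $\tau:=\Ric_{\rho,J}$ is of type $(1,1)$, so the first part applies with $\tauhat:=\cL_v\tau$. Differentiating the naturality identity $\phi_t^*\Ric_{\rho,J}=\Ric_{\phi_t^*\rho,\phi_t^*J}$ along the flow $\phi_t$ of $v$ and invoking~\eqref{eq:RICRHOF}---as in Step~2 of the proof of Theorem~\ref{thm:LAMBDA}---yields
$$
\cL_v\Ric_{\rho,J}=\Richat_\rho(J,\cL_vJ)+\tfrac{1}{2}d(df_v\circ J).
$$
Since $J$ is integrable, equation~\eqref{eq:dfJN} forces $d(df_v\circ J)$ to be of type $(1,1)$, so it is invariant under $(u,u')\mapsto(Ju,Ju')$ and drops out of the antisymmetrization in the first part. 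This proves~\eqref{eq:tauhat} for $\Jhat=\cL_vJ$.

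To extend to arbitrary $\bar\p_J$-closed $\Jhat$, observe that both sides of~\eqref{eq:tauhat} depend only on the $2$-jets of $J$ and $\Jhat$ at a point $p\in M$, so it suffices to construct on some contractible neighborhood $U$ of $p$ a vector field $v\in\Vect(U)$ with $\cL_vJ=\Jhat$ on $U$. By~\eqref{eq:DJ} this amounts to solving the linear equation $\bar\p_Jv=-\tfrac{1}{2}J\Jhat$ on~$U$. Under the standard identification of $\Om^{0,\bullet}_J(M,TM)$ with the Dolbeault complex of the holomorphic tangent bundle $(TM,J)$, the operator $\bar\p_J$ satisfies $\bar\p_J\circ J=J\circ\bar\p_J$, so $\bar\p_J(-\tfrac12 J\Jhat)=-\tfrac12 J\bar\p_J\Jhat=0$ and the local $\bar\p$-Poincar\'e lemma for $(TM,J)$ produces the required~$v$. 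The main obstacle I anticipate is this last step, which ultimately rests on the $J$-equivariance of $\bar\p_J$ together with the locality of~\eqref{eq:tauhat} in the $2$-jet of $\Jhat$; once both are confirmed, no integrability obstruction intervenes.
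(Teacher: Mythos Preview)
Your proposal is correct and follows essentially the same route as the paper's proof: the paper differentiates the pullback identity $\tau_t(u,u')=\tau_t(J_tu,J_tu')$ along the flow of~$v$ for the first part (equivalent to your Leibniz expansion), then for the second part reduces to the case $\Jhat=\cL_vJ$ via the identity $\cL_v\Ric_{\rho,J}-\Richat_\rho(J,\cL_vJ)=\tfrac12 d(df_v\circ J)\in\Om^{1,1}_J(M)$ and finishes by invoking the holomorphic Poincar\'e Lemma. Your write-up is simply more explicit about this last step---spelling out the locality in the $2$-jet and the solvability of $\bar\p_Jv=-\tfrac12 J\Jhat$---whereas the paper compresses it into a single sentence.
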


\begin{proof}
Let~${\tau\in\Om^{1,1}_J(M)}$, 
let~$\phi_t$ be the flow of~${v\in\Vect(M)}$,
and let~${J_t:=\phi_t^*J}$.
If~${\tauhat:=\cL_v\tau}$ and~${\Jhat:=\cL_vJ}$, differentiate the 
identity~${\tau_t(u,u')=\tau_t(J_tu,J_tu')}$ 
with~${\tau_t:=\phi_t^*\tau}$ to obtain~\eqref{eq:tauhat}.  
Now assume~$J$ is integrable and~${\tau=\Ric_{\rho,J}}$.
If~${\tauhat:=\Richat_\rho(J,\cL_vJ)}$,
then~${\cL_v\tau-\tauhat=\tfrac{1}{2}d(df_v\circ J)\in\Om^{1,1}_J(M)}$   
by Theorem~\ref{thm:LAMBDA}, 
so~\eqref{eq:tauhat} holds with~${\Jhat=\cL_vJ}$.  
Now use the holomorphic Poincar\'e Lemma.
\end{proof}

\begin{lemma}[{\bf Self-Adjoint Lie Derivative~${\cL_vJ}$}]\label{le:SELFADJOINT}
Let~$(M,\om)$ be a closed connected symplectic $2\sn$-manifold,
let~${J\in\sJ(M,\om)}$, and let~${v\in\Vect(M)}$.  
Then the following holds (with~${\rho=\om^\sn/\sn!}$ in part~(iii)).

\smallskip\noindent{\bf (i)}
${\cL_vJ}$ is self-adjoint  if and only if ${d\iota(v)\om\in\Om^{1,1}_J(M)}$.

\smallskip\noindent{\bf (ii)}
If~$J$ is integrable, then~${\cL_vJ}$ is self-adjoint 
if and only if there exists a function~${F\in\Om^0(M)}$ 
such that~${d\iota(v+\nabla F)\om=0}$.

\smallskip\noindent{\bf (iii)}
${\iota(v)\om}$ is harmonic if and only 
if~${f_v=f_{Jv}=0}$ and~${\cL_vJ=(\cL_vJ)^*}$.
\end{lemma}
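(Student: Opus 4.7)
The plan is to prove the three parts in order. Part (i) is a direct application of Lemma~\ref{le:ONEONE} with $\tau=\om$; part (ii) combines part (i) with the global $\p\bar\p$-lemma on the K\"ahler manifold $(M,\om,J)$; and part (iii) uses part (i) together with the computation of $d^*\iota(v)\om$ via~\eqref{eq:STARV} and a pointwise Hodge--Riemann identity for primitive $(1,1)$-forms.

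For part (i), $d\om=0$ gives $\tauhat=\cL_v\om=d\iota(v)\om$, so \eqref{eq:tauhat} reads
\begin{equation*}
d\iota(v)\om(u,u') - d\iota(v)\om(Ju,Ju') = \om(\Jhat u,Ju') + \om(Ju,\Jhat u'),
\end{equation*}
with $\Jhat=\cL_vJ$. Using $J$-invariance of $\om$ and the anti-commutation $J\Jhat+\Jhat J=0$, a short calculation shows that the right-hand side vanishes for all $u,u'$ if and only if $\inner{\Jhat u}{u'}=\inner{u}{\Jhat u'}$ with respect to $\inner{\cdot}{\cdot}=\om(\cdot,J\cdot)$. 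Thus $\cL_vJ$ is self-adjoint exactly when $d\iota(v)\om$ is of type $(1,1)$.

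For part (ii), integrability of $J$ makes $(M,\om,J)$ K\"ahler. Since $\nabla F = Jv_F$, one has $\iota(\nabla F)\om = -dF\circ J$, hence $d\iota(v+\nabla F)\om = d\iota(v)\om - d(dF\circ J)$, and \eqref{eq:dfJN} with $N_J=0$ shows $d(dF\circ J)\in\Om^{1,1}_J(M)$. Existence of $F$ with $d\iota(v+\nabla F)\om=0$ thus forces $d\iota(v)\om\in\Om^{1,1}_J(M)$, and (i) gives self-adjointness. Conversely, self-adjointness together with (i) shows $d\iota(v)\om$ is an exact $(1,1)$-form, and the global $\p\bar\p$-lemma on the compact K\"ahler manifold produces the required $F$.

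For part (iii), apply $*$ and $d$ to $*\iota(v)\om=\iota(Jv)\rho$ from~\eqref{eq:STARV} to get $d^*\iota(v)\om = -f_{Jv}$, so co-closedness is equivalent to $f_{Jv}=0$. It remains to show $d\iota(v)\om=0$ if and only if $f_v=0$ and $\cL_vJ=(\cL_vJ)^*$. The forward direction is immediate: $\cL_v\om=0$ implies $\cL_v\rho=0$ hence $f_v=0$, and self-adjointness follows from (i). For the converse, set $\tau:=d\iota(v)\om$. By (i), $\tau\in\Om^{1,1}_J(M)$; and from $\cL_v\om\wedge\om^{\sn-1}/(\sn-1)!=\cL_v(\om^\sn/\sn!)=f_v\rho=0$ one deduces $\tau\wedge\om^{\sn-1}=0$, so $\tau$ is primitive. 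When $\sn=1$ this directly yields $\tau=0$. When $\sn\ge2$, since $\tau$ and $\om$ are closed and $\tau=d\iota(v)\om$ is exact, Stokes gives
\begin{equation*}
\int_M \tau\wedge\tau\wedge\frac{\om^{\sn-2}}{(\sn-2)!} = \int_M d\Bigl(\iota(v)\om\wedge\tau\wedge\frac{\om^{\sn-2}}{(\sn-2)!}\Bigr) = 0,
\end{equation*}
while the pointwise Hermitian identity $\tau\wedge\tau\wedge\om^{\sn-2}/(\sn-2)! = -\abs{\tau}^2\rho$ for primitive $(1,1)$-forms then forces $\tau=0$. The main obstacle is this converse direction in (iii), where one must deduce vanishing of $\tau$ from only the information that it is an exact primitive $(1,1)$-form; the Hodge--Riemann identity used is purely pointwise Hermitian linear algebra and does not require integrability of $J$, so combined with Stokes on the closed manifold it closes the argument, the remaining care being in signs and normalization factors.
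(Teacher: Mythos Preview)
Your proof is correct and follows essentially the same approach as the paper. Parts~(i) and~(ii) are identical to the paper's argument; in part~(iii) the paper uses the same Hodge-star identity for primitive $(1,1)$-forms to conclude $*\tau=-\tau\wedge\om^{\sn-2}/(\sn-2)!$, hence $d{*\tau}=0$, so $\tau$ is exact and harmonic and therefore zero, whereas you obtain the same conclusion by integrating $\tau\wedge{*\tau}$ and applying Stokes --- a cosmetic difference that trades an appeal to Hodge decomposition for the pointwise Hodge--Riemann sign.
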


\begin{proof}
Part~(i) follows from Lemma~\ref{le:ONEONE} with~$\tau=\om$. 

Now suppose~$J$ is integrable.  Then~${\cL_vJ}$
and~${\cL_{Jv}J=J\cL_vJ}$ are self-adjoint 
for every symplectic vector field~$v$ by~(i).
Conversely, assume~${\cL_vJ=(\cL_vJ)^*}$.
Then~${d\iota(v)\om\in\Om^{1,1}_J(M)}$ by~(i),
and so there exists a function~${F\in\Om^0(M)}$ 
such that~${d\iota(v)\om=d(dF\circ J)=-d\iota(\nabla F)\om}$. 
This proves~(ii).  

\bigbreak

To prove~(iii) define~${\rho:=\om^\sn/\sn!}$. 
Then~\eqref{eq:STARV} shows 
that~${d^*\iota(v)\om=0}$ if and only if~${f_{Jv}=0}$.  
If~${d\iota(v)\om=0}$, then~${f_v=0}$ and~${\cL_vJ}$ 
is self-adjoint by~(i).  
Conversely, assume~${f_v=0}$ and ${\cL_vJ=(\cL_vJ)^*}$.
Then~${d\iota(v)\om\in\Om^{1,1}_J(M)}$ by~(i)
and~${\inner{d\iota(v)\om}{\om}=f_v=0}$.
Thus
$$
*d\iota(v)\om=-d\iota(v)\om\wedge\tfrac{\om^{\sn-2}}{(\sn-2)!},
$$
so~${d{*d\iota(v)\om}=0}$, and hence~${d\iota(v)\om=0}$.  
This proves Lemma~\ref{le:SELFADJOINT}.
\end{proof}

\begin{lemma}[{\bf $\Lambda_\rho$ in the Ricci-flat Case}]\label{le:LAMBDAFG}
Let~$(M,J,\om)$ be a closed connected ${2\sn}$-dimensional Ricci-flat
K\"ahler mani\-fold with volume form~${\rho=\om^\sn/\sn!}$ 
and let~${\Jhat\in\Om^{0,1}_J(M,TM)}$ such that~${\bar\p_J\Jhat=0}$.
Then~${\Richat_\rho(J,\Jhat)\in\Om^{1,1}_J(M)}$ 
and there exists a unique pair of  
functions~${f=f_\Jhat,g=f_{J\Jhat}\in\Om^0(M)}$ 
such that
\begin{equation}\label{eq:LAMBDAFG}
\Lambda_\rho(J,\Jhat)= -df\circ J+dg,\qquad
\int_Mf\rho=\int_Mg\rho=0.
\end{equation}
Moreover, if~${\bar\p_J^*\Jhat=0}$ then~${\Lambda_\rho(J,\Jhat)=0}$.
\end{lemma}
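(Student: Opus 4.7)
The statement has three claims: (a) $\widehat{\Ric}_\rho(J,\hat J) \in \Omega^{1,1}_J(M)$; (b) existence and uniqueness of the decomposition $\Lambda_\rho(J,\hat J) = -df\circ J + dg$ with mean-zero $f, g$; and (c) the vanishing clause. For (a) I apply Lemma~\ref{le:ONEONE} with $\tau := \Ric_{\rho,J} = 0$ (by the Ricci-flat hypothesis) and $\hat\tau := \widehat{\Ric}_\rho(J,\hat J)$: since $J$ is integrable and $\bar\partial_J\hat J = 0$, the second half of that lemma applies, and equation~\eqref{eq:tauhat} with $\tau = 0$ yields $\hat\tau(u,u') = \hat\tau(Ju,Ju')$, proving $\widehat{\Ric}_\rho(J,\hat J)$ is of type $(1,1)$.

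For (b) I start from~\eqref{eq:RICHAT}, which gives $d\Lambda_\rho(J,\hat J) = 2\widehat{\Ric}_\rho(J,\hat J)$. By (a) this is a $d$-exact real $(1,1)$-form on the compact K\"ahler manifold $(M,\omega,J)$, so the $\partial\bar\partial$-lemma furnishes a unique $f\in\Omega^0(M)$ with $\int_M f\rho = 0$ such that $2\widehat{\Ric}_\rho(J,\hat J) = -d(df\circ J)$. Hence $\Lambda_\rho(J,\hat J) + df\circ J$ is a closed 1-form; to promote it to an exact one I show it is $L^2$-orthogonal to every harmonic 1-form $h$. For $df\circ J$ this is a standard K\"ahler identity: harmonic 1-forms are annihilated by $(d\circ J)^*$, so $\langle df\circ J, h\rangle_{L^2} = 0$. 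For $\Lambda_\rho(J,\hat J)$ I use Lemma~\ref{le:HOLV}: write $h = \iota(v)\omega$ for a holomorphic vector field $v$; a short computation in the K\"ahler setting, using $\nabla J = 0$ and equation~\eqref{eq:LvJac}, shows that $\cL_v J = 0$ implies $\cL_{Jv} J = 0$, so that by~\eqref{eq:STARV} and~\eqref{eq:LAMBDARHO},
\[
\langle \Lambda_\rho(J,\hat J), h\rangle_{L^2}
= \int_M \Lambda_\rho(J,\hat J)\wedge\iota(Jv)\rho
= \tfrac12 \int_M \trace\bigl(\hat J J \cL_{Jv} J\bigr)\rho = 0.
\]
Hodge theory then provides a unique $g\in\Omega^0(M)$ with $\int_M g\rho = 0$ and $\Lambda_\rho(J,\hat J) + df\circ J = dg$. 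Uniqueness of the pair $(f,g)$: if $-df_1\circ J + dg_1 = 0$, then $d(df_1\circ J) = 0$ forces $f_1$ pluriharmonic and hence constant on a compact K\"ahler manifold, whence $dg_1 = 0$ and $g_1$ is also constant; the mean-zero normalization pins down $f = f_{\hat J}$ and $g = f_{J\hat J}$ uniquely. (The naming $g = f_{J\hat J}$ is consistent because the relation $\Lambda_\rho(J, J\hat J) = -\Lambda_\rho(J,\hat J)\circ J$, which follows from Lemma~\ref{le:DJ}, propagates the decomposition from $\hat J$ to $J\hat J$.)

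The main obstacle is the orthogonality $\langle \Lambda_\rho(J,\hat J), h\rangle_{L^2} = 0$; it is precisely here that the Ricci-flat K\"ahler hypothesis enters essentially, via Lemma~\ref{le:HOLV} identifying holomorphic vector fields with those dual to harmonic 1-forms. Finally, for (c): the hypothesis $\bar\partial_J^*\hat J = 0$ combined with the standing $\bar\partial_J\hat J = 0$ makes $\hat J$ a $\bar\partial_J$-harmonic element of $\Omega^{0,1}_J(M,TM)$, so Lemma~\ref{le:JHATSTAR} shows that $\hat J^*$ is harmonic as well; in particular $\bar\partial_J^*\hat J^* = 0$, and Lemma~\ref{le:DJ} delivers $\Lambda_\rho(J,\hat J) = \iota(J\bar\partial_J^*\hat J^*)\omega = 0$.
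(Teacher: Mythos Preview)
Your argument is correct, but the route differs from the paper's in two places.

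For part~(c), the paper does not invoke Lemma~\ref{le:JHATSTAR}. Instead it argues directly: setting $v:=\bar\p_J^*\Jhat^*$, Lemma~\ref{le:DJ} gives $\iota(v)\om=-\Lambda_\rho(J,J\Jhat)$, so $d\iota(v)\om=-2\Richat_\rho(J,J\Jhat)$ is an exact $(1,1)$-form; hence $\cL_vJ$ is self-adjoint by Lemma~\ref{le:SELFADJOINT}, and so is $J\cL_vJ=-2\bar\p_J\bar\p_J^*(\Jhat^*-\Jhat)$, which forces $\bar\p_J^*\Jhat^*=0$ by an $L^2$-orthogonality argument. Your appeal to Lemma~\ref{le:JHATSTAR} is cleaner but is a forward reference in the paper's ordering; since the proof of Lemma~\ref{le:JHATSTAR} uses only the Bochner--Kodaira--Nakano identity and not Lemma~\ref{le:LAMBDAFG}, there is no circularity.

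For part~(b), the paper takes a shorter path: having already established~(c), it chooses $v$ with $\bar\p_J^*(\Jhat-\cL_vJ)=0$, so that $\Lambda_\rho(J,\Jhat)=\Lambda_\rho(J,\cL_vJ)$ by the vanishing clause, and then reads off $f=f_v$, $g=f_{Jv}$ from equation~\eqref{eq:LAMBDAu} in Theorem~\ref{thm:LAMBDA}. Your approach via the $\p\bar\p$-lemma and orthogonality to harmonic $1$-forms (using Lemma~\ref{le:HOLV}, again a forward reference but logically independent) is more intrinsic to Hodge theory and avoids the auxiliary vector field, at the cost of a longer argument. The paper's approach has the advantage of making the identification $f=f_v$, $g=f_{Jv}$ transparent, which is how these functions are actually used downstream (e.g.\ in Claim~2 of the proof of Theorem~\ref{thm:JINTZERO}).
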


\begin{proof}
It follows directly from Lemma~\ref{le:ONEONE} 
that~${\Richat_\rho(J,\Jhat)\in\Om^{1,1}_J(M)}$. 
Now assume~${\bar\p_J^*\Jhat=0}$ 
and let~${v:=\bar\p_J^*\Jhat^*\in\Vect(M)}$.
Then~$\iota(v)\om=-\Lambda_\rho(J,J\Jhat)$ by Lemma~\ref{le:DJ},
hence~${d\iota(v)\om=-2\Richat_\rho(J,J\Jhat)}$ is an exact~$(1,1)$-form,
thus~$\cL_vJ$ is self-adjoint by Lemma~\ref{le:SELFADJOINT},
and so is ${J\cL_vJ = -2\bar\p_Jv = 2\bar\p_J\bar\p_J^*(\Jhat-\Jhat^*)}$.
Thus~${\bar\p_J\bar\p_J^*(\Jhat^*-\Jhat)}$ is $L^2$ orthogonal 
to~${\Jhat^*-\Jhat}$ and so~${\bar\p_J^*\Jhat^*=\bar\p_J^*(\Jhat^*-\Jhat)=0}$.
Hence~${\Lambda_\rho(J,\Jhat)=0}$ by Lemma~\ref{le:DJ}.
To prove~\eqref{eq:LAMBDAFG},  choose~${v\in\Vect(M)}$ 
such that~${\bar\p_J^*(\Jhat-\cL_vJ)=0}$.  
Then~${\Lambda_\rho(J,\Jhat)=\Lambda_\rho(J,\cL_vJ)}$
and hence~${f:=f_v}$ and~${g:=f_{Jv}}$ satisfy~\eqref{eq:LAMBDAFG}
by Theorem~\ref{thm:LAMBDA}.  This proves Lemma~\ref{le:LAMBDAFG}.
\end{proof}

\begin{lemma}[{\bf Holomorphic Vector Fields}]\label{le:HOLV}
Let~$M$ be a closed connected oriented $2\sn$-manifold,
fix a positive volume form~${\rho\in\Om^{2\sn}(M)}$
and an almost complex structure~${J\in\sJ(M)}$ 
such that~${\Ric_{\rho,J}=0}$, and let~${v\in\Vect(M)}$.  
Then the following holds.

\smallskip\noindent{\bf (i)}
${\Lambda_\rho(J,\cL_vJ)=0}$ if and only if~${d\iota(v)\rho=d\iota(Jv)\rho=0}$.

\smallskip\noindent{\bf (ii)}
Assume~$J$ is compatible with a symplectic form~$\om$ 
such that~${\om^{\sn}/\sn!=\rho}$ and that~${\cL_vJ=0}$.
Then~$\iota(v)\om$ is a harmonic $1$-form.

\smallskip\noindent{\bf (iii)}
Assume~$J$ is integrable and compatible with a symplectic 
form~$\om$ such that~${\om^{\sn}/\sn!=\rho}$.
Then~${\cL_vJ=0}$ if and only if~$\iota(v)\om$ is harmonic.
If~${d\iota(v)\rho=0}$, then there exists a~${v_0\in\Vect(M)}$ 
such that~$\iota(v_0)\rho$ is exact and~${\cL_{v_0}J=\cL_vJ}$.
\end{lemma}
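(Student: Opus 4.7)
The plan is to leverage Theorem~\ref{thm:LAMBDA}, Lemma~\ref{le:SELFADJOINT}, and the Bochner--Weitzenb\"ock formula on Ricci-flat K\"ahler manifolds.

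For part~(i), I would substitute~${\Ric_{\rho,J}=0}$ into equation~\eqref{eq:LAMBDAu} to obtain~${\Lambda_\rho(J,\cL_vJ)=-df_v\circ J+df_{Jv}}$. The ``if'' direction is then immediate. For the converse, I would evaluate this $1$-form at the vector field~$Jv$, producing the function~${\cL_vf_v+\cL_{Jv}f_{Jv}}$. Integrating against~$\rho$ and using the identity~${\int_M\cL_uh\,\rho=-\int_Mhf_u\rho}$ (from~${\cL_u\rho=f_u\rho}$ and Stokes) yields~${-\int_M(f_v^2+f_{Jv}^2)\rho=0}$, forcing~${f_v=f_{Jv}=0}$. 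Part~(ii) should then drop out immediately from~(i) together with Lemma~\ref{le:SELFADJOINT}(iii): harmonicity of~${\iota(v)\om}$ is equivalent to~${f_v=f_{Jv}=0}$ plus self-adjointness of~${\cL_vJ}$, and both hold trivially when~${\cL_vJ=0}$.

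For the ``iff'' part of~(iii), the forward implication is~(ii). For the converse, the key observation is that part~(iii) of Theorem~\ref{thm:RICCI} gives~${\Ric_{\rho,J}=\tfrac{1}{2}\trace(JR^\tabla)}$, and in the K\"ahler case~${\tabla=\nabla}$ (since~${\nabla J=0}$), so~${\Ric_{\rho,J}=0}$ is equivalent to Ricci-flatness of the K\"ahler metric~${g=\om(\cdot,J\cdot)}$. The Bochner--Weitzenb\"ock formula then forces harmonic $1$-forms to be parallel; applied to~${\alpha=\iota(v)\om}$ and combined with~${\nabla\om=0}$, this gives~${\nabla v=0}$. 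Substituting into~\eqref{eq:LvJac} and using~${\nabla J=0}$ yields~${\cL_vJ=0}$.

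For the last assertion, I would seek~${v_0=v-w}$ with~$w$ holomorphic and~${\iota(v-w)\rho}$ exact, so that~${\cL_{v_0}J=\cL_vJ}$ automatically. Consider the linear map
$$
\Phi\colon\bigl\{w\in\Vect(M)\,\big|\,\cL_wJ=0\bigr\}\TO H^{2\sn-1}(M;\R),\qquad w\mapsto[\iota(w)\rho].
$$
By~(ii), $\iota(w)\rho$ is closed for holomorphic~$w$; by the Bochner argument above such~$w$ is parallel, so~${\iota(w)\rho={*}w^\flat}$ is harmonic, which forces injectivity of~$\Phi$ (harmonic and exact implies zero). For surjectivity, the first part of~(iii) identifies the domain of~$\Phi$ with the space of harmonic $1$-forms via~${w\mapsto\iota(w)\om}$, giving dimension~${b_1(M)=\dim H^{2\sn-1}(M;\R)}$ by Poincar\'e duality. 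Choosing~$w$ with~${\Phi(w)=[\iota(v)\rho]}$ and setting~${v_0:=v-w}$ completes the argument.

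The main obstacle is the Bochner step in~(iii): it requires the identification of~${\Ric_{\rho,J}}$ with the standard K\"ahler Ricci form (supplied by Theorem~\ref{thm:RICCI}(iii)) so that the Weitzenb\"ock formula applies; the remaining steps are essentially algebraic manipulations built on the formulas already developed.
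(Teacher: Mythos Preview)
Your proposal is correct, but diverges from the paper's proof in several places.

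For part~(i), you evaluate~${-df_v\circ J+df_{Jv}}$ at~$Jv$, integrate, and use~${\int_M\cL_uh\,\rho=-\int_Mhf_u\rho}$ to obtain~${\int_M(f_v^2+f_{Jv}^2)\rho=0}$. The paper instead chooses an auxiliary nondegenerate (not necessarily closed) $2$-form~$\om$ compatible with~$J$ with~${\om^\sn/\sn!=\rho}$, computes via~\eqref{eq:STARV} that
\[
0 = {*}\Bigl(d(df_v\circ J)\wedge\tfrac{\om^{\sn-1}}{(\sn-1)!}\Bigr)
= d^*df_v - {*}\Bigl((df_v\circ J)\wedge d\om\wedge\tfrac{\om^{\sn-2}}{(\sn-2)!}\Bigr),
\]
and invokes the Hopf maximum principle to conclude~$f_v$ is constant, hence zero. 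Your integration argument is shorter and avoids both the auxiliary~$\om$ and the maximum principle.

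For the converse in~(iii), you use the Bochner--Weitzenb\"ock formula on $1$-forms (Ricci-flat K\"ahler~$\Rightarrow$ harmonic $1$-forms are parallel~$\Rightarrow$~${\nabla v=0}$~$\Rightarrow$~${\cL_vJ=0}$ via~\eqref{eq:LvJac}). The paper stays inside its own machinery: from Lemma~\ref{le:SELFADJOINT} it gets~${(\cL_vJ)^*=\cL_vJ}$ and~${f_v=f_{Jv}=0}$, then part~(i) gives~${\Lambda_\rho(J,\cL_vJ)=0}$, and Lemma~\ref{le:DJ} converts this to~${\bar\p_J^*\bar\p_Jv=0}$, whence~${\cL_vJ=2J\bar\p_Jv=0}$. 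Your route imports a standard Riemannian tool; the paper's route exhibits the result as a direct consequence of the~$\Lambda_\rho$ formalism.

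For the last assertion in~(iii), you argue abstractly: the map~${w\mapsto[\iota(w)\rho]}$ from holomorphic vector fields to~$H^{2\sn-1}(M;\R)$ is injective (parallel forms that are exact vanish) and surjective by a dimension count via Poincar\'e duality and the first part of~(iii). The paper gives an explicit construction: choose~${\alpha_0\in\Om^{2\sn-2}(M)}$ with~${d{*}d\alpha_0=d\iota(Jv)\om}$, set~${\iota(v_0)\rho:=d\alpha_0}$, and verify directly that~${\iota(J(v-v_0))\om}$ is harmonic, so~${\cL_{v-v_0}J=0}$ by the first part of~(iii). The paper's construction is more concrete; yours clarifies why such a~$v_0$ must exist.
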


\begin{proof}
To prove~(i), observe that~${\Lambda_\rho(J,\cL_vJ)=-df_v\circ J+df_{Jv}}$ 
by~\eqref{eq:LAMBDAu}.  Assume~${\Lambda_\rho(J,\cL_vJ)=0}$ 
and choose a nondegenerate $2$-form~${\om\in\Om^2(M)}$ 
that is compatible with~$J$ and satisfies~${\om^\sn/\sn!=\rho}$.
Then equation~\eqref{eq:STARV} yields
$$
0 = *\left(\bigl(d(df_v\circ J)\bigr) \wedge \tfrac{\om^{\sn-1}}{(\sn-1)!}\right)
= d^*df_v - *\left((df_v\circ J)\wedge d\om\wedge\tfrac{\om^{\sn-2}}{(\sn-2)!}\right).
$$
By the Hopf maximum principle, this implies that~$f_v$ is locally constant.
Thus~$f_{Jv}$ is also locally constant.  Since~$f_v$ and~$f_{Jv}$ have 
mean value zero on each connected component of~$M$, 
it follows that~${f_v=f_{Jv}=0}$.  This proves~(i). 

Part~(ii) follows directly from~(i) and Lemma~\ref{le:SELFADJOINT}.
To prove~(iii), assume~$J$ is integrable and~$\om$ is closed. 
If~$\iota(v)\om$ is harmonic, then~${(\cL_vJ)^*=\cL_vJ}$ 
and~${f_v=f_{Jv}=0}$ by Lemma~\ref{le:SELFADJOINT}, 
thus it follows from~(i) and~Lemma~\ref{le:DJ} that
$$
0 = \Lambda_\rho(J,\cL_vJ)
= \iota(J\bar\p_J^*(\cL_vJ)^*)\om 
= \iota(J\bar\p_J^*\cL_vJ)\om 
= -2\iota(\bar\p_J^*\bar\p_Jv)\om,
$$
and so~${\cL_vJ=2J\bar\p_Jv=0}$.
Now assume~${d\iota(v)\rho=0}$, choose~${\alpha_0\in\Om^{2\sn-2}(M)}$ 
such that $d{*d}\alpha_0=d\iota(Jv)\om$, 
and define $v_0\in\Vect(M)$ by $\iota(v_0)\rho:=d\alpha_0$.
Then~${\iota(Jv_0)\om={*d\alpha_0}}$ by~\eqref{eq:STARV}.
Hence~${d\iota(Jv_0)\om=d{*d\alpha_0}=d\iota(Jv)\om}$.
We also have~${d{*\iota(J(v-v_0))\om}=-d\iota(v-v_0)\rho=0}$.
Thus~${\iota(J(v-v_0))\om}$ is harmonic
and so~${\cL_{v-v_0}J=-J\cL_{J(v-v_0)}J=0}$.  
This proves Lemma~\ref{le:HOLV}. 
\end{proof}

\begin{lemma}[{\bf Harmonic Complex Anti-Linear Endomorphisms~$\Jhat$}]\label{le:JHATSTAR}
Let~$(M,J,\om)$ be a closed connected $2\sn$-dimensional
Ricci-flat K\"ahler manifold with volume form~${\rho:=\om^\sn/\sn!}$
and let~${\Jhat\in\Om^{0,1}_J(M,TM)}$ 
such that~${\bar\p_J\Jhat=0}$ and~${\bar\p_J^*\Jhat=0}$.
Then~${\bar\p_J\Jhat^*=0}$ and~${\bar\p_J^*\Jhat^*=0}$.
\end{lemma}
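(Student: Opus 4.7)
My plan is to establish the two claims $\bar\p_J^*\Jhat^*=0$ and $\bar\p_J\Jhat^*=0$ separately.  The first is an immediate by-product of the argument used to prove the ``moreover'' assertion in Lemma~\ref{le:LAMBDAFG}, while the second is the main new content of the lemma and will invoke the Bochner--Kodaira--Nakano identity from Appendix~\ref{app:BKN} together with the Ricci-flatness hypothesis.

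For $\bar\p_J^*\Jhat^*=0$, I will recycle the argument from the last paragraph of the proof of Lemma~\ref{le:LAMBDAFG}.  Set $v:=\bar\p_J^*\Jhat^*\in\Vect(M)$; then Lemma~\ref{le:DJ} gives $\iota(v)\om=-\Lambda_\rho(J,J\Jhat)$, and the integrability of~$J$ combined with $\bar\p_J\Jhat=0$ yields $\bar\p_J(J\Jhat)=0$, whence Lemma~\ref{le:ONEONE} forces $\Richat_\rho(J,J\Jhat)\in\Om^{1,1}_J(M)$.  Thus $d\iota(v)\om=-2\Richat_\rho(J,J\Jhat)$ is an exact $(1,1)$-form, so Lemma~\ref{le:SELFADJOINT} gives that $\cL_vJ$ is self-adjoint, and hence so is $J\cL_vJ=-2\bar\p_J\bar\p_J^*\Jhat^*$.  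Using the hypothesis $\bar\p_J^*\Jhat=0$ to rewrite this as $2\bar\p_J\bar\p_J^*(\Jhat-\Jhat^*)$, which is $L^2$-orthogonal to the skew-adjoint $\Jhat^*-\Jhat$, an integration by parts yields $\norm{\bar\p_J^*(\Jhat^*-\Jhat)}_{L^2}^2=0$ and therefore $\bar\p_J^*\Jhat^*=\bar\p_J^*\Jhat=0$.

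For $\bar\p_J\Jhat^*=0$, the goal is to establish $\Delta_{\bar\p_J}\Jhat^*=0$, since then the identity $\bar\p_J^*\Jhat^*=0$ just proved implies $\norm{\bar\p_J\Jhat^*}_{L^2}^2=\inner{\Delta_{\bar\p_J}\Jhat^*}{\Jhat^*}_{L^2}=0$.  The Bochner--Kodaira--Nakano identity from Appendix~\ref{app:BKN} expresses $\Delta_{\bar\p_J}$ on $\Om^{0,1}(M,T^{1,0}M)$ as the rough Laplacian $\nabla^*\nabla$ plus a curvature-correction of the form $[iR^{T^{1,0}M},\Lambda]$.  The rough Laplacian commutes with the metric adjoint operation because $\nabla$ preserves the Hermitian metric (so $\nabla^*\nabla\Jhat^*=(\nabla^*\nabla\Jhat)^*$); and on a Ricci-flat K\"ahler manifold the curvature-correction term is also invariant under $\Jhat\mapsto\Jhat^*$, by a pointwise identity to be verified in local holomorphic coordinates.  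Hence $\Delta_{\bar\p_J}\Jhat=0$ forces $\Delta_{\bar\p_J}\Jhat^*=(\Delta_{\bar\p_J}\Jhat)^*=0$, completing the proof.

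The principal obstacle is the last step, namely the pointwise verification that the BKN curvature correction commutes with the adjoint operation on a Ricci-flat K\"ahler manifold.  This amounts to a tensorial identity that exploits the K\"ahler symmetries $R_{a\bar bc\bar d}=R_{c\bar ba\bar d}$ of the Riemann curvature together with the vanishing of its Ricci trace $g^{c\bar d}R^a{}_{bc\bar d}=0$, and is most transparent in the lowered-index description under which $\Om^{0,1}_J(M,TM)\cong \Om^{0,1}(M)\otimes \Om^{0,1}(M)$ and the adjoint $\Jhat\mapsto\Jhat^*$ becomes transposition of the two tensor factors.
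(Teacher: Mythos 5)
Your argument for $\bar\p_J\Jhat^*=0$ via the Bochner--Kodaira--Nakano identity is exactly the paper's proof: in the paper's formulation $\bar\p_J^*\bar\p_J+\bar\p_J\bar\p_J^*=\tfrac{1}{2}\nabla^*\nabla+\tfrac{1}{2}[JQ,\cdot]+\cT$, the term $[JQ,\cdot]$ drops out because $Q=0$ by Ricci-flatness, and your deferred pointwise computation is precisely the identity $\cT(\Jhat)^*=\cT(\Jhat^*)$, which follows from the pair symmetry of the Riemann curvature tensor, so the whole Laplacian commutes with $\Jhat\mapsto\Jhat^*$. Your separate preliminary argument for $\bar\p_J^*\Jhat^*=0$ (recycled from Lemma~\ref{le:LAMBDAFG}) is valid and non-circular, but redundant, since the commutation of the Laplacian with the adjoint already delivers both conclusions at once.
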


\begin{proof}
Let~$\nabla$ be the Levi-Civita connection of a K\"ahler metric~${\om(\cdot,J\cdot)}$.
Then the Bochner--Kodaira--Nakano identity
for~${\Jhat\in\Om^{0,1}_J(M,TM)}$ takes the form
\begin{equation}\label{eq:BKN}
\begin{split}
\bar\p_J^*\bar\p_J\Jhat+\bar\p_J\bar\p_J^*\Jhat
= \tfrac{1}{2}\nabla^*\nabla\Jhat + \tfrac{1}{2}[JQ,\Jhat] + \cT(\Jhat),
\end{split}
\end{equation}
where
$$
\nabla^*\nabla\Jhat=-\sum_i(\Nabla{e_i}\Nabla{e_i}\Jhat+\DIV(e_i)\Nabla{e_i}\Jhat),\qquad
\cT(\Jhat)u=\sum_iR^{\nabla\!}(e_i,u)\Jhat e_i
$$
for a local orthonormal frame~${e_1,\dots,e_{2\sn}}$,
and the skew-adjoint endomorphism~$Q$ is defined by
$
\inner{Q\cdot}{\cdot}=\Ric_{\rho,J}.
$
(See~\cite{DEMAILLY}.)  Since~${\cT(\Jhat)^*=\cT(\Jhat^*)}$, it follows that the 
operator~${\bar\p_J^*\bar\p_J+\bar\p_J\bar\p_J^*}$ commutes 
with the operator~${\Jhat\mapsto\Jhat^*}$ in the K\"ahler--Einstein 
case~${Q=\kappa J}$. This proves Lemma~\ref{le:JHATSTAR}.
\end{proof}

\begin{proof}[Proof of Theorem~\ref{thm:JINTZERO}]
\label{proof:JINTZERO}
Let~${\Jhat\in\Om^{0,1}_J(M,TM)}$ with~${\bar\p_J\Jhat=0}$.
Then part~(iii) of Lemma~\ref{le:HOLV} shows that the last 
two assertions in~(ii) are equivalent.  Next observe the following.

\medskip\noindent{\bf Claim~1.}
{\it ${\Richat_\rho(J,\Jhat)=0}$ if and only if~${f_\Jhat=0}$.}

\medskip\noindent{\bf Claim~2.}
{\it $\Om_{\rho,J}(\Jhat,\cL_vJ) = \int_M\bigl(f_\Jhat f_{Jv} - f_{J\Jhat}f_v\bigr)\rho$
for all~${v\in\Vect(M)}$.}

\medskip\noindent
By~\eqref{eq:RICHAT} and Lemma~\ref{le:LAMBDAFG}
we have~${\Richat_\rho(J,\Jhat)=\tfrac{1}{2}d\Lambda_\rho(J,\Jhat)
=-\tfrac{1}{2}d(df_\Jhat\circ J)}$ and this proves Claim~1.
Claim~2 follows from~\eqref{eq:LAMBDARHO} 
and Lemma~\ref{le:LAMBDAFG}. 

Sufficiency in~(i) and~(ii) follows directly from Claim~1 and Claim~2.
 To prove necessity, choose a symplectic form~$\om$ such 
that~$J$ is compatible with~$\om$ and~${\om^\sn/\sn!=\rho}$. 
Then~$(M,J,\om)$ is a Ricci-flat K\"ahler manifold.

We prove part~(i).
Assume~${\Om_{\rho,J}(\Jhat,\Jhat')=0}$ for all~$\Jhat'$ with~${\bar\p_J\Jhat'=0}$
and choose~${v\in\Vect(M)}$ such that~${\bar\p_J^*(\Jhat-\cL_vJ)=0}$.
Then~${\Lambda_\rho(J,\Jhat-\cL_vJ)=0}$ by Lemma~\ref{le:LAMBDAFG} 
and~${\bar\p_J(\Jhat-\cL_vJ)^*=0}$ by Lemma~\ref{le:JHATSTAR}.  
Thus it follows from the assumption and equations~\eqref{eq:LAMBDARHO}
and~\eqref{eq:RICuv} that
\begin{equation*}
0 = \Om_{\rho,J}(\Jhat-\cL_vJ,\cL_{Jv}J) 
= -\Om_{\rho,J}(\cL_vJ,\cL_{Jv}J) 
= \int_M\bigl(f_v^2+f_{Jv}^2\bigr)\rho.
\end{equation*}
Hence~${f_v=f_{Jv}=0}$.
Now fix an element~${\Jhat'\in\Om^{0,1}_J(M,TM)}$ with~${\bar\p_J\Jhat'=0}$.
Then~${\Om_{\rho,J}(\Jhat-\cL_vJ,J\Jhat')=0}$ by assumption and Claim~2.  Thus
$$
\inner{(\Jhat-\cL_vJ)^*}{\Jhat'}_{L^2}
= \int_M\trace((\Jhat-\cL_vJ)\Jhat')\rho
= -2\Om_{\rho,J}(\Jhat-\cL_vJ,J\Jhat')
= 0.
$$
This implies that there exists a~${\tau\in\Om^{0,2}_J(M,TM)}$ 
with~${\bar\p_J^*\tau=(\Jhat-\cL_vJ)^*}$.
Hence~${\bar\p_J\bar\p_J^*\tau=\bar\p_J(\Jhat-\cL_vJ)^*=0}$ 
and so~${\Jhat=\cL_vJ}$.  This proves~(i).

We prove part~(ii).
Assume~${\Richat_\rho(J,\Jhat)=0}$ and~${\Om_{\rho,J}(\Jhat,\Jhat')=0}$ 
for all~$\Jhat'$ that satisfy~${\bar\p_J\Jhat'=0}$ and~${\Richat_\rho(J,\Jhat')=0}$.
Then~${f_\Jhat=0}$ by Claim~1 and we choose~${H\in\Om^0(M)}$ 
such that~${d^*dH=-f_{J\Jhat}}$.  
Then~${\Lambda_\rho(J,\Jhat-\cL_{v_H}J) = 0}$ 
by Lemma~\ref{le:DIVH} and Lemma~\ref{le:LAMBDAFG}.
Now let~${\Jhat'\in\Om^{0,1}_J(M,TM)}$ with~${\bar\p_J\Jhat'=0}$ 
and choose~${F\in\Om^0(M)}$ such that~${d^*dF=-f_{\Jhat'}}$.  
Then~${\Richat_\rho(J,\Jhat'-\cL_{\nabla F}J) = 0}$ 
by~\eqref{eq:RICHAT}, Lemma~\ref{le:DIVH}, 
and Lemma~\ref{le:LAMBDAFG}.  Hence 
\begin{equation*}
\begin{split}
\Om_{\rho,J}(\Jhat-\cL_{v_H}J,\Jhat')
= 
\Om_{\rho,J}(\Jhat-\cL_{v_H}J,\Jhat'-\cL_{\nabla F}J) 
= 
\Om_{\rho,J}(\Jhat,\Jhat'-\cL_{\nabla F}J) 
= 0
\end{split}
\end{equation*}
by assumption and Theorem~\ref{thm:RICCI}.
So part~(i) asserts that there exists a vector field~$u$ 
with~${f_u=f_{Ju}=0}$ and~${\cL_uJ=\Jhat-\cL_{v_H}J}$.
Thus~${v:=u+v_H}$ is a divergence-free vector field with~${\cL_vJ=\Jhat}$.
This proves Theorem~\ref{thm:JINTZERO}.
\end{proof}

\bigbreak

We close this section with a well known lemma (see~\cite{KM})
that is used in Theorem~\ref{thm:CONNECTION}.
We include a proof for completeness of the exposition.

\begin{lemma}[{\bf Harmonic~$(0,2)$-Forms}]\label{le:PARALLEL}
Let~$(M,J,\om)$ be a closed connected $2\sn$-dimensional 
Ricci-flat K\"ahler manifold, let~$\nabla$ be the Levi-Civita connection
of the K\"ahler metric, let~${\Jhat\in\Om^{0,1}_J(M,TM)}$ 
with~${\Jhat + \Jhat^* = 0}$, 
and define~${\omhat:=\inner{\Jhat\cdot}{\cdot}\in\Om^2(M)}$.
Then~${\omhat^{1,1}_J=0}$ and the following are equivalent. 

\smallskip\noindent{\bf (i)}
${\bar\p_J\Jhat=0}$ and~${\bar\p_J^*\Jhat=0}$.

\smallskip\noindent{\bf (ii)}
${\nabla\Jhat=0}$.

\smallskip\noindent{\bf (iii)}
${\nabla\omhat=0}$.

\smallskip\noindent{\bf (iv)}
$\omhat$ is a harmonic $2$-form.

\smallskip\noindent{\bf (v)}
${d\omhat=0}$.
\end{lemma}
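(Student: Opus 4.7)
My plan is to organize the proof as a cycle $(\mathrm{v})\Rightarrow(\mathrm{iv})\Rightarrow(\mathrm{iii})\Leftrightarrow(\mathrm{ii})\Rightarrow(\mathrm{i})\Rightarrow(\mathrm{iii})$ together with the easy chain $(\mathrm{iii})\Rightarrow(\mathrm{iv})\Rightarrow(\mathrm{v})$. First I would dispatch the auxiliary claim $\hat\omega^{1,1}_J=0$: using $\hat J J+J\hat J=0$ and $J$-invariance of the K\"ahler metric, a direct computation gives
\begin{equation*}
\hat\omega(Ju,Jv)=\inner{\hat J Ju}{Jv}=-\inner{J\hat J u}{Jv}=-\inner{\hat Ju}{v}=-\hat\omega(u,v),
\end{equation*}
and since $\hat J^*=-\hat J$ the form $\hat\omega$ is skew-symmetric, hence real and of pure type $(2,0)+(0,2)$.

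The trivial implications come next. Since $\nabla$ is metric, the lowering $\hat J\mapsto\hat\omega$ commutes with $\nabla$, so $(\mathrm{ii})\Leftrightarrow(\mathrm{iii})$. Parallel tensors are automatically $d$- and $d^*$-closed by a one-line computation in a local orthonormal frame, yielding $(\mathrm{iii})\Rightarrow(\mathrm{iv})$; and $(\mathrm{iv})\Rightarrow(\mathrm{v})$ is obvious. Formula \eqref{eq:dNJ} expresses $\bar\p_J\hat J$ as a first-order algebraic expression in $\nabla\hat J$, and the formal adjoint $\bar\p_J^*\hat J$ is likewise first-order in $\hat J$, so $(\mathrm{ii})\Rightarrow(\mathrm{i})$.

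For $(\mathrm{v})\Rightarrow(\mathrm{iv})$ I would decompose $\hat\omega=\alpha+\bar\alpha$ with $\alpha\in\Om^{0,2}_J(M)$. The four components of $d\hat\omega$ sit in the distinct bidegrees $(1,2),(0,3),(3,0),(2,1)$, so $d\hat\omega=0$ forces $\p\alpha=\bar\p\alpha=0$. The K\"ahler identity $\bar\p^*=\bi(\p\Lambda-\Lambda\p)$ combined with the bidegree vanishing $\Lambda\alpha\in\Om^{-1,1}_J(M)=0$ and $\p\alpha=0$ then gives $\bar\p^*\alpha=0$, so $\alpha$ is $\bar\p$-harmonic. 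Since $\Delta_d=2\Delta_{\bar\p}$ on a K\"ahler manifold, $\alpha$ and symmetrically $\bar\alpha$ are $d$-harmonic, whence $\hat\omega$ is harmonic.

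The substantive step is $(\mathrm{i})\Rightarrow(\mathrm{iii})$. I would invoke the Bochner--Kodaira--Nakano identity \eqref{eq:BKN} from Lemma~\ref{le:JHATSTAR}; in the Ricci-flat case $Q=0$ it reduces to
\begin{equation*}
\bar\p_J^*\bar\p_J\hat J+\bar\p_J\bar\p_J^*\hat J=\tfrac12\nabla^*\nabla\hat J+\cT(\hat J).
\end{equation*}
Pairing with $\hat J$ in $L^2$ and integrating by parts under~(i) yields
\begin{equation*}
0=\tfrac12\Norm{\nabla\hat J}_{L^2}^2+\inner{\cT(\hat J)}{\hat J}_{L^2}.
\end{equation*}
The main obstacle is to show that the curvature integral vanishes for skew-adjoint $\hat J$ on a Ricci-flat K\"ahler manifold. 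The cleanest route is through the isomorphism $\hat J\leftrightarrow\hat\omega$ and the classical Bochner identity $2\Delta_{\bar\p}\alpha=\nabla^*\nabla\alpha+\cR(\alpha)$ for $\alpha\in\Om^{0,2}_J(M)$, where $\cR$ is a pointwise contraction involving only the Ricci tensor and therefore vanishes identically here. This gives $\nabla\alpha=0$ and, by conjugation, $\nabla\hat\omega=\nabla\hat J=0$, closing the loop.
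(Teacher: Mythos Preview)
Your argument for~$(\mathrm{i})\Rightarrow(\mathrm{iii})$ has a gap. You correctly arrive at ${0=\tfrac12\Norm{\nabla\Jhat}_{L^2}^2+\inner{\cT(\Jhat)}{\Jhat}_{L^2}}$ and correctly identify the remaining task as controlling the curvature term. But your proposed route through the form-side identity ${2\Delta_{\bar\p}\alpha=\nabla^*\nabla\alpha+\cR(\alpha)}$ does not close: to extract~${\nabla\alpha=0}$ from that identity you need~${\Delta_{\bar\p}\alpha=0}$, and you have not shown that condition~$(\mathrm{i})$ on~$\Jhat$ implies $\bar\p$-harmonicity of~$\alpha$. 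The metric lowering~${\Jhat\leftrightarrow\omhat}$ commutes with~$\nabla$, but there is no a~priori reason it intertwines the $\bar\p_J$-Laplacian on~$\Om^{0,1}_J(M,TM)$ with the $\bar\p$-Laplacian on~$\Om^{0,2}_J(M)$; that would itself require a computation. In effect your form-side argument proves~$(\mathrm{iv})\Rightarrow(\mathrm{iii})$ rather than~$(\mathrm{i})\Rightarrow(\mathrm{iii})$: once~$\omhat$ is harmonic so is its $(0,2)$-part~$\alpha$, and then Ricci-flatness gives~${\nabla\alpha=0}$. With that relabeling your chain reads $(\mathrm{v})\Rightarrow(\mathrm{iv})\Rightarrow(\mathrm{iii})\Leftrightarrow(\mathrm{ii})\Rightarrow(\mathrm{i})$ together with $(\mathrm{iii})\Rightarrow(\mathrm{iv})\Rightarrow(\mathrm{v})$, and~$(\mathrm{i})$ remains a dead end.

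The paper fills this gap by showing~${\cT(\Jhat)=0}$ \emph{pointwise} for skew-adjoint~$\Jhat$ on any K\"ahler manifold, without passing to the form side. The first Bianchi identity gives~${\cT(\Jhat)=\tfrac12\sum_iR^\nabla(e_i,\Jhat e_i)}$; taking a frame with~${e_{\sn+i}=Je_i}$ and using~${\Jhat J=-J\Jhat}$ together with the K\"ahler symmetry~${R^\nabla(Ju,Jv)=R^\nabla(u,v)}$ makes the sum cancel in pairs. With~${Q=0}$ the BKN identity then reduces to an equality of the $\bar\p_J$-Laplacian and the rough Laplacian on~$\Jhat$, yielding~$(\mathrm{i})\Leftrightarrow(\mathrm{ii})$ directly. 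Your~$(\mathrm{v})\Rightarrow(\mathrm{iv})$ via the K\"ahler identities is a valid alternative to the paper's route through~${{*\omhat}=\omhat\wedge\om^{\sn-2}/(\sn-2)!}$, and your form-side Bochner, reinterpreted as above, parallels the paper's Weitzenb\"ock argument for~$(\mathrm{iv})\Rightarrow(\mathrm{iii})$.
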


\begin{proof}
It follows directly from the definition that~${\omhat^{1,1}_J=0}$.
To prove that~(i) is equivalent to~(ii), let~$\cT$ be the operator in the 
proof of Lemma~\ref{le:JHATSTAR} and assume that~$\Jhat$ is skew-adjoint.
Then, by the first Bianchi identity, we have
\begin{equation*}
\begin{split}
&\cT(\Jhat)u
= 
\sum_iR^{\nabla\!}(e_i,u)\Jhat e_i 
=
\sum_{i,j}\inner{e_j}{\Jhat e_i}R^{\nabla\!}(e_i,u)e_j \\
&=
\sum_{i,j}\inner{\Jhat e_j}{e_i}\bigl(R^{\nabla\!}(e_j,e_i)u+R^{\nabla\!}(u,e_j)e_i\bigr) 
= 
\sum_jR^{\nabla\!}(e_j,\Jhat e_j)u - \cT(\Jhat)u.
\end{split}
\end{equation*}
Hence, for a local orthonormal frame with~${e_{\sn+i}=Je_i}$, we obtain
\begin{equation*}
\begin{split}
\cT(\Jhat)
= 
\tfrac{1}{2}\sum_iR^{\nabla\!}(e_i,\Jhat e_i)  
= 
\tfrac{1}{4}\sum_i\left(R^{\nabla\!}(e_i,\Jhat e_i) + R^{\nabla\!}(Je_i,\Jhat Je_i)\right)  
= 
0.
\end{split}
\end{equation*}
Thus~${\norm{\bar\p_J\Jhat}^2+\norm{\bar\p_J^*\Jhat}^2=\norm{\nabla\Jhat}^2}$
by~\eqref{eq:BKN} with~${Q=0}$ and so~(i) is equivalent to~(ii).  
The equivalence of~(ii) and~(iii) follows directly from the definition of~$\omhat$.
That~(iii) implies~(v) follows from Lemma~\ref{le:TORSION},
and~(iv) is equivalent to~(v) because $\omhat^{1,1}_J=0$ 
and so~${{*\omhat}=\omhat\wedge\om^{\sn-2}/(\sn-2)!}$.
That~(iv) implies~(iii) is a consequence
of the Weitzenb\"ock formula
\begin{equation}\label{eq:WEITZEN}
\begin{split}
&\Bigl(d^*d\omhat-dd^*\omhat-\nabla^*\nabla\omhat\Bigr)(u,v) \\
&= \sum_i\Bigl(\omhat\bigl(e_i,R^{\nabla\!}(u,v)e_i\bigr)
- \omhat\bigl(u,R^{\nabla\!}(v,e_i)e_i\bigr)
+ \omhat\bigl(v,R^{\nabla\!}(u,e_i)e_i\bigr)\Bigr)
\end{split}
\end{equation}
for~${\omhat\in\Om^2(M)}$, ${u,v\in\Vect(M)}$,
and a local orthonormal frame~${e_1,\dots,e_{2\sn}}$. 
In the Ricci-flat K\"ahler case with~${\omhat^{1,1}_J=0}$
the right hand side in~\eqref{eq:WEITZEN} vanishes and 
hence~${\norm{d\omhat}^2+\norm{d^*\omhat}^2=\norm{\nabla\omhat}^2}$.
This proves Lemma~\ref{le:PARALLEL}. 
\end{proof}


\section{Teichm\"uller space}\label{sec:TEICH}

\subsection*{The Calabi--Yau Teichm\"uller space}

Fix a closed connected oriented $2\sn$-manifold~$M$.
The {\bf Calabi--Yau Teich\-m\"uller space} is the space of 
isotopy classes of complex structures~$J$ with real 
first Chern class zero and nonempty K\"ahler cone~$\cK_J$.
It is denoted by 
\begin{equation}\label{eq:TEICH}
\begin{split}
\sT_0(M) 
&:= \sJ_{\INT,0}(M)/\Diff_0(M),\\ 
\sJ_{\INT,0}(M)
&:=
\biggl\{J\in\sJ_\INT(M)\,\bigg|\,
\begin{array}{l}c_1(TM,J)=0\in H^2(M;\R) \\
\mbox{and }J\mbox{ admits a K\"ahler form}
\end{array}\biggr\}.
\end{split}
\end{equation}
For every~${J\in\sJ_{\INT,0}(M)}$ the space of holomorphic 
vector fields is isomorphic to~$H^1(M;\R)$ by part~(iii) 
of Lemma~\ref{le:HOLV} and the Calabi--Yau Theorem.
Moreover, the Bogomolov--Tian--Todorov theorem asserts
that the obstruction class vanishes~\cite{BOGOMOLOV,TIAN,TODOROV}.
Hence the cohomology of the complex
\begin{equation}\label{eq:TCOMPLEX}
\Om^0(M,TM)\stackrel{\bar\p_J}{\longrightarrow}
\Om^{0,1}_J(M,TM)\stackrel{\bar\p_J}{\longrightarrow}
\Om^{0,2}_J(M,TM)
\end{equation}
has constant dimension.  (This assertion can also be derived
from~\cite[Proposition~9.30]{VOISIN} and Lemma~\ref{le:BETA}.)
It follows that the Teichm\"uller space~$\sT_0(M)$ 
is a smooth manifold~\cite{CATANESE,KS1,KS2,KS3,KURANISHI1,KURANISHI2} 
whose tangent space at~$J\in\sJ_{\INT,0}(M)$ is the cohomology 
of the complex~\eqref{eq:TCOMPLEX}, i.e.\ 
\begin{equation}\label{eq:TTEICH}
T_{[J]}\sT_0(M) 
= \frac{\ker(\bar\p_J:\Om^{0,1}_J(M,TM)\to\Om^{0,2}_J(M,TM))}
{\im(\bar\p_J:\Om^0(M,TM)\to\Om^{0,1}_J(M,TM))}.
\end{equation}

\begin{remark}\label{rmk:K3}\rm
The Teichm\"uller space is in general not Hausdorff, 
even for the K3 surface~\cite{GHS,VERBITSKY1}. 
Let~$(M,J)$ be a K3-surface that admits 
an embedded holomorphic sphere~${C\subset M}$ 
with self-intersection number~${C\cdot C=-2}$, 
and let~${\tau:M\to M}$ be a Dehn twist about~$C$.   
Then there exists a smooth family of complex 
structures~$\{J_t\in\sJ_{\INT,0}(M)\}_{t\in\C}$ and a smooth family 
of diffeomorphisms~${\{\phi_t\in\Diff_0(M)\}_{t\in\C\setminus\{0\}}}$
such that~$J_0=J$ and~${\phi_t^*J_t=\tau^*J_{-t}}$ for all~${t\in\C\setminus\{0\}}$.
Thus~$J_t$ and~$\tau^*J_{-t}$ represent the same class 
in Teichm\"uller space, however, their limits~${\lim_{t\to0}J_t=J_0}$
and~${\lim_{t\to0}\tau^*J_{-t}=\tau^*J_0}$ do not represent the 
same class in Teichm\"uller space because their effective cones differ. 
Namely, the class~${[C]\in H_2(M;\Z)}$ belongs to the effective cone of~$J_0$ 
while the class~${-[C]\in H_2(M;\Z)}$ belongs to the effective cone of~$\tau^*J_0$.

For general hyperK\"ahler manifolds the Teichm\"uller space becomes Hausdorff 
after identifying inseparable complex structures (see Verbitsky~\cite{VERBITSKY1,VERBITSKY2}), 
which are biholomorphic by a theorem of Huybrechts~\cite{HUY2}.
\end{remark}

\subsection*{Teichm\"uller space as a symplectic quotient}

Fix a positive volume form~$\rho$ on~$M$ and define
\begin{equation}\label{eq:TEICHRHO}
\begin{split}
\sT_0(M,\rho)
&:=
\sJ_{\INT,0}(M,\rho)/\Diff_0(M,\rho), \\
\sJ_{\INT,0}(M,\rho)
&:=
\bigl\{J\in\sJ_{\INT,0}(M)\,|\,\Ric_{\rho,J}=0\bigr\}.
\end{split}
\end{equation}
The tangent space of~$\sT_0(M,\rho)$
at~$J\in\sJ_{\INT,0}(M,\rho)$ is the quotient 
\begin{equation}\label{eq:TTEICHRHO}
T_{[J]}\sT_0(M,\rho) 
= \frac{\bigl\{\Jhat\in\Om^{0,1}_J(M,TM)\,|\,
\bar\p_J\Jhat=0,\,\Richat_\rho(J,\Jhat)=0\bigr\}}
{\bigl\{\cL_vJ\,|\,v\in\Vect(M),\,d\iota(v)\rho=0\bigr\}}.
\end{equation}

\begin{lemma}\label{le:iotarho}
The inclusion~${\iota_\rho:\sT_0(M,\rho)\to\sT_0(M)}$
is a diffeomorphism.
\end{lemma}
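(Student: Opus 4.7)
The plan is to show that $\iota_\rho$ is a bijection and that its differential at every point is a linear isomorphism; combined with ellipticity of $\bar\p_J$ and a slice theorem for $\Diff_0(M,\rho)$ acting on $\sJ_{\INT,0}(M,\rho)$, this upgrades to a diffeomorphism. Well-definedness is immediate from $\Diff_0(M,\rho)\subset\Diff_0(M)$ and $\sJ_{\INT,0}(M,\rho)\subset\sJ_{\INT,0}(M)$. Injectivity follows from the last assertion of Theorem~\ref{thm:RICBC}(iii): if $J_1,J_2\in\sJ_{\INT,0}(M,\rho)$ and $\phi^*J_1=J_2$ with $\phi\in\Diff_0(M)$, then $\Ric_{\rho,J_1}=\Ric_{\rho,\phi^*J_1}=0$ forces $\phi\in\Diff_0(M,\rho)$. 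For surjectivity I would start with $J\in\sJ_{\INT,0}(M)$, choose a K\"ahler form $\om$ on $(M,J)$, rescale it so that $\int_M\om^\sn/\sn!=\int_M\rho$, apply Corollary~\ref{cor:CY}(ii) to produce $\phi\in\Diff_0(M)$ with $\Ric_{\om^\sn/\sn!,\phi^*J}=0$, and then use Moser isotopy to produce $\psi\in\Diff_0(M)$ with $\psi^*(\om^\sn/\sn!)=\rho$. Naturality (Theorem~\ref{thm:RICCI}(i)) gives $\Ric_{\rho,\psi^*\phi^*J}=\psi^*\Ric_{\om^\sn/\sn!,\phi^*J}=0$, so $\psi^*\phi^*J\in\sJ_{\INT,0}(M,\rho)$ represents the class of $J$ in $\sT_0(M)$.

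The differential at $[J]$ is the natural projection from $\{\Jhat\in\Om^{0,1}_J(M,TM):\bar\p_J\Jhat=0,\,\Richat_\rho(J,\Jhat)=0\}/\{\cL_vJ:d\iota(v)\rho=0\}$ onto $\{\Jhat\in\Om^{0,1}_J(M,TM):\bar\p_J\Jhat=0\}/\{\cL_vJ:v\in\Vect(M)\}$. For its injectivity, suppose $\Jhat=\cL_vJ$ is in the source. Then $\Richat_\rho(J,\cL_vJ)=0$, and Theorem~\ref{thm:LAMBDA} combined with $\Ric_{\rho,J}=0$ and~\eqref{eq:RICHAT} gives $d(df_v\circ J)=0$. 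Since $(M,J)$ admits a K\"ahler form, the identity $-d(df_v\circ J)=2\bi\p\bar\p f_v$ makes $f_v$ pluriharmonic, hence harmonic, hence constant; the normalization $\int_Mf_v\rho=0$ yields $f_v=0$, so $\iota(v)\rho$ is closed and $[\Jhat]=0$ already in the source. For surjectivity, given $\Jhat\in\ker\bar\p_J$, Lemma~\ref{le:LAMBDAFG} provides mean-zero functions $f_\Jhat,f_{J\Jhat}$ with $\Lambda_\rho(J,\Jhat)=-df_\Jhat\circ J+df_{J\Jhat}$. Choose any $v\in\Vect(M)$ with $f_v=f_\Jhat$ (possible because $\int_Mf_\Jhat\rho=0$) and set $\Jhat':=\Jhat-\cL_vJ$. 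Then $\bar\p_J\Jhat'=0$ since $\bar\p_J\cL_vJ=0$ in the integrable case by~\eqref{eq:dNJ} and $N_J=0$, and Theorem~\ref{thm:LAMBDA} together with~\eqref{eq:RICHAT} gives $\Richat_\rho(J,\Jhat')=-\tfrac12d(d(f_\Jhat-f_v)\circ J)=0$. Hence $[\Jhat']$ lies in the source and maps to $[\Jhat]$.

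The main technical obstacle is the pluriharmonic-implies-constant step, which is what aligns the quotient by (infinitesimally) $\rho$-preserving vector fields on the source side with the quotient by all vector fields on the target side; without the compact K\"ahler hypothesis this fails and one would obtain a proper inclusion rather than an isomorphism of tangent spaces. Everything else is bookkeeping with $\Lambda_\rho$, Lemma~\ref{le:LAMBDAFG}, Corollary~\ref{cor:CY}, and naturality. Once the differential is a pointwise linear isomorphism, the standard Banach-manifold structure on $\sJ_{\INT,0}(M,\rho)$ (cut out from $\sJ_{\INT,0}(M)$ by the elliptic equation $\Ric_{\rho,J}=0$) and slices for the $\Diff_0(M,\rho)$-action produce local charts in which $\iota_\rho$ is a smooth bijection with smooth inverse, giving the claimed global diffeomorphism.
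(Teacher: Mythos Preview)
Your argument is correct and follows essentially the same route as the paper: bijectivity via Theorem~\ref{thm:RICBC}, injectivity of the differential via $\Richat_\rho(J,\cL_vJ)=-\tfrac12 d(df_v\circ J)$ and the pluriharmonic step, and surjectivity by subtracting $\cL_vJ$ with $f_v=f_\Jhat$ (the paper makes the specific choice $v=-\nabla F$ with $d^*dF=f_\Jhat$, which is the same correction). Your surjectivity-of-$\iota_\rho$ detour through Corollary~\ref{cor:CY}(ii) plus Moser is unnecessary, since Theorem~\ref{thm:RICBC}(ii) already delivers $\phi\in\Diff_0(M)$ with $\Ric_{\rho,\phi^*J}=0$ directly.
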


\begin{proof}
The map~$\iota_\rho$ is bijective by Theorem~\ref{thm:RICBC}.  
The derivative of~$\iota_\rho$ at an
element~${J\in\sJ_{\INT,0}(M,\rho)}$ is the inclusion
of the quotient~\eqref{eq:TTEICHRHO}
into~\eqref{eq:TTEICH}. It is injective 
because~${\Richat_\rho(J,\cL_vJ)=-\tfrac{1}{2}d(df_v\circ J)}$,
and so~${\Richat_\rho(J,\cL_vJ)=0}$ implies~${f_v=0}$ 
and thus~${d\iota(v)\rho=0}$. It is surjective because, 
if~${\Jhat\in\Om^{0,1}_J(M,TM)}$ satisfies~${\bar\p_J\Jhat=0}$
and~${F\in\Om^0(M)}$ is the unique solution of~${d^*dF=f_\Jhat}$
with mean value zero, then
$
\Lambda_\rho(J,\cL_{\nabla F}J)=dd^*dF\circ J=df_\Jhat\circ J
$
by Lemma~\ref{le:DIVH} 
and so~${\Richat_\rho(J,\Jhat+\cL_{\nabla F}J)=0}$
by Lemma~\ref{le:LAMBDAFG}.
This proves Lemma~\ref{le:iotarho}.
\end{proof}

By Lemma~\ref{le:HOLV} the quotient group~${\Diff_0(M,\rho)/\Diff^\ex(M,\rho)}$
acts trivially on~$\sJ_{\INT,0}(M,\rho)/\Diff^\ex(M,\rho)$.
Hence~$\sT_0(M,\rho)$ is a sub\-mani\-fold of the infinite-dimensional 
symplectic quotient~${\sW_0(M,\rho)=\sJ_0(M,\rho)/\Diff^\ex(M,\rho)}$ 
in~\eqref{eq:MRHO}, which is regular near~$\sT_0(M,\rho)$
by Lemma~\ref{le:HOLV} and Theorem~\ref{thm:COMP}.
Moreover,~$\Om_\rho$ descends to a symplectic form
on~$\sT_0(M,\rho)$ by Theorem~\ref{thm:JINTZERO}.
Here is a formula for the pushforward of this symplectic form
under the diffeomorphism~$\iota_\rho$ in Lemma~\ref{le:iotarho}.
Let~${V>0}$.  By Theorem~\ref{thm:RICBC} 
every~${J\in\sJ_{\INT,0}(M)}$ admits 
a unique positive volume form~${\rho=\rho_J\in\Om^{2\sn}(M)}$ 
such that
\begin{equation}\label{eq:rhoJ}
\Ric_{\rho,J}=0,\qquad \int_M\rho=V.
\end{equation}

\begin{definition}[{\bf Weil--Petersson Symplectic Form}]
\label{def:TEICH}
For~${J\in\sJ_{\INT,0}(M)}$, 
for the volume form $\rho_J$ in~\eqref{eq:rhoJ}, 
and for $\Jhat_1,\Jhat_2\in\Om^{0,1}_J(M,TM)$
with~${\bar\p_J\Jhat_i=0}$ and~${f_i,g_i}$ as 
in Lemma~\ref{le:LAMBDAFG}, define
\begin{equation}\label{eq:TEICHSYMP}
\Om_J(\Jhat_1,\Jhat_2) 
:= \int_M\Bigl(
\tfrac{1}{2}\trace\bigl(\Jhat_1J\Jhat_2\bigr)
- f_1g_2 + f_2g_1
\Bigr)\rho_J,
\end{equation}
\begin{equation}\label{eq:TEICHINNER}
\inner{\Jhat_1}{\Jhat_2}_J
:= \Om_J(\Jhat_1,-J\Jhat_2)
= \int_M\Bigl(
\tfrac{1}{2}\trace\bigl(\Jhat_1\Jhat_2\bigr)
- f_1f_2 - g_1g_2
\Bigr)\rho_J.
\end{equation}
\end{definition}

\bigbreak

\begin{theorem}\label{thm:TEICH}
{\bf (i)}
The $2$-form~$\Om_J$ in~\eqref{eq:TEICHSYMP} descends to a nondegenerate 
$2$-form on the quotient space~\eqref{eq:TTEICH} and defines a 
symplectic form on~$\sT_0(M)$. Its pullback to~$\sT_0(M,\rho)$
under the diffeomorphism~$\iota_\rho$ of Lemma~\ref{le:iotarho}
is the symplectic form induced by~$\Om_\rho$.

\smallskip\noindent{\bf (ii)}
If~${\phi:M\to M}$ is an orientation preserving diffeomorphism then
\begin{equation}\label{eq:TEICHSYMP1}
\Om_{\phi^*J}(\phi^*\Jhat_1,\phi^*\Jhat_2) 
= \Om_J(\Jhat_1,\Jhat_2) 
\end{equation}
for all~${\Jhat_1,\Jhat_2\in\Om^{0,1}_J(M,TM)}$ such that~${\bar\p_J\Jhat_i=0}$.
Thus the mapping class group~${\Gamma:=\Diff^+(M)/\Diff_0(M)}$ 
acts on~$\sT_0(M)$ by symplectomorphisms.  

\smallskip\noindent{\bf (iii)}
For a symplectic form~${\om\in\Om^2(M)}$ 
with real first Chern class zero define
\begin{equation}\label{eq:TEICHOM}
\begin{split}
\sT(M,\om) := \sJ_\INT(M,\om)/\!\!\sim, \quad
\sJ_\INT(M,\om) := \sJ_\INT(M)\cap\sJ(M,\om),
\end{split}
\end{equation}
where~${J_0\sim J_1}$ iff there is a 
diffeomorphism~${\phi\in\Diff_0(M)}$ such that~${\phi^*J_0=J_1}$. 
This space is a complex submanifold of~$\sT_0(M)$ and the symplectic 
form~\eqref{eq:TEICHSYMP} restricts to the standard K\"ahler form 
on~$\sT(M,\om)$.  The symmetric bilinear form~\eqref{eq:TEICHINNER} 
is positive on~$T_{[J]}\sT(M,\om)$ and is negative on its symplectic complement.
\end{theorem}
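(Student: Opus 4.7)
The plan for \textbf{part (i)} is to define $\Om$ on $\sT_0(M)$ intrinsically as the symplectic form transported under $\iota_{\rho_J}$ (Lemma~\ref{le:iotarho}) from the Marsden--Weinstein reduction at $\rho = \rho_J$ for each base point $[J]$. With this intrinsic definition, closedness and nondegeneracy are immediate: Theorem~\ref{thm:COMP} and Theorem~\ref{thm:JINTZERO}(ii) show that $\sT_0(M,\rho_J)$ is a regular symplectic submanifold of $\sW_0(M,\rho_J)$, and the pullback identity $\iota_\rho^*\Om = \Om_\rho$ is built into the construction. What requires verification is that this intrinsic form agrees with the explicit expression \eqref{eq:TEICHSYMP} on arbitrary representatives $\Jhat_i \in \ker\bar\p_J$. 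Following the proof of Lemma~\ref{le:iotarho}, I would pass to preferred representatives $\Jhat_i^0 = \Jhat_i + \cL_{\nabla F_i}J$ with $\Richat_{\rho_J}(J,\Jhat_i^0) = 0$, where $F_i$ solves $d^*dF_i = f_{\Jhat_i}$. A direct computation using \eqref{eq:LAMBDAu}, \eqref{eq:LAMBDARHO}, \eqref{eq:RICuv}, Stokes' theorem, and the mean-zero normalizations of $f_i, g_i$ should then show that $\Om_{\rho_J,J}(\Jhat_1^0,\Jhat_2^0)$ equals the right-hand side of \eqref{eq:TEICHSYMP}; the correction $-f_1g_2 + f_2g_1$ arises as the cross-terms from the substitution and exactly absorbs the deformation of the trace term.

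For \textbf{part (ii)}, naturality reduces to naturality of the ingredients: since $\phi \in \Diff^+(M)$ preserves $V = \int_M\rho_J$, and $\Ric_{\phi^*\rho_J,\phi^*J} = \phi^*\Ric_{\rho_J,J} = 0$ by \eqref{eq:NATURALITY}, uniqueness in \eqref{eq:rhoJ} gives $\rho_{\phi^*J} = \phi^*\rho_J$. Naturality of $\Lambda_\rho$ together with uniqueness in Lemma~\ref{le:LAMBDAFG} yields $f_{\phi^*\Jhat_i} = \phi^*f_i$ and $g_{\phi^*\Jhat_i} = \phi^*g_i$, so \eqref{eq:TEICHSYMP1} follows by change of variables.

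For \textbf{part (iii)}, Corollary~\ref{cor:CY}(ii) provides a representative of $[J]$ in $\sJ_\INT(M,\om)$ with $\Ric_{\om,J}=0$, so after normalizing the volume one has $\rho_J = \om^\sn/\sn!$. Tangent vectors to $\sJ_\INT(M,\om)$ at $J$ are self-adjoint elements of $\ker\bar\p_J$, and both conditions are preserved by $\Jhat \mapsto -J\Jhat$, yielding the complex submanifold structure. By Lemma~\ref{le:JHATSTAR} the harmonic representatives split into harmonic self- and skew-adjoint parts, and on the self-adjoint piece Lemma~\ref{le:LAMBDAFG} gives $\Lambda_\rho(J,\Jhat) = 0$, hence $f_\Jhat = g_\Jhat = 0$. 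Therefore \eqref{eq:TEICHSYMP} restricts to the standard Weil--Petersson K\"ahler form and \eqref{eq:TEICHINNER} becomes $\tfrac{1}{2}\int\abs{\Jhat}^2\rho_J \ge 0$, positive definite. For the complement, self- and skew-adjoint $\Jhat$'s are $\tau_J$-orthogonal by a direct trace calculation using $J\Jhat = -\Jhat J$ and $\trace(AB) = \trace(BA)$, which together with the harmonic decomposition identifies the $\Om_J$-complement of $T_{[J]}\sT(M,\om)$ with the skew-adjoint harmonic subspace. On it, $\trace(\Jhat^2) = -\abs{\Jhat}^2$ while $f^2, g^2$ remain nonnegative, so \eqref{eq:TEICHINNER} is negative definite.

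\textbf{Main obstacle.} The delicate point is the computation in (i): verifying that the correction $-f_1g_2 + f_2g_1$ in \eqref{eq:TEICHSYMP} matches precisely the Stokes contributions produced by the gauge shift $\Jhat_i \mapsto \Jhat_i^0$. Every other step reduces cleanly to results already established, but this bookkeeping, carried out via \eqref{eq:LAMBDAu} at Ricci-flat $J$, carries the technical weight of the theorem.
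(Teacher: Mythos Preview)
Your plan is correct and uses the same ingredients as the paper, but the paper organizes part~(i) more efficiently. Rather than defining $\Om$ via the Marsden--Weinstein reduction and then verifying the explicit formula through a gauge-shift expansion, the paper works directly with~\eqref{eq:TEICHSYMP}: its Step~1 proves the single identity $\Om_J(\Jhat,\cL_vJ)=0$ for \emph{all} $v\in\Vect(M)$ in two lines from~\eqref{eq:LAMBDARHO} and $\Lambda_\rho(J,\Jhat)=-df\circ J+dg$, which immediately shows that $\Om_J$ descends to the quotient~\eqref{eq:TTEICH}. The cross-term bookkeeping you flag as the ``main obstacle'' is exactly the special case $v=\nabla F$ of this identity, so the paper's ordering dissolves your obstacle into one short computation. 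Step~2 then proves nondegeneracy by subtracting $\cL_{v_F+\nabla G}J$ (killing \emph{both} $f$ and $g$, not just $f$) and invoking part~(i) of Theorem~\ref{thm:JINTZERO} rather than part~(ii). Closedness follows, as you say, from the identification with the reduced form on $\sT_0(M,\rho)$. Your treatment of parts~(ii) and~(iii) matches the paper's, which disposes of each in a sentence; for~(iii) the paper simply notes that $T_{[J]}\sT(M,\om)$ is represented by self-adjoint $\Jhat$ modulo Hamiltonian and gradient Lie derivatives, and on harmonic representatives $f=g=0$ by Lemma~\ref{le:LAMBDAFG}, so~\eqref{eq:TEICHSYMP} and~\eqref{eq:TEICHINNER} reduce to $\Om_\rho$ and its positive-definite partner there.
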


\begin{proof}
The proof has three steps.

\medskip\noindent{\bf Step~1.}
Let~${J\in\sJ_{\INT,0}(M)}$, let~${\Jhat\in\Om^{0,1}_J(M,TM)}$ 
such that~${\bar\p_J\Jhat=0}$, and let~${v\in\Vect(M)}$.
Then~${\Om_J(\Jhat,\cL_vJ)=0}$.

\medskip\noindent
Let~${\rho:=\rho_J}$ and let~${f=f_{\Jhat}}$ and~${g=f_{J\Jhat}}$ be as 
in Lemma~\ref{le:LAMBDAFG}.  Then
\begin{equation*}
\begin{split}
\Om_J(\Jhat,\cL_vJ)
&=
\tfrac{1}{2}\int_M\trace\bigl(\Jhat J\cL_vJ\bigr)
- \int_M\bigl(ff_{Jv}-gf_v)\rho \\
&=
\int_M\Lambda_\rho(J,\Jhat)\wedge\iota(v)\rho 
- \int_Mfd\iota(Jv)\rho + \int_Mgd\iota(v)\rho 
=
0
\end{split}
\end{equation*}
because~${\Lambda_\rho(J,\Jhat)=-df\circ J+dg}$.
This proves Step~1.

\medskip\noindent{\bf Step~2.}
{\it Let~${J\in\sJ_{\INT,0}(M)}$ and let~${\Jhat\in\Om^{0,1}_J(M,TM)}$ 
such that~${\bar\p_J\Jhat=0}$ and~${\Om_J(\Jhat,\Jhat')=0}$ 
for all~${\Jhat'\in\Om^{0,1}_J(M,TM)}$ with~${\bar\p_J\Jhat'=0}$.
Then~${\Jhat\in\im\bar\p_J}$.}

\medskip\noindent
Choose a symplectic form~$\om$ such that~${J\in\sJ_\INT(M,\om)}$ 
and~${\om^\sn/\sn!=\rho_J}$, and choose functions~${F,G\in\Om^0(M)}$ 
such that~${d^*dF=-f_{J\Jhat}}$ and~${d^*dG=-f_\Jhat}$. 
Then~${\Lambda_\rho(J,\cL_{v_F+\nabla G}J)=\Lambda_\rho(J,\Jhat)}$ 
by Lemma~\ref{le:DIVH} and hence, by Step~1,
$$
\Om_{\rho,J}(\Jhat-\cL_{v_F+\nabla G}J,\Jhat') 
= \Om_J(\Jhat-\cL_{v_F+\nabla G}J,\Jhat') 
= \Om_J(\Jhat,\Jhat')
= 0
$$
for all~${\Jhat'\in\Om^{0,1}_J(M,TM)}$ with~${\bar\p_J\Jhat'=0}$.
Hence, by part~(i) of Theorem~\ref{thm:JINTZERO}, there 
exists a vector field~$v$ with~${\cL_vJ=\Jhat-\cL_{v_F+\nabla G}J}$.
This proves Step~2.

\bigbreak

\medskip\noindent{\bf Step~3.}
{\it We prove Theorem~\ref{thm:TEICH}.}

\medskip\noindent
By Step~1 the $2$-form~\eqref{eq:TEICHSYMP} on~$\sJ_{\INT,0}(M)$
descends to~$\sT_0(M)$ and by Step~2 
the induced $2$-form on~$\sT_0(M)$ is nondegenerate.
Its pullback under the diffeomorphism~$\iota_\rho$
in Lemma~\ref{le:iotarho} is the restricton of the 
symplectic form~$\Om_\rho$ on~$\sJ(M)$ 
to the subquotient~$\sT_0(M,\rho)$.
Hence it is closed and this proves part~(i).
Part~(ii) follows directly from the definitions and part~(iii) 
holds because the tangent space~${T_{[J]}\sT(M,\om)}$ 
is the quotient of the space of self-adjoint 
endomorphisms~${\Jhat=\Jhat^*\in\Om^{0,1}_J(M,TM)}$
with~${\bar\p_J\Jhat=0}$ by those generated by Ha\-miltonian
and gradient vector fields.  This proves Theorem~\ref{thm:TEICH}.
\end{proof}

\subsection*{A symplectic connection}

Consider the fibration
$
\sT_0(M,\om)\hookrightarrow \sE_0(M)\to\sB_0(M),
$
where~$\sB_0(M)$ denotes the space of isotopy classes 
of symplectic forms with real first Chern class zero 
which admit compatible complex structures
and~$\sE_0(M)$ denotes the space of isotopy classes 
of Ricci-flat K\"ahler structures~$(\om,J)$ on~$M$. Thus
\begin{equation}\label{eq:SYMP0}
\begin{split}
\sT_0(M,\om)
&:= 
\sJ_{\INT,0}(M,\om)/\Symp(M,\om)\cap\Diff_0(M),\\
\sJ_{\INT,0}(M,\om) 
&:= 
\left\{J\in\sJ_\INT(M,\om)\,|\,\Ric_{\om,J}=0\right\},\\
\sB_0(M) 
&:= 
\sS_0(M)/\Diff_0(M),\\
\sS_0(M)
&:=
\left\{\om\in\Om^2(M)\,\bigg|\,
\begin{array}{l}d\om=0,\om^\sn>0,\,c_1^\R(\om)=0,\\
\sJ_\INT(M,\om)\ne\emptyset
\end{array}\right\},\\
\sE_0(M) 
&:= 
\sK_0(M)/\Diff_0(M),\\
\sK_0(M) 
&:= 
\left\{(\om,J)\,|\,
\om\in\sS_0(M),\,J\in\sJ_\INT(M,\om),\,\Ric_{\om,J}=0
\right\}.
\end{split}
\end{equation}
These quotient spaces are finite-dimensional manifolds.
Here is a list of the real dimensions for the cases 
where~$M$ is the $2\sn$-torus, 
the K3 surface, 
the Enriques surface, 
the quintic in~$\CP^4$, 
the banana manifold~${\it B}$ in~\cite{BRYAN}, 
and a rigid Calabi--Yau $3$-fold~${\it JB}$ 
introduced recently by Jim Bryan (as yet unpublished).
\begin{center}
\begin{tabular}{||c||c|c|c|c|c||}
\hline
\multirow{2}{*}{$M$}  & $\sT_0(M)$ & $\cK_J$ & $\sE_0(M)$ & $\sB_0(M)$ & $\sT_0(M,\om)$ \\
& $2h^{\sn-1,1}_{M,L}$ & $h^{1,1}$ & $h^{1,1}+2h^{\sn-1,1}_{M,L}$
& $h^{1,1}+2h^{2,0}$ & $2h^{\sn-1,1}_{M,L}-2h^{2,0}$ \\
 \hline
 \hline
 $\T^{2\sn}$   & $2\sn^2$   & $\sn^2$ & $3\sn^2$ & $2\sn^2-\sn$ & $\sn^2+\sn$ \\
 ${\it K3}$ & 40 & 20   & 60 & 22 & 38 \\
 Enriques & 20 & 10 &  30 & 10 & 20 \\
 Quintic    & 202 & 1 &  203 & 1 & 202 \\
 ${\it B}$  & 16 & 20 &  36 & 20 & 16 \\
 ${\it JB}$   & 0 & 4 &  4 & 4 & 0 \\
 \hline
\end{tabular}
\end{center}

The symplectic form~\eqref{eq:TEICHSYMP} in Theorem~\ref{thm:TEICH}
gives rise to a closed $2$-form on~$\sE_0(M)$ 
which restricts to the canonical K\"ahler form on each fiber
and whose kernel at~$(\om,J)$ is the space~$H^{1,1}_J(M)\times\{0\}$.
It gives rise to a symplectic connection on~$\sE_0(M)$ 
as in~\cite[Chapter~6]{MS}.  To describe this connection,
it will be convenient to use the notation
\begin{equation}\label{eq:omhat20}
\begin{split}
(J^*\omhat)(u,u')
:=&\,\,
\omhat(Ju,Ju'),\\
(\iota(J)\omhat)(u,u')
:=&\,\,
\omhat(Ju,u')+\omhat(u,Ju').
\end{split}
\end{equation}
for~${\omhat\in\Om^2(M)}$ and~${u,u'\in\Vect(M)}$.
The $2$-forms~$J^*\omhat$ and~$\iota(J)\omhat$ 
are closed whenever~$\omhat$ is harmonic, 
because~${\omhat^{2,0}_J = \tfrac{1}{4}(\omhat-J^*\omhat) - \tfrac{\bi}{4}\iota(J)\omhat}$.

The tangent space of~$\sK_0(M)$ at~${(\om,J)}$ with~${\rho:=\om^\sn/\sn!}$ 
is the space of all pairs~${(\omhat,\Jhat)\in\Om^2(M)\times\Om^{0,1}_J(M,TM)}$
that satisfy the conditions
\begin{equation}\label{eq:TK1}
\omhat(u,u')-\omhat(Ju,Ju') = \om(\Jhat u,Ju') + \om(Ju,\Jhat u')
\end{equation}
for all~${u,u'\in\Vect(M)}$ and
\begin{equation}\label{eq:TK2}
d\omhat=0,\qquad
\bar\p_J\Jhat=0,\qquad
\Richat_\rho(J,\Jhat)+\tfrac{1}{2}d(d\inner{\omhat}{\om}\circ J)=0.
\end{equation}
We will strengthen the last condition in~\eqref{eq:TK2} and require  
\begin{equation}\label{eq:TK3}
\Lambda_\rho(J,\Jhat) = - d\inner{\omhat}{\om}\circ J.
\end{equation}
The definition of the connection is based on the next lemma.

\begin{lemma}\label{le:A}
Let~${(\om,J)\in\sK_0(M)}$ be a Ricci-flat K\"ahler structure,
denote by~${\inner{\cdot}{\cdot}:=\om(\cdot,J\cdot)}$ the K\"ahler metric,
define~${\rho:=\om^\sn/\sn!}$, and let~${\omhat\in\Om^2(M)}$ be a closed $2$-form.
Then, for~${\Jhat\in\Om^{0,1}_J(M,TM)}$, the following are equivalent.

\smallskip\noindent{\bf (a)}
$\Jhat$ satisfies~\eqref{eq:TK1}, \eqref{eq:TK2}, \eqref{eq:TK3}, 
and, for all~${\Jhat'\in\Om^{0,1}_J(M,TM)}$,
\begin{equation}\label{eq:HORIZONTAL}
\Jhat'=(\Jhat')^*,\quad\bar\p_J\Jhat'=0
\qquad\implies\qquad\Om_J(\Jhat,\Jhat') = 0.
\end{equation}

\smallskip\noindent{\bf (b)}
If~${v\in\Vect(M)}$ satisfies
\begin{equation}\label{eq:OMHATV}
d^*(\omhat-d\iota(v)\om)=0,\qquad
d^*\iota(v)\om=0, 
\end{equation}
and~${\omhat_0\in\Om^2(M)}$
and~${\Jhat_0\in\Om^{0,1}_J(M,TM)}$ are defined by
\begin{equation}\label{eq:OMJHATZERO}
\begin{split}
\omhat_0:=\omhat-d\iota(v)\om,\qquad
\inner{\Jhat_0\cdot}{\cdot} 
= \tfrac{1}{2}\bigl(\omhat_0 - J^*\omhat_0\bigr),
\end{split}
\end{equation}
then~${\Jhat=\cL_vJ+\Jhat_0}$.

\smallskip\noindent
Moreover, for every closed $2$-form~$\omhat$ there exists 
a unique~${\Jhat\in\Om^{0,1}_J(M,TM)}$ 
that satisfies the equivalent conditions~(a) and~(b).
\end{lemma}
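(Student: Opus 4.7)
The plan is to construct $\Jhat$ explicitly via part~(b) using Hodge theory, verify that this $\Jhat$ satisfies all four conditions of part~(a), and then deduce uniqueness for~(a) by exploiting the positivity of the Weil--Petersson pairing on self-adjoint harmonic deformations.

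First I settle existence and uniqueness of $v$ in~(b). Writing $\lambdahat:=\iota(v)\om$, the equations~\eqref{eq:OMHATV} are equivalent to $\Delta\lambdahat=d^*\omhat$ together with $d^*\lambdahat=0$. Since $d^*\omhat$ is $L^2$-orthogonal to harmonic $1$-forms, Hodge theory on $\Om^1(M)$ yields such a $\lambdahat$ (take the coexact part of the $d$-potential of $\omhat$), uniquely determined modulo $\ker\Delta$. The harmonic ambiguity is closed, so $\omhat_0:=\omhat-d\lambdahat$ and hence $\Jhat_0$ are independent of the choice, while by Lemma~\ref{le:HOLV}(iii) the corresponding ambiguity in $v$ consists of holomorphic vector fields, which satisfy $\cL_vJ=0$; thus $\Jhat:=\cL_vJ+\Jhat_0$ is a well-defined element of $\Om^{0,1}_J(M,TM)$. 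Moreover $\omhat_0$ is closed (as $\omhat$ is) and coclosed (by the first equation of~\eqref{eq:OMHATV}), hence harmonic, and its anti-$J$-invariant part $\omhat_0-J^*\omhat_0=2\inner{\Jhat_0\cdot}{\cdot}$ is the $(2,0)+(0,2)$-component, therefore also harmonic by the K\"ahler Hodge decomposition. Lemma~\ref{le:PARALLEL} then gives $\bar\p_J\Jhat_0=\bar\p_J^*\Jhat_0=0$ and $\nabla\Jhat_0=0$.

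Next I verify that (b) implies (a). Equation~\eqref{eq:TK1} follows by applying Lemma~\ref{le:ONEONE} with $\tau=\om$ to identify $d\iota(v)\om-J^*d\iota(v)\om$ with the $\cL_vJ$-pairing, while the definition of $\Jhat_0$ accounts precisely for the residual anti-$J$-invariant part of $\omhat_0$. The equation $\bar\p_J\Jhat=0$ holds because $\bar\p_J\cL_vJ=2J\bar\p_J^2v=0$ (by~\eqref{eq:LvJ} and holomorphicity of $J$) and $\bar\p_J\Jhat_0=0$ was established above, while the Ricci condition in~\eqref{eq:TK2} follows once~\eqref{eq:TK3} is established, via Theorem~\ref{thm:RICCI}(ii). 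For~\eqref{eq:TK3}, Theorem~\ref{thm:LAMBDA} at $\Ric_{\rho,J}=0$ gives $\Lambda_\rho(J,\cL_vJ)=-df_v\circ J+df_{Jv}$, Lemma~\ref{le:LAMBDAFG} gives $\Lambda_\rho(J,\Jhat_0)=0$, and~\eqref{eq:STARV} combined with the second equation of~\eqref{eq:OMHATV} yields $f_{Jv}=-d^*\iota(v)\om=0$; a direct computation using $d\om=0$ and $\tau\wedge\om^{\sn-1}/(\sn-1)!=\inner{\tau}{\om}\rho$ gives $f_v=\inner{d\lambdahat}{\om}$, and the K\"ahler identity $[\Lambda,\Delta]=0$ implies $\inner{\omhat_0}{\om}=\Lambda\omhat_0$ is a harmonic function, hence constant on the connected manifold $M$, so $df_v=d\inner{\omhat}{\om}$. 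For~\eqref{eq:HORIZONTAL}, any self-adjoint harmonic $\Jhat'$ satisfies $f_{\Jhat'}=g_{\Jhat'}=0$ by Lemma~\ref{le:LAMBDAFG}, so $\Om_J(\Jhat,\Jhat')=\Om_{\rho,J}(\Jhat,\Jhat')$; the $\cL_vJ$-contribution vanishes by~\eqref{eq:LAMBDARHO} and $\Lambda_\rho(J,\Jhat')=0$, and the $\Jhat_0$-contribution vanishes because skew-adjoint and self-adjoint harmonic deformations are $\Om_{\rho,J}$-orthogonal, the symplectic-complement assertion of Theorem~\ref{thm:TEICH}(iii).

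Finally, for uniqueness, let $\Xhat$ denote the difference of two solutions of~(a). Subtracting~\eqref{eq:TK1} forces $\Xhat=\Xhat^*$, subtracting~\eqref{eq:TK2} gives $\bar\p_J\Xhat=0$, and subtracting~\eqref{eq:TK3} gives $\Lambda_\rho(J,\Xhat)=0$; by Lemma~\ref{le:DJ} together with self-adjointness this forces $\bar\p_J^*\Xhat=0$, so $\Xhat$ is harmonic. Since $\nabla J=0$ in the K\"ahler case, $J$ commutes with $\bar\p_J$ and $\bar\p_J^*$, so $J\Xhat$ is again self-adjoint and harmonic. Applying~\eqref{eq:HORIZONTAL} with $\Jhat':=J\Xhat$ yields $\Om_J(\Xhat,J\Xhat)=0$, hence $\inner{\Xhat}{\Xhat}_J=0$ by~\eqref{eq:TEICHINNER}, and therefore $\Xhat=0$ by the positivity statement in Theorem~\ref{thm:TEICH}(iii). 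I expect the main technical obstacle to lie in~\eqref{eq:TK3}: identifying $f_v$ with $\inner{\omhat}{\om}$ up to a constant requires combining Hodge-theoretic information about $\omhat_0$ via the K\"ahler identity $[\Lambda,\Delta]=0$ with the specific normalization in~\eqref{eq:OMHATV}, and bookkeeping the sign conventions for $\Lambda_\rho$ on $\cL_vJ$ and on $\Jhat_0$ demands care.
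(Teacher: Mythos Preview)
Your strategy matches the paper's three-step argument (Hodge-theoretic existence for~(b), verification that~(b) implies~(a), uniqueness for~(a)), and your treatment of~\eqref{eq:TK3} actually makes explicit a point the paper glosses over, namely that~$\inner{\omhat_0}{\om}$ is constant.  There is, however, one genuine gap and one notable difference worth recording.

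\textbf{The gap.}  In verifying~\eqref{eq:HORIZONTAL} you restrict to self-adjoint \emph{harmonic}~$\Jhat'$, but the condition must hold for all self-adjoint~$\Jhat'$ with~$\bar\p_J\Jhat'=0$.  Your argument for the~$\cL_vJ$-contribution relies on~$\Lambda_\rho(J,\Jhat')=0$, which fails for non-harmonic~$\Jhat'$.  The fix is to invoke instead the invariance property~$\Om_J(\cL_vJ,\Jhat')=0$ established in Step~1 of the proof of Theorem~\ref{thm:TEICH} (valid for all~$\Jhat'\in\ker\bar\p_J$), and for the~$\Jhat_0$-contribution to note that~$\Om_J(\Jhat_0,\Jhat')=\Om_{\rho,J}(\Jhat_0,\Jhat')$ because~$f_{\Jhat_0}=g_{\Jhat_0}=0$ (only~$\Jhat_0$ needs to be harmonic for this cancellation), and the latter vanishes pointwise since~$\trace(\Jhat_0 J\Jhat')=0$ whenever~$\Jhat_0$ is skew-adjoint and~$\Jhat'$ self-adjoint.

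\textbf{The difference.}  Your uniqueness argument is shorter than the paper's.  The paper first shows~$\Om_J(\Xhat,\Jhat')=0$ for \emph{all}~$\Jhat'\in\ker\bar\p_J$ (by passing to~$(\Jhat'-\cL_{v'}J)^+$ via Lemma~\ref{le:JHATSTAR}), then invokes Theorem~\ref{thm:TEICH} to write~$\Xhat=\cL_vJ$, and finally decomposes~$v=v_0+v_H+\nabla F$ using Lemmas~\ref{le:SELFADJOINT}, \ref{le:HOLV}, and~\ref{le:DIVH} to force~$\Xhat=0$.  Your route---plugging the single test element~$\Jhat'=J\Xhat$ (self-adjoint and harmonic) into~\eqref{eq:HORIZONTAL} and reading off~$\inner{\Xhat}{\Xhat}_J=0$ from positivity---bypasses this detour entirely.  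The paper's approach has the advantage of yielding the stronger intermediate statement that~$\Xhat$ is a Lie derivative, while yours is more economical for the immediate goal.
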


\begin{proof}
We prove in three steps that~(a) is equivalent to~(b).

\medskip\noindent{\bf Step~1.}
{\it Suppose~$\Jhat_1$ and~$\Jhat_2$ satisfy~(a).  Then~${\Jhat_1=\Jhat_2}$.}

\medskip\noindent
The difference~${\Jhat:=\Jhat_1-\Jhat_2}$ satisfies~(a) 
with~${\omhat=0}$. Hence~${\Jhat=\Jhat^*}$ by~\eqref{eq:TK1},
and~${\bar\p_J\Jhat=0}$ by~\eqref{eq:TK2},
and~${\Lambda_\rho(J,\Jhat)=0}$ by~\eqref{eq:TK3}.
Let~${\Jhat'\in\Om^{0,1}_J(M,TM)}$ with~${\bar\p_J\Jhat'=0}$,
choose a vector field~$v'$ such that~${\bar\p_J^*(\Jhat'-\cL_{v'}J)=0}$,
and define~${(\Jhat'-\cL_{v'}J\bigr)^+:=\tfrac{1}{2}(\Jhat'-\cL_{v'}J+(\Jhat'-\cL_{v'}J)^*)}$.
Then~${\bar\p_J(\Jhat'-\cL_{v'}J)^+=0}$ by Lemma~\ref{le:JHATSTAR},
hence~${\Om_J(\Jhat,(\Jhat'-\cL_{v'}J)^+)=0}$ by~\eqref{eq:HORIZONTAL},
and this implies that~${\Om_J(\Jhat,\Jhat') = \Om_J(\Jhat,\Jhat'-\cL_{v'}J)
= \Om_J(\Jhat,(\Jhat'-\cL_{v'}J)^+) = 0}$.
Thus by Theorem~\ref{thm:TEICH} there exists 
a vector field~$v$ with~${\cL_vJ=\Jhat}$.
Since~${\Jhat=\Jhat^*}$, Lemma~\ref{le:SELFADJOINT} 
asserts that there exist functions~${F,H\in\Om^0(M)}$ and a vector field~$v_0$ 
such that~${v:=v_0+v_H+\nabla F}$ and~${\iota(v_0)\om}$ is a harmonic $1$-form.
Thus~${\cL_{v_0}J=0}$ by Lemma~\ref{le:HOLV} and
$
dd^*dF\circ J - dd^*dH
= \Lambda_\rho(J,\cL_vJ)
= 0
$
by Lemma~\ref{le:DIVH}.
Hence~$F$ and~$H$ are constant, 
so~${\Jhat=\cL_{v_0}J=0}$ and~${\Jhat_1=\Jhat_2}$.
This proves Step~1.

\medskip\noindent{\bf Step~2.}
{\it Suppose~${v\in\Vect(M)}$ satisfies~\eqref{eq:OMHATV}, 
define~$\omhat_0$ and~$\Jhat_0$ by~\eqref{eq:OMJHATZERO},
and define~${\Jhat:=\cL_vJ+\Jhat_0}$. Then~$\Jhat$ satisfies~(a).}

\medskip\noindent
By~\eqref{eq:OMJHATZERO} and~\eqref{eq:STARV}, we have
\begin{equation}\label{eq:fvfJv}
\begin{split}
f_v\rho
&= d\iota(v)\rho
= d\iota(v)\om\wedge\tfrac{\om^{\sn-1}}{(\sn-1)!}
= \inner{\omhat-\omhat_0}{\om}\rho,\\
f_{Jv}\rho
&= d\iota(Jv)\rho
= d{*\iota(v)\om}
= 0.
\end{split}
\end{equation}
Moreover,~${\omhat_0}$ is a harmonic $2$-form
and so is~${\omhat_0-J^*\omhat_0}$. 
Thus~${\bar\p_J\Jhat_0=0}$ and~${\bar\p_J^*\Jhat_0=0}$
by Lemma~\ref{le:PARALLEL},
hence~${\Lambda_\rho(J,\Jhat_0)=0}$ by Lemma~\ref{le:LAMBDAFG}, 
and therefore~$\Lambda_\rho(J,\Jhat) = \Lambda_\rho(J,\cL_vJ) 
= -df_v\circ J = -d\inner{\omhat}{\om}\circ J$ by~\eqref{eq:fvfJv}.
Since~$\Jhat_0$ is skew-adjoint by~\eqref{eq:OMJHATZERO},
we have~${\Om_J(\Jhat,\Jhat') = \Om_J(\cL_vJ,\Jhat')=0}$
for all~$\Jhat'$ with~${\Jhat'=(\Jhat')^*}$ and~${\bar\p_J\Jhat'=0}$
and, by Lemma~\ref{le:ONEONE},
$$
\inner{(\Jhat-\Jhat^*)\cdot}{\cdot}
= \inner{(\cL_vJ-(\cL_vJ)^*)\cdot}{\cdot} + 2\inner{\Jhat_0\cdot}{\cdot} 
= \omhat-J^*\omhat.
$$
Hence~$\Jhat$ satisfies~(a) and this proves Step~2.

\medskip\noindent{\bf Step~3.}
{\it (a) is equivalent to~(b).}

\medskip\noindent
By Step~2, (b) implies~(a). Now assume~$\Jhat$ satisfies~(a)
and~$v$ satisfies~\eqref{eq:OMHATV}.
Define~$\omhat_0$ and~$\Jhat_0$ by~\eqref{eq:OMJHATZERO}.
Then~$\cL_vJ+\Jhat_0$ satisfies~(a) by Step~2 and so~${\Jhat=\cL_vJ+\Jhat_0}$
by Step~1.  Thus~$\Jhat$ satisfies~(b) and this proves Step~3.

Thus we have established the equivalence of~(a) and~(b).
By Step~2 and Hodge theory there exists 
an element~${\Jhat\in\Om^{0,1}_J(M,TM)}$ that satisfies~(a).
Uniqueness was established in Step~1 and this proves Lemma~\ref{le:A}. 
\end{proof}

\bigbreak

\begin{theorem}[{\bf Symplectic Connection}]\label{thm:CONNECTION}
{\bf (i)}
Let~${(\om,J)\in\sK_0(M)}$ and let~${\rho:=\om^\sn/\sn!}$. 
Then there exists a unique linear map
\begin{equation*}
\sA_{\om,J}:\Om^2(M)\supset\ker d\to\Om^{0,1}_J(M,TM)
\end{equation*}
which assigns to every closed real valued 
$2$-form~${\omhat\in\Om^2(M)}$ the unique infinitesimal complex 
structure~${\Jhat=\sA_{\om,J}(\omhat)\in\Om^{0,1}_J(M,TM)}$
that satisfies the equivalent conditions~(a) and~(b) in Lemma~\ref{le:A}.

\smallskip\noindent{\bf (ii)}
The~$1$-form~$\sA$ is $\Diff(M)$-equivariant, i.e.\ 
\begin{equation}\label{eq:A1}
\sA_{\phi^*\om,\phi^*J}(\phi^*\omhat) = \phi^*\sA_{\om,J}(\omhat)
\end{equation}
for every~${(\om,J)\in\sK_0(M)}$, every closed $2$-form~${\omhat}$, 
and every orientation preserving diffeomorphism~${\phi:M\to M}$.  
Moreover, 
\begin{equation}\label{eq:A2}
\sA_{\om,J}(d\iota(v)\om)=\cL_vJ
\end{equation}
for all~${(\om,J)\in\sK_0(M)}$ and all~${v\in\Vect(M)}$ 
with~${d\iota(Jv)\om^\sn=0}$.

\smallskip\noindent{\bf (iii)}
The curvature of the connection~$\sA$ is a $\Diff_0(M)$-equivariant 
$2$-form on~$\sS_0(M)$ with values in the space of smooth
functions on the fiber~$\sT_0(M,\om)$. It assigns to every~$\om\in\sS_0(M)$ 
and every pair~$\omhat_1,\omhat_2$ of closed $2$-forms on~$M$ 
the Hamiltonian function~${\sH_{\om;\omhat_1,\omhat_2}:\sT_0(M,\om)\to\R}$
given by 
\begin{equation}\label{eq:CURVATURE}
\begin{split}
\sH_{\om;\omhat_1,\omhat_2}(J)
&:= 
-\Om_J\bigl(\sA_{\om,J}(\omhat_1),\sA_{\om,J}(\omhat_2)\bigr)  \\
&\phantom{:}=
\tfrac{1}{2}\int_M\bigl(\iota(J)(\omhat_1-d\lambdahat_1)\bigr)
\wedge\omhat_2\wedge\tfrac{\om^{\sn-2}}{(\sn-2)!}
\end{split}
\end{equation}
for~${J\in\sJ_\INT(M,\om)}$ with~${\Ric_{\om,J}=0}$, 
where the $1$-form~$\lambdahat_1\in\Om^1(M)$
is chosen such that~${d^*(\omhat_1-d\lambdahat_1)=0}$
with respect to the K\"ahler metric~${\inner{\cdot}{\cdot}:=\om(\cdot,J\cdot)}$.  
The Hamiltonian vector field on~$\sT_0(M,\om)$ generated by this 
function is the vertical part of the Lie bracket of the horizontal lifts 
of two vector fields on~$\sB_0(M)$ that take the 
values~$\omhat_i$ at~$\om$ (see~\cite[Lemma~6.4.8]{MS}).
\end{theorem}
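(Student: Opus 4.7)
Parts~(i) and~(ii) are essentially formal consequences of Lemma~\ref{le:A}. The lemma supplies both existence and uniqueness of $\sA_{\om,J}(\omhat)$, and linearity follows because the defining conditions~\eqref{eq:TK1}--\eqref{eq:TK3} and~\eqref{eq:HORIZONTAL} of part~(a) are linear in the pair $(\omhat,\Jhat)$; uniqueness then forces $\sA_{\om,J}(c_1\omhat_1+c_2\omhat_2)=c_1\sA_{\om,J}(\omhat_1)+c_2\sA_{\om,J}(\omhat_2)$. For the equivariance~\eqref{eq:A1}, every ingredient of condition~(b) in Lemma~\ref{le:A} transforms naturally under a diffeomorphism $\phi$: the K\"ahler metric and Hodge star by definition, $\Lambda_\rho$ by Theorem~\ref{thm:RICCI}(i), and $\Om_J$ by Theorem~\ref{thm:TEICH}(ii). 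Pulling back the characterization therefore shows $\phi^*\sA_{\om,J}(\omhat)$ meets condition~(b) for the data $(\phi^*\om,\phi^*J,\phi^*\omhat)$, and uniqueness concludes. For~\eqref{eq:A2}, set $\omhat:=d\iota(v)\om$ and take the vector field in~(b) to be $v$ itself: the hypothesis $d\iota(Jv)\om^\sn=0$ combined with $*\iota(v)\om=\iota(Jv)\rho$ from~\eqref{eq:STARV} gives $d^*\iota(v)\om=0$, so~\eqref{eq:OMHATV} is satisfied, hence $\omhat_0=0$, $\Jhat_0=0$, and $\sA_{\om,J}(\omhat)=\cL_vJ$.

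The substance lies in part~(iii). The general symplectic-connection framework of~\cite[Lemma~6.4.8]{MS} identifies the curvature with the fiberwise Hamiltonian $\sH_{\om;\omhat_1,\omhat_2}(J):=-\Om_J(\sA_{\om,J}(\omhat_1),\sA_{\om,J}(\omhat_2))$ and also yields the final clause about vertical brackets. What remains is to evaluate this pairing. Decomposing $\Jhat_i:=\sA_{\om,J}(\omhat_i)=\cL_{v_i}J+\Jhat_{0,i}$ as in~\eqref{eq:OMJHATZERO}, the correction terms $\Jhat_{0,i}$ are skew-adjoint and correspond to the real primitive $(2,0)+(0,2)$-form $\omhat_{0,i}-J^*\omhat_{0,i}$ on a Ricci-flat K\"ahler manifold, which is harmonic; thus Lemma~\ref{le:PARALLEL} gives $\bar\p_J\Jhat_{0,i}=\bar\p_J^*\Jhat_{0,i}=0$, and then Lemma~\ref{le:LAMBDAFG} yields $\Lambda_\rho(J,\Jhat_{0,i})=0$ together with the vanishing of the auxiliary functions $f,g$ from Definition~\ref{def:TEICH}. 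Applying Step~1 of the proof of Theorem~\ref{thm:TEICH} in each argument to kill the $\cL_{v_i}J$-contributions, the pairing reduces to
\begin{equation*}
-\Om_J\bigl(\Jhat_1,\Jhat_2\bigr)
= -\Om_J\bigl(\Jhat_{0,1},\Jhat_{0,2}\bigr)
= -\tfrac{1}{2}\int_M\trace\bigl(\Jhat_{0,1}J\Jhat_{0,2}\bigr)\rho.
\end{equation*}

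The remaining step is a pointwise K\"ahler identity. The real primitive $(2,0)+(0,2)$-forms $\alpha_i:=\omhat_{0,i}-J^*\omhat_{0,i}=2\inner{\Jhat_{0,i}\cdot}{\cdot}$ satisfy $\iota(J)\alpha_i=2\alpha_i(J\cdot,\cdot)$ and the K\"ahler primitivity identity $\star\alpha_i=-\alpha_i\wedge\om^{\sn-2}/(\sn-2)!$. A direct computation in a unitary frame, using $\Jhat_{0,i}J=-J\Jhat_{0,i}$, then converts the trace integrand to $-\tfrac12\iota(J)\omhat_{0,1}\wedge\omhat_{0,2}\wedge\om^{\sn-2}/(\sn-2)!$. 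Since $\omhat_{0,1}$ is harmonic, its $(2,0)+(0,2)$-component is closed, so $\iota(J)\omhat_{0,1}$ is closed; because $\omhat_2-\omhat_{0,2}=d\lambdahat_2$ and $\om$ is closed, Stokes' theorem permits replacing $\omhat_{0,2}$ by $\omhat_2$, yielding~\eqref{eq:CURVATURE}.

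The main obstacle is this final pointwise identity. Translating $\tfrac12\trace(\Jhat_{0,1}J\Jhat_{0,2})\rho$ into the wedge form involving $\iota(J)$ requires careful bookkeeping of the signs arising from the action of $J$ on real $(2,0)+(0,2)$-forms and from the Hodge star on primitive classes; it is precisely here that the K\"ahler hypothesis $\om^\sn/\sn!=\rho$ enters decisively. All other ingredients---existence, uniqueness, equivariance, and the reduction of the pairing---follow essentially mechanically from Lemma~\ref{le:A}, Lemma~\ref{le:PARALLEL}, Lemma~\ref{le:LAMBDAFG}, and Theorem~\ref{thm:TEICH}.
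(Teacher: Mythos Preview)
Your proposal follows essentially the same route as the paper's proof: parts~(i) and~(ii) from Lemma~\ref{le:A} and naturality, and for part~(iii) the decomposition $\sA_{\om,J}(\omhat_i)=\cL_{v_i}J+\Jhat_{0,i}$ from~\eqref{eq:OMJHATZERO}, reduction of~$\Om_J$ to $-\tfrac12\int_M\trace(\Jhat_{0,1}J\Jhat_{0,2})\rho$, a pointwise frame computation converting this to a wedge expression, then Stokes to replace $\omhat_{0,2}$ by $\omhat_2$. One correction: the forms $\alpha_i=\omhat_{0,i}-J^*\omhat_{0,i}$ are of type $(2,0)+(0,2)$, for which the Lefschetz formula gives $*\alpha_i=+\alpha_i\wedge\om^{\sn-2}/(\sn-2)!$ (your minus sign is the primitive $(1,1)$ case); with this fixed the signs match~\eqref{eq:CURVATURE}.
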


\begin{proof}
Part~(i)  and~\eqref{eq:A2} follow directly from Lemma~\ref{le:A},
while~\eqref{eq:A1} follows by combining uniqueness in part~(i) 
with the naturality conditions in Theorem~\ref{thm:RICCI} 
and Theorem~\ref{thm:TEICH}. This proves~(i) and~(ii).

For part~(iii) we must verify the second equality in~\eqref{eq:CURVATURE}.
Fix a symplectic form~${\om\in\Om^2(M)}$ with real first Chern class zero,
define~${\rho:=\om^\sn/\sn!}$, and let~${\omhat_1,\omhat_2\in\Om^2(M)}$ be closed.
Let~${J\in\sJ_\INT(M,\om)}$ such that~${\Ric_{\rho,J}=0}$,
choose~${\lambdahat_i\in\Om^1(M)}$ 
such that~${d^*(\omhat_i-d\lambdahat_i)=0}$ and~${d^*\lambdahat_i=0}$ 
with respect to the K\"ahler metric~${\inner{\cdot}{\cdot}=\om(\cdot,J\cdot)}$,
and define~${v_i\in\Vect(M)}$ by~${\iota(v_i)\om:=\lambdahat_i}$.

\bigbreak

By~(i) and Lemma~\ref{le:A}, we have
\begin{equation}\label{eq:Jomlahat}
\begin{split}
\sA_{\om,J}(\omhat_i) 
= 
\Jhat_i+\cL_{v_i}J,\quad
\inner{\Jhat_i\cdot}{\cdot} 
= 
\tfrac{1}{2}\bigl((\omhat_i-d\lambdahat_i)-J^*(\omhat_i-d\lambdahat_i)\bigr).
\end{split}
\end{equation}
Since~${\Lambda_\rho(J,\Jhat_i)=0}$ 
and~${f_{Jv_i}=0}$ by~\eqref{eq:fvfJv},
equation~\eqref{eq:Jomlahat} yields
\begin{equation*}
\begin{split}
\sH_{\om;\omhat_1,\omhat_2}(J)
= 
-\Om_J\bigl(\Jhat_1+\cL_{v_1}J,\Jhat_2+\cL_{v_2}J\bigr) 
=
-\tfrac{1}{2}\int_M\trace\bigl(\Jhat_1J\Jhat_2\bigr)\rho. 
\end{split}
\end{equation*}
Now choose a local orthonormal frame~${e_1,\dots,e_{2\sn}}$.  Then
\begin{equation*}
\begin{split}
-\tfrac{1}{2}\trace\bigl(\Jhat_1J\Jhat_2\bigr)
&=
-\tfrac{1}{2}\sum_i\inner{J\Jhat_1e_i}{\Jhat_2e_i} 
=
-\tfrac{1}{2}\sum_i\inner{J\Jhat_1e_i}{e_j}\inner{e_j}{\Jhat_2e_i} \\
&=
\sum_{i<j}\inner{\Jhat_1e_i}{Je_j}\inner{e_j}{\Jhat_2e_i}  
=
\tfrac{1}{2}\inner{\iota(J)\tau_1}{\tau_2},
\end{split}
\end{equation*}
where $\tau_i:=\inner{\Jhat_i\cdot}{\cdot}
=\tfrac{1}{2}(\omhat_i-d\lambdahat_i-J^*(\omhat_i-d\lambdahat_i))$.
This $2$-form satisfies $(\tau_i)^{1,1}_J=0$
and hence~${{*\tau_i}=\tau_i\wedge\om^{\sn-2}/(\sn-2)!}$.
Moreover,~${\iota(J)\tau_i=\iota(J)(\omhat_i-d\lambdahat_i)}$.
Thus
\begin{equation*}
\begin{split}
\sH_{\om;\omhat_1,\omhat_2}(J)
&=
-\tfrac{1}{2}\int_M\trace\bigl(\Jhat_1J\Jhat_2\bigr)\rho 
=
\tfrac{1}{2}\int_M\bigl(\iota(J)\tau_1\bigr)\wedge{*\tau_2} \\
&=
\tfrac{1}{4}\int_M\bigl(\iota(J)(\omhat_1-d\lambdahat_1)\bigr)\wedge
\bigl(\omhat_2-d\lambdahat_2-J^*(\omhat_2-d\lambdahat_2)\bigr)
\wedge \tfrac{\om^{\sn-2}}{(\sn-2)!} \\
&=
\tfrac{1}{2}\int_M\bigl(\iota(J)(\omhat_1-d\lambdahat_1)\bigr)\wedge
\bigl(\omhat_2-d\lambdahat_2\bigr)
\wedge \tfrac{\om^{\sn-2}}{(\sn-2)!} \\
&=
\tfrac{1}{2}\int_M\bigl(\iota(J)(\omhat_1-d\lambdahat_1)\bigr)\wedge
\omhat_2\wedge \tfrac{\om^{\sn-2}}{(\sn-2)!}.
\end{split}
\end{equation*}
This proves~\eqref{eq:CURVATURE}.  The right hand side
of~\eqref{eq:CURVATURE} depends only on the cohomology classes
of~$\omhat_1$ and~$\omhat_2$.  Hence it is invariant under 
the action of~${\Diff_0(M)\cap\Symp(M,\om)}$ on~$J$, 
because~$\phi^*\omhat_i-\omhat_i$ is exact for~${\phi\in\Diff_0(M)}$.
Thus it descends to a function on~$\sT_0(M,\om)$.
This proves Theorem~\ref{thm:CONNECTION}.
\end{proof}

The quotient of the Calabi--Yau Teichm\"uller space by the mapping 
class group is the Calabi--Yau moduli space~${\sM_0(M):=\sJ_{\INT,0}(M)/\Diff^+(M)}$.
For each K\"ahlerable symplectic form~$\om$
with real first Chern class zero
there is a polarized Calabi--Yau moduli
space~${\sM_0(M,\om):=\sJ_{\INT,0}(M,\om)/\Symp(M,\om)}$,
the quotient of the polarized Teichm\"uller space~$\sT_0(M,\om)$
by the symplectic mapping class group.
The study of the geometry of these moduli spaces is a vast
and extremely active area of research in both mathematics
and physics.  An overview of the subject and many references 
can be found in the article~\cite{YAU3} by Shing-Tung Yau.


\appendix

\section{Torsion-free connections}\label{app:NABLA}

Let~$M$ be an oriented $2\sn$-manifold.
We prove that a nondegenerate $2$-form on~$M$
is preserved by a torsion-free connection if and only if it is closed,
and that an almost complex structure on~$M$ is preserved by a torsion-free
connection if and only if it is integrable.  We use the sign conventions
$$
[\cL_u,\cL_v]+\cL_{[u,v]}=0
$$
for the Lie bracket and
\begin{equation}\label{eq:NJ}
N_J(u,v) = [u,v] + J[Ju,v]+J[u,Jv]-[Ju,Jv]
\end{equation}
for the Nijenhuis tensor.  If~$\nabla$ is a torsion-free connection 
on~$TM$ then 
\begin{equation}\label{eq:NJnabla}
N_J(u,v) = (\Nabla{u}J)Jv  + (\Nabla{Ju}J)v - (\Nabla{v}J)Ju  - (\Nabla{Jv}J)u.
\end{equation}

\begin{lemma}\label{le:TORSION}
Let~$M$ be a $2\sn$-manifold.

\smallskip\noindent{\bf (i)}
An almost complex structure $J$ is integrable if and only if 
there exists a torsion-free connection~$\nabla$ on~$TM$ 
such that~${\nabla J=0}$.  If~$J$ is integrable 
and~$\rho\in\Om^{2\sn}(M)$ is a volume form inducing 
the same orientation as~$J$, then there exists a  torsion-free 
connection~$\nabla$ on~$TM$ such that~${\nabla\rho=0}$ and~${\nabla J=0}$.

\smallskip\noindent{\bf (ii)}
A nondegenerate $2$-form~${\om\in\Om^2(M)}$
is closed if and only if there exists a torsion-free 
connection~$\nabla$ on~$TM$ such that~${\nabla\om=0}$.
\end{lemma}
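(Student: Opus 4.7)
The plan is to establish both ``if'' directions by direct algebraic manipulation, and both ``only if'' directions by exhibiting local flat models and gluing them via a partition of unity. For~(i) the easy direction is immediate from formula~\eqref{eq:NJnabla}: if $\nabla$ is torsion-free and $\nabla J=0$, then every term on the right of~\eqref{eq:NJnabla} vanishes, so $N_J=0$ and~$J$ is integrable. For the converse, the Newlander--Nirenberg theorem produces a holomorphic atlas; in any holomorphic chart the underlying real coordinates $x^1,y^1,\dots,x^\sn,y^\sn$ satisfy $J\p_{x^i}=\p_{y^i}$ and $J\p_{y^i}=-\p_{x^i}$, so the flat coordinate connection is torsion-free and preserves~$J$. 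The key observation is that both conditions ``torsion-free'' and ``$\nabla J=0$'' are affine in $\nabla$: the difference of two connections is a tensor $A\in\Om^1(M,\End(TM))$, and the set of torsion-free $J$-preserving connections is cut out by the \emph{linear} conditions $A(u)v=A(v)u$ and $[A(u),J]=0$. Consequently, given a locally finite cover $\{U_\alpha\}$ by holomorphic charts with flat connections $\nabla^\alpha$ and a subordinate partition of unity $\{\chi_\alpha\}$, the convex combination $\nabla:=\sum_\alpha\chi_\alpha\nabla^\alpha$ is a globally defined torsion-free $J$-preserving connection.

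For the volume-form refinement, start with a torsion-free $J$-preserving connection~$\nabla$ and let $\psi\in\Om^1(M)$ be the unique $1$-form with $\nabla\rho=\psi\otimes\rho$. I look for a $(1,2)$-tensor $A$ satisfying the three conditions $A(u)v=A(v)u$, $[A(u),J]=0$, and $\trace(A(u))=\psi(u)$; the first two preserve torsion-freeness and $J$-invariance, while the third compensates the failure of $\nabla$ to annihilate~$\rho$, since any endomorphism~$E$ of~$TM$ acts on~$\Lambda^{2\sn}T^*M$ by multiplication by~$-\trace(E)$. The explicit ansatz
$$
A(u)v:=\alpha(u)v+\alpha(v)u-\alpha(Ju)Jv-\alpha(Jv)Ju
$$
is manifestly symmetric in $u,v$, commutes with~$J$ (a short check using $J^2=-\one$), and has trace $(2\sn+2)\alpha(u)$ after expanding and using $\trace(J)=0$. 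Choosing $\alpha:=\psi/(2\sn+2)$ yields the desired modification $\nabla+A$.

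For~(ii), the ``if'' direction follows from the identity $d\om(u,v,w)=\sum_{\text{cyc}}(\Nabla{u}\om)(v,w)$, which is valid for any torsion-free connection and reduces to a standard cancellation between the bracket terms in the intrinsic formula for~$d\om$ and the connection terms in~$\Nabla{u}\om$. For the ``only if'' direction, Darboux's theorem supplies, around each point, local coordinates in which $\om$ takes the standard form $\sum_idx^i\wedge dy^i$; the flat coordinate connection there is torsion-free and preserves this standard $2$-form, and exactly the same partition-of-unity argument as in~(i) assembles these into a global torsion-free $\nabla$ with $\nabla\om=0$, since ``$\nabla\om=0$'' is again an affine condition cut out by the linear requirement $\om(A(u)v,w)+\om(v,A(u)w)=0$. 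The main conceptual obstacle in both parts is invoking the right local existence theorem (Newlander--Nirenberg for~(i), Darboux for~(ii)); once these are available the gluing is routine, and the volume-form tweak in~(i) is the only place requiring a genuine construction, which the explicit ansatz above handles in one line.
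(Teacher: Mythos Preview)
Your argument is correct. The partition-of-unity gluing works because the conditions ``torsion-free'' and ``$\nabla J=0$'' (resp.\ ``$\nabla\om=0$'') are indeed affine in~$\nabla$, and your ansatz for the volume-form correction is exactly right---it is in fact the last line of the paper's formula~\eqref{eq:NABLA3}.

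The paper takes a different route: rather than invoking Newlander--Nirenberg and Darboux to produce local models, it writes down a single global formula for the connection in each case. For~(i) it starts from the Levi--Civita connection of a metric~$g_{\rho,J}$ with volume form~$\rho$ for which~$J$ is skew-adjoint, and then applies the correction~\eqref{eq:NABLA3}, which simultaneously forces~$\habla J=0$, $\habla\rho=0$, and~$\mathrm{Tor}^\habla=-\tfrac14 N_J$; this last identity makes the equivalence of integrability and torsion-freeness completely explicit without appeal to Newlander--Nirenberg. For~(ii) it chooses an $\om$-compatible~$J$, takes the Levi--Civita connection of the associated metric, and corrects by the symmetric tensor~\eqref{eq:nabroh2}, using the closedness of~$\om$ via the cyclic identity~\eqref{eq:nabroh1}. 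Your approach is cleaner conceptually and modular, but leans on two substantial local existence theorems; the paper's approach is entirely self-contained (just linear algebra on top of a Levi--Civita connection) and yields the extra dividend that the torsion of the constructed connection is literally~$-\tfrac14 N_J$.
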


\begin{proof}
We prove part~(i).  If $\nabla$ is a torsion-free connection 
with~${\nabla J=0}$ it follows directly from~\eqref{eq:NJnabla}
that~${N_J=0}$. Conversely suppose~$J$ is integrable
and let~$\rho$ be a volume form on~$M$ inducing 
the same orientation as~$J$. Fix a background metric $g$ on~$M$. 
Then~${g_J:=g+J^*g}$ is a metric with respect to which~$J$
is skew-adjoint and, if~${\dvol_J\in\Om^{2\sn}(M)}$ is the volume form 
of this metric, then the metric~${g_{\rho,J}:=(\rho/\dvol_J)^{1/\sn}g_J}$
has the volume form~$\rho$.  Let~$\nabla$ be the Levi-Civita 
connection of the metric~$g_{\rho,J}$.  
Then~$\nabla$ is torsion-free and~$\nabla\rho=0$.  
Let~${\alpha(u):= \tfrac{1}{2}\trace(J(\nabla J)u)}$ and define 
\begin{equation}\label{eq:NABLA3}
\begin{split}
\Habla{u}v 
:=&\,\, 
\Nabla{u}v - \tfrac12J(\Nabla{u}J)v
- \tfrac{1}{4}J(\Nabla{v}J)u - \tfrac{1}{4}(\Nabla{Jv}J)u \\
&\,\, 
+ \frac{\alpha(u)v+\alpha(v)u-\alpha(Ju)Jv-\alpha(Jv)Ju}{2\sn+2}.
\end{split}
\end{equation}
Then~${\habla\rho=0}$,~${\habla J=0}$, and a calculation shows 
that~${\mathrm{Tor}^\habla = -\tfrac14N_J}$, so~$\habla$ 
is torsion-free if and only if~$J$ is integrable. This proves~(i). 

We prove part~(ii). If $\nabla$ is torsion-free
and~${\nabla\om=0}$ then
\begin{equation*}
\begin{split}
d\om(u,v,w)
&=
\cL_u(\om(v,w)) + \cL_v(\om(w,u)) + \cL_w(\om(u,v)) \\
&\quad
+ \om([v,w],u) + \om([w,u],v) + \om([u,v],w) \\
&=
\om([v,w]-\Nabla{w}v+\Nabla{v}w,u) 
+ \om([w,u]-\Nabla{u}w+\Nabla{w}u,v) \\
&\quad
+ \om([u,v]-\Nabla{v}u+\Nabla{u}v,w) 
=
0.
\end{split}
\end{equation*}
Conversely, suppose~$\om$ is a symplectic form and choose an 
almost complex structure~$J$ on~$M$ that is compatible with~$\om$,
so~$\inner{\cdot}{\cdot}:=\om(\cdot,J\cdot)$ is a Riemannian metric.
Let~$\nabla$ be its Levi-Civita connection. Then
\begin{equation}\label{eq:nabroh1}
\inner{(\Nabla{u}J)v}{w} 
+ \inner{(\Nabla{v}J)w}{u} 
+ \inner{(\Nabla{w}J)u}{v} 
= d\om(u,v,w)
= 0
\end{equation}
for all~${u,v,w\in\Vect(M)}$ by~\cite[Lemma~4.1.14]{MS}.  
Define
\begin{equation}\label{eq:nabroh2}
\DTabla{u}v := \Nabla{u}v + A(u)v,\qquad
A(u)v := -\tfrac13J\bigl((\Nabla{u}J)v+(\Nabla{v}J)u\bigr).
\end{equation}
This connection is torsion-free and
satisfies~${JA(u) + A(u)^*J = \Nabla{u}J}$
for every vector field~${u\in\Vect(M)}$
by a straight forward calculation.  Hence
\begin{equation*}
\begin{split}
\om(\DTabla{u}v,w) + \om(v,\DTabla{u}w)
&=
\inner{J\Nabla{u}v+JA(u)v}{w} + \inner{Jv}{\Nabla{u}w+A(u)w} \\
&=
\inner{(JA(u)+A(u)^*J)v}{w}
+ \inner{J\Nabla{u}v}{w} 
+ \inner{Jv}{\Nabla{u}w} \\
&=
\inner{(\Nabla{u}J)v}{w}
+ \inner{J\Nabla{u}v}{w} 
+ \inner{Jv}{\Nabla{u}w} \\
&=
\cL_u\inner{Jv}{w} 
=
\cL_u\bigl(\om(v,w)\bigr)
\end{split}
\end{equation*}
for all~${u,v,w\in\Vect(M)}$.  
This proves Lemma~\ref{le:TORSION}.
\end{proof}

\begin{lemma}\label{le:RICCI11}
Let~$M$ be an oriented $2\sn$-manifold, 
let~$\rho\in\Om^{2\sn}(M)$ be a positive volume form, 
let~$J\in\sJ_\INT(M)$ be a complex structure compatible 
with the orientation, and let~$\nabla$ be a torsion-free 
$\rho$-connection such that~${\nabla J=0}$.  
Then~$\trace(JR^\nabla)$ is a~$(1,1)$-form.
\end{lemma}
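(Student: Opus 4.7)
The plan is a direct local computation in holomorphic coordinates. By Lemma~\ref{le:TORSION}(i), the existence of a torsion-free connection with~${\nabla J=0}$ already forces $J$ to be integrable, so one may choose a holomorphic chart~${(z^1,\dots,z^\sn)}$ near a given point and extend $\nabla$ $\C$-linearly to~${TM\otimes\C=T^{1,0}\oplus T^{0,1}}$. Because~${\nabla J=0}$, this splitting is preserved, and in the holomorphic frame~${\zeta_k:=\p/\p z^k}$ the induced connection on~$T^{1,0}$ is recorded by a matrix of complex-valued $1$-forms~${\Gamma^j_k}$ via~${\nabla\zeta_k=\sum_j\Gamma^j_k\otimes\zeta_j}$.

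The first step is to show that each~${\Gamma^j_k}$ is of type~${(1,0)}$. Since~${[\zeta_b,\bar\zeta_a]=0}$ in holomorphic coordinates, torsion-freeness gives~${\nabla_{\bar\zeta_a}\zeta_b=\nabla_{\zeta_b}\bar\zeta_a}$; the left-hand side lies in~$T^{1,0}$ while the right-hand side lies in~$T^{0,1}$, so both vanish, forcing~${\Gamma^j_k(\bar\zeta_a)=0}$. The second step is the algebraic identity~${\trace(\Gamma\wedge\Gamma)=0}$, valid for any matrix of $1$-forms by swapping summation indices and exploiting anti-commutativity; combined with the structure equation for curvature this yields~${\trace_\C R^{\nabla^{1,0}}=d(\trace_\C\Gamma)}$.

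The third step uses~${\nabla\rho=0}$ to identify~${\trace_\C\Gamma}$. Writing $\rho=\phi\cdot\omega_0$ locally, where~${\omega_0}$ is the canonical real volume form built from the coordinate vector fields and~${\phi>0}$, the parallel-volume condition translates into~${\trace_\C\Gamma+\overline{\trace_\C\Gamma}=d\log\phi}$; splitting by type and invoking Step~1 forces~${\trace_\C\Gamma=\p\log\phi}$. Consequently
$$\trace_\C R^{\nabla^{1,0}}=d\,\p\log\phi=-\p\bar\p\log\phi,$$
which is manifestly of type~$(1,1)$.

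Finally, for any $J$-linear endomorphism~$\xi$ of~$TM$, working in a real basis~${(e_1,\dots,e_\sn,Je_1,\dots,Je_\sn)}$ gives the identity~${\trace_\R(J\xi)=-2\,\Im\trace_\C(\xi^{1,0})}$. Applied pointwise to~${\xi=R^\nabla(u,v)}$, which is $J$-linear because~${\nabla J=0}$, this yields~${\trace(JR^\nabla)=2\,\Im(\p\bar\p\log\phi)}$, a real $(1,1)$-form, as required. The main hurdle is the bookkeeping: translating the $(1,0)$-type decomposition and the parallel-volume condition into the clean formula~${\trace_\C\Gamma=\p\log\phi}$, and passing carefully between the real trace of the statement and the complex trace of the induced connection on~$T^{1,0}$. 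Once these translations are in place, the lemma reduces to the elementary identity~${\trace(\Gamma\wedge\Gamma)=0}$ together with the observation that the curvature of the induced connection on the anti-canonical line bundle is~${-\p\bar\p\log\phi}$.
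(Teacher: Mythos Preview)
Your proof is correct, but it proceeds along a genuinely different route from the paper. The paper's argument is a coordinate-free algebraic manipulation: using only the first Bianchi identity and ${[R^\nabla,J]=0}$, it derives the endomorphism identity
\[
JR(u,v)-R(Ju,v)-R(u,Jv)-JR(Ju,Jv)=0,
\]
and then takes the real trace; the hypothesis ${\nabla\rho=0}$ enters only implicitly, through ${\trace R^\nabla=0}$, which kills the two middle terms. By contrast, you work in holomorphic coordinates, prove that the connection matrix on ${T^{1,0}}$ is of type ${(1,0)}$, and use ${\nabla\rho=0}$ explicitly to identify ${\trace_\C\Gamma=\p\log\phi}$. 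Your approach actually yields more than the bare statement: you obtain the local formula ${\trace(JR^\nabla)=-2i\,\p\bar\p\log\phi}$, which is precisely the classical expression for (twice) the Ricci form in terms of the volume density, and makes part~(i) of Theorem~\ref{thm:RICBC} transparent. The paper's argument, on the other hand, is shorter and entirely intrinsic, and isolates clearly which algebraic inputs are doing the work.
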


\begin{proof}
Since $\nabla$ is torsion-free,~$R^\nabla$ satisfies the 
first Bianchi identity. Thus
\begin{equation*}
\begin{split}
&
R(u,v)w+JR(Ju,v)w+JR(u,Jv)w-R(Ju,Jv)w \\
&=
R(u,v)w+JR(Ju,v)w+JR(u,Jv)w+R(Jv,w)Ju+R(w,Ju)Jv \\
&=
R(u,v)w+JR(Ju,v)w+JR(w,Ju)v+JR(u,Jv)w+JR(Jv,w)u \\
&=
R(u,v)w-JR(v,w)Ju-JR(w,u)Jv \\
&=
R(u,v)w+R(v,w)u+R(w,u)v 
=
0
\end{split}
\end{equation*}
and so~${JR(u,v)-R(Ju,v)-R(u,Jv)-JR(Ju,Jv)=0}$.  
Take the trace to obtain~${\trace(JR(u,v))=\trace(JR(Ju,Jv))}$.  
This proves Lemma~\ref{le:RICCI11}.
\end{proof}


\section{The Bochner--Kodaira--Nakano identity}\label{app:BKN}

This appendix gives a self-contained proof of the Bochner--Kodaira--Nakano
identity~\eqref{eq:BKN} for complex anti-linear $1$-forms 
with values in the tangent bundle.   This formula plays a central role 
in the proofs of Lemma~\ref{le:JHATSTAR} and Lemma~\ref{le:PARALLEL}. 
Assume throughout that~$(M,\om,J)$ is a $2\sn$-dimensional K\"ahler manifold 
with the volume form~${\rho:=\om^\sn/\sn!}$, denote by~$\nabla$ the Levi-Civita connection 
of the K\"ahler metric, by~${R^{\nabla\!}\in\Om^2(M,\End(TM))}$ the Riemann
curvature tensor, by~${\Ric_{\rho,J}:=\tfrac{1}{2}\trace(JR^{\nabla\!})\in\Om^2(M)}$
the Ricci-form, and define the complex linear skew-adjoint 
endomorphism~${Q\in\Om^0(M,\End(TM))}$ by
\begin{equation}\label{eq:Q}
\inner{Qu}{v} = \Ric_{\rho,J}(u,v)
\end{equation}
for~${u,v\in\Vect(M)}$.  Define the 
map~${\cT:\Om^{0,1}_J(M,TM)\to\Om^{0,1}_J(M,TM)}$ by
\begin{equation}\label{eq:T}
\cT(\Jhat)u := \sum_{i=1}^{2\sn}R^{\nabla\!}(e_i,u)\Jhat e_i
\end{equation}
for~${\Jhat\in\Om^{0,1}_J(M,TM)}$ and~${u\in\Vect(M)}$,
where~${e_1,\dots,e_{2\sn}}$ is a local orthonormal frame 
of the tangent bundle.  With this notation the operator~$\nabla^*\nabla$
on the space of sections of the endomorphism bundle is given by
\begin{equation}\label{eq:NN}
\nabla^*\nabla\Jhat=-\sum_i\left(\Nabla{e_i}\Nabla{e_i}\Jhat+\DIV(e_i)\Nabla{e_i}\Jhat\right)
\end{equation}
for~${\Jhat\in\Om^{0,1}_J(M,TM)}$. 

\begin{theorem}[{\bf Bochner--Kodaira--Nakano}]\label{thm:BKN}
Every $\Jhat\in\Om^{0,1}_J(M,TM)$ satisfies the equation
\begin{equation}\label{eq:BKNW}
\begin{split}
\bar\p_J^*\bar\p_J\Jhat+\bar\p_J\bar\p_J^*\Jhat
= \tfrac{1}{2}\nabla^*\nabla\Jhat + \tfrac{1}{2}[JQ,\Jhat] + \cT(\Jhat).
\end{split}
\end{equation}
\end{theorem}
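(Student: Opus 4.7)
The plan is to invoke the K\"ahler identity that the Levi-Civita connection $\nabla$ on $TM$ coincides with the Chern connection on the holomorphic tangent bundle, so that $\bar\p_J$ on sections and on $(0,q)$-forms with values in $TM$ can be expressed as the $(0,1)$-part of $\nabla$, and the formal adjoint $\bar\p_J^*$ as the $(0,1)$-component of a covariant divergence (integration by parts relative to $\rho=\om^\sn/\sn!$). Once both operators have been rewritten in terms of $\nabla$ alone, the combination $\bar\p_J^*\bar\p_J\Jhat + \bar\p_J \bar\p_J^*\Jhat$ becomes a manifestly second-order differential operator on $\Om^{0,1}_J(M,TM)$ whose principal part can be compared directly with the rough Laplacian $\nabla^*\nabla$ given by~\eqref{eq:NN}, and whose zeroth-order part can be read off as a curvature contraction.

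I would first compute $\bar\p_J\Jhat \in \Om^{0,2}_J(M,TM)$ and $\bar\p_J^*\Jhat \in \Om^0(M,TM)$ explicitly in terms of $\nabla\Jhat$, using the decomposition of $\Nabla{\cdot}\Jhat$ into $J$-linear and $J$-antilinear parts in each slot. A synchronous local orthonormal frame $e_1,\dots,e_{2\sn}$ with $e_{\sn+i}=Je_i$ collapses the relevant sums: $\bar\p_J^*\Jhat$ becomes (up to a universal constant) $-\sum_i \Nabla{e_i}\Jhat e_i$ at the frame center, while $\bar\p_J\Jhat$ is the corresponding anti-symmetrized expression. Substituting back into $(\bar\p_J^*\bar\p_J + \bar\p_J\bar\p_J^*)\Jhat$ and expanding yields one collection of terms involving pairs of covariant derivatives in the same order, and another collection requiring a commutation.

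The terms in which the two derivatives do not need to be commuted assemble, after using the K\"ahler relation $\nabla J=0$ and the orthonormal-frame identity $\sum_i e_i\otimes e_i = \sum_i Je_i\otimes Je_i$, into exactly $\tfrac{1}{2}\nabla^*\nabla\Jhat$ via~\eqref{eq:NN}. The remaining terms are curvature contractions, obtained from $[\Nabla{e_i},\Nabla{e_j}]\Jhat = [R^\nabla(e_i,e_j),\Jhat]$ after the Christoffel corrections have cancelled at the frame center. I would then apply the first Bianchi identity to the resulting sum $\sum_i R^\nabla(e_i,\cdot)\Jhat e_i$, together with the K\"ahler symmetry $R^\nabla(Ju,Jv)=R^\nabla(u,v)$, to split the remainder into two pieces: a trace contribution that, by the definition~\eqref{eq:Q} of $Q$ and the formula $\Ric_{\rho,J} = \tfrac{1}{2}\trace(JR^\nabla)$ of Lemma~\ref{le:RICCI11}, is exactly $\tfrac{1}{2}[JQ,\Jhat]$; and the genuinely non-trace piece, which by~\eqref{eq:T} is $\cT(\Jhat)$.

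The hard part will be the index and sign bookkeeping in the curvature step: one must split the commutator sum so that the Bianchi identity lands in the correct slot to produce the Ricci contraction cleanly separated from the full contraction $\cT(\Jhat)$, without spurious cross terms mixing the two. A useful cross-check, which I would build into the calculation, is the self-adjoint/skew-adjoint decomposition $\Jhat = \tfrac{1}{2}(\Jhat+\Jhat^*) + \tfrac{1}{2}(\Jhat-\Jhat^*)$: on the skew-adjoint summand the contraction $\cT$ vanishes identically (as shown in the proof of Lemma~\ref{le:PARALLEL}), so the right-hand side of~\eqref{eq:BKNW} reduces to $\tfrac{1}{2}\nabla^*\nabla\Jhat + \tfrac{1}{2}[JQ,\Jhat]$ there, and one can verify the coefficients on self-adjoint $\Jhat$ separately. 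Matching both components then establishes~\eqref{eq:BKNW} in full generality.
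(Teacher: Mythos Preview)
Your plan is correct and essentially coincides with the paper's own proof: the paper also expresses $\bar\p_J^*\Jhat=-\sum_i(\Nabla{e_i}\Jhat)e_i$ and $\bar\p_J^*\tau=-\sum_i(\Nabla{e_i}\tau)(e_i,\cdot)$ via integration by parts in a local orthonormal frame (Lemma~\ref{le:DBARSTAR}), then expands $(\bar\p_J^*\bar\p_J+\bar\p_J\bar\p_J^*)\Jhat$ directly, commutes the second covariant derivatives to produce $[R^\nabla(e_i,u),\Jhat]$, and uses the frame formula $Q=-\tfrac{1}{2}\sum_iR^\nabla(e_i,Je_i)=\sum_iJR^\nabla(\cdot,e_i)e_i$ (Lemma~\ref{le:Q}, via first Bianchi) to isolate $\tfrac{1}{2}[JQ,\Jhat]$ from $\cT(\Jhat)$. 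Your self-adjoint/skew-adjoint cross-check is a nice addition but not needed for the argument.
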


\begin{proof}
See page~\pageref{proof:BKN}.
\end{proof}

The proof relies on the following three lemmas.
Throughout we use the notation~${e_1,\dots,e_{2\sn}}$ 
for a local orthonormal frame of the tangent bundle, 
and it will sometimes be convenient to choose the frame 
such that~${e_{\sn+i}=Je_i }$ for~${i=1,\dots,\sn}$.  
We will use the notation~$\DIV(u)$ for the divergence
of a vector field~${u\in\Vect(M)}$; thus~${\DIV(u)=\trace(\nabla u)}$
and~${\DIV(u)\rho=d\iota(u)\rho}$.  For~${u\in\Vect(M)}$ we denote by~$\cL_u$ 
the Lie derivative (of any object on~$M$) in the direction of~$u$;
thus~${\cL_uf=df\circ u}$ for~${f\in\Om^0(M)}$ and~${\cL_uv=[v,u]}$
for~${v\in\Vect(M)}$.  (Note the sign convention for the Lie bracket.)

\begin{lemma}\label{le:FRAME}
Let~${B:TM\otimes TM\to E}$ be any bilinear form on the tangent 
bundle with values in a vector bundle~${E\to M}$.  Then
\begin{equation}\label{eq:FRAME1}
\sum_{i=1}^{2\sn}\Bigl(B(e_i,\Nabla{u}e_i) +  B(\Nabla{u}e_i,e_i)\Bigr) = 0
\end{equation}
for all~${u\in\Vect(M)}$.  Moreover,
\begin{equation}\label{eq:FRAME2}
\sum_{i=1}^{2\sn}\left(\Nabla{e_i}e_i + \DIV(e_i)e_i\right) = 0.
\end{equation}
\end{lemma}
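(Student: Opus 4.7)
The plan is to exploit the one key fact that an orthonormal frame $e_1,\dots,e_{2\sn}$ induces a skew-symmetric ``connection matrix''. Concretely, differentiating the identity $\inner{e_i}{e_j}=\delta_{ij}$ in the direction of any vector field $w$ gives
\begin{equation*}
\inner{\Nabla{w}e_i}{e_j}+\inner{e_i}{\Nabla{w}e_j}=0,
\end{equation*}
so the matrix $A^w_{ij}:=\inner{\Nabla{w}e_i}{e_j}$ is skew-symmetric in $(i,j)$. Both parts of Lemma~\ref{le:FRAME} will follow by writing everything in this frame and pairing a skew-symmetric object with a symmetric one.

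For equation~\eqref{eq:FRAME1}, I would expand $\Nabla{u}e_i=\sum_j A^u_{ij}e_j$ and observe that
\begin{equation*}
\sum_i\bigl(B(e_i,\Nabla{u}e_i)+B(\Nabla{u}e_i,e_i)\bigr)
= \sum_{i,j}A^u_{ij}\bigl(B(e_i,e_j)+B(e_j,e_i)\bigr).
\end{equation*}
The coefficient $B(e_i,e_j)+B(e_j,e_i)$ is symmetric in $(i,j)$ while $A^u_{ij}$ is skew-symmetric, so the double sum vanishes. This is essentially a one-line argument.

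For equation~\eqref{eq:FRAME2}, I would introduce $\Gamma^k_{ji}:=\inner{\Nabla{e_j}e_i}{e_k}$, which by the orthonormality of the frame satisfies the skew-symmetry $\Gamma^k_{ji}=-\Gamma^i_{jk}$. Then $\DIV(e_i)=\sum_j\Gamma^j_{ji}$ and $\Nabla{e_i}e_i=\sum_k\Gamma^k_{ii}e_k$, so the $e_k$-component of the left hand side of~\eqref{eq:FRAME2} equals
\begin{equation*}
\sum_i\Gamma^k_{ii}+\sum_i\Gamma^i_{ik}
= \sum_i\bigl(\Gamma^k_{ii}+\Gamma^i_{ik}\bigr)
= 0,
\end{equation*}
where the final equality uses $\Gamma^k_{ii}=-\Gamma^i_{ik}$. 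This proves~\eqref{eq:FRAME2}.

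There is no real obstacle in this lemma; it is a purely algebraic bookkeeping statement whose only content is the skew-symmetry of the connection coefficients of a Levi--Civita-type connection expressed in an orthonormal frame. Its role in the paper is as a workhorse for the Bochner--Kodaira--Nakano computation (Theorem~\ref{thm:BKN}), where it will be applied repeatedly to discard terms involving $\Nabla{u}e_i$ when summing over $i$, effectively allowing one to pretend that the frame is parallel at a point despite the fact that a global parallel orthonormal frame need not exist.
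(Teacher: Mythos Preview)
Your proof is correct and follows essentially the same approach as the paper: both arguments reduce to the skew-symmetry $\inner{\Nabla{w}e_i}{e_j}+\inner{\Nabla{w}e_j}{e_i}=0$ of the connection coefficients in an orthonormal frame, with~\eqref{eq:FRAME1} obtained by pairing this skew object against the symmetric expression $B(e_i,e_j)+B(e_j,e_i)$, and~\eqref{eq:FRAME2} by taking the $e_k$-component and using $\Gamma^k_{ii}=-\Gamma^i_{ik}$ (the paper phrases the latter as $\DIV(e_j)-\trace(\nabla e_j)=0$, which is the same computation).
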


\begin{proof}
Define the coefficients~$c_{ij}$ by
$$
c_{ij}:=\inner{\Nabla{u}e_i}{e_j}
$$
so that~${\Nabla{u}e_i=\sum_jc_{ij}e_j}$.
Then~${c_{ij}+c_{ji}=\cL_u\inner{e_i}{e_j}=0}$, because the frame is orthonormal 
and~$\nabla$ is a Riemannian connection.  Hence
\begin{equation*}
\begin{split}
\sum_{i=1}^{2\sn}B(e_i,\Nabla{u}e_i) 
&= 
\sum_{i=1}^{2\sn}\sum_{j=1}^{2\sn}c_{ij}B(e_i,e_j)  \\
&= 
-\sum_{j=1}^{2\sn}\sum_{i=1}^{2\sn}c_{ji}B(e_i,e_j)  \\
&=
- \sum_{j=1}^{2\sn}B(\Nabla{u}e_j,e_j) 
\end{split}
\end{equation*}
and this proves~\eqref{eq:FRAME1}.   
Since
$$
0=\cL_{e_i}\inner{e_j}{e_i} = \inner{\Nabla{e_i}e_j}{e_i}+\inner{e_j}{\Nabla{e_i}e_i}
$$
for all~$i$ and~$j$, we also have
\begin{equation*}
\begin{split}
\sum_{i=1}^{2\sn}\inner{e_j}{\DIV(e_i)e_i + \Nabla{e_i}e_i} 
&=
\DIV(e_j) - \sum_{i=1}^{2\sn}\inner{\Nabla{e_i}e_j}{e_i}  \\
&= 
\DIV(e_j) - \trace(\nabla e_j) \\
&=
0
\end{split}
\end{equation*}
for all~$j$.  This proves~\eqref{eq:FRAME2} and Lemma~\ref{le:FRAME}. 
\end{proof}

\begin{lemma}\label{le:Q}
The endomorphism~$Q$ in~\eqref{eq:Q} is given by
\begin{equation}\label{eq:QFRAME}
\begin{split}
Qu
= 
- \tfrac{1}{2}\sum_{i=1}^{2\sn}R^{\nabla\!}(e_i,Je_i)u 
=
\sum_{i=1}^{2\sn} JR^{\nabla\!}(u,e_i)e_i
\end{split}
\end{equation}
for~${u\in\Vect(M)}$.
\end{lemma}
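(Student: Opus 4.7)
The plan is to establish the two equalities separately. Both sides of the formula are tensorial in $u$, so it suffices to verify the defining relation $\inner{Qu}{v} = \tfrac{1}{2}\trace(JR^{\nabla\!}(u,v))$ after substituting each expression. The first equality will come from the pair-symmetry of the Riemann tensor together with $J^*=-J$; the second will come from the first Bianchi identity combined with the K\"ahler condition $\nabla J = 0$ (which makes $R^\nabla$ commute with $J$) and a frame-change argument.

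For the first equality, I would start by writing
\[
\trace\bigl(JR^{\nabla\!}(u,v)\bigr)
= \sum_i\inner{JR^{\nabla\!}(u,v)e_i}{e_i}
= -\sum_i\inner{R^{\nabla\!}(u,v)e_i}{Je_i},
\]
using skew-adjointness of $J$ with respect to the K\"ahler metric. Then the pair symmetry $\inner{R^{\nabla\!}(u,v)w}{z}=\inner{R^{\nabla\!}(w,z)u}{v}$ converts this into $-\sum_i\inner{R^{\nabla\!}(e_i,Je_i)u}{v}$, so that $Qu=-\tfrac{1}{2}\sum_iR^{\nabla\!}(e_i,Je_i)u$.

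For the second equality, I would apply the first Bianchi identity to obtain
\[
R^{\nabla\!}(e_i,Je_i)u = R^{\nabla\!}(u,Je_i)e_i - R^{\nabla\!}(u,e_i)Je_i,
\]
and use $\nabla J=0$, which gives $R^{\nabla\!}(u,e_i)Je_i = JR^{\nabla\!}(u,e_i)e_i$. The remaining task is to show
\[
\sum_{i=1}^{2\sn} R^{\nabla\!}(u,Je_i)e_i = -J\sum_{i=1}^{2\sn} R^{\nabla\!}(u,e_i)e_i.
\]
Here I would use that the endomorphism $u\mapsto\sum_i R^{\nabla\!}(u,f_i)f_i$ is tensorial and therefore independent of the choice of orthonormal frame. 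Writing $e_i = -J(Je_i)$ on the left and then relabeling $f_i:=Je_i$, which is again an orthonormal frame, yields the claimed identity. Substituting both reductions back gives
\[
-\tfrac{1}{2}\sum_iR^{\nabla\!}(e_i,Je_i)u
= -\tfrac{1}{2}\bigl(-J-J\bigr)\sum_iR^{\nabla\!}(u,e_i)e_i
= \sum_iJR^{\nabla\!}(u,e_i)e_i.
\]

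The only subtle step is the sign bookkeeping, and in particular the frame-change identity $\sum_iR^{\nabla\!}(u,Je_i)e_i = -J\sum_iR^{\nabla\!}(u,e_i)e_i$; one has to be careful to apply the frame-independence to the correct tensor (the one with two $f_i$'s in matching slots) rather than naively to the mixed sum. Once that point is handled, the rest is a direct algebraic manipulation.
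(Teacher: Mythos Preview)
Your proof is correct and follows essentially the same route as the paper: the first equality via pair symmetry of the Riemann tensor and skew-adjointness of~$J$, the second via the first Bianchi identity together with $[R^\nabla,J]=0$ and a frame-change argument. The only cosmetic difference is that the paper picks an explicit frame with $e_{\sn+i}=Je_i$ to show $\sum_iR^{\nabla\!}(Je_i,u)e_i=-\sum_iR^{\nabla\!}(e_i,u)Je_i$, whereas you argue frame-independence abstractly via $f_i:=Je_i$; these are the same idea.
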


\begin{proof}
Let~${u,v\in\Vect(M)}$. Then by~\eqref{eq:Q} we have
\begin{equation*}
\begin{split}
\inner{Qu}{v}
&=
\Ric_{\rho,J}(u,v) \\
&=
\tfrac{1}{2}\trace(JR^{\nabla\!}(u,v)) \\
&=
\tfrac{1}{2}\sum_{i=1}^{2\sn}\inner{e_i}{JR^{\nabla\!}(u,v)e_i} \\
&=
- \tfrac{1}{2}\sum_{i=1}^{2\sn}\inner{R^{\nabla\!}(u,v)e_i}{Je_i} \\
&=
- \tfrac{1}{2}\sum_{i=1}^{2\sn}\inner{R^{\nabla\!}(e_i,Je_i)u}{v}.
\end{split}
\end{equation*}
This proves the first equality in~\eqref{eq:QFRAME}.
Moreover, it follows from the first Bianchi identity that
\begin{equation*}
\begin{split}
Qu 
&=
-\tfrac{1}{2}\sum_{i=1}^{2\sn}R^{\nabla\!}(e_i,Je_i)u  \\
&=
\tfrac{1}{2}\sum_{i=1}^{2\sn}\Bigl(
R^{\nabla\!}(u,e_i)Je_i + R^{\nabla\!}(Je_i,u)e_i
\Bigr)  \\
&=
\tfrac{1}{2}\sum_{i=1}^{2\sn}\Bigl(
R^{\nabla\!}(u,e_i)Je_i - R^{\nabla\!}(e_i,u)Je_i
\Bigr)  \\
&=
\sum_{i=1}^{2\sn}R^{\nabla\!}(u,e_i)Je_i  \\
&=
\sum_{i=1}^{2\sn}JR^{\nabla\!}(u,e_i)e_i.
\end{split}
\end{equation*}
Here the third step uses a frame that satisfies~${e_{\sn+i}=Je_i}$
for~${i=1,\dots,\sn}$.  This proves~\eqref{eq:QFRAME} and Lemma~\ref{le:Q}.
\end{proof}

\begin{lemma}\label{le:DBARSTAR}
Let~${\Jhat\in\Om^{0,1}_J(M,TM)}$ and~${\tau\in\Om^{0,2}_J(M,TM)}$.
Then
\begin{eqnarray}
\label{eq:DBARSTAR1}
\bar\p_J^*\Jhat 
\!\!\!&=&\!\!\! 
-\sum_{i=1}^{2\sn}(\Nabla{e_i}\Jhat)e_i
\in\Vect(M), \\
\label{eq:DBARSTAR2}
\bar\p_J^*\tau 
\!\!\!&=&\!\!\! 
-\sum_{i=1}^{2\sn}(\Nabla{e_i}\tau)(e_i,\cdot)
\in\Om^{0,1}_J(M,TM).
\end{eqnarray}
\end{lemma}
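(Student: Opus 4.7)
The plan is to prove both identities by direct integration by parts, using the Kähler condition $\nabla J = 0$ to put the Cauchy--Riemann operators into a form where one $\nabla$ can be transferred onto the test section.

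\textbf{First formula.} From~\eqref{eq:DJ} and $\nabla J = 0$ one has $(\bar\p_J v)u = \tfrac{1}{2}(\Nabla{u}v + J\Nabla{Ju}v)$. I would then pair this with $\Jhat$ at the pointwise level, $\sum_i\inner{(\bar\p_J v)e_i}{\Jhat e_i}$, and observe that the two summands coincide: the complex anti-linearity $\Jhat J = -J\Jhat$ rewrites $\inner{J\Nabla{Je_i}v}{\Jhat e_i}$ as $\inner{\Nabla{Je_i}v}{\Jhat J e_i}$, and the reindexing $e_i\mapsto Je_i$ (which maps an orthonormal frame to another orthonormal frame) converts this back to $\inner{\Nabla{e_i}v}{\Jhat e_i}$. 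Thus $\inner{\bar\p_J v}{\Jhat} = \sum_i\inner{\Nabla{e_i}v}{\Jhat e_i}$ pointwise. Integration by parts is then effected by the vector field $X := \Jhat^* v$, characterized by $\inner{X}{w} = \inner{v}{\Jhat w}$; since $\nabla$ commutes with metric adjoints, $\DIV(X) = \sum_i\inner{\Nabla{e_i}v}{\Jhat e_i} + \sum_i\inner{v}{(\Nabla{e_i}\Jhat)e_i}$, and Stokes' theorem ($\int_M\DIV(X)\rho = 0$) yields~\eqref{eq:DBARSTAR1}.

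\textbf{Second formula.} The argument for $\tau \in \Om^{0,2}_J(M, TM)$ proceeds in the same spirit. From~\eqref{eq:dNJ}, noting that $\Nabla{u}\Jhat$ remains complex anti-linear because $\nabla J = 0$, one gets $\bar\p_J\Jhat(u,v) = \tfrac{1}{2}((\Nabla{u}\Jhat)v - (\Nabla{v}\Jhat)u + J(\Nabla{Ju}\Jhat)v - J(\Nabla{Jv}\Jhat)u)$. Pairing with $\tau$, antisymmetry of $\tau$ absorbs the alternation over $(u,v)$, while the $(0,2)$-condition $\tau(Ju,v) = -J\tau(u,v)$ together with the $e_i \mapsto Je_i$ reindexing absorbs the $J$-twisted terms, producing the pointwise identity $\inner{\bar\p_J\Jhat}{\tau} = \sum_{i,j}\inner{(\Nabla{e_i}\Jhat)e_j}{\tau(e_i,e_j)}$. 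For the integration by parts I would introduce the frame-independent vector field defined by $\inner{X}{w} := \sum_j\inner{\Jhat e_j}{\tau(w,e_j)}$ (a partial trace in the second slot of $\tau$); evaluating $\DIV(X)$ in a local orthonormal frame parallel at a point (so that Lemma~\ref{le:FRAME} eliminates the frame-derivative terms) gives $\DIV(X) = \sum_{i,j}\inner{(\Nabla{e_i}\Jhat)e_j}{\tau(e_i,e_j)} + \sum_j\inner{\Jhat e_j}{\sum_i(\Nabla{e_i}\tau)(e_i,e_j)}$, and Stokes' theorem produces~\eqref{eq:DBARSTAR2}.

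\textbf{The hard part.} No analytic subtlety is involved: everything is pointwise linear algebra together with a single invocation of Stokes. The main obstacle is sign and index bookkeeping, specifically correctly merging the two $J$-split pieces of each Cauchy--Riemann operator into one standard sum via the interplay of complex anti-linearity (of $\Jhat$ or $\tau$) with the $e_i \mapsto Je_i$ reindexing, and carefully tracking the frame-derivative terms appearing in $\DIV(X)$ so that they cancel via Lemma~\ref{le:FRAME} (or vanish at the chosen basepoint with a parallel frame).
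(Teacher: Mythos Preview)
Your proposal is correct and follows essentially the same route as the paper: pair with a test section and integrate by parts. The paper's packaging differs only cosmetically --- instead of merging the $J$-twisted pieces via the $e_i\mapsto Je_i$ reindexing, it observes directly that $\inner{\Jhat}{\bar\p_J X}_{L^2}=\inner{\Jhat}{\nabla X}_{L^2}$ (resp.\ $\inner{\tau}{\bar\p_J\Jhat}_{L^2}=\inner{\tau}{d^\nabla\Jhat}_{L^2}$) by orthogonality of types, and then carries out the integration by parts via $\Nabla{u}^*=-\Nabla{u}-\DIV(u)$ together with Lemma~\ref{le:FRAME} rather than through an explicit divergence vector field; the underlying computation is identical.
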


\begin{proof}
For~${u\in\Vect(M)}$ the formal adjoint operator of
the covariant derivative~$\Nabla{u}$
is given by~${\Nabla{u}^*=-\Nabla{u}-\DIV(u)}$.
Fix a vector field~${X\in\Vect(M)}$.  Since
the~$(1,0)$-form~${\p_JX=\tfrac{1}{2}(\nabla X-J(\nabla X)J)}$
is orthogonal to~$\Jhat$, we have
\begin{equation*}
\begin{split}
\inner{\bar\p_J^*\Jhat}{X}_{L^2} 
&=
\inner{\Jhat}{\bar\p_JX}_{L^2} 
=
\inner{\Jhat}{\nabla X}_{L^2} 
=
\int_M\trace\left(\Jhat^*\nabla X\right)\rho \\
&=
\sum_{i=1}^{2\sn}\int_M\inner{\Jhat e_i}{\Nabla{e_i}X} \rho 
=
-\sum_{i=1}^{2\sn}\int_M\inner{\Nabla{e_i}(\Jhat e_i)+\DIV(e_i)\Jhat e_i}{X} \rho \\
&=
-\sum_{i=1}^{2\sn}\int_M\inner{\Nabla{e_i}(\Jhat e_i)-\Jhat\Nabla{e_i}e_i}{X} \rho 
=
-\sum_{i=1}^{2\sn}\int_M\inner{(\Nabla{e_i}\Jhat)e_i}{X} \rho.
\end{split}
\end{equation*}
Here the penultimate step uses~\eqref{eq:FRAME2} in Lemma~\ref{le:FRAME}.
This proves~\eqref{eq:DBARSTAR1}.   Now
\begin{equation*}
\begin{split}
\inner{\bar\p_J^*\tau}{\Jhat}_{L^2} 
&=
\inner{\tau}{d^\nabla\Jhat}_{L^2} 
=
\tfrac{1}{2}\sum_{i,j=1}^{2\sn}\int_M
\inner{\tau(e_i,e_j)}{(\Nabla{e_i}\Jhat)e_j-(\Nabla{e_j}\Jhat)e_i} 
\rho \\
&=
\sum_{i,j=1}^{2\sn}\int_M
\inner{\tau(e_i,e_j)}{(\Nabla{e_i}\Jhat)e_j} 
\rho \\
&=
\sum_{i,j=1}^{2\sn}\int_M
\inner{\tau(e_i,e_j)}{\Nabla{e_i}(\Jhat e_j)-\Jhat\Nabla{e_i}e_j} 
\rho \\
&=
- \sum_{i,j=1}^{2\sn}\int_M
\inner{\Nabla{e_i}(\tau(e_i,e_j))+\DIV(e_i)\tau(e_i,e_j)-\tau(e_i,\Nabla{e_i}e_j)}{\Jhat e_j} 
\rho \\
&=
- \sum_{i,j=1}^{2\sn}\int_M
\inner{(\Nabla{e_i}\tau)(e_i,e_j)}{\Jhat e_j} 
\rho.
\end{split}
\end{equation*}
Here the last but one step uses equation~\eqref{eq:FRAME1} and the last
step uses equation~\eqref{eq:FRAME2} in Lemma~\ref{le:FRAME}.
This proves equation~\eqref{eq:DBARSTAR2} and Lemma~\ref{le:DBARSTAR}.
\end{proof}

\begin{proof}[Proof of Theorem~\ref{thm:BKN}]\label{proof:BKN}
Let $\Jhat\in\Om^{0,1}_J(M,TM)$ 
and let~${X:=\bar\p_J^*\Jhat\in\Vect(M)}$ 
and~${\tau:=\bar\p_J\Jhat\in\Om^{0,2}_J(M,TM)}$.   
Then, by Lemma~\ref{le:DBARSTAR}, we have
\begin{equation*}
\begin{split}
X 
&= - \sum_i(\Nabla{e_i}\Jhat)e_i, \\
(\bar\p_JX)(u) 
&= 
\tfrac{1}{2}\bigl(\Nabla{u}X + J\Nabla{Ju}X\bigr),\\
\tau(u,v) 
&= 
\tfrac{1}{2}\bigl((
\Nabla{u}\Jhat)v - (\Nabla{v}\Jhat)u - (\Nabla{Ju}\Jhat)Jv + (\Nabla{Jv}\Jhat)Ju
\bigr), \\
(\bar\p_J^*\tau)(u) 
&= 
- \sum_i(\Nabla{e_i}\tau)(e_i,u)
\end{split}
\end{equation*}
for all~${u,v\in\Vect(M)}$.  Hence, by Lemma~\ref{le:FRAME},
\begin{equation*}
\begin{split}
&(\bar\p_J^*\bar\p_J\Jhat+\bar\p_J\bar\p_J^*\Jhat)(u) 
=
(\bar\p_J^*\tau)(u) + (\bar\p_JX)(u) \\
&=
- \sum_i\Nabla{e_i}\bigl(\tau(e_i,u)\bigr)
+ \sum_i\tau(\Nabla{e_i}e_i,u)
+ \sum_i\tau(e_i,\Nabla{e_i}u) \\
&\quad
-\tfrac{1}{2}\sum_i\Bigl(
\Nabla{u}((\Nabla{e_i}\Jhat)e_i) + J\Nabla{Ju}((\Nabla{e_i}\Jhat)e_i)
\Bigr) \\
&=
- \tfrac{1}{2}\sum_i\Nabla{e_i}\Bigl(
(\Nabla{e_i}\Jhat)u - (\Nabla{u}\Jhat)e_i 
- (\Nabla{Je_i}\Jhat)Ju + (\Nabla{Ju}\Jhat)Je_i
\Bigr) \\
&\quad
- \tfrac{1}{2}\sum_i\DIV(e_i)\Bigl(
(\Nabla{e_i}\Jhat)u - (\Nabla{u}\Jhat)e_i 
- (\Nabla{Je_i}\Jhat)Ju - (\Nabla{Ju}\Jhat)Je_i
\Bigr) \\
&\quad
+ \tfrac{1}{2}\sum_i\Bigl(
(\Nabla{e_i}\Jhat)\Nabla{e_i}u - (\Nabla{\Nabla{e_i}u}\Jhat)e_i 
- (\Nabla{Je_i}\Jhat)J\Nabla{e_i}u + (\Nabla{J\Nabla{e_i}u}\Jhat)Je_i
\Bigr) \\
&\quad
-\tfrac{1}{2}\sum_i\Bigl(
(\Nabla{u}\Nabla{e_i}\Jhat)e_i 
+ (\Nabla{e_i}\Jhat)\Nabla{u}e_i
+ J(\Nabla{Ju}\Nabla{e_i}\Jhat)e_i
+ J(\Nabla{e_i}\Jhat)\Nabla{Ju}e_i
\Bigr) \\
&=
-\tfrac{1}{2}\sum_i\Bigl(
\Nabla{e_i}\Nabla{e_i}\Jhat+\DIV(e_i)\Nabla{e_i}\Jhat
\Bigr)u 
+\tfrac{1}{2}\sum_i\Bigl(
\Nabla{e_i}\Nabla{Je_i}\Jhat+\DIV(e_i)\Nabla{Je_i}\Jhat
\Bigr)Ju \\
&\quad
+ \tfrac{1}{2}\sum_i\Bigl(
\Nabla{e_i}\Nabla{u}\Jhat - \Nabla{u}\Nabla{e_i}\Jhat + \Nabla{[e_i,u]}\Jhat
\Bigr)e_i \\
&\quad
- \tfrac{1}{2}\sum_i\Bigl(
\Nabla{e_i}\Nabla{Ju}\Jhat - \Nabla{Ju}\Nabla{e_i}\Jhat + \Nabla{[e_i,Ju]}\Jhat
\Bigr)Je_i \\
&= 
\tfrac{1}{2}(\nabla^*\nabla\Jhat)u 
+\tfrac{1}{2}\sum_i\Bigl(
\Nabla{e_i}\Nabla{Je_i}\Jhat+\DIV(e_i)\Nabla{Je_i}\Jhat
\Bigr)Ju \\
&\quad
+ \tfrac{1}{2}\sum_i[R^{\nabla\!}(e_i,u),\Jhat]e_i - \tfrac{1}{2}\sum_i[R^{\nabla\!}(e_i,Ju),\Jhat]Je_i.
\end{split}
\end{equation*}
Here we have used equation~\eqref{eq:NN} and the formula
$$
[R^{\nabla\!}(u,v),\Jhat] = \Nabla{u}\Nabla{v}\Jhat-\Nabla{v}\Nabla{u}\Jhat+\Nabla{[u,v]}\Jhat
$$
for the Riemann curvature tensor.
Now use the identities~\eqref{eq:FRAME2} in Lemma~\ref{le:FRAME}
and~\eqref{eq:QFRAME} in Lemma~\ref{le:Q} to obtain
\begin{equation*}
\begin{split}
-[Q,\Jhat]
&=
\tfrac{1}{2}\sum_i[R^{\nabla\!}(e_i,Je_i),\Jhat] \\
&=
\tfrac{1}{2}\sum_i\Bigl(\Nabla{e_i}\Nabla{Je_i}\Jhat-\Nabla{Je_i}\Nabla{e_i}\Jhat+\Nabla{[e_i,Je_i]}\Jhat\Bigr) \\
&=
\tfrac{1}{2}\sum_i\Bigl(\Nabla{e_i}\Nabla{Je_i}\Jhat - \Nabla{\Nabla{e_i}(Je_i)}\Jhat\Bigr) 
- \tfrac{1}{2}\sum_i\Bigl(\Nabla{Je_i}\Nabla{e_i}\Jhat - \Nabla{\Nabla{Je_i}e_i}\Jhat\Bigr) \\
&=
\sum_i\Bigl(\Nabla{e_i}\Nabla{Je_i}\Jhat - \Nabla{\Nabla{e_i}(Je_i)}\Jhat\Bigr) \\
&=
\sum_i\Bigl(\Nabla{e_i}\Nabla{Je_i}\Jhat + \DIV(e_i)\Nabla{Je_i}\Jhat\Bigr).
\end{split}
\end{equation*}
This yields the formula
\begin{equation*}
\begin{split}
&(\bar\p_J^*\bar\p_J\Jhat+\bar\p_J\bar\p_J^*\Jhat)u \\
&= 
\tfrac{1}{2}(\nabla^*\nabla\Jhat)u 
- \tfrac{1}{2}[Q,\Jhat]Ju 
+ \tfrac{1}{2}\sum_i\Bigl(
[R^{\nabla\!}(e_i,u),\Jhat]e_i - [R^{\nabla\!}(e_i,Ju),\Jhat]Je_i
\Bigr) \\
&= 
\tfrac{1}{2}(\nabla^*\nabla\Jhat)u 
- \tfrac{1}{2}[Q,\Jhat]Ju 
+ \tfrac{1}{2}\sum_i\Bigl(
[R^{\nabla\!}(e_i,u),\Jhat]e_i + [R^{\nabla\!}(Je_i,Ju),\Jhat]e_i
\Bigr) \\
&= 
\tfrac{1}{2}(\nabla^*\nabla\Jhat)u 
- \tfrac{1}{2}[Q,\Jhat]Ju 
+ \sum_i[R^{\nabla\!}(e_i,u),\Jhat]e_i  \\
&= 
\tfrac{1}{2}(\nabla^*\nabla\Jhat)u 
- \tfrac{1}{2}[Q,\Jhat]Ju 
- \sum_i\Jhat R^{\nabla\!}(e_i,u)e_i
+ \sum_iR^{\nabla\!}(e_i,u)\Jhat e_i  \\
&= 
\tfrac{1}{2}(\nabla^*\nabla\Jhat)u 
- \tfrac{1}{2}[Q,\Jhat]Ju 
- \sum_i\Jhat JJR^{\nabla\!}(u,e_i)e_i
+ \cT(\Jhat)u \\
&= 
\tfrac{1}{2}(\nabla^*\nabla\Jhat)u 
- \tfrac{1}{2}Q\Jhat Ju 
+ \tfrac{1}{2}\Jhat QJu 
- \Jhat JQu
+ \cT(\Jhat)u \\
&=
\tfrac{1}{2}(\nabla^*\nabla\Jhat)u 
+ \tfrac{1}{2}JQ\Jhat u 
- \tfrac{1}{2}\Jhat JQu 
+ \cT(\Jhat)u \\
&=
\tfrac{1}{2}(\nabla^*\nabla\Jhat)u + \tfrac{1}{2}[JQ,\Jhat] u + \cT(\Jhat)u.
\end{split}
\end{equation*}
Here we have used~\eqref{eq:T} and Lemma~\ref{le:Q}.
This proves Theorem~\ref{thm:BKN}.
\end{proof}


\section{Bott--Chern cohomology}\label{app:BOTTCHERN}

Let~$M$ be a closed connected $2\sn$-manifold and let~$J$
be an almost complex structure on~$M$. 

\begin{lemma}\label{le:ddc}
The almost complex structure~$J$ is integrable if and only if 
$2$-form $d(df\circ J)$ is of type $(1,1)$ for every smooth 
function~${f:M\to\R}$. 
\end{lemma}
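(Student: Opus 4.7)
The key tool is equation \eqref{eq:dfJN} stated just before the lemma, namely
\[
d(df\circ J)(u,v) - d(df\circ J)(Ju,Jv) = df(JN_J(u,v))
\]
for all $f\in\Om^0(M)$ and $u,v\in\Vect(M)$. My plan is simply to read both implications off this single identity, after recalling the characterization of $(1,1)$-forms.

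First I would note that a real $2$-form $\tau\in\Om^2(M)$ is of type $(1,1)$ with respect to $J$ if and only if $\tau(Ju,Jv)=\tau(u,v)$ for all $u,v\in\Vect(M)$: the type $(2,0)+(0,2)$ part anti-commutes with the action $\tau(\cdot,\cdot)\mapsto\tau(J\cdot,J\cdot)$ while the $(1,1)$ part is fixed by it, so vanishing of the anti-invariant part is equivalent to being of type $(1,1)$.

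For the forward implication, if $J$ is integrable then $N_J=0$, so \eqref{eq:dfJN} gives $d(df\circ J)(u,v)=d(df\circ J)(Ju,Jv)$ for all $u,v$ and all $f$, which by the preceding remark says exactly that $d(df\circ J)$ is of type $(1,1)$. For the converse, assume $d(df\circ J)\in\Om^{1,1}_J(M)$ for every $f\in\Om^0(M)$; then by the same characterization the left-hand side of \eqref{eq:dfJN} vanishes, so $df(JN_J(u,v))=0$ for all $f\in\Om^0(M)$ and all $u,v\in\Vect(M)$. Since at every point $p\in M$ the differentials $df(p)$ exhaust $T_p^*M$ as $f$ ranges over $\Om^0(M)$ (take local coordinate functions multiplied by a cutoff), this forces $JN_J(u,v)=0$ pointwise, and applying $-J$ yields $N_J=0$. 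Hence $J$ is integrable.

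There is no real obstacle here: the whole content is packaged in \eqref{eq:dfJN}, so the proof reduces to one line of elementary linear algebra on each side. The only thing worth checking carefully is the type-$(1,1)$ characterization for real $2$-forms, which is standard but deserves a brief remark in the write-up.
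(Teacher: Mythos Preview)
Your proof is correct and follows the same logic as the paper: both deduce the equivalence directly from the identity $d(df\circ J)(u,v) - d(df\circ J)(Ju,Jv) = df(JN_J(u,v))$. The only difference is that the paper's proof of Lemma~\ref{le:ddc} re-derives this identity from scratch (as equation~\eqref{eq:ddcNJ}, via a Riemannian metric with $J$ skew-adjoint and the formula~\eqref{eq:NJnabla}), whereas you cite it as the already-stated~\eqref{eq:dfJN} from Section~\ref{sec:INTEGRABLE}; note that~\eqref{eq:dfJN} is not ``just before the lemma'' but several sections earlier, so if you want a self-contained appendix proof you should include the derivation.
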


\begin{proof}
The assertion follows directly from the identity
\begin{equation}\label{eq:ddcNJ}
d(df\circ J)(u,v) - d(df\circ J)(Ju,Jv) = df(JN_J(u,v)).
\end{equation}
for all~${f\in\Om^0(M)}$ and all~${u,v\in\Vect(M)}$. 
To prove this equation, choose a Riemannian metric on~$M$ 
with respect to which the almost complex structure is skew-adjoint, 
let~$\nabla$ be the Levi-Civita connection of this metric, 
denote by~$\nabla f$ the gradient of~$f$,
and abbreviate~${\tau_f:=d(df\circ J)}$.  Then
\begin{equation*}
\begin{split}
\tau_f(u,v)
&= 
\cL_u(df(Jv)) 
- \cL_v(df(Ju)) + df(J[u,v]) \\
&= 
\cL_u\inner{\nabla{f}}{Jv} 
- \cL_v\inner{\nabla{f}}{Ju} 
+ \inner{\nabla{f}}{J[u,v]} \\
&= 
\inner{\Nabla{u}\nabla{f}}{Jv} 
- \inner{\Nabla{v}\nabla{f}}{Ju}  
+ \inner{\nabla{f}}{(\Nabla{u}J)v-(\Nabla{v}J)u}
\end{split}
\end{equation*}
and hence
\begin{equation*}
\begin{split}
\tau_f(Ju,Jv)
= 
-\inner{\Nabla{Ju}\nabla{f}}{v} + \inner{\Nabla{Jv}\nabla{f}}{u} 
+ \inner{\nabla{f}}{(\Nabla{Ju}J)Jv-(\Nabla{Jv}J)Ju}.
\end{split}
\end{equation*}
Take the difference of these equations and use the fact
that the covariant Hessian of~$f$ is symmetric to obtain
\begin{equation*}
\begin{split}
\tau_f(u,v)
- \tau_f(Ju,Jv)
&= 
\inner{\nabla{f}}
{(\Nabla{u}J)v-(\Nabla{v}J)u-(\Nabla{Ju}J)Jv+(\Nabla{Jv}J)Ju}  \\
&=
\inner{\nabla f}{JN_J(u,v)} \\
&=
df(JN_J(u,v)).
\end{split}
\end{equation*}
Here the second equality follows from~\eqref{eq:NJnabla}.
This proves Lemma~\ref{le:ddc}. 
\end{proof}

Throughout the remainder of this appendix we assume that~$J$ 
is integrable.  Then~$\bar\p\circ\bar\p=0$ and so
\begin{equation}\label{eq:ddc}
\bi\p\bar\p f=\bi d\bar\p f=-\tfrac12d(df\circ J)
\end{equation}
for every smooth function~${f:M\to\R}$.  

The Bott--Chern cohomology groups of~$M$ are defined by
$$
H^{p,q}_\BC(M;\C):=\frac{\left\{\tau\in\Om^{p,q}(M;\C)\,|\,\p\tau=0,\,\bar\p\tau=0\right\}}
{\left\{\p\bar\p\sigma\,|\,\sigma\in\Om^{p-1,q-1}(M;\C)\right\}}.
$$
These are finite-dimensional complex vector spaces~\cite{BC}. 
In the K\"ahler case the $\p\bar\p$-lemma asserts that
every exact $(p,q)$-form on~$M$ belongs to the image of 
the operator~${\p\bar\p:\Om^{p-1,q-1}(M;\C)\to\Om^{p,q}(M;\C)}$.
This implies that the Bott--Chern cohomology group
agrees with the deRham cohomology group
$$
H^{p,q}_\dR(M;\C) 
:= \frac{\left\{\tau\in\Om^{p,q}(M;\C)\,|\,d\tau=0\right\}}
{\left\{d\alpha\,|\,\alpha\in\Om^{p+q-1}(M;\C),\,d\alpha\in\Om^{p,q}(M)\right\}}
$$
and the direct sum of these groups is~$H^{p+q}_\dR(M;\C)$.
In general, there is a surjective map~${H^{p,q}_\BC(M;\C)\to H^{p,q}_\dR(M;\C)}$
which may have a nontrivial kernel. 

In the present paper the relevant case is~${p=q=1}$.  
Denote by~$\Om^{1,1}(M)$ the space of 
real valued~$2$-forms~${\tau\in\Om^2(M)}$ that 
satisfy~${\tau(u,v)=\tau(Ju,Jv)}$ for all~${u,v\in\Vect(M)}$ 
and define
\begin{equation}\label{eq:H11BC}
H^{1,1}_\BC(M)
:=
\frac{\left\{\tau\in\Om^{1,1}(M)\,|\,d\tau=0\right\}}
{\left\{d(df\circ J)\,|\,f\in\Om^0(M;\R)\right\}}.
\end{equation}
The kernel of the
homomorphism~${H^{1,1}_\BC(M)\to H^{1,1}_\dR(M)}$ 
has the dimension
\begin{equation}\label{eq:kappaJ}
\kappa(J) := \dim\frac{\left\{d\lambda\,|\,
\lambda\in\Om^1(M),\,d\lambda\in\Om^{1,1}(M)\right\}}
{\left\{d(df\circ J)\,|\,f\in\Om^0(M)\right\}}.
\end{equation}
To examine this invariant, choose a nondegenerate 
$2$-form~$\om$ that is compatible with~$J$,
so~${\inner{\cdot}{\cdot}:=\om(\cdot,J\cdot)}$ 
is a Riemannian metric on~$M$,
and denote by~${*:\Om^k(M)\to\Om^{2\sn-k}(M)}$
the Hodge $*$-operator of this metric. Then
\begin{equation}\label{eq:HODGE1}
{*\lambda} = -(\lambda\circ J)\wedge\frac{\om^{\sn-1}}{(\sn-1)!},\qquad
{*\left(\lambda\wedge\frac{\om^{\sn-1}}{(\sn-1)!}\right)} = -\lambda\circ J
\end{equation}
for~${\lambda\in\Om^1(M)}$. 
For~${\sn\ge2}$ the operator~${\tau\mapsto*(\tau\wedge\om^{\sn-2}/(\sn-2)!)}$
on~${\Om^2(M)}$ has trace zero and eigenvalues~$\pm1$ and~${\sn-1}$.
Define
\begin{equation*}
\begin{split}
\Om^\pm(M) 
:= 
\left\{\tau\in\Om^2(M)\,\Big|\,
{*\left(\tau\wedge\frac{\om^{\sn-2}}{(\sn-2)!}\right)}=\pm\tau\right\}.
\end{split}
\end{equation*}
Then~$\Om^{1,1}(M)=\Om^0(M)\om\oplus\Om^-(M)$ and
\begin{equation}\label{eq:HODGE2}
{*\left(\tau\wedge\frac{\om^{\sn-2}}{(\sn-2)!}\right)}
= \inner{\tau}{\om}\om - J^*\tau,\qquad
\inner{\tau}{\om}\frac{\om^\sn}{\sn!} := \tau\wedge\frac{\om^{\sn-1}}{(\sn-1)!},
\end{equation}
for all~$\tau\in\Om^2(M)$, where~${(J^*\tau)(u,v):=\tau(Ju,Jv)}$
for~${u,v\in\Vect(M)}$.

\bigbreak

\begin{lemma}\label{le:HODGE}
{\bf (i)} The numbers
\begin{equation}\label{eq:kappaJ1}
\begin{split}
\kappa_0(J,\om) 
&:= \dim\frac{\left\{\inner{d\lambda}{\om}\,|\,
\lambda\in\Om^1(M),\,d\lambda\in\Om^{1,1}(M)\right\}}
{\left\{\inner{d(df\circ J)}{\om}\,|\,f\in\Om^0(M)\right\}},\\
\kappa_1(J,\om) 
&:= \dim\left\{d\lambda\,|\,\lambda\in\Om^1(M),\,
d\lambda\in\Om^{1,1}(M),\,\inner{d\lambda}{\om}=0\right\}
\end{split}
\end{equation}
are finite and ${\kappa_0(J,\om)\in\{0,1\}}$.
Moreover, the number~$\kappa_0(J,\om)$ vanishes 
whenever~$\om^{\sn-1}$ is closed,
and~$\kappa_1(J,\om)$ vanishes 
whenever~$\om^{\sn-2}$ is closed.

\smallskip\noindent{\bf (ii)}
$\kappa(J)=\kappa_0(J,\om)+\kappa_1(J,\om)$.

\smallskip\noindent{\bf (iii)}
${\kappa(J)=0}$ if and only if for every exact~$(1,1)$-form~${\tau\in\Om^{1,1}(M)}$ 
there exists a function~${f\in\Om^0(M)}$ such that~${d(df\circ J)=\tau}$.
Moreover,~${\kappa(J)=0}$ whenever~$J$ admits a K\"ahler form.
\end{lemma}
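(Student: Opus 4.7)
The plan is to analyze the obstruction $\kappa(J)$ via the orthogonal decomposition $\Om^{1,1}(M) = \Om^0(M)\om \oplus \Om^-(M)$. Set $W := \{d\lambda : \lambda\in\Om^1(M),\, d\lambda\in\Om^{1,1}(M)\}$ and $U := \{d(df\circ J) : f\in\Om^0(M)\} \subseteq W$, let $\pi(\tau) := \inner{\tau}{\om}$ be the projection onto $\Om^0(M)$, and observe that $\pi(W)$ and $\pi(U)$ are the subspaces appearing in $\kappa_0(J,\om)$. Applying the snake lemma to the inclusion $U\hookrightarrow W$ together with the surjections $\pi|_U$ and $\pi|_W$ yields
\begin{equation*}
0 \to (W\cap\ker\pi)/(U\cap\ker\pi) \to W/U \to \pi(W)/\pi(U) \to 0,
\end{equation*}
whose right-hand quotient is $\kappa_0(J,\om)$ by definition.

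The key analytic input is the second-order operator $P:\Om^0(M)\to\Om^0(M)$, $Pf := \inner{d(df\circ J)}{\om}$. A direct computation using the definitions of the Hodge star and the identity $*df=-(df\circ J)\wedge\om^{\sn-1}/(\sn-1)!$ gives
\begin{equation*}
Pf\cdot\rho = -d{*df} + (df\circ J)\wedge d(\om^{\sn-1}/(\sn-1)!),
\end{equation*}
so $P = -\Delta + X$ for a first-order operator $X$, and $P$ has the same principal symbol as $-\Delta$. Hence $P$ is elliptic of index zero on the closed manifold $M$, and the Hopf maximum principle (applicable since $P$ has no zeroth-order term) forces $\ker P$ to consist of constants only. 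It follows that $\pi(U) = \mathrm{image}(P)$ has codimension one in $\Om^0(M)$, giving $\kappa_0 \in \{0,1\}$, and that $U\cap\ker\pi = 0$, since $Pf=0$ forces $f$ constant and hence $d(df\circ J)=0$. The latter reduces the snake sequence to $\kappa(J) = \kappa_1(J,\om) + \kappa_0(J,\om)$, proving (ii).

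For the vanishing statements in (i), if $\om^{\sn-1}$ is closed then $P$ equals the negative Laplacian, so $\pi(U)$ is the space of mean-zero functions; meanwhile Stokes' theorem gives $\int_M\inner{d\lambda}{\om}\rho = -\int_M\lambda\wedge d(\om^{\sn-1}/(\sn-1)!) = 0$, so $\pi(W)\subseteq\pi(U)$ and $\kappa_0 = 0$. If $\om^{\sn-2}$ is closed, any $\tau = d\lambda \in \Om^{1,1}$ with $\inner{\tau}{\om}=0$ is primitive, so $*\tau = -\tau\wedge\om^{\sn-2}/(\sn-2)!$, hence $d^*\tau = 0$, and
\begin{equation*}
\norm{\tau}_{L^2}^2 = \int_M\tau\wedge{*\tau} = -\int_M d\lambda\wedge d\lambda\wedge\tfrac{\om^{\sn-2}}{(\sn-2)!}
\end{equation*}
is the integral of an exact form, hence zero, forcing $\tau = 0$. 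Finally (iii) is essentially tautological: $\kappa(J) = \dim W/U = 0$ precisely means every exact $(1,1)$-form is of the form $d(df\circ J)$; and when $J$ admits a K\"ahler form $\om$, both $\om^{\sn-1}$ and $\om^{\sn-2}$ are closed, so $\kappa_0 = \kappa_1 = 0$ by what was just shown, and (ii) gives $\kappa(J) = 0$. The main obstacle is the analytic step: rigorously establishing the Fredholm alternative and index-zero property for the non-self-adjoint elliptic operator $P = -\Delta + X$ and confirming via the Hopf maximum principle that its kernel is exactly the constants.
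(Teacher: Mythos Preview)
Your argument is correct and in several places cleaner than the paper's, but there is one genuine omission: you never prove that~$\kappa_1(J,\om)$ is finite in general. Your snake-lemma exact sequence
\[
0 \;\longrightarrow\; (W\cap\ker\pi)\;\longrightarrow\; W/U \;\longrightarrow\; \pi(W)/\pi(U)\;\longrightarrow\; 0
\]
only transfers finiteness from~$\kappa(J)$ to~$\kappa_1$, not the other way around. The paper handles this by introducing a second elliptic operator~$L_1 := d^*d^+ + dd^*$ on~$\Om^1(M)$, where~$d^+\lambda := d\lambda + *\bigl(d\lambda\wedge\om^{\sn-2}/(\sn-2)!\bigr)$, and observing that~$\kappa_1 = \dim(\ker d^+\cap\ker d^*)/H^1(M) \le \dim\ker L_1 < \infty$. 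You can alternatively close the gap more cheaply by invoking the finite-dimensionality of Bott--Chern cohomology (stated in the appendix just before the lemma), which gives~$\kappa(J)<\infty$ and hence~$\kappa_1\le\kappa(J)<\infty$ from your exact sequence.

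Apart from this, your approach and the paper's agree on the operator~$P = L_0$ and the treatment of~$\kappa_0$, but diverge elsewhere. For part~(ii) the paper constructs an explicit direct-sum decomposition of~$\Om^{1,1}(M)\cap\im d$ (splitting into~$\R d\lambda_0 \oplus d(\ker d^+) \oplus \im(f\mapsto d(df\circ J))$ when~$\kappa_0=1$), whereas your snake-lemma argument is more conceptual and avoids the case split. For the vanishing of~$\kappa_1$ when~$d\om^{\sn-2}=0$, the paper deduces it from~$L_1 = d^*d + dd^*$, while your Stokes computation
\[
\norm{\tau}_{L^2}^2 = -\int_M d\lambda\wedge d\lambda\wedge\tfrac{\om^{\sn-2}}{(\sn-2)!} = -\int_M d\Bigl(\lambda\wedge d\lambda\wedge\tfrac{\om^{\sn-2}}{(\sn-2)!}\Bigr) = 0
\]
is more elementary and does not require any elliptic theory. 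The trade-off is that the paper's operator~$L_1$ does double duty, simultaneously yielding finiteness of~$\kappa_1$ and its vanishing in the balanced case; your route separates these, which is why the finiteness needs its own justification.
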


\begin{proof}
We prove part~(i).  Define the operator~${L_0:\Om^0(M)\to\Om^0(M)}$ by
\begin{equation}\label{eq:L0}
L_0f := \inner{d(df\circ J)}{\om} 
= d^*df - \frac{(df\circ J)\wedge d(\om^{\sn-1}/(\sn-1)!)}{\om^\sn/\sn!}
\end{equation}
for~${f\in\Om^0(M)}$. Here the second equation follows from~\eqref{eq:HODGE1}.
Then~$L_0$ is a second order elliptic operator without zeroth order terms. 
Thus its kernel consists of the constant functions by the Hopf maximum principle. 
Moreover,~$L_0$ is an index zero Fredholm operator and so
has a one-dimensional cokernel.  Thus~${\kappa_0(J,\om)\in\{0,1\}}$.
If~$\om^{\sn-1}$ is closed then~$L_0=d^*d$ and the 
function~${\inner{d\lambda}{\om}}$ has mean value zero 
for all~${\lambda\in\Om^1(M)}$, so~${\kappa_0(J,\om)=0}$.

Next define the operator~${d^+:\Om^1(M)\to\Om^1(M)}$ by
\begin{equation}\label{eq:DPLUS}
d^+\lambda:= d\lambda +*\left(d\lambda\wedge\tfrac{\om^{\sn-2}}{(\sn-2)!}\right)
\end{equation}
for~$\lambda\in\Om^1(M)$.  Then it follows from~\eqref{eq:HODGE2} that
a $1$-form~${\lambda\in\Om^1(M)}$ satisfies~${d^+\lambda=0}$ if and only 
if~${d\lambda\in\Om^{1,1}(M)}$ and~${\inner{d\lambda}{\om}=0}$. 
Thus
\begin{equation}\label{eq:kappa1}
\kappa_1(J,\om) = \dim(d(\ker d^+)) = \dim(\ker d^+/\ker d).
\end{equation}
Now define the 
operator~${L_1:\Om^1(M)\to\Om^1(M)}$ by 
\begin{equation}\label{eq:L1}
L_1\lambda := d^*d^+\lambda+dd^*\lambda
= (d^*d+dd^*)\lambda - *\left(d\lambda\wedge d\tfrac{\om^{\sn-2}}{(\sn-2)!}\right).
\end{equation}
Then~$L_1$ is a second order elliptic operator and so has a finite dimensional kernel.
Its kernel contains the space~$\ker d^+\cap\ker d^*$, which in turn contains the 
space~${H^1(M):=\ker d\cap\ker d^*}$ of harmonic~$1$ forms. 
Moreover, the quotient~${(\ker d^+\cap\ker d^*)/H^1(M)}$
has dimension~$\kappa_1(J,\om)$ by~\eqref{eq:kappa1}. Thus 
$$
\dim H^1(M)+\kappa_1(J,\om)=\dim(\ker d^+\cap\ker d^*)\le\dim\ker L_1<\infty.
$$  
If~${d\om^{\sn-2}=0}$ then~${L_1=d^*d+dd^*}$ and so~${\kappa_1(J,\om)=0}$.  
This proves part~(i).

\bigbreak

To prove part~(ii), assume first that~${\kappa_0(J,\om)=1}$
and choose~${\lambda_0\in\Om^1(M)}$ 
such that~${d\lambda_0\in\Om^{1,1}(M)}$ 
and~${\inner{d\lambda_0}{\om}\notin\im L_0}$. 
Then
\begin{equation}\label{eq:OM0}
\Om^0(M)=\im L_0\oplus\R\inner{d\lambda_0}{\om}.
\end{equation}
We prove that
\begin{equation}\label{eq:OM11}
\Om^{1,1}(M)\cap\im d 
= \R d\lambda_0 \oplus d(\ker d^+) 
\oplus \{d(df\circ J)\,|\,f\in\Om^0(M)\}.
\end{equation}
First note that~${d\lambda_0\notin d(\ker d^+)}$ by~\eqref{eq:HODGE2} 
and~\eqref{eq:DPLUS}, and that~$d\lambda_0\ne d(df\circ J)$ for 
all~${f\in\Om^0(M)}$ because~${\inner{d\lambda_0}{\om}\notin\im L_0}$.
Moreover, if~${\lambda\in\Om^1(M)}$ satisfies~${d^+\lambda=0}$ 
and~${d\lambda=d(df\circ J)}$ for some~${f\in\Om^0(M)}$, then
$$
L_0f = \inner{d(df\circ J)}{\om} = \inner{d\lambda}{\om}=0,
$$
and so~$f$ is constant.  Thus the right hand side of~\eqref{eq:OM11}
is a direct sum decomposition.  Now let~${\lambda\in\Om^1(M)}$ 
with~${d\lambda\in\Om^{1,1}(M)}$. Then by~\eqref{eq:OM0} 
there exist a real number~$s$ and a function~$f\in\Om^0(M)$ such that
$$
\inner{d\lambda}{\om} = \inner{d(df\circ J)}{\om} + s\inner{d\lambda_0}{\om}.
$$
Define~${\lambda_1:=\lambda-df\circ J-s\lambda_0}$.
Then~${d\lambda_1\in\Om^{1,1}(M)}$ and~$\lambda_1\in\ker d^+$ 
by~\eqref{eq:HODGE2} and~\eqref{eq:DPLUS},
and we have~${d\lambda=sd\lambda_0+d\lambda_1+d(df\circ J)}$.
This proves~\eqref{eq:OM11}.  It follows from~\eqref{eq:kappa1} 
and~\eqref{eq:OM11} that~${\kappa(J)=1+\kappa_1(J,\om)}$.
This proves part~(ii) in the case~${\kappa_0(J,\om)=1}$.
The proof in the case~${\kappa_0(J,\om)=0}$ is analogous.

Part~(iii) follows directly from the definitions and parts~(i) and~(ii).
This proves Lemma~\ref{le:HODGE}.
\end{proof}
 
\begin{corollary}\label{cor:HODGE}
Let~$M$ be closed connected oriented smooth four-manifold,
let~$J$ be a complex structure on~$M$ that is compatible
with the orientation, let~${\om\in\Om^2(M)}$ be a 
nondegenerate $2$-form that is compatible with~$J$,
and equip~$M$ with the Riemannian 
metric~${\inner{\cdot}{\cdot}:=\om(\cdot,J\cdot)}$.
Then
$$
\kappa_1(J,\om)=0 
$$
and~${\kappa(J)=\kappa_0(J,\om)\in\{0,1\}}$.
Moreover, the following are equivalent.

\smallskip\noindent{\bf (i)}
${\kappa(J)=1}$.
 
\smallskip\noindent{\bf (ii)}
${\Om^0(M)=\left\{\inner{d\lambda}{\om}\,|\,
\lambda\in\Om^1(M),\,d\lambda\in\Om^{1,1}(M)\right\}}$.
 
\smallskip\noindent{\bf (iii)}
Every self-dual harmonic $2$-form~$\tau\in\Om^2(M)$
satisfies~${\inner{\tau}{\om}\equiv0}$.
 
\smallskip\noindent{\bf (iv)}
$H^{2,+}_{\om,J}(M)\subset\Om^{2,0}_J(M)\oplus\Om^{0,2}_J(M)$.
\end{corollary}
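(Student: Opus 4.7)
The plan is to combine Lemma \ref{le:HODGE} with Hodge theory for the self-dual operator $d^+$ on a four-manifold. First I would note that $\sn=2$ forces $\om^{\sn-2}=1$, which is trivially closed, so Lemma \ref{le:HODGE}(i) yields $\kappa_1(J,\om)=0$ immediately, and part (ii) of that lemma then gives $\kappa(J)=\kappa_0(J,\om)\in\{0,1\}$. For (i)$\Leftrightarrow$(ii), the operator $L_0$ in \eqref{eq:L0} is a second order elliptic operator without zeroth order terms, with kernel the constants (by the Hopf maximum principle, as in the proof of Lemma \ref{le:HODGE}(i)) and index zero, hence has one-dimensional cokernel, so $\im L_0$ has codimension one in $\Om^0(M)$; the numerator in \eqref{eq:kappaJ1} always contains $\im L_0$, so it is either $\im L_0$ (giving $\kappa_0=0$) or all of $\Om^0(M)$ (giving $\kappa_0=1$), and the latter is precisely (ii). The equivalence (iii)$\Leftrightarrow$(iv) is then immediate from the pointwise splitting $\Om^+(M)=\R\om\oplus(\Om^{2,0}_J\oplus\Om^{0,2}_J)_\R$ valid in real dimension four: a self-dual form $\eta$ lies in $(\Om^{2,0}_J\oplus\Om^{0,2}_J)_\R$ iff its $\om$-component vanishes iff $\inner{\eta}{\om}\equiv 0$.

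The heart of the argument is (ii)$\Leftrightarrow$(iii), which I would derive from the Hodge decomposition $\Om^+(M)=\im d^+\oplus H^{2,+}_{\om,J}(M)$. This holds because $d^+\oplus d^*:\Om^1(M)\to\Om^+(M)\oplus\Om^0(M)$ is elliptic so $d^+$ has closed range, and a short integration by parts identifies $(d^+)^*$ on $\Om^+(M)$ with $d^*|_{\Om^+(M)}$, whose kernel consists of closed self-dual $2$-forms, which are automatically harmonic. For (iii)$\Rightarrow$(ii), given $f\in\Om^0(M)$ the self-dual form $f\om$ is $L^2$-orthogonal to every harmonic self-dual $2$-form by (iii), so $f\om=(d\lambda)^+$ for some $\lambda\in\Om^1(M)$; since $\Om^-(M)\subset\Om^{1,1}(M)$ we get $d\lambda=f\om+(d\lambda)^-\in\Om^{1,1}(M)$ with $\inner{d\lambda}{\om}=2f$, so the map $\lambda\mapsto\inner{d\lambda}{\om}$ is surjective onto $\Om^0(M)$. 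For (ii)$\Rightarrow$(iii), given harmonic self-dual $\eta$, Stokes' theorem yields $\int_Md\lambda\wedge\eta=0$ for every $\lambda\in\Om^1(M)$; when $d\lambda\in\Om^{1,1}(M)$ I would use type considerations to reduce the integrand to $\tfrac12\inner{d\lambda}{\om}\inner{\eta}{\om}\,\rho$, and then invoke (ii) to conclude $\inner{\eta}{\om}\equiv 0$.

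The main obstacle is the type computation in (ii)$\Rightarrow$(iii): decomposing $d\lambda=\tfrac12\inner{d\lambda}{\om}\om+\tau_-$ with $\tau_-\in\Om^-(M)$ and $\eta=\tfrac12\inner{\eta}{\om}\om+\eta_{2,0+0,2}$ with $\eta_{2,0+0,2}\in(\Om^{2,0}_J\oplus\Om^{0,2}_J)_\R$, every cross term in the expansion of $d\lambda\wedge\eta$ must be shown to vanish. This relies on $\tau_-\wedge\om=0$ (since $\Om^-(M)\perp\om$ in the $\inner{\cdot}{\cdot}$-pairing), on $\om\wedge\eta_{2,0+0,2}=0$ and $\tau_-\wedge\eta_{2,0+0,2}=0$ (both because $\Om^{p,q}_J=0$ whenever $p>2$ or $q>2$ in complex dimension two), together with the normalization $\om\wedge\om=2\rho$ which converts the surviving term into $\tfrac12\inner{d\lambda}{\om}\inner{\eta}{\om}\rho$ and yields the desired pairing identity.
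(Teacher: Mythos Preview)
Your proof is correct and follows essentially the same approach as the paper's: both reduce $\kappa_1=0$ to Lemma~\ref{le:HODGE} via $\om^{\sn-2}=1$, handle (i)$\Leftrightarrow$(ii) through the one-dimensional cokernel of $L_0$, and obtain (ii)$\Leftrightarrow$(iii) from the Hodge decomposition $\Om^+(M)=\im d^+\oplus H^{2,+}_{\om,J}(M)$ together with the identification of (ii) with $\Om^0(M)\om\subset\im d^+$. The only cosmetic difference is that for (ii)$\Rightarrow$(iii) you spell out the wedge-product computation $d\lambda\wedge\eta=\tfrac12\inner{d\lambda}{\om}\inner{\eta}{\om}\rho$ explicitly, whereas the paper invokes the $L^2$-orthogonality of $\im d^+$ and $H^{2,+}_{\om,J}(M)$ directly in both directions.
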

 
\begin{proof}
Since~$M$ has dimension four, $\om^{\sn-2}$ is the constant 
function~$1$ and so~${\kappa_1(J,\om)=0}$ by Lemma~\ref{le:HODGE}.
This shows that~${\kappa(J)=\kappa_0(J,\om)\in\{0,1\}}$.
The equivalence of~(i) and~(ii) follows from the fact
that the operator~$L_0$ in~\eqref{eq:L0} has a one-dimensional cokernel
and that~$\kappa_0(J,\om)=1$ if and only if 
$$
\im L_0 \subsetneq
\left\{\inner{d\lambda}{\om}\,\big|\,\lambda\in\Om^1(M),\,d\lambda\in\Om^{1,1}(M)\right\}
\subset\Om^0(M).
$$
Next observe that the $L^2$-orthogonal complement of the 
image of the operator~${d^+:\Om^1(M)\to\Om^+_{\om,J}(M)}$ 
is the space~$H^{2,+}_{\om,J}(M)$ of self-dual harmonic $2$-forms,
and that~(ii) holds if and only if~${\Om^0(M)\om\subset\im d^+}$ 
(see equation~\eqref{eq:HODGE2}).
Thus~(ii) holds if and only if the spaces~${H^{2,+}_{\om,J}(M)}$
and~$\Om^0(M)\om$ are $L^2$ orthogonal to each other, 
and this is equivalent to~(iii).  
The equivalence of~(iii) and~(iv) follows from the fact 
that the space~${\Om^{2,0}_J(M;\C)\oplus\Om^{0,2}_J(M;\C)}$
intersects~$\Om^2(M)$ in the space of all~${\tau\in\Om^2(M)}$
that satisfy~${\tau(u,v)+\tau(Ju,Jv)=0}$ for all~${u,v\in\Vect(M)}$, 
the $L^2$ orthogonal complement of~$\Om^0(M)\om$ in~$\Om^{2,+}_{\om,J}(M)$. 
This proves Corollary~\ref{cor:HODGE}.
\end{proof}

On a closed connected oriented smooth four-manifold~$M$
Corollary~\ref{cor:HODGE} shows that the set of complex 
structures~$J$ that satisfy~${\kappa(J)=0}$ is open, 
and that~${\kappa(J)=1}$ whenever~${b^{2,+}(M)=0}$.

\begin{remark}\label{rmk:c1BC}\rm
Let~$E$ be a holomorphic vector bundle over a complex manifold~$(M,J)$. 
Then, for every Hermitian metric~$h$ on~$E$, there exists 
a unique Hermitian connection $\nabla$ on~$E$
such that~${\bar\p^{\nabla}=\bar\p:\Om^0(M,E)\to\Om^{0,1}(M,E)}$.
The real valued $2$-form~${\frac{\bi}{2\pi}\trace^c(F^\nabla)}$
is a closed $(1,1)$-form which represents the first 
Chern class of~$E$.  If~${h'(s_1,s_2)=h(s_1,Qs_2)}$
is another Hermitian structure 
(with~$Q$ a section of the bundle of positive definite
Hermitian automorphisms with respect to~$h$) 
then the corresponding Hermitian connection is 
given by~${\nabla' = \nabla + Q^{-1}\p Q}$
and the complex trace of its curvature is 
$$
\trace^c(F^{\nabla'}) = \trace^c(F^\nabla) - \p\bar\p f,\qquad
f:= \mathrm{det}^c(Q):M\to\R.
$$
Thus by~\eqref{eq:ddc} the first Chern class
$$
c_1(E)=\left[\frac{\bi}{2\pi}\trace^c(F^\nabla)\right]_\dR\in H^2_\dR(M)
$$ 
in de Rham cohomology lifts to a well defined class
$$
c_{1,\BC}(E) := \left[\frac{\bi}{2\pi}\trace^c(F^\nabla)\right]_\BC\in H^{1,1}_\BC(M)
$$
in Bott--Chern cohmology, called the {\bf first  Bott--Chern class of~$E$}.
(For more details see~\cite{AT1,AT2,BGS,BC}).
\end{remark}


\section{Complex structures and n-forms}\label{app:NFORM}

Fix a closed connected oriented $2\sn$-manifold~$M$
and a complex line bundle~${L\to M}$ with a Hermitian form $\inner{s_1}{s_2}$ 
for~${s_1,s_2\in\Om^0(M,L)}$ (complex anti-linear 
in the first variable and complex linear in the second variable).  
Define
\begin{equation}\label{eq:cn}
c_\sn 
:= (-1)^{\frac{\sn(\sn+1)}{2}}\bi^\sn 
= \left\{\begin{array}{rl}
1,&\mbox{if }\sn\mbox{ is even},\\
-\bi,&\mbox{if }\sn\mbox{ is odd}.
\end{array}\right.
\end{equation}

\begin{lemma}\label{le:theta}
Let~$J\in\sJ(M)$ be an almost complex structure compatible
with the orientation. Then $c_1(TM,J)=c_1(L)\in H^2(M;\Z)$ 
if and only if there exists a nowhere vanishing 
$\sn$-form~${\theta\in\Om^{\sn,0}_J(M,L)}$.  If this holds then
\begin{equation}\label{eq:rhotheta}
\rho := c_\sn\winner{\theta}{\theta} \in\Om^{2\sn}(M)
\end{equation}
is a positive volume form on~$M$. 
\end{lemma}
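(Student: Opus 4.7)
The plan is to recognize $\Om^{\sn,0}_J(M,L)$ as the space of smooth sections of the complex line bundle $\sL := \Lambda^{\sn,0}_JT^*M\otimes_\C L$ and then invoke the standard fact that a smooth complex line bundle over a manifold admits a nowhere vanishing section if and only if it is smoothly trivializable, which is equivalent to the vanishing of its first Chern class.

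For the Chern class identification I would use that the canonical line bundle $K_J := \Lambda^{\sn,0}_JT^*M$ is the $\C$-dual of $\det_\C(TM,J)$, so $c_1(K_J) = -c_1(TM,J)$ and therefore $c_1(\sL) = c_1(L)-c_1(TM,J)$. The claimed equivalence between $c_1(TM,J)=c_1(L)$ and the existence of a nowhere vanishing $\theta\in\Om^{\sn,0}_J(M,L)$ is then immediate from the discussion of the previous paragraph.

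For the positivity of $\rho$ I would argue pointwise. At $p\in M$, pick a complex basis $\zeta_1,\dots,\zeta_\sn$ of $(T_pM,J_p)$; the real basis $\zeta_1,J_p\zeta_1,\dots,\zeta_\sn,J_p\zeta_\sn$ is positively oriented because $J$ is compatible with the orientation. Let $\alpha^k,\beta^k\in T_p^*M$ be the real covectors dual to $\zeta_k,J_p\zeta_k$ and set $\zeta^k := \alpha^k+\bi\beta^k$, so that $\zeta^1,\dots,\zeta^\sn$ is a $\C$-basis of $(T_p^*M)^{1,0}_J$. Trivialize $L$ near $p$ by a local section $s$ with $\inner{s}{s}(p)=1$ and write $\theta(p) = a\cdot\zeta^1\wedge\cdots\wedge\zeta^\sn\otimes s$ with $a\in\C\setminus\{0\}$ (nonvanishing of $\theta$ yields $a\neq 0$).

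The concluding step is a sign-bookkeeping calculation. From $\bar\zeta^k\wedge\zeta^k = 2\bi\,\alpha^k\wedge\beta^k$ and the shuffle identity $\bar\zeta^1\wedge\cdots\wedge\bar\zeta^\sn\wedge\zeta^1\wedge\cdots\wedge\zeta^\sn = (-1)^{\sn(\sn-1)/2}\,\bar\zeta^1\wedge\zeta^1\wedge\cdots\wedge\bar\zeta^\sn\wedge\zeta^\sn$, writing $\omega := \zeta^1\wedge\cdots\wedge\zeta^\sn$ gives
$$
\bar\omega\wedge\omega = (-1)^{\sn(\sn-1)/2}(2\bi)^\sn\,\alpha^1\wedge\beta^1\wedge\cdots\wedge\alpha^\sn\wedge\beta^\sn.
$$
Multiplying by $c_\sn = (-1)^{\sn(\sn+1)/2}\bi^\sn$ produces the overall factor $(-1)^{\sn^2+\sn}\,2^\sn = 2^\sn$, hence
$$
c_\sn\winner{\theta}{\theta}(p) = 2^\sn\,\abs{a}^2\,\alpha^1\wedge\beta^1\wedge\cdots\wedge\alpha^\sn\wedge\beta^\sn,
$$
which is a positive multiple of the oriented volume element at $p$. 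Since $a$ is nonzero pointwise, $\rho$ is a nowhere vanishing positive $2\sn$-form on $M$. The only real obstacle is the sign and shuffle bookkeeping; conceptually the argument is routine once the identification of $\Om^{\sn,0}_J(M,L)$ with sections of $\sL$ has been made.
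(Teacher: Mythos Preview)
Your proof is correct and follows essentially the same approach as the paper: identify $\Om^{\sn,0}_J(M,L)$ with sections of $\Lambda^{\sn,0}_JT^*M\otimes L$, use $c_1(K_J)=-c_1(TM,J)$ to reduce the first claim to the standard characterization of trivializable line bundles, and verify positivity of $\rho$ by a pointwise computation in a complex frame yielding the factor $2^\sn\abs{a}^2$. The paper phrases the pointwise step via an isomorphism $(\C^\sn,\bi)\to(T_mM,J)$ and coordinates $z_i=x_i+\bi y_i$, but the sign bookkeeping is identical to yours.
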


\begin{proof}
The first Chern class of~$(TM,J)$ agrees with minus the first Chern class
of the complex line bundle~${\Lambda^{\sn,0}_JT^*M}$.
Hence~${c_1(TM,J)=c_1(L)}$ if and only if~${E:=\Lambda^{\sn,0}_JT^*M\otimes L}$ 
admits a a trivialization or, equivalently, a nowhere vanishing section,
and such a section is an~$(\sn,0)$-form~${\theta\in\Om^{\sn,0}_J(M,L)}$.

To show that, for any nowhere vanishing 
$\sn$-form~${\theta\in\Om^{\sn,0}_J(M)}$, 
the formula~\eqref{eq:rhotheta} defines a positive volume 
form on~$M$, fix an element~${m\in M}$ and choose
a complex isomorphism~${(\C^\sn,\bi)\to(T_mM,J)}$.
Let~${z_i=x_i+\bi y_i}$ for~${i=1,\dots,\sn}$
be the coordinates on~$\C^\sn$.   
Then there is an element~${\lambda\in L_m}$ 
(the fiber of~$L$ over~$m$) such that
$$
\theta_m = \lambda dz_1\wedge\cdots\wedge dz_\sn.
$$
Hence 
\begin{equation*}
\begin{split}
\rho_m
&= 
c_\sn \winner{\theta_m}{\theta_m} \\
&=
\frac{(-1)^{\frac{\sn(\sn-1)}{2}}}{\bi^\sn}\abs{\lambda}^2
d\overline{z}_1\wedge\cdots\wedge d\overline{z}_\sn
\wedge dz_1\wedge\cdots\wedge dz_\sn \\
&= 
2^\sn\abs{\lambda}^2\frac{d\overline{z}_1\wedge dz_1}{2\bi}
\wedge\cdots\wedge\frac{d\overline{z}_\sn\wedge dz_\sn}{2\bi} \\
&=
2^\sn\abs{\lambda}^2dx_1\wedge dy_1\wedge\cdots\wedge dx_\sn\wedge dy_\sn.
\end{split}
\end{equation*}
Thus~$\rho$ is a positive volume form on~$M$ 
and this proves Lemma~\ref{le:theta}.
\end{proof}

\begin{lemma}\label{le:THODGE}
Let~${J\in\sJ(M)}$ be an almost complex structure compatible with the orientation, 
let~${\theta\in\Om^{\sn,0}_J(M,L)}$ be a nowhere vanishing $(\sn,0)$-form,
let~${\om\in\Om^2(M)}$ be a nondegenerate $2$-form that is compatible 
with~$J$ such that
\begin{equation}\label{eq:rhom}
\frac{\om^\sn}{\sn!} = c_\sn\winner{\theta}{\theta}=:\rho,
\end{equation}
and let~${*:\Om^{p,q}_J(M,L)\to\Om^{\sn-q,\sn-p}_J(M,L)}$ be the Hodge $*$-operator 
of the Riemannian metric~${\inner{\cdot}{\cdot}:=\om(\cdot,J\cdot)}$.
Then the following holds.

\smallskip\noindent{\bf (i)}
For every~${\Jhat\in\Om^{0,1}_J(M,TM)}$ there is 
a unique~${\beta\in\Om^{\sn-1,1}_J(M,L)}$ such that
\begin{equation}\label{eq:JHATBETA}
\bi\iota(u)\beta-\iota(Ju)\beta = \iota(\Jhat u)\theta
\end{equation}
for all~${u\in\Vect(M)}$.  

\smallskip\noindent{\bf (ii)}
For every~${\beta\in\Om^{\sn-1,1}_J(M,L)}$ there exists 
a unique~${\Jhat\in\Om^{0,1}_J(M,TM)}$ such 
that~\eqref{eq:JHATBETA} holds for all~${u\in\Vect(M)}$. 

\smallskip\noindent{\bf (iii)}
Suppose~${\beta\in\Om^{\sn-1,1}_J(M,L)}$ and~${\Jhat\in\Om^{0,1}_J(M,TM)}$ 
satisfy equation~\eqref{eq:JHATBETA}.  Then
\begin{equation}\label{eq:JHATBETA1}
\bi\iota(u){*\beta} - \iota(Ju){*\beta} = -c_\sn\iota(\Jhat^*u)\theta
\end{equation}
for all~${u\in\Vect(M)}$. Moreover, 
\begin{equation}\label{eq:JHATBETA2}
\Jhat=\Jhat^*
\quad\iff\quad 
*\beta=-c_\sn \beta
\quad\iff\quad
\beta\wedge\om=0,
\end{equation}
\begin{equation}\label{eq:JHATBETA3}
\Jhat+\Jhat^*=0
\quad\iff\quad 
*\beta=c_\sn \beta
\quad\iff\quad 
\beta\in\Om^{\sn-2,0}_J(M,L)\wedge\om.
\end{equation}

\smallskip\noindent{\bf (iv)}
Suppose~${\beta\in\Om^{\sn-1,1}_J(M,L)}$ and~${\Jhat\in\Om^{0,1}_J(M,TM)}$ 
satisfy equation~\eqref{eq:JHATBETA} and let~${\omhat\in\Om^2(M)}$.  Then 
\begin{equation}\label{eq:omhatbeta}
\om\wedge\beta+\omhat\wedge\theta=0\quad\iff\quad
\begin{array}{l}
\omhat(u,v)-\omhat(Ju,Jv)=\inner{(\Jhat-\Jhat^*)u}{v} \\
\mbox{for all }u,v\in\Vect(M).
\end{array}
\end{equation}

\smallskip\noindent{\bf (v)}
Let~${\beta,\beta'\in\Om^{\sn-1,1}_J(M,L)}$ and~${\Jhat,\Jhat'\in\Om^{0,1}_J(M,TM)}$ 
be given such that the pairs~$(\beta,\Jhat)$ and~$(\beta',\Jhat')$ satisfy~\eqref{eq:JHATBETA}.  
Then the pointwise inner product of~$\beta$ and~$\beta'$ is given by 
\begin{equation}\label{eq:BETAinner}
\inner{\beta}{\beta'}
= \Re\left(\frac{\winner{\beta}{*\beta'}}{\rho}\right)
= \tfrac{1}{8}\trace\bigl(\Jhat^*\Jhat'\bigr)\rho.
\end{equation}
Moreover, we have
\begin{equation}\label{eq:BETAsymp}
c_\sn\winner{\beta}{\beta'}
= 
- \tfrac{1}{8}\trace\bigl(\Jhat\Jhat'\bigr)\rho
+ \tfrac{\bi}{8}\trace\bigl(\Jhat J\Jhat'\bigr)\rho.
\end{equation}
\end{lemma}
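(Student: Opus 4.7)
All five statements are pointwise algebraic claims about the fibers of the relevant vector bundles, so I would fix a point $p \in M$ and work in a $J$-unitary frame. Choose a $J$-compatible orthonormal basis $e_1,Je_1,\ldots,e_\sn,Je_\sn$ of $T_pM$ for the metric $\inner{\cdot}{\cdot} = \om(\cdot,J\cdot)$, with associated complex basis $f_j := \tfrac{1}{\sqrt 2}(e_j - \bi Je_j)$ of $T_p^{1,0}M$. Then $\om_p = \bi\sum_j f^j \wedge \bar f^j$, and the normalization $\om^\sn/\sn! = c_\sn\winner{\theta}{\theta}$ forces $\theta_p = \lambda\,f^1\wedge\cdots\wedge f^\sn$ with $|\lambda|^2 = 1$. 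In this frame every tensor admits an explicit coordinate expression and the proof becomes a bookkeeping exercise.

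For parts (i) and (ii), I would use the type decomposition $u = u^{1,0} + u^{0,1}$ with $u^{1,0} = \tfrac12(u-\bi Ju)$. A direct expansion yields $\bi\iota(u)\beta - \iota(Ju)\beta = 2\bi\,\iota(u^{0,1})\beta$, a $(\sn-1,0)$-form depending $\C$-linearly on $u^{0,1}$. Since $\Jhat$ anticommutes with $J$ one has $(\Jhat u)^{1,0} = \Jhat u^{0,1}$, so $\iota(\Jhat u)\theta = \iota(\Jhat u^{0,1})\theta$ is likewise a $(\sn-1,0)$-form $\C$-linear in $u^{0,1}$. Equation \eqref{eq:JHATBETA} thus becomes an identity between two $\C$-linear maps $T_p^{0,1}M \to \Lambda^{\sn-1,0}_J T_p^*M \otimes L_p$. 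The assignment $\beta \mapsto (v \mapsto \iota(v)\beta)$ is an isomorphism $\Lambda^{\sn-1,1}_J T_p^*M \otimes L_p \to \Hom_\C(T_p^{0,1}M, \Lambda^{\sn-1,0}_J T_p^*M \otimes L_p)$ (both spaces have complex dimension $\sn^2$, with injectivity immediate in the frame), and $\Jhat \mapsto (v \mapsto \iota(\Jhat v)\theta)$ is an isomorphism onto the same target because $w \mapsto \iota(w)\theta$ is an isomorphism $T_p^{1,0}M \to \Lambda^{\sn-1,0}_J T_p^*M \otimes L_p$ whenever $\theta$ is nowhere vanishing. Composing these gives the bijection $\Jhat \leftrightarrow \beta$.

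For part (iii), I appeal to the Lefschetz decomposition $\Lambda^{\sn-1,1}_J T_p^*M = \bigl(\om \wedge \Lambda^{\sn-2,0}_J T_p^*M\bigr) \oplus \Lambda^{\sn-1,1}_{J,0} T_p^*M$, where the second summand is the primitive part. The Weil formula for the Hodge star specializes to $*\alpha = -c_\sn\alpha$ on $\Lambda^{\sn-1,1}_{J,0}$ and $*(\om\wedge\alpha') = c_\sn\,\om\wedge\alpha'$ for $\alpha' \in \Lambda^{\sn-2,0}_J$, while primitivity of an $(\sn-1,1)$-form is equivalent to $\alpha \wedge \om = 0$; this gives the wedge-product equivalences in \eqref{eq:JHATBETA2}-\eqref{eq:JHATBETA3}. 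Writing $\Jhat f_j = \sum_k\Jhat_{kj}\,\bar f_k$ and solving \eqref{eq:JHATBETA} in the frame yields $\beta = \tfrac{\lambda}{2\bi}\sum_{j,k}\Jhat_{kj}\,\bigl(\iota(f_k)(f^1\wedge\cdots\wedge f^\sn)\bigr)\wedge\bar f^j$. A direct computation then identifies Hermitian symmetry $\Jhat_{jk} = \overline{\Jhat_{kj}}$ (i.e.\ $\Jhat = \Jhat^*$) with primitivity of $\beta$, and anti-Hermitian symmetry with $\beta \in \om \wedge \Lambda^{\sn-2,0}_J$; equation \eqref{eq:JHATBETA1} follows by contracting the resulting expression for $*\beta$ and comparing with $\iota(\Jhat^* u^{0,1})\theta$ in the same frame.

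Parts (iv) and (v) are direct computations. For (iv), both sides of $\om\wedge\beta + \omhat\wedge\theta = 0$ lie in $\Lambda^{\sn,2}_J T^*M \otimes L$: $\theta$ is pure $(\sn,0)$ so only $\omhat^{0,2}$ contributes to $\omhat \wedge \theta$, and $\om\wedge\beta$ is automatically $(\sn,2)$. Via (iii), the projection of $\beta$ onto $\om\wedge\Lambda^{\sn-2,0}_J$ corresponds to the skew-adjoint part $\tfrac12(\Jhat - \Jhat^*)$, and using $\omhat - J^*\omhat = 2\,\Re(\omhat^{0,2})$ one translates the wedge equation into the pointwise identity $\omhat(u,v) - \omhat(Ju,Jv) = \inner{(\Jhat - \Jhat^*)u}{v}$. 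For (v), both sides are bilinear (or sesquilinear) in the pairs $(\beta,\Jhat)$ and $(\beta',\Jhat')$; by polarization it suffices to treat the diagonal case, and substituting the frame expression for $\beta$ reduces both sides of \eqref{eq:BETAinner} and \eqref{eq:BETAsymp} to $\sum_{j,k}|\Jhat_{kj}|^2$ times a universal constant that can be tracked explicitly. The main obstacle throughout is the bookkeeping of signs and constants (the interaction of $c_\sn$, $\bi^\sn$, and the $\sqrt 2$ factors in the unitary frame) when matching the two sides, especially in (iii) and (v); the conceptual content is entirely captured by the type decomposition of (i)-(ii) and the Lefschetz decomposition of (iii).
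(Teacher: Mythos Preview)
Your approach is essentially the paper's: both reduce everything to pointwise linear algebra in a frame, the paper using real coordinates on~$\R^{2\sn}$ with the standard~$J,\om,\theta$ and you using a complex unitary frame. Your isomorphism/dimension argument for~(i)--(ii) is equivalent to the paper's explicit formula for~$\beta$ (the paper writes $\beta(v_1,\dots,v_\sn)=\sum_i\theta(v_1,\dots,-\tfrac12J\Jhat v_i,\dots,v_\sn)$ and verifies~\eqref{eq:JHATBETA} directly), and for~(iii)--(v) the paper simply writes out~$\beta$ and~$*\beta$ in coordinates and reads off the answers. Your use of the Lefschetz decomposition and the Weil formula for the Hodge star in~(iii) is a slightly more structural packaging of the same computation and makes the eigenspace statements~\eqref{eq:JHATBETA2}--\eqref{eq:JHATBETA3} transparent without computing~$*\beta$ explicitly.

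One concrete slip to watch: the condition~$\Jhat=\Jhat^*$ corresponds to the matrix~$(\Jhat_{kj})$ being \emph{symmetric}, not Hermitian. The real adjoint transposes without conjugating: extending~$g$ bilinearly to~$T_\C M$ one has $g_\C(f_i,\bar f_j)=\delta_{ij}$ and $g_\C(f_i,f_j)=0$, so $g_\C(\Jhat^*f_j,f_\ell)=g_\C(f_j,\Jhat f_\ell)=\Jhat_{j\ell}$ gives $(\Jhat^*)_{\ell j}=\Jhat_{j\ell}$. In the paper's real-coordinate setup (where $\Jhat=\bigl(\begin{smallmatrix}A&B\\B&-A\end{smallmatrix}\bigr)$ and $\Jhat^*$ is literally the transpose) this is immediate, and self-adjointness becomes $a_{ij}=a_{ji}$ for the complex matrix $A+\bi B$. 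This is exactly the bookkeeping hazard you anticipate; it does not affect the strategy.
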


\begin{proof}
Define~${\beta\in\Om^\sn(M,L)}$ by
\begin{equation*}
\begin{split}
\beta(v_1,\dots,v_\sn)
&:=
\theta(-\tfrac{1}{2}J\Jhat v_1,v_2,\dots,v_\sn) \\
&\quad\;
+ \theta(v_1,-\tfrac{1}{2}J\Jhat v_2,v_3,\dots,v_\sn) \\
&\quad\;
+ \cdots 
+ \theta(v_1,\dots,v_{\sn-1},-\tfrac{1}{2}J\Jhat v_\sn)
\end{split}
\end{equation*}
for~${v_1,\dots,v_\sn\in\Vect(M)}$. Then
\begin{equation*}
\begin{split}
\beta(Ju,v_2,\dots,v_\sn)
+ \theta(\Jhat u,v_2,\dots,v_\sn) 
&=
\theta(\tfrac{1}{2}\Jhat u,v_2,\dots,v_\sn) \\
&\quad
+ \theta(Ju,-\tfrac{1}{2}J\Jhat v_2,v_3,\dots,v_\sn) \\
&\quad
+ \cdots 
+ \theta(Ju,v_2,\dots,v_{\sn-1},-\tfrac{1}{2}J\Jhat v_\sn) \\
&= 
\bi\beta(u,v_2,\dots,v_\sn)
\end{split}
\end{equation*}
for all~${u,v_2,\dots,v_\sn\in\Vect(M)}$. 
Thus~$\beta$ is an $(\sn-1,1)$-form that 
satisfies equation~\eqref{eq:JHATBETA}.
If~$\beta'$ is another $(\sn-1,1)$-form that 
satisfies equation~\eqref{eq:JHATBETA}, 
then~${\iota(Ju)(\beta'-\beta)=\bi\iota(u)(\beta'-\beta)}$,
thus~${\beta'-\beta\in\Om^{\sn,0}_J(M,L)}$, and so~${\beta'=\beta}$. 
This proves~(i).

We prove part~(ii). Thus let~${\beta\in\Om^{\sn-1,1}_J(M,L)}$
be given. Then for every vector field~${u\in\Vect(M)}$ the 
$(\sn-1)$-form~${\bi\iota(u)\beta-\iota(Ju)\beta}$
is of type~${(\sn-1,0)}$ and hence can be written in the 
form~${\iota(v)\theta}$ for some vector field~${v\in\Vect(M)}$
that is uniquely determined by~$u$.  This shows that there 
exists a unique section~${\Jhat\in\Om^0(M,\End(TM))}$
of the endomorphism bundle that satisfies~\eqref{eq:JHATBETA}
for all~${u\in\Vect(M)}$. By~\eqref{eq:JHATBETA} we have
$$
\iota(\Jhat Ju)\theta 
= \bi\iota(Ju)\beta+\iota(u)\beta 
= -\bi\iota(\Jhat u)\theta 
= -\iota(J\Jhat u)\theta
$$
for all~${u\in\Vect(M)}$ and thus~${\Jhat J+J\Jhat=0}$.
This proves~(ii). 

We prove part~(iii). It suffices to consider the trivial line bundle 
and the standard structures on~$\R^{2\sn}$ with the 
coordinates~${x_1,\dots,x_\sn,y_1,\dots,y_\sn}$.
They are given by 
\begin{equation}\label{eq:JOMloc}
J = \left(\begin{array}{rr} 0 & -\one \\ \one & 0 \end{array}\right),\qquad
\om = \sum_{i=1}^\sn dx_i\wedge dy_i,\qquad
\theta = \frac{dz_1}{\sqrt{2}}\wedge\cdots\wedge\frac{dz_\sn}{\sqrt{2}},
\end{equation}
where~${z_i:=x_i+\bi y_i}$ for~${i=1,\dots,\sn}$. 
A complex anti-linear endomorphism has the form
\begin{equation}\label{eq:JHATloc}
\Jhat = \left(\begin{array}{rr} A & B \\ B & -A \end{array}\right),\qquad
A+\bi B = (a_{ij})_{i,j=1,\dots,\sn}\in\C^{\sn\times\sn}.
\end{equation}
The corresponding~$(\sn-1,1)$-form ${\beta\in\Om^{\sn-1,1}_J(\R^{2\sn})}$
is given by 
\begin{equation}\label{eq:BETAloc}
\beta = \frac{1}{2\bi}\sum_{i,j=1}^\sn a_{ij}
\frac{dz_1}{\sqrt{2}}\wedge\cdots\frac{dz_{i-1}}{\sqrt{2}}
\wedge\frac{d\oz_j}{\sqrt{2}}
\wedge\frac{dz_{i+1}}{\sqrt{2}}\wedge\cdots\wedge\frac{dz_\sn}{\sqrt{2}}.
\end{equation}
Now~$\Jhat^*$ is represented by the transposed 
matrix~${A^T+\bi B^T=(a_{ji})_{i,j=1,\dots,\sn}}$ and
$$
{*\beta} = \frac{-c_\sn}{2\bi}\sum_{i,j=1}^\sn a_{ji}
\frac{dz_1}{\sqrt{2}}\wedge\cdots\frac{dz_{i-1}}{\sqrt{2}}
\wedge\frac{d\oz_j}{\sqrt{2}}
\wedge\frac{dz_{i+1}}{\sqrt{2}}\wedge\cdots\wedge\frac{dz_\sn}{\sqrt{2}}.
$$
This proves~\eqref{eq:JHATBETA1}. Now~\eqref{eq:JHATBETA2} 
and~\eqref{eq:JHATBETA3} follow from~\eqref{eq:JHATBETA1} 
and the eigenspace decomposition of the Hodge $*$-operator 
on~$\Om^{\sn-1,1}(M)$.  This proves~(iii).

We prove part~(iv). Continue the notation in the proof 
of part~(iii), so~${J,\om,\theta,\Jhat,\beta}$ 
are as in~\eqref{eq:JOMloc}, \eqref{eq:JHATloc}, and~\eqref{eq:BETAloc}. 
Then a $2$-form~$\omhat\in\Om^2(M)$
satisfies~${\omhat(u,v)-\omhat(Ju,Jv)=\inner{(\Jhat-\Jhat^*)u}{v}}$
for all~${u,v\in\Vect(M)}$ if and only if its ${(0,2)}$-part is given 
by~${\omhat^{0,2}=-\tfrac{1}{4}\sum_{i,j}a_{ij}d\oz_i\wedge d\oz_j}$,
and this in turn is equivalent to the 
equation~${\omhat\wedge\theta=-\om\wedge\beta}$.
This proves~(iv).

We prove part~(v). Continue the notation in the proof of part~(iii)
and use the same notation for~$(\beta',\Jhat')$
with~$A,B,a_{ij}$ replaced by~$A',B',a_{ij}'$.  Then 
\begin{equation*}
\begin{split}
\obeta\wedge*\beta'
&=
\frac{-c_\sn}{4} \sum_{i,j=1}^\sn\sum_{k,\ell=1}^\sn 
\oa_{ij}a_{\ell k}' 
\frac{d\oz_1}{\sqrt{2}}\wedge\cdots\wedge\frac{d\oz_{i-1}}{\sqrt{2}}
\wedge\frac{dz_j}{\sqrt{2}}
\wedge\frac{d\oz_{i+1}}{\sqrt{2}}\wedge\cdots\wedge\frac{d\oz_\sn}{\sqrt{2}}
\\
&\qquad
\wedge
\frac{dz_1}{\sqrt{2}}\wedge\cdots\wedge\frac{dz_{k-1}}{\sqrt{2}}
\wedge\frac{d\oz_\ell}{\sqrt{2}}
\wedge\frac{dz_{k+1}}{\sqrt{2}}\wedge\cdots\wedge\frac{dz_\sn}{\sqrt{2}} \\
&=
\frac{c_\sn}{4} \sum_{i,j=1}^\sn
\oa_{ij}a_{ji}' 
\frac{d\oz_1}{\sqrt{2}}\wedge\cdots\wedge\frac{d\oz_\sn}{\sqrt{2}}
\wedge\frac{dz_1}{\sqrt{2}}\wedge\cdots\wedge\frac{dz_\sn}{\sqrt{2}} \\
&=
\frac{1}{4}\sum_{i,j=1}^\sn\oa_{ij}a_{ji}' c_\sn\otheta\wedge\theta \\
&=
\frac{1}{4}\trace(A-\bi B)^T(A'+\bi B')\rho.
\end{split}
\end{equation*}
Thus~${\Re(\obeta\wedge *\beta') 
= \tfrac{1}{4}\trace(A^TA'+B^TB')\rho 
= \tfrac{1}{8}\trace(\Jhat^T\Jhat')\rho}$
and this proves equation~\eqref{eq:BETAinner}.  
Moreover, the pair~$(\oc_\sn*\beta,-\Jhat^*)$ 
satisfies~\eqref{eq:JHATBETA} by part~(iii).  
Thus~${\Re(c_\sn\obeta\wedge\beta')
= \Re(\overline{\oc_\sn*\beta}\wedge\beta')
= - \tfrac{1}{8}\trace(\Jhat\Jhat')\rho}$. 
This confirms~\eqref{eq:BETAsymp} for the real part. 
The formula for the imaginary part holds because 
both sides of the equation are complex linear 
in~$\Jhat'$ with respect to the complex 
structure~${\Jhat'\mapsto J\Jhat'}$. 
This proves~(v) and Lemma~\ref{le:THODGE}.
\end{proof}

The next lemma adapts an observation by Donaldson 
in~\cite[Lemma~1]{DON4} to the present setting.

\begin{lemma}\label{le:THETA}
Let~$\rho$ be a positive volume form and let~$J\in\sJ(M)$ 
be a positive almost complex structure such that~${c_1(TM,J)=c_1(L)\in H^2(M;\Z)}$. 
Then the following are equivalent.

\smallskip\noindent{\bf (i)}  
$J$ is integrable.

\smallskip\noindent{\bf (ii)}  
There exists a nowhere vanishing 
$\sn$-form~$\theta\in\Om^{\sn,0}_J(M,L)$
and a Hermitian connection~$\Nabla{L}$ on~$L$ 
such that~${d^\Nabla{L}\theta=0}$ 
and~${c_\sn\winner{\theta}{\theta}=\rho}$.

\smallskip\noindent
If~(i) holds then the pair~$(\Nabla{L},\theta)$ in~(ii) is uniquely 
determined by~$J$ up to unitary gauge equivalence.  
If~(ii) holds then
\begin{equation}\label{eq:RICF}
(F^\Nabla{L})_J^{0,2}=0,\qquad
\Ric_{\rho,J}=\bi F^\Nabla{L}.
\end{equation}
If~(i) and~(ii) hold and~$\nabla$ is a torsion-free 
connection on~$TM$ that satisfies~${\nabla J=0}$, 
then~${\nabla\rho=0}$ if and only if~${\nabla\theta=0}$.
\end{lemma}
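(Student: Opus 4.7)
The plan is to establish the equivalence (i) $\Leftrightarrow$ (ii) first, then deduce the two curvature statements and the final equivalence $\nabla\rho=0 \Leftrightarrow \nabla\theta=0$ as consequences of (ii). Throughout I would work in a local unitary frame $e$ of $L$, writing $\theta = \sigma \otimes e$ with $\sigma$ a scalar $(\sn, 0)$-form; the normalization $c_\sn\winner{\theta}{\theta}=\rho$ then becomes a pointwise identity on $|\sigma|^2$, and the equation $d^{\nabla^L}\theta = 0$ becomes $d\sigma + \alpha \wedge \sigma = 0$ for the connection $1$-form $\alpha$ of $\nabla^L$, which is purely imaginary when $\nabla^L$ is Hermitian.

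For (i) $\Rightarrow$ (ii), Lemma~\ref{le:theta} supplies a nowhere vanishing $\theta_0 \in \Om^{\sn, 0}_J(M, L)$, which I would rescale by the positive real function $\bigl(\rho/c_\sn\winner{\theta_0}{\theta_0}\bigr)^{1/2}$ to obtain $\theta$ with $c_\sn\winner{\theta}{\theta}=\rho$. In the local frame, integrability of $J$ gives $d\sigma \in \Om^{\sn,1}_J(M,L)$, so $d\sigma = -\alpha^{(0,1)} \wedge \sigma$ for a unique $(0, 1)$-form $\alpha^{(0,1)}$; setting $\alpha := \alpha^{(0,1)} - \overline{\alpha^{(0,1)}}$ gives a purely imaginary local solution to $d\sigma + \alpha \wedge \sigma = 0$, and since the transition functions between unitary frames take values in $U(1)$, the local $\alpha$'s transform correctly and assemble into a global Hermitian $\nabla^L$. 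For uniqueness up to unitary gauge, if $(\theta_1, \nabla_1^L)$ and $(\theta_2, \nabla_2^L)$ both satisfy (ii), the normalization forces $\theta_2 = u\theta_1$ for some $u: M \to U(1)$, and applying $d^{\nabla_1^L}$ to this identity yields $\nabla_2^L = u\nabla_1^L u^{-1}$.

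The converse (ii) $\Rightarrow$ (i) is the main obstacle. The key identity I would prove is that, for any Hermitian connection $\nabla^L$ on $L$ and any $\theta \in \Om^{\sn,0}_J(M, L)$, the $(\sn{-}1, 2)$-component of $d^{\nabla^L}\theta$ is, up to a fixed nonzero scalar, the contraction of $\theta$ with the Nijenhuis tensor $N_J$. To see this, I fix any torsion-free connection $D$ on $TM$ and express $d^{\nabla^L}\theta$ as the alternation of the combined covariant derivative $D \otimes \nabla^L$ applied to $\theta$: in the local frame, $\alpha \wedge \sigma$ contributes only to the $(\sn, 1)$-part (because $\alpha^{(1,0)} \wedge \sigma$ vanishes for dimension reasons), while the $(\sn{-}1, 2)$-part comes from the failure of $D$ to preserve $J$, which is a linear expression in $N_J$ by formula~\eqref{eq:NJnabla}. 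Because $\theta$ is nowhere vanishing, the induced contraction map $\Om^{0,2}_J(M, TM) \to \Om^{\sn-1, 2}_J(M, L)$ is pointwise injective, and therefore $d^{\nabla^L}\theta = 0$ forces $N_J = 0$.

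The remaining statements are consequences. Applying $d^{\nabla^L}$ to $d^{\nabla^L}\theta = 0$ gives $F^{\nabla^L} \wedge \theta = 0$; the bidegree count leaves only $(F^{\nabla^L})^{0,2}_J \wedge \theta \in \Om^{\sn, 2}_J(M, L)$, so injectivity of wedging with the nonvanishing $\theta$ yields $(F^{\nabla^L})^{0,2}_J = 0$. For $\Ric_{\rho, J} = \bi F^{\nabla^L}$, I choose holomorphic coordinates and write $\theta = g\,dz_1\wedge\cdots\wedge dz_\sn \otimes e$ in a unit frame of $L$; the Hermitian and closedness conditions force $\alpha = \partial\log\bar g - \bar\partial\log g$, whose curvature in the integrable case is $-\partial\bar\partial\log|g|^2$, while the normalization $c_\sn\winner{\theta}{\theta}=\rho$ identifies $|g|^2$ (up to a dimensional constant) with the local density of $\rho$, so comparison with the Chern-curvature formula for the canonical bundle from Theorem~\ref{thm:RICBC} produces the identity. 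Finally, writing $\nabla\theta = A \otimes \theta$ for a complex-valued $1$-form $A$ on $M$ (using the combined connection on $\Lambda^{\sn, 0}_J T^*M \otimes L$), the torsion-free hypothesis gives $d^{\nabla^L}\theta = A \wedge \theta$, so (ii) forces $A$ to be of type $(1, 0)$; then $\nabla\rho = (A + \bar A) \otimes \rho$, and since $A$ is $(1, 0)$, the condition $A + \bar A = 0$ is equivalent to $A = 0$, i.e.\ to $\nabla\theta = 0$.
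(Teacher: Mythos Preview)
Your proof is correct and follows essentially the same architecture as the paper's: both directions of the equivalence hinge on the identity $(d^{\Nabla{L}}\theta)^{\sn-1,2}_J = \tfrac14\iota(N_J)\theta$, uniqueness is obtained from the normalization and the closedness condition, $(F^{\Nabla{L}})^{0,2}_J=0$ comes from $d^{\Nabla{L}}d^{\Nabla{L}}\theta=0$, and the final equivalence $\nabla\rho=0\Leftrightarrow\nabla\theta=0$ is obtained by writing $\nabla\theta=A\theta$ with $A\in\Om^{1,0}_J(M,\C)$ and observing that $A+\bar A=0$ forces $A=0$.

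There are two genuine differences worth noting. First, for (i)$\Rightarrow$(ii) you work in local unitary frames and patch the connection forms, whereas the paper starts from a global Hermitian connection $\Nabla{\,0}$ and modifies it by the globally defined $1$-form $\oeta-\eta$ determined by $d^{\Nabla{\,0}}\theta=\eta\wedge\theta$; the paper's route avoids the (correct but tedious) patching verification. Second, and more substantively, for the identity $\Ric_{\rho,J}=\bi F^{\Nabla{L}}$ you compute directly in local holomorphic coordinates, identifying $F^{\Nabla{L}}$ with $-\p\bar\p\log|g|^2$ and invoking the Chern-curvature description of $\Ric_{\rho,J}$ from Theorem~\ref{thm:RICBC}. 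The paper instead first establishes $\nabla\theta=0$ for a torsion-free $\rho$-connection with $\nabla J=0$ (supplied by Lemma~\ref{le:TORSION}), from which $\trace^c(R^\nabla)=F^{\Nabla{L}}$ follows immediately, and then reads off $\Ric_{\rho,J}=\tfrac12\trace(JR^\nabla)=\bi\trace^c(R^\nabla)=\bi F^{\Nabla{L}}$. Your computation is more hands-on; the paper's argument is coordinate-free and makes the logical dependence on the last clause of the lemma explicit, since the Ricci identity is derived \emph{from} $\nabla\theta=0$ rather than proved independently.
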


\begin{proof}
We prove that~(i) implies~(ii). By Lemma~\ref{le:theta} there exists
a nowhere vanishing $(\sn,0)$-form~${\theta\in\Om^{\sn,0}_J(M,L)}$
such that~${c_\sn\winner{\theta}{\theta}=\rho}$.
Choose any Hermitian connection~$\Nabla{\,0}$ on~$L$.
Then~${d^\Nabla{\,0}\theta\in\Om^{n,1}_J(M,L)}$ because~$J$ is integrable 
and hence there exists a unique $1$-form~${\eta\in\Om^{0,1}_J(M)}$ 
such that~${\eta\wedge\theta = d^\Nabla{\,0}\theta}$.
Define the Hermitian connection~$\Nabla{L}$ 
by~${\Nabla{L} := \Nabla{\,0}+\oeta-\eta}$. Then
$$
d^\Nabla{L}\theta 
= d^\Nabla{\,0}\theta + (\oeta-\eta)\wedge\theta 
= \oeta\wedge\theta 
= 0
$$
because~${\oeta\in\Om^{1,0}_J(M)}$. This shows that~(i) implies~(ii). 
Moreover,~(ii) implies 
$$
(F^\Nabla{L})^{0,2}_J\wedge\theta 
= F^\Nabla{L}\wedge\theta
= d^\Nabla{L}d^\Nabla{L}\theta
= 0
$$
and hence~${(F^\Nabla{L})^{0,2}_J=0}$.

We prove uniqueness in~(ii).
If~$(\theta',\Nabla{L}')$ is any other pair as in~(ii),
then there exists a unique unitary gauge
transformation~${g:M\to S^1}$ such that
$$
\theta'=g^{-1}\theta
$$
Hence the $1$-form~${\alpha:=\Nabla{L}'-\Nabla{L}\in\Om^1(M,\bi\R)}$
satisfies
$$
0 
= d^{\Nabla{L}'}\theta' 
= d^{\Nabla{L}+\alpha}(g^{-1}\theta)
= \alpha\wedge g^{-1}\theta + dg^{-1}\wedge\theta
= (\alpha^{0,1}-g^{-1}\bar\p g)\wedge g^{-1}\theta.
$$
Hence~${\alpha^{0,1}=g^{-1}\bar\p g}$ and 
so~${\alpha=g^{-1}\bar\p_Jg-\og^{-1}\p_J\og = g^{-1}dg}$
because~$g^{-1}dg$ is a $1$-form on~$M$ with values in~$\bi\R$.  
Thus
$$
\Nabla{L}' =\Nabla{L}+g^{-1}dg = g^*\Nabla{L}
$$
and this proves uniqueness up to unitary gauge equivalence. 

\bigbreak

We prove that~(ii) implies~(i).  If~${\theta\in\Om^{\sn,0}_J(M,L)}$ 
and~$\Nabla{L}$ is any complex connection on~$L$ 
then~${(d^\Nabla{L}\theta)^{\sn-1,2} = \tfrac{1}{4}\iota(N_J)\theta}$,
where
\begin{equation}\label{eq:NJtheta}
\begin{split}
&(\iota(N_J)\theta)(v_1,\dots,v_{\sn+1}) \\
&:= \sum_{i<j}(-1)^{i+j-1}
\theta\left(N_J(v_i,v_j),v_1,\dots,\widehat{v_i},
\dots,\widehat{v_j},\dots,v_{\sn+1}\right)
\end{split}
\end{equation}
for~${v_1,\dots,v_{\sn+1}\in\Vect(M)}$. 
If~${d^\Nabla{L}\theta=0}$ it follows that~${\iota(N_J)\theta=0}$.  
If~$\theta$ vanishes nowhere this implies $N_J=0$.  
To see this, fix two vector fields $v_1,v_2$.
Then~$\iota(N_J(v_1,v_2))\theta$ is a nonzero $(\sn-1,0)$-form 
while the remaining summands on the right in~\eqref{eq:NJtheta} 
are of type~$(\sn-2,1)$ or~$(\sn-3,2)$.
This implies that~${\iota(N_J(v_1,v_2))\theta=0}$ and 
hence~$N_J(v_1,v_2)=0$ because~$\theta$ vanishes nowhere.
Thus $N_J=0$ and therefore $J$ is integrable.

Now assume~(ii) and let~$\nabla$ be a torsion-free connection on~$TM$
that satisfies~${\nabla J=0}$ and~${\nabla\rho=0}$.  Such a connection exists
by part~(i) of Lemma~\ref{le:TORSION}.  For~${u\in\Vect(M)}$ the 
$\sn$-form ${\Nabla{u}\theta\in\Om^\sn(M,L)}$ is defined by 
\begin{equation*}
\begin{split}
(\Nabla{u}\theta)(v_1,\dots v_\sn) 
&:=
\Nabla{L,u}\bigl(\theta(v_1,\dots,v_\sn)\bigr) \\
&\quad\;
- \theta(\Nabla{u}v_1,v_2,\dots,v_\sn)
- \cdots - \theta(v_2,\dots,v_{\sn-1},\Nabla{u}v_\sn)
\end{split}
\end{equation*}
Since~${\nabla J=0}$, this is an~$(\sn,0)$-form. 
Hence there exists a unique complex valued 
$1$-form~${\alpha\in\Om^1(M,\C)}$ such that
$$
\Nabla{u}\theta = \alpha(u)\theta
$$
for all~${u\in\Vect(M)}$.  Now the equation~${d^\Nabla{L}\theta=0}$
can be expressed in the form
\begin{equation*}
\begin{split}
(\Nabla{u}\theta)(v_1,\dots v_\sn) 
= \sum_{i=1}^n(-1)^{i-1}(\Nabla{v_i}\theta)(u,v_1,\dots,v_{i-1},v_{i+1},\dots,v_\sn)
\end{split}
\end{equation*}
for~${u,v_1,\dots,v_\sn\in\Vect(M)}$.  The right hand side of this equation
is complex linear in~$u$ and this implies~${\alpha\in\Om^{1,0}(M)}$, i.e.\ 
$$
\alpha(Ju) = \bi\alpha(u)
$$
for all~${u\in\Vect(M)}$.  
Since~$\rho=c_\sn\winner{\theta}{\theta}$ and~${\nabla\rho=0}$, 
we also have~${\RE\alpha=0}$, hence~${\alpha=0}$, and so~$\nabla\theta=0$.  
This implies~$\trace^c(R^\nabla)=F^\Nabla{L}$ and therefore 
$$
\Ric_{\rho,J}=\tfrac{1}{2}\trace(JR^\nabla)
=\bi\trace^c(R^\nabla)=\bi F^\Nabla{L}.
$$
This proves Lemma~\ref{le:THETA}. 
\end{proof}

\begin{lemma}\label{le:BETA}
Let~${\rho\in\Om^{2\sn}(M)}$ be a positive volume form,
let~${J\in\sJ_\INT(M)}$, let~$\Nabla{L}$ be a Hermitian connection on~$L$,
and let~${\theta\in\Om^{\sn,0}_J(M,L)}$ be nowhere vanishing 
such that~${d^\Nabla{L}\theta=0}$ and~${c_\sn\winner{\theta}{\theta}=\rho}$.  
Then the following holds.

\smallskip\noindent{\bf (i)}
Let~${v\in\Vect(M)}$ and define~${\Jhat:=\cL_vJ}$ 
and~${\beta:=\bar\p^\Nabla{L}_J\iota(v)\theta\in\Om^{\sn-1,1}_J(M,L)}$
and~${h:=\tfrac{1}{2}(f_v-\bi f_{Jv})}$.
Then~${d^\Nabla{L}\iota(v)\theta=\beta+h\theta}$
and~\eqref{eq:JHATBETA} holds for all~$u$.

\smallskip\noindent{\bf (ii)}
Suppose~${\Jhat\in\Om^{0,1}_J(M,TM)}$ and~${\beta\in\Om^{\sn-1,1}_J(M,L)}$ 
satisfy~\eqref{eq:JHATBETA}.  Then
\begin{equation}\label{eq:dbarJHATBETA1}
\bar\p_J^*\Jhat=0\qquad\iff\qquad\bigl(\bar\p^\Nabla{L}_J\bigr)^*\beta = 0,
\end{equation}
\begin{equation}\label{eq:dbarJHATBETA2}
\bar\p_J\Jhat=0\qquad\iff\qquad\bar\p^\Nabla{L}_J\beta = 0.
\end{equation}

\smallskip\noindent{\bf (iii)}
Let~$\Jhat$ and~$\beta$ be as in~(ii) and let~${\Lambda_\rho(J,\Jhat)}$ 
be as in~\eqref{eq:LAMBDA}.  Then
\begin{equation}\label{eq:BETALAMBDA}
\bi\p_J^{\Nabla{L}}\beta + \tfrac{1}{2}\Lambda_\rho(J,\Jhat)\wedge\theta=0.
\end{equation}

\smallskip\noindent{\bf (iv)}
Let~$\Jhat$ and~$\beta$ be as in~(ii) with~${\bar\p_J\Jhat=0}$
and let~${\Ric_{\rho,J}}$ and~${\Richat_\rho(J,\Jhat)}$
be as in Theorem~\ref{thm:RICCI}.
Then~${\Ric_{\rho,J}\wedge\beta + \Richat_\rho(J,\Jhat)\wedge\theta=0}$.

\smallskip\noindent{\bf (v)}
Let~$\Jhat$ and~$\beta$ be as in~(ii) with~${\bar\p_J\Jhat=0}$
and assume~${F^\Nabla{L}=0}$ and~$J$ admits a K\"ahler form. 
Then there exists a unique function~${h\in\Om^0(M,\C)}$ such that
\begin{equation}\label{eq:BETAH}
d^\Nabla{L}\bigl(\beta + h\theta\bigr) = 0,\qquad
\int_Mh\rho=0.
\end{equation}
Moreover, ${h=\tfrac{1}{2}(f-\bi g)}$ 
in the notation of Lemma~\ref{le:LAMBDAFG}, 
and~${\beta+h\theta\in\im d^\Nabla{L}}$
if and only if there exists a vector field~$v$
such that~${\Jhat=\cL_vJ}$.
\end{lemma}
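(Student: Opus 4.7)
The plan is to treat parts~(i)--(v) in order, exploiting the pointwise isomorphism $\Jhat\leftrightarrow\beta$ from Lemma~\ref{le:THODGE} together with $d^\Nabla{L}\theta=0$ and the curvature identity $F^\Nabla{L}=-\bi\Ric_{\rho,J}$ from~\eqref{eq:RICF}. For part~(i) I would apply Cartan's formula on $L$-valued forms: since $d^\Nabla{L}\theta=0$, the $\sn$-form $d^\Nabla{L}\iota(v)\theta$ has $(\sn-1,1)$-part $\bar\p^\Nabla{L}_J\iota(v)\theta=\beta$ and $(\sn,0)$-part $h\theta$ for a uniquely determined $h\in\Om^0(M,\C)$ (since $\theta$ is nowhere zero). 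To identify $h$, expand $\cL_v\rho=\cL_v(c_\sn\winner{\theta}{\theta})$: because $\beta\wedge\bar\theta=0=\theta\wedge\bar\beta$ by bidegree, this gives $f_v\rho=(h+\bar h)\rho$, whence $\Re h=\tfrac12 f_v$; substituting $Jv$ for $v$ and using $\iota(Jv)\theta=\bi\iota(v)\theta$ yields $\Im h=-\tfrac12 f_{Jv}$. Identity~\eqref{eq:JHATBETA} then follows by computing $\iota(Ju)d^\Nabla{L}\iota(v)\theta-\bi\iota(u)d^\Nabla{L}\iota(v)\theta$ from the tensorial Lie-derivative formula: the connection terms cancel by $\C$-linearity of $\theta$, and the bracket contribution reduces via $[v,Ju]-J[v,u]=(\cL_vJ)u=\Jhat u$ to $-\iota(\Jhat u)\theta$, which rearranges to~\eqref{eq:JHATBETA}.

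Parts~(ii) and~(iii) I would prove by a direct computation in local $J$-holomorphic coordinates using a local $\bar\p^\Nabla{L}$-flat frame of $L$, so that $\theta=\lambda\,dz_1\wedge\cdots\wedge dz_\sn$ and $\beta$ takes the explicit shape~\eqref{eq:BETAloc} with the matrix $(a_{ij})$ encoding $\Jhat$. The operators $\bar\p_J$ on $\Jhat$ and $\bar\p^\Nabla{L}_J$ on $\beta$ each record the same antiholomorphic derivatives $\p_{\bar z_k}a_{ij}$ with matching antisymmetry, yielding~\eqref{eq:dbarJHATBETA2}; for~\eqref{eq:dbarJHATBETA1} one combines this with the $L^2$-isometry (up to a constant) from Lemma~\ref{le:THODGE}(v) to transport adjoints. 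A parallel local computation expresses $\p^\Nabla{L}_J\beta$ in terms of $\p_{z_k}a_{ij}$ and matches it against $\Lambda_\rho(J,\Jhat)\wedge\theta$, whose local form pairs the divergence $\sum_k\p_{z_k}a_{kj}$ (which reproduces the $\trace((\nabla\Jhat)\cdot)$ term in~\eqref{eq:LAMBDA}) with the $\tfrac12\Jhat J\Nabla{}J$ Christoffel correction, yielding~\eqref{eq:BETALAMBDA}. Part~(iv) follows by applying $\bar\p^\Nabla{L}_J$ to~\eqref{eq:BETALAMBDA}: using $\bar\p^\Nabla{L}_J\beta=0$, $\bar\p^\Nabla{L}\theta=0$, and $(d^\Nabla{L})^2\beta=F^\Nabla{L}\wedge\beta=-\bi\Ric_{\rho,J}\wedge\beta$, one obtains $\tfrac{\bi}{2}\bar\p_J\Lambda_\rho\wedge\theta=-\bi\Ric_{\rho,J}\wedge\beta$; the bidegree observation $d\Lambda_\rho\wedge\theta=\bar\p_J\Lambda_\rho\wedge\theta$ together with $\Richat_\rho=\tfrac12 d\Lambda_\rho$ from~\eqref{eq:RICHAT} finishes the argument.

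For part~(v), $F^\Nabla{L}=0$ forces $\Ric_{\rho,J}=0$, so Lemma~\ref{le:LAMBDAFG} supplies $\Lambda_\rho(J,\Jhat)=-df\circ J+dg$ with mean-zero $f=f_\Jhat$, $g=f_{J\Jhat}$; computing $\Lambda_\rho^{0,1}=\bi\bar\p f+\bar\p g$ and substituting into~\eqref{eq:BETALAMBDA} gives $\p^\Nabla{L}_J\beta=\bar\p(-\tfrac12 f+\tfrac{\bi}{2}g)\wedge\theta$. The equation $d^\Nabla{L}(\beta+h\theta)=0$ reduces via $\bar\p^\Nabla{L}_J\beta=0$, $d^\Nabla{L}\theta=0$, and $\p h\wedge\theta=0$ to $\bar\p\bigl(h-\tfrac12(f-\bi g)\bigr)\wedge\theta=0$, forcing $h-\tfrac12(f-\bi g)$ to be holomorphic hence constant, pinned to zero by $\int h\rho=0$. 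The ``if'' direction of the exactness claim follows from part~(i): when $\Jhat=\cL_vJ$, uniqueness in Lemma~\ref{le:THODGE}(i) gives $\beta=\bar\p^\Nabla{L}_J\iota(v)\theta$, while~\eqref{eq:LAMBDAu} identifies $f=f_v$, $g=f_{Jv}$, and hence $h=\tfrac12(f_v-\bi f_{Jv})$, so $\beta+h\theta=d^\Nabla{L}\iota(v)\theta$. The main obstacle is the converse: using that $(L,\Nabla{L})$ is flat and $\bar\p^\Nabla{L}$-trivialized by $\theta$, I invoke the K\"ahler $\bar\p^\Nabla{L}$-Hodge decomposition on $(\sn-1,1)$-forms; the $(\sn,0)$-harmonic projection of $h\theta$ vanishes because $\int h\rho=0$, so exactness of $\beta+h\theta$ forces the $\bar\p^\Nabla{L}$-harmonic part of $\beta$ to vanish, giving $\beta=\bar\p^\Nabla{L}\eta$ for some $\eta\in\Om^{\sn-1,0}(M,L)$; writing $\eta=\iota(v)\theta$ for the unique real $v\in\Vect(M)$ recovers $\beta=\bar\p^\Nabla{L}_J\iota(v)\theta$, and uniqueness in Lemma~\ref{le:THODGE}(ii) then yields $\Jhat=\cL_vJ$.
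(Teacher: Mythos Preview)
Your plan for parts~(i), (iv), and~(v) matches the paper's argument closely. The genuine divergence is in parts~(ii) and~(iii), where you propose local holomorphic-coordinate computations while the paper works intrinsically. For~\eqref{eq:dbarJHATBETA2} the paper sets $\alpha_u:=\bi\iota(u)d^{\Nabla{L}}\beta-\iota(Ju)d^{\Nabla{L}}\beta$ and proves, via Cartan calculus and~\eqref{eq:JHATBETA}, the pointwise identity $\bi\iota(v)\alpha_u-\iota(Jv)\alpha_u=\iota(J\bar\p_J\Jhat(u,v))\theta$, which shows directly that $(d^{\Nabla{L}}\beta)^{\sn-1,2}_J=0$ iff $\bar\p_J\Jhat=0$. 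For part~(iii) the paper's argument is rather different from yours and worth knowing: since $\bi\p^{\Nabla{L}}_J\beta\in\Om^{\sn,1}_J(M,L)$, one may write $\bi\p^{\Nabla{L}}_J\beta=-\eta\wedge\theta$ for some $\eta\in\Om^{0,1}_J(M)$, and then identify $\Re\eta$ by pairing against arbitrary $v\in\Vect(M)$ using the integral identity~\eqref{eq:LAMBDARHO} together with~\eqref{eq:BETAsymp} and part~(i); this yields $\Re\eta=\tfrac{1}{4}\Lambda_\rho(J,\Jhat)$ with no coordinate work at all. Your local approach can be made to work, but you should be aware that in holomorphic coordinates the volume form is $\rho=|\lambda|^2$ times the standard one (with $\lambda$ nonconstant in general), so the torsion-free $\rho$-connection is not the flat one and the ``Christoffel correction'' you allude to in~\eqref{eq:LAMBDA} carries the $\p\log|\lambda|^2$ terms---this is exactly what the change-of-volume formula~\eqref{eq:RICRHOF} encodes, and it has to be tracked carefully. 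Finally, in part~(v) your detour through the harmonic projection of $h\theta$ is unnecessary: once you invoke the K\"ahler identity $\Delta_{d^{\Nabla{L}}}=2\Delta_{\bar\p^{\Nabla{L}}}$ (for a flat bundle, with $\om^\sn/\sn!=\rho$), the $d^{\Nabla{L}}$-exact form $\beta+h\theta$ has vanishing $d$-harmonic projection, and since harmonic forms decompose by bidegree the $(\sn-1,1)$-component $\beta$ already has vanishing $\bar\p$-harmonic part, independent of $\int_M h\rho$; this is how the paper proceeds.
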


\begin{proof}
Fix a torsion-free connection~$\nabla$ 
such that~${\nabla J=0}$ and~${\nabla\theta=0}$.  
Next define~${\cL_v^{\Nabla{L}}\alpha
:=d^{\Nabla{L}}\iota(v)\alpha+\iota(v)d^{\Nabla{L}}\alpha}$
for~${\alpha\in\Om^k(M,L)}$ and~${v\in\Vect(M)}$
Then
\begin{equation*}
\begin{split}
(\cL_v^\Nabla{L}\alpha)(v_1,\dots,v_k)
= 
\Nabla{L,v}\bigl(\alpha(v_1,\dots,v_k)\bigr) 
- \sum\alpha(\dots,v_{i-1},[v_i,v],v_{i+1},\dots)
\end{split}
\end{equation*}
for~${v,v_1,\dots,v_k\in\Vect(M)}$ 
and~${\cL^{\Nabla{L}}_v\theta=d^{\Nabla{L}}\iota(v)\theta}$.
Hence, by the Leibniz rule,
\begin{equation*}
\begin{split}
(d^{\Nabla{L}}\iota(v)\theta)(v_1,\dots,v_\sn)
= \theta(\Nabla{v_1}v,v_2,\dots,v_\sn)
+ \cdots + \theta(v_1,\dots,v_{\sn-1},\Nabla{v_\sn}v)
\end{split}
\end{equation*}
for all~${v,v_1,\dots,v_\sn\in\Vect(M)}$. 
Since~$\theta$ is complex multi-linear this implies
\begin{equation}\label{eq:dbeta3}
\begin{split}
\bi\iota(u)d^{\Nabla{L}}\iota(v)\theta
- \iota(Ju)d^{\Nabla{L}}\iota(v)\theta
= \iota(J\Nabla{u}v-\Nabla{Ju}v)\theta
= \iota((\cL_vJ)u)\theta
\end{split}
\end{equation}
for all~${u,v\in\Vect(M)}$.
Hence
\begin{equation*}
\begin{split}
\iota((\cL_vJ)u)\theta 
= 
\bi\iota(u)(d^{\Nabla{L}}\iota(v)\theta)^{\sn-1,1}_J
- \iota(Ju)(d^{\Nabla{L}}\iota(v)\theta)^{\sn-1,1}_J
= 
\bi\iota(u)\beta - \iota(Ju)\beta
\end{split}
\end{equation*}
for all~${u,v}$ and this proves~\eqref{eq:JHATBETA}.
The equation~${\winner{\theta}{\p^\Nabla{L}_J\iota(v)\theta}=\winner{\theta}{h\theta}}$ 
follows directly from the definitions and the formula~${\rho=c_\sn\winner{\theta}{\theta}}$.
This proves~(i).

\bigbreak

We prove part~(ii).  The equivalence in~\eqref{eq:dbarJHATBETA1}
follows from the identity
\begin{equation*}
\begin{split}
\inner{\beta}{\bar\p^\Nabla{L}_J\iota(v)\theta}_{L^2}
= 
\tfrac{1}{8}\int_M\trace\bigl(\Jhat^*\cL_vJ\bigr)\rho 
= 
\tfrac{1}{4}\inner{\Jhat}{J\bar\p_Jv}_{L^2} 
= 
\tfrac{1}{4}\inner{\bar\p_J^*\Jhat}{Jv}_{L^2}
\end{split}
\end{equation*}
for~${v\in\Vect(M)}$.  Here we have used part~(v) 
of Lemma~\ref{le:THODGE} as well as~(i). 
To prove~\eqref{eq:dbarJHATBETA2}, 
define~${\alpha_u\in\Om^\sn(M,L)}$ by
\begin{equation}\label{eq:dbeta1}
\alpha_u := \bi\iota(u)d^{\Nabla{L}}\beta - \iota(Ju) d^{\Nabla{L}}\beta
\end{equation}
for~${u\in\Vect(M)}$.  We will prove that,
for all~${u,v\in\Vect(M)}$,
\begin{equation}\label{eq:dbeta2}
\bi\iota(v)\alpha_u - \iota(Jv)\alpha_u = \iota(J\bar\p_J\Jhat(u,v))\theta.
\end{equation}
Equation~\eqref{eq:dbeta2} shows 
that~${\bar\p_J\Jhat=0}$ if and only if~${\alpha_u\in\Om^{\sn,0}_J(M,L)}$
for every vector field~${u\in\Vect(M)}$.  By~\eqref{eq:dbeta1} this is equivalent to
the condition~${d^{\Nabla{L}}\beta\in\Om^{\sn,1}_J(M,L)}$ or, equivalently, 
to~${\bar\p_J^{\Nabla{L}}\beta = (d^{\Nabla{L}}\beta)_J^{\sn-1,2}=0}$.

To prove~\eqref{eq:dbeta2}, fix a torsion-free connection~$\nabla$ 
that satisfies~${\nabla J=0}$ and~${\nabla\theta=0}$. 
Then it follows from~\eqref{eq:dbeta3} with~$v$ 
replaced by~${\Jhat v}$ that
\begin{equation}\label{eq:dbeta4}
\begin{split}
&\bi\iota(u)d^{\Nabla{L}}\iota(\Jhat v)\theta
- \iota(Ju)d^{\Nabla{L}}\iota(\Jhat v)\theta \\
&= 
\iota\bigl(J(\Nabla{u}\Jhat)v-(\Nabla{Ju}\Jhat)v\bigr)\theta 
+ \iota\bigl(J\Jhat\Nabla{u}v - \Jhat\Nabla{Ju}v\bigr)\theta
\end{split}
\end{equation}
for all~${u,v\in\Vect(M)}$.  Moreover,
\begin{equation}\label{eq:dbeta5}
\begin{split}
\alpha_u
=
\bi\iota(u)d^{\Nabla{L}}\beta 
- \iota(Ju)d^{\Nabla{L}}\beta 
=
\bi\cL^{\Nabla{L}}_u\beta 
- \cL^{\Nabla{L}}_{Ju}\beta 
- d^{\Nabla{L}}\iota(\Jhat u)\theta
\end{split}
\end{equation}
for all~${u\in\Vect(M)}$ by~\eqref{eq:JHATBETA}.
With this understood, we obtain
\begin{equation*}
\begin{split}
\bi\iota(v)\alpha_u - \iota(Jv)\alpha_u 
&= 
- \iota(v)\cL^{\Nabla{L}}_u\beta 
- \bi\iota(v)\cL^{\Nabla{L}}_{Ju}\beta 
- \bi\iota(v)d\iota(\Jhat u)\theta \\
&\quad
-\bi\iota(Jv)\cL^{\Nabla{L}}_u\beta 
+ \iota(Jv)\cL^{\Nabla{L}}_{Ju}\beta 
+ \iota(Jv)d^{\Nabla{L}}\iota(\Jhat u)\theta \\
&=
- \cL^{\Nabla{L}}_u\iota(v)\beta - \iota([u,v])\beta
- \bi\cL^{\Nabla{L}}_{Ju}\iota(v)\beta - \bi\iota([Ju,v])\beta \\
&\quad
-\bi\cL^{\Nabla{L}}_u\iota(Jv)\beta - \bi\iota([u,Jv])\beta
+ \cL^{\Nabla{L}}_{Ju}\iota(Jv)\beta + \iota([Ju,Jv])\beta \\
&\quad
- \bi\iota(v)d\iota(\Jhat u)\theta + \iota(Jv)d\iota(\Jhat u)\theta\\
&=
\bi\iota(u)d^{\Nabla{L}}\iota(\Jhat v)\theta 
-  \iota(Ju)d^{\Nabla{L}}\iota(\Jhat v)\theta 
- \bi\iota(v)d^{\Nabla{L}}\iota(\Jhat u)\theta \\
&\quad
+ \iota(Jv)d^{\Nabla{L}}\iota(\Jhat u)\theta 
+ \iota(J\Jhat[u,v])\theta 
+ \iota(\Jhat J[Ju,Jv])\theta  \\
&=
\iota\bigl(J(\Nabla{u}\Jhat)v - (\Nabla{Ju}\Jhat)v
-J(\Nabla{v}\Jhat)u + (\Nabla{Jv}\Jhat)u\bigr)\theta \\
&= 
\iota(J\bar\p_J\Jhat(u,v))\theta.
\end{split}
\end{equation*}
Here the first equality follows from~\eqref{eq:dbeta5}, 
the third from~\eqref{eq:JHATBETA},
and the fourth from~\eqref{eq:dbeta4}.
This proves~\eqref{eq:dbeta2} and~(ii). 

\bigbreak

We prove part~(iii).  Since~${\bi\p^{\Nabla{L}}_J\beta\in\Om^{n,1}_J(M,L)}$,
there is an~${\eta\in\Om^{0,1}_J(M)}$ such that
\begin{equation}\label{eq:dbetaeta}
\bi\p^{\Nabla{L}}_J\beta + \eta\wedge\theta = 0.
\end{equation}
Now let~${v\in\Vect(M)}$.  
Then the pair~$(\cL_vJ,\bar\p^{\Nabla{L}}_J\iota(v)\theta)$
satisfies~\eqref{eq:JHATBETA} by~(i).  
Hence, by~\eqref{eq:LAMBDARHO} and~\eqref{eq:BETAsymp}, we have
\begin{equation*}
\begin{split}
\tfrac{1}{4}\int_M\Lambda_\rho(J,\Jhat)\wedge\iota(v)\rho
&=
\tfrac{1}{8}\int_M\trace\bigl(\Jhat J\cL_vJ\bigr)\rho \\
&=
\Im\left(\int_Mc_\sn\winner{\beta}{\bar\p^{\Nabla{L}}_J\iota(v)\theta}\right) \\
&=
\Re\left(\int_Mc_\sn\winner{(\bi\beta)}{d^{\Nabla{L}}\iota(v)\theta}\right) \\
&=
(-1)^{\sn+1}\Re\left(\int_Mc_\sn\winner{(\bi d^{\Nabla{L}}\beta)}{\iota(v)\theta}\right) \\
&=
-\Re\left(
\int_Mc_\sn\winner{(\iota(v)\bi\p^{\Nabla{L}}_J\beta)}{\theta}
\right) \\
&=
\Re\left(
\int_M\overline{\eta(v)}c_\sn\winner{\theta}{\theta}
\right) \\
&=
\int_M\Re(\eta)\wedge\iota(v)\rho. 
\end{split}
\end{equation*}
Here the penultimate equality follows from~\eqref{eq:dbetaeta}.
Thus~${\Re(\eta)=\tfrac{1}{4}\Lambda_\rho(J,\Jhat)}$,
hence~${\eta = \tfrac{1}{2}\Lambda_\rho(J,\Jhat)^{0,1}_J}$,
and so~\eqref{eq:BETALAMBDA} follows 
from~\eqref{eq:dbetaeta}. This proves~(iii). 

We prove part~(iv).  Since~${\bar\p_J\Jhat=0}$ it follows 
from part~(ii) that~${\bar\p^{\Nabla{L}}_J\beta=0}$.  
Hence~${\bi d^{\Nabla{L}}\beta+\tfrac{1}{2}\Lambda_\rho(J,\Jhat)\wedge\theta=0}$
by~\eqref{eq:BETALAMBDA} and so, by Lemma~\ref{le:THETA}, we have
$$
\Ric_{\rho,J}\wedge\beta
= \bi F^{\Nabla{L}}\wedge\beta
= \bi d^{\Nabla{L}}d^{\Nabla{L}}\beta
= -\tfrac{1}{2}d\Lambda_\rho(J,\Jhat)\wedge\theta
= - \Richat_\rho(J,\Jhat)\wedge\theta.
$$
If~$\Ric_{\rho,J}$ is nondegenerate, the assertion follows directly
from Lemma~\ref{le:ONEONE} and part~(iv) of Lemma~\ref{le:THODGE}.  
This proves~(iv). 

We prove part~(v).   Since~${F^{\Nabla{L}}=0}$ 
we have~${\Ric_{\rho,J}=0}$ by Lemma~\ref{le:THETA}.
Hence Lemma~\ref{le:LAMBDAFG} asserts that there exists a unique 
pair of smooth functions~${f,g\in\Om^0(M)}$ of mean value zero 
such that~${\Lambda_\rho(J,\Jhat)=-df\circ J+dg}$.
Let~${h:=\tfrac{1}{2}(f-\bi g)}$.  
Then $\bar\p_Jh=-\tfrac{\bi}{2}\Lambda_\rho(J,\Jhat)^{0,1}_J$
and so~$d^{\Nabla{L}}(\beta+h\theta)=0$ by~\eqref{eq:BETALAMBDA}.
If~${\beta+h\theta\in\im d^\Nabla{L}}$,
choose a K\"ahler form~$\om$ such that~${\om^\sn/\sn!=\rho}$
and use the identity~${d^\Nabla{L}(d^\Nabla{L})^*+(d^\Nabla{L})^*d^\Nabla{L}
=2(\bar\p_J^\Nabla{L}(\bar\p_J^\Nabla{L})^*+(\bar\p_J^\Nabla{L})^*\bar\p_J^\Nabla{L})}$
to deduce that~${\beta=\bar\p^{\Nabla{L}}_J\iota(v)\theta}$ for some
vector field~$v$. This proves Lemma~\ref{le:BETA}.
\end{proof}

If~${\Jhat\in\Om^{0,1}_J(M,TM)}$ satisfies~${\bar\p_J\Jhat=0}$,
then the~$\sn$-form
$$
\thetahat:=\beta+h\theta\in\Om^\sn(M,L)
$$ 
in part~(v) of Lemma~\ref{le:BETA} should be 
thought of as the tangent vector associated to~$\Jhat$ in the 
projective space of closed complex valued $\sn$-forms modulo scaling.  
Namely, if ${t\mapsto J_t}$ is a smooth path of (integrable)
complex structures such that~${\p_t|_{t=0}J_t=\Jhat}$,
and~${t\mapsto\theta_t\in\Om^{\sn,0}_{J_t}(M,L)}$
is a smooth path of nowhere vanishing closed $(\sn,0)$-forms, 
then~${\p_t|_{t=0}\theta_t\in\thetahat+\C\theta}$.

\begin{corollary}\label{cor:THETAHAT}
Let~$M$ be an closed connected oriented $2\sn$-mani\-fold, 
let~$J$ be a complex structure on~$M$ with real first Chern class
zero and nonempty K\"ahler cone, let~${L\to M}$ be a Hermitian
line bundle equipped with a flat connection~$\Nabla{L}$
such that~${c_1(L)=c_1(TM,J)\in H^2(M;\Z)}$,
let~${\theta\in\Om^{\sn,0}_J(M,L)}$ be a nowhere 
vanishing $(\sn,0)$-form such that~${d^\Nabla{L}\theta=0}$,
and define~${\rho:=c_\sn\winner{\theta}{\theta}}$.
For~${i=1,2}$ let~${\Jhat_i\in\Om^{0,1}_J(M,TM)}$  
such that~${\bar\p_J\Jhat_i=0}$, let~${\beta_i\in\Om^{\sn-1,1}_J(M,L)}$ 
satisfy~\eqref{eq:JHATBETA} for all~${u\in\Vect(M)}$ with~${\Jhat=\Jhat_i}$,
let~$h_i\in\Om^0(M,\C)$ be the unique function that 
satisfies~\eqref{eq:BETAH} with~${\beta=\beta_i}$, 
and define~${\thetahat_i := \beta_i+h_i\theta}$. Then
\begin{equation}\label{eq:THETAHAT}
\begin{split}
\Re\left(c_\sn\int_M\winner{\thetahat_1}{\thetahat_2}\right)
&= 
- \tfrac{1}{8}\int_M\trace(\Jhat_1\Jhat_2)\rho 
+ \int_M\Re(\oh_1h_2)\rho,  \\
\Im\left(c_\sn\int_M\winner{\thetahat_1}{\thetahat_2}\right)
&= 
\tfrac{1}{8}\int_M\trace(\Jhat_1J\Jhat_2)\rho 
+ \int_M\Im(\oh_1h_2)\rho.
\end{split}
\end{equation}
\end{corollary}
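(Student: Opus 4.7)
The plan is to reduce Corollary~\ref{cor:THETAHAT} to a pointwise computation via the bilinear pairing formula~\eqref{eq:BETAsymp} in Lemma~\ref{le:THODGE}(v), after observing that several mixed terms drop out for bidegree reasons. The starting point is the expansion
\begin{equation*}
c_\sn \winner{\thetahat_1}{\thetahat_2}
= c_\sn\winner{\beta_1}{\beta_2}
+ c_\sn h_2\winner{\beta_1}{\theta}
+ c_\sn\oh_1\winner{\theta}{\beta_2}
+ \oh_1 h_2\,c_\sn\winner{\theta}{\theta},
\end{equation*}
which follows directly from the definition $\thetahat_i=\beta_i+h_i\theta$ together with the fact that the Hermitian pairing on $L$ is antilinear in the first variable and linear in the second.

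The key observation is that the two cross terms vanish identically. Since the Hermitian pairing involves complex conjugation of the first entry, $\winner{\beta_1}{\theta}$ carries the bidegree $(q_1+p_2,p_1+q_2)$ where $(p_1,q_1)=(\sn-1,1)$ is the bidegree of $\beta_1$ and $(p_2,q_2)=(\sn,0)$ is the bidegree of $\theta$. This produces a form of bidegree $(\sn+1,\sn-1)$, which is zero because the first index exceeds $\sn$. The same argument shows that $\winner{\theta}{\beta_2}$ has bidegree $(\sn-1,\sn+1)$ and therefore vanishes as well. By contrast, both $\winner{\theta}{\theta}$ and $\winner{\beta_1}{\beta_2}$ have bidegree $(\sn,\sn)$, which is the maximal bidegree, and hence survive.

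With the cross terms eliminated, the expansion collapses to
\begin{equation*}
c_\sn\int_M\winner{\thetahat_1}{\thetahat_2}
= \int_M c_\sn\winner{\beta_1}{\beta_2}
+ \int_M \oh_1 h_2\,\rho,
\end{equation*}
where we used $\rho=c_\sn\winner{\theta}{\theta}$. Substituting the pointwise identity~\eqref{eq:BETAsymp} from Lemma~\ref{le:THODGE}(v), namely
\begin{equation*}
c_\sn\winner{\beta_1}{\beta_2}
= -\tfrac{1}{8}\trace(\Jhat_1\Jhat_2)\rho + \tfrac{\bi}{8}\trace(\Jhat_1 J\Jhat_2)\rho,
\end{equation*}
yields the full formula
\begin{equation*}
c_\sn\int_M\winner{\thetahat_1}{\thetahat_2}
= \int_M\bigl(-\tfrac{1}{8}\trace(\Jhat_1\Jhat_2)
+\tfrac{\bi}{8}\trace(\Jhat_1 J\Jhat_2)
+ \oh_1 h_2\bigr)\rho.
\end{equation*}
Since the two trace expressions are real-valued (the $\Jhat_i$ and $J$ are real endomorphisms of $TM$), separating real and imaginary parts gives the two identities in~\eqref{eq:THETAHAT}.

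There is no serious obstacle here; the whole argument is a clean expansion once Lemma~\ref{le:THODGE}(v) and Lemma~\ref{le:BETA}(v) are available. The only point that requires a moment of care is the bidegree check that kills the cross terms $\winner{\beta_i}{\theta}$ and $\winner{\theta}{\beta_j}$; this is where the specific Hermitian convention (antilinear in the first slot) enters, and it is worth stating explicitly because otherwise a reader might worry that the functions $h_i$ contribute additional off-diagonal contributions to the inner product of $\thetahat_1$ and $\thetahat_2$.
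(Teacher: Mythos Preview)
Your proof is correct and follows essentially the same route as the paper, which states only that the result ``follows directly from~\eqref{eq:BETAinner} and the definition of~$\thetahat_i$.'' You make explicit the one point the paper leaves implicit---that the cross terms $\winner{\beta_1}{\theta}$ and $\winner{\theta}{\beta_2}$ vanish for bidegree reasons---and you invoke~\eqref{eq:BETAsymp} (the other formula in Lemma~\ref{le:THODGE}(v)), which is indeed the identity that applies directly here.
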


\begin{proof}
This follows directly from~\eqref{eq:BETAinner} 
and the definition of~$\thetahat_i$. 
\end{proof}

The discussion in this appendix is inspired by Donaldson's symplectic form 
on the space of complex structures on a Fano manifold in~\cite{DON4}.
He proved in~\cite[Theorem~1]{DON4} in the Fano case that the Hermitian
form
$$
(\Jhat_1,\Jhat_2)\mapsto c_\sn\int_M\winner{\thetahat_1}{\thetahat_2}
$$
is negative definite on the space of complex structures 
compatible with a fixed symplectic form~$\om$.  
In the Calabi--Yau case (with the symplectic form not fixed) 
this Hermitian form on the kernel 
of~${\bar\p_J:\Om^{0,1}_J(M,TM)\to\Om^{0,2}_J(M,TM)}$
vanishes on the image 
of the operator~${\bar\p_J:\Om^0(M,TM)\to\Om^{0,1}_J(M,TM)}$
and descends to a well-defined and nondegenerate, 
but indefinite, Hermitian form on the quotient 
space~${\ker\bar\p_J/\im\bar\p_J=T_{[J]}\sT_0(M)}$.
Its imaginary part is the symplectic form 
on Teichm\"uller space in Theorem~\ref{thm:TEICH}.


\end{document}